\documentclass[notitlepage,11pt,reqno]{amsart}
\usepackage[foot]{amsaddr}
\usepackage{amssymb,nicefrac,bm,upgreek,mathtools,verbatim}
\usepackage[final]{hyperref}
\usepackage[mathscr]{eucal}
\usepackage{dsfont}
\usepackage[normalem]{ulem}
\usepackage{amsopn}

\usepackage[margin=1in]{geometry}
\allowdisplaybreaks
\raggedbottom

\newcommand{\stkout}[1]{\ifmmode\text{\sout{\ensuremath{#1}}}\else\sout{#1}\fi}

\newtheorem{lemma}{Lemma}[section]
\newtheorem{theorem}{Theorem}[section]

\newtheorem{corollary}{Corollary}[section]

\theoremstyle{definition}
\newtheorem{definition}{Definition}[section]

\newtheorem{example}{Example}[section]

\theoremstyle{remark}
\newtheorem{remark}{Remark}[section]
\numberwithin{theorem}{section}
\numberwithin{equation}{section}

\hypersetup{
  colorlinks=true,
  citecolor=mblue,
  linkcolor=mblue,
  urlcolor = blue,
  anchorcolor = blue,
  frenchlinks=false,
  pdfborder={0 0 0},
  naturalnames=false,
  hypertexnames=false,
  breaklinks}
\usepackage[capitalize,nameinlink]{cleveref}
\usepackage[abbrev,msc-links,nobysame,citation-order]{amsrefs} 

\crefname{section}{Section}{Sections}
\crefname{subsection}{Section}{Sections}
\crefname{condition}{Condition}{Conditions}
\crefname{hypothesis}{Hypothesis}{Conditions}
\crefname{assumption}{Assumption}{Assumptions}
\crefname{lemma}{Lemma}{Lemmas}
\crefname{fact}{Fact}{Facts}

\Crefname{figure}{Figure}{Figures}

\crefformat{equation}{\textup{#2(#1)#3}}
\crefrangeformat{equation}{\textup{#3(#1)#4--#5(#2)#6}}
\crefmultiformat{equation}{\textup{#2(#1)#3}}{ and \textup{#2(#1)#3}}
{, \textup{#2(#1)#3}}{, and \textup{#2(#1)#3}}
\crefrangemultiformat{equation}{\textup{#3(#1)#4--#5(#2)#6}}%
{ and \textup{#3(#1)#4--#5(#2)#6}}{, \textup{#3(#1)#4--#5(#2)#6}}%
{, and \textup{#3(#1)#4--#5(#2)#6}}

\Crefformat{equation}{#2Equation~\textup{(#1)}#3}
\Crefrangeformat{equation}{Equations~\textup{#3(#1)#4--#5(#2)#6}}
\Crefmultiformat{equation}{Equations~\textup{#2(#1)#3}}{ and \textup{#2(#1)#3}}
{, \textup{#2(#1)#3}}{, and \textup{#2(#1)#3}}
\Crefrangemultiformat{equation}{Equations~\textup{#3(#1)#4--#5(#2)#6}}%
{ and \textup{#3(#1)#4--#5(#2)#6}}{, \textup{#3(#1)#4--#5(#2)#6}}%
{, and \textup{#3(#1)#4--#5(#2)#6}}

\crefdefaultlabelformat{#2\textup{#1}#3}
%
\newcommand{\vertiii}[1]{{\left\vert\kern-0.25ex\left\vert\kern-0.25ex\left\vert #1 
    \right\vert\kern-0.25ex\right\vert\kern-0.25ex\right\vert}}

\newcommand{\Uadm}{\mathfrak U}
\newcommand{\Act}{\mathbb{U}}
\newcommand{\Usm}{\mathfrak U_{\mathsf{sm}}}

\newcommand{\Um}{\mathfrak U_{\mathsf{m}}}

\newcommand{\pV}{\mathrm{V}} 
\newcommand{\pv}{\mathrm{v}} 

\newcommand{\cB}{{\sB}}  
\newcommand{\sB}{{\mathscr{B}}}  
\newcommand{\cC}{{\mathcal{C}}}   
\newcommand{\sE}{{\mathscr{E}}} 
\newcommand{\sF}{{\mathfrak{F}}}   
\newcommand{\cJ}{{\mathcal{J}}}  
  
\newcommand{\sL}{{\mathscr{L}}}  %
\newcommand{\Lp}{{L}}            

\newcommand{\cP}{{\mathcal{P}}}  

\newcommand{\Lyap}{{\mathcal{V}}}  

\newcommand{\RR}{\mathds{R}}
\newcommand{\NN}{\mathds{N}}

\newcommand{\Rd}{{\mathds{R}^{d}}}
\DeclareMathOperator{\Exp}{\mathbb{E}}

\newcommand{\D}{\mathrm{d}}

\newcommand{\Ind}{\mathds{1}}   
\newcommand{\cD}{\mathcal{D}} 
 
\newcommand{\Sob}{{\mathscr W}}    
\newcommand{\Sobl}{{\mathscr W}_{\text{loc}}} 

\newcommand{\df}{:=}
\newcommand{\transp}{^{\mathsf{T}}}
\DeclareMathOperator*{\osc}{osc}

\DeclareMathOperator*{\trace}{Tr}

\newcommand{\sorder}{{\mathfrak{o}}}
\newcommand{\grad}{\nabla}
\newcommand{\uuptau}{{\Breve\uptau}}

\newcommand{\abs}[1]{\lvert#1\rvert}
\newcommand{\norm}[1]{\lVert#1\rVert}


\usepackage{color}
\definecolor{dmagenta}{rgb}{.4,.1,.5}
\definecolor{dblue}{rgb}{.0,.0,.5}
\definecolor{mblue}{rgb}{.0,.0,.7}
\definecolor{ddblue}{rgb}{.0,.0,.4}
\definecolor{dred}{rgb}{.7,.0,.0}
\definecolor{dgreen}{rgb}{.0,.5,.0}
\definecolor{Eeom}{rgb}{.0,.0,.5}



\begin{document}
\title[Robustness of Controlled  regime-switching diffusion Processes]
{Robustness of Stochastic Optimal Control for Controlled  regime-switching diffusions with Incorrect Models}

\author[Somnath Pradhan]{Somnath Pradhan$^\dag$}
\address{$^\dag$Department of Mathematics, Indian Institute of Science Education and Research Bhopal,
Bhopal, MP - 462066, India}
\email{somnath@iiserb.ac.in}

\author[Dinesh Rathia]{Dinesh Rathia$^{\ddag}$}
\address{$^\ddag$Department of Mathematics, Indian Institute of Science Education and Research Bhopal,
Bhopal, MP - 462066, India}
\email{dinesh23@iiserb.ac.in}

\keywords{Robust control, Regime-switching controlled diffusions, Hamilton-Jacobi-Bellman equation, Coupled system, Stationary control}

\subjclass[2000]{Primary: 93E20, 60J25; secondary: 49J55}



\begin{abstract}
This paper investigates the robustness of stochastic optimal control for controlled regime-switching diffusions. We consider systems driven by both continuous fluctuations and discrete regime changes, allowing for model misspecification in both the diffusion and switching components. Within a unified framework, we study four classical cost formulations---finite-horizon, infinite-horizon discounted and ergodic costs, and the exit-time cost, and establish continuity of value functions and robustness of optimal controls. Specifically, we show that as a sequence of approximating regime-switching models converges to the true model, the associated value functions and optimal policies converge as well, ensuring vanishing performance loss. The analysis relies on the regularity of the solution to the associated weakly coupled HJB systems, and their stochastic representation (obtained via It$\hat{\rm o}$–Krylov formula). The results extend the robustness framework developed for diffusion processes to a significantly broader class of hybrid systems with interacting continuous and discrete dynamics.
\end{abstract}

\maketitle

\section{Introduction} A stochastic optimal control problem addresses decision-making under uncertainty, 
where the goal is to minimize or maximize a specified performance criterion. 
Typically, the system dynamics are described by a stochastic differential equation (SDE) 
driven by Brownian motion, and the controller seeks to minimize an expected cost functional 
over a given time horizon. 
The decision-maker must determine a control strategy referred to as a stochastic control  
that optimizes the objective in this random environment. A fundamental analytical approach to such problems is the \emph{dynamic programming principle} 
and the associated Hamilton--Jacobi--Bellman (HJB) equation, 
whose solution characterizes the optimal control policy \cites{FlemingSoner2006,Bor-book,KushnerDupuis2001,ABG-book,HP09-book,yong2012stochastic}.

In practical applications, one typically works with an idealized system model, often constructed from available incomplete or noisy data. A control strategy is then designed based on this approximate model; however, when implemented on the true system, it may not perform as expected and give rise to a performance loss due to the mismatch between the assumed model and the true model. This naturally gives rise to the question of \emph{robustness}: does the performance loss vanish as the model used for design becomes increasingly accurate? Equivalently, as the approximate model converges to the true system, 
do the corresponding value functions and optimal controls also converge?

Formally, let the true model be denoted by $(X,c)$, 
where $X$ describes the system dynamics, 
and $c$ is the running cost function. 
Let $(X_n, c_n)$ represent a sequence of approximating models converging to $(X,c)$ as $n\to\infty$ 
in an appropriate sense. 
For each admissible control policy $U$, denote the total expected cost by 
$\mathcal{J}(c,U)$ under the true model and by $\mathcal{J}_n(c_n,U)$ under the approximate model. 
If $v^*$ and $v_n^*$ denote the corresponding optimal controls, 
the robustness problem is to show that the performance loss
\(
|\mathcal{J}(c,v^*) - \mathcal{J}(c,v_n^*)|\)
vanishes as $n\to\infty$, i.e., \(
|\mathcal{J}(c,v^*) - \mathcal{J}(c,v_n^*)| \longrightarrow 0.\)

This problem is of significant theoretical and practical importance and has been 
studied extensively in the discrete-time setting within the framework of 
Markov decision processes (MDPs); see, for example, \cites{KY-20,KRY-20,BJP02,KV16,SX15,NG05} 
and the references therein. In contrast, the continuous-time setting has received relatively limited 
attention; see, for instance, \cites{HS08,GL99,LJE15}. Motivated by these developments, 
Pradhan and Yüksel~\cite{PradhanYuksel2023} extended the robustness analysis to continuous-time 
controlled diffusion processes, establishing detailed results under several cost criteria including finite horizon, infinite horizon discounted and ergodic costs, and exit-time formulations. To the best 
of our knowledge, a corresponding robustness theory for hybrid models such as regime-switching diffusions 
has not yet been developed, and the present work aims to fill this gap.

A rich literature exists on controlled diffusion processes and their optimality properties 
\cites{FlemingSoner2006,Bor-book,KushnerDupuis2001,HP09-book,ABG-book}, 
as well as on robust and risk-sensitive extensions 
\cites{GL99,LJE15,arapostathis2022risk}. 
However, purely diffusion-based models often fail to capture systems that exhibit 
both continuous fluctuations and discrete regime changes. 
Such systems are more naturally represented by 
\emph{regime-switching diffusions}, in which the continuous dynamics are influenced 
by a finite-state Markov chain. 
These hybrid models have been extensively investigated 
\cites{YinZhu2010,MaoYuan2006}, 
owing to their ability to describe realistic systems arising in finance, 
engineering, manufacturing, communications, and biological processes. 
Classical results in this area include \cites{Ghosh97,AGM93}, 
while more recent studies focus on optimal control problems for partially observed regime-switching diffusions under discounted and ergodic performance criteria, extending dynamic programming methods to systems with hidden Markovian switching \cites{Escobedo}. 
Despite this progress, robustness analysis for such hybrid models remains an open challenge, 
as the associated HJB equations form a weakly coupled system of nonlinear PDEs 
that must account simultaneously for both diffusion and switching dynamics.


In this work, we investigate the continuity and robustness properties of optimal controls 
for controlled regime-switching diffusions under four important cost evaluation criteria: 
the \emph{finite-horizon}, \emph{ infinite-horizon discounted and ergodic (long-run average) costs},  and \emph{exit-time} costs 
(see, e.g., \cites{FH,AGM93,Ghosh97}). 
Our goal is to extend the diffusion-based robustness framework 
of~\cite{PradhanYuksel2023} to this hybrid setting and to establish 
continuity of value functions and robustness of optimal controls.

The study of regime-switching diffusions is motivated by the need to model systems 
that experience both continuous fluctuations and abrupt regime changes. 
Such hybrid dynamics arise in diverse applications including finance, 
engineering, and networked systems. Unlike existing works, we allow for 
model misspecification in both the diffusion and switching components, 
which introduces new analytical challenges. In particular, the associated 
Hamilton–Jacobi–Bellman (HJB) equations form a weakly coupled system of 
nonlinear partial differential equations (PDEs) rather than a single equation. 
This coupling necessitates refined continuity arguments adapted to the hybrid setting, 
regularity results for PDE systems, and new techniques to establish robustness 
when both continuous dynamics and discrete regime transitions interact.


\medskip
\noindent
\textbf{Contributions.} 
The main contributions of this paper are summarized as follows:
\begin{enumerate}
    \item \textbf{Continuity of value functions:} 
    We establish that the value functions associated with a sequence of 
    approximating regime-switching diffusion models converge to those of the true model 
    under mild structural and regularity conditions on the system coefficients.
    
    \item \textbf{Robustness of optimal controls:} 
    We prove that optimal controls derived from approximate regime-switching 
    diffusion models remain asymptotically optimal when applied to the true model. 
    This extends the diffusion-based robustness results of 
    \cite{PradhanYuksel2023} to a broader class of hybrid systems 
    involving both continuous and discrete dynamics.
\end{enumerate}
The continuity results obtained in Theorems \ref{TH2.3}, \ref{TH3.3}, \ref{thm:finite-horizon-conv} and \ref{thm:exit-continuity} also ensure the stability of the solution of associated semi-linear parabolic/elliptic PDEs (see e.g \cites{WLS01,SI72}). 
Furthermore, as corollaries, we show that:
\begin{itemize}
\item $\epsilon$-optimal controls for the approximating models remain $3\epsilon$-optimal in the true model, and
\item the robustness results remain valid when the noise is approximated by an It$\mathrm{\hat o}$ process.
\end{itemize}


\medskip
\noindent
\textbf{Paper organization.}
Section~2 introduces the controlled regime-switching diffusion model 
and the associated cost criteria. 
Section~3 develops the continuity and robustness theory for the discounted cost problem. 
Section~4 treats the ergodic (long-run average) case, 
while Section~5 studies the finite-horizon formulation, Section~6 addresses the exit-time cost problem and presents two important concluding corollaries, and Section~7 presents an example.
Finally, Section~8 concludes the paper with a discussion of applications 
and potential directions for future research.

\section{Description of the problem}\label{PD} Let $\Act$ be a compact metric space of control actions and $\pV=\mathscr{P}(\Act)$ be the space of probability measures on  $\Act$ endowed with the topology of weak convergence. 
Consider the controlled  regime-switching diffusion process $(X_t, S_t)$ taking values in $\Rd \times \mathbb{S}$, where $\mathbb{S} = \{1, \dots, N\}$ denotes the finite set of regimes, defined on a complete probability space \((\Omega,{\sF},\mathbb{P})\). The following stochastic differential equations describe the dynamics of $(X_t, S_t)$
\begin{equation}\label{E1.1}
\begin{aligned}
dX_t &= b(X_t, S_t, U_t)\,dt + \upsigma(X_t, S_t)\,dW_t,\\
dS_t &= \int_{\RR} h(X_t, S_{t-}, U_t, z)\, \mathcal{P}(dt, dz)
\end{aligned}
\end{equation}

Where

\begin{itemize}
\item Initial distribution of $S$ is $S_0$. 
    
    \item Initial distribution of $X$ is $X_0$.
    \item The functions, $b=[b_1,....,b_d]^T:\Rd \times \mathbb{S} \times \Act \to  \Rd$ is the drift vector, $ \upsigma = [\upsigma_{ij}(\cdot,\cdot)]_{1\leq i,j\leq d}:\Rd \times \mathbb{S} \to  \RR^{d\times d}$ is the diffusion matrix.

    \item $W$ is a $d$-dimensional standard Wiener process.
    
    \item $\mathcal{P}(dt, dz)$ is a Poisson random measure on $\RR_+ \times \RR$ with intensity $dt \times m(dz)$, where $m$ is the Lebesgue measure on $\RR$.
    
    \item $\mathcal{P}(\cdot, \cdot)$, $W(\cdot)$, $X_0$, $S_0$ are independent.
    
    \item The function $h : \Rd \times \mathbb{S} \times \Act \times \RR \to \RR$ is defined by
    \[
        h(x, i, \zeta, z) := 
        \begin{cases}
            j - i & \text{if } z \in \triangle_{i,j}(x, \zeta), \\
            0 & \text{otherwise},
        \end{cases}
    \]
    where for $i, j \in \mathbb{S}$ and fixed $x, \zeta$, the sets $\triangle_{i,j}(x, \zeta)$ are left-closed, right-open disjoint intervals of $\RR$ having length $m_{ij}(x, \zeta)$.
    
    \item $\mathbb{M} := (m_{ij})_{i,j \in \mathbb{S}}$ is the transition matrix of the controlled Markov chain $S_t$, where $m_{ij}:\Rd \times \mathbb{S}\to \RR$ are the switching rates such that $m_{ij} \geq 0$ if $i \neq j$ and, $\sum_{j=1}^{N} m_{ij} = 0$ for all $i \in \mathbb{S}$, 
    
    \item The control process $\{U_t\}$ takes values in a compact metric space $\Act$, is progressively measurable with respect to $\sF_t := \text{completion of } \sigma\{X_s, S_s; s \leq t\}$ relative to $(\sF, \mathbb{P})$, and is non-anticipative: for each $t \geq 0$, the $\sigma$-field $\sigma\{U_s\,;\, s \leq t\}$ is independent of 
    \[
        \sigma\{W_s - W_t,\, \cP(A, B) : A \in \mathcal{B}([s, \infty)), B \in \mathcal{B} (\RR), s \geq t\}.
    \]
    The process $U$ is called an \textit{admissible control}, and the set of all admissible controls is denoted by $\Uadm$ (see, \cite{ABG-book}*{Chapter 5, p. 197}).
\item 
 We extend the drift term $b : \Rd \times \mathbb{S} \times \pV \to  \Rd$ as follows:
\begin{equation*}
b (x,i,\mathrm{v}) = \int_{\Act} b(x,i,\zeta)\mathrm{v}(\D \zeta), 
\end{equation*}
for $\mathrm{v}\in\pV$.
\end{itemize}

To guarantee the existence and uniqueness of strong solutions to \cref{E1.1}, we impose a set of structural assumptions on the drift vector 
$b$, the diffusion matrix 
$\upsigma$, and the transition rate matrix 
$\mathbb{M}$. 
\subsection{\bf Assumptions:}
\begin{itemize}
\item[\hypertarget{A1}{{(A1)}}]
\emph{Local Lipschitz continuity:\/}
The functions $b(x,i,\zeta),\upsigma^{ij}(x,k),\,m_{ij}(x,\zeta)$,\,\,
are continuous and locally Lipschitz continuous in $x$ (uniformly with respect $\zeta$) with a Lipschitz constant $C_{R}>0$
depending on $R>0$, i.e.,
\begin{equation*}
\abs{b(x,i,\zeta) - b(y,i, \zeta)}^2 + \norm{\upsigma(x,i) - \upsigma(y,i)}^2\,+\,|m_{ij}(x,\zeta)-m_{ij}(y,\zeta)|^2 \,\le\, C_{R}\,\abs{x-y}^2
\end{equation*}
for all $x,y\in \sB_R\,,\,i,j \in \mathbb{S}$ and $\zeta\in\Act$, where $\norm{\upsigma}\df\sqrt{\trace(\upsigma\upsigma\transp)}$\,. 

\medskip
\item[\hypertarget{A2}{{(A2)}}]
\emph{Affine growth condition:\/} The drift term
$b$ and the diffusion coefficient $\upsigma$ satisfy a global growth condition of the form
\begin{equation*}
\sup_{\zeta\in\Act}\, \langle b(x,i, \zeta),x\rangle^{+} + \norm{\upsigma(x,i)}^{2} \,\le\,C_0 \bigl(1 + \abs{x}^{2}\bigr) 
\end{equation*}
for all $x\in\Rd,\,i \in \mathbb{S}$ and for some constant $C_0>0$.

\medskip
\item[\hypertarget{A3}{{(A3)}}]
\emph{Nondegeneracy:\/}
For each $R>0$, it holds that
\begin{equation*}
\sum_{i,j=1}^{d} a^{ij}(x,k)z_{i}z_{j}
\,\ge\,C^{-1}_{R} \abs{z}^{2} \qquad\forall\, x\in \sB_{R}\,, k\in \mathbb{S},
\end{equation*}
and for all $z=(z_{1},\dotsc,z_{d})\transp\in\Rd$,
where $a\df \frac{1}{2}\upsigma \upsigma\transp$.
\end{itemize}
 Under these assumptions \hyperlink{A1}{{(A1)}}--\hyperlink{A3}{{(A3)}}, the system \cref{E1.1}
admits a unique, strong solution  for every admissible control (see, for example, \cite{ABG-book}*{p. 197} and \cite{Yin10}*{Theorem 3.10}), with 
\[
X \in \cC(\RR_+; \Rd), \quad S \in\cD(\RR_+; \mathbb{S}),
\]
where \( \cD(\RR_+; \mathbb{S}) \) is the space of all right-continuous functions from \( \RR_+ \) to \( \mathbb{S} \) having left limits.

The ergodic behavior of the joint process $Y_t := (X_t, S_t)$ depends strongly on the coupling
coefficients $\{m_{ij}\}$. For this, we define the matrix
\[
\widetilde{\mathbb{M}}(x, \zeta) := (\widetilde{m}_{ij}(x, \zeta)) : \Rd \times  \Act \to \RR^{N \times N},
\]
where
\[
\widetilde{m}_{ij}(x, \zeta) := 
\begin{cases}
m_{ij}(x, \zeta), & \text{if } i \ne j, \\
0, & \text{otherwise}.
\end{cases}
\]

In addition to the usual structural assumptions \hyperlink{A1}{{(A1)}}--\hyperlink{A3}{{(A3)}}, we impose the following condition:

\vspace{1em}
\noindent
\begin{itemize}
\item[\hypertarget{A4}{{(A4)}}]
\emph{Irreducibility:\/}
The matrix \( \breve{\mathbb{M}}(x) := (\breve{m}_{ij}(x)) \), where 
\[
\breve{m}_{ij}(x) := \min_{\zeta \in  \Act} \widetilde{m}_{ij}(x, \zeta),
\]
is irreducible in \( \Rd \), that is, for every nonempty disjoint sets  \( \,\mathbb{S}_1, \mathbb{S}_2 \subset \mathbb{S} \) satisfying \( \mathbb{S}_1 \cup\, \mathbb{S}_2 = \mathbb{S} \), there exist \( i_0 \in \mathbb{S}_1 \) and \( j_0 \in \mathbb{S}_2 \) such that
\[
\big|\{ x \in \Rd : \breve{m}_{i_0 j_0}(x) > 0 \} \big| > 0,
\]
where \( | \cdot | \) denotes the Lebesgue measure. 
\end{itemize}
We assume $\norm{m_{ij}}_{\infty}\le M$ throughout this article.

In this article, we consider the problem of minimizing discounted, finite horizon, exit-time, and ergodic cost criteria.

Let $c\colon\Rd\times \mathbb{S} \times \Act \to \RR_+$ be the \emph{running cost} function. We assume that 
\begin{itemize}
\item[\hypertarget{A5}{{(A5)}}]
The \emph{running cost} $c$ is bounded (i.e., there exist $M>0$ such that $\|c\|_{\infty} \leq M$), continuous and locally Lipschitz continuous in $x$ uniformly with respect to $\zeta\in\Act$.
\end{itemize}
 We extend $c\colon\Rd\times \mathbb{S}\times\pV \to\RR_+$ as follows: for $\pv \in \pV$
\begin{equation*}
c(x,i,\pv) := \int_{\Act}c(x,i,\zeta)\pv(\D\zeta)\,.
\end{equation*}

\subsection{Cost criteria:}\label{Cost} The following costs will be considered in this article.

\vspace{2mm}
{\bf Discounted cost criterion.} 
For any admissible control $U \in\Uadm$, the associated \emph{$\alpha$-discounted cost} is defined by
\begin{equation}\label{EDiscost}
\cJ_{\alpha}^{U}(x,i, c) \,\df\, \Exp_{x,i}^{U} \left[\int_0^{\infty} e^{-\alpha t} c(X_s,S_s, U_s) \D s\right],\quad (x,i)\in\Rd \times \mathbb{S}\,,
\end{equation} where $\alpha > 0$ is the discount factor, $(X_{(\cdot)},S_{(\cdot)})$ is the solution of the controlled system  \cref{E1.1} under $U \in\Uadm$, and $\Exp_{x,i}^{U}$ denotes the expectation with respect to the law of the process $(X_{(\cdot)},S_{(\cdot)})$ with the initial condition $(x,i)$. The control objective is to minimize the cost in~\eqref{EDiscost} over all admissible controls. A control $U^{*}\in\Uadm$ is said to be \emph{optimal} if, for every $(x,i)\in\Rd\times\mathbb{S}$,
\begin{equation}\label{OPDcost}
\cJ_{\alpha}^{U^*}(x,i, c) = \inf_{U\in \Uadm}\cJ_{\alpha}^{U}(x,i, c) \,\,\, (\,=:\, \,\, V_{\alpha}(x,i))\,,
\end{equation} where $V_{\alpha}(x,i)$ is called the $\alpha$-discounted optimal value function.

\vspace{2mm}
{\bf Ergodic cost criterion.} 
For a control $U \in \Uadm$, the corresponding \emph{ergodic cost functional} is defined as
\begin{equation*}\label{ECost1}
\sE_{x,i}(c,U) = \limsup_{T\to \infty}\frac{1}{T}\Exp_{x,i}^{U}\left[\int_0^{T} c(X_s,S_s, U_s) \D{s}\right]\,,\qquad (x,i)\in\Rd\times\mathbb{S}
\end{equation*} and the optimal value is defined as
\begin{equation*}\label{ECost1Opt}
\sE^*(c) \,\df\, \inf_{(x,i)\in\Rd\times \mathbb{S}}\,\inf_{U\in \Uadm}\sE_{x,i}(c, U)\,.
\end{equation*}
Then a control $U^*\in \Uadm$ is said to be optimal if we have 
\begin{equation*}\label{ECost1Opt1}
\sE_{x,i}(c, U^*) = \sE^*(c)\,.
\end{equation*}

{\bf Finite horizon cost.}  For any $U\in \Uadm$, the associated \emph{finite horizon cost} is given by
\begin{equation*}\label{FiniteCost1}
\cJ_{T}^U(x,i,c) = \Exp_{x,i}^{U}\left[\int_0^{T} c(X_s,S_s,U_s) \D{s} + c_{_{T}}(X_T,S_T)\right]\,,
\end{equation*} where $c_{_{T}}(\cdot,\cdot)$ is the terminal cost. The optimal value is defined as
\begin{equation*}\label{FiniteCost1Opt}
\cJ_{T}^*(x,i,c) \,\df\, \inf_{U\in \Uadm}\cJ_{T}^U(x,i,c)\,.
\end{equation*}
Thus, a policy $U^*\in \Uadm$ is said to be (finite horizon) optimal if we have 
\begin{equation*}\label{FiniteCost1Opt1}
\cJ_{T}^{U^*}(x,i,c) = \cJ_{T}^*(x,i,c)\quad \text{for all}\,\,\, (x,i)\in \Rd\times\mathbb{S},.
\end{equation*}
We assume the terminal function $c_{_{T}}$ satisfies $c_{_{T}} \in \Sob^{2,p}(\Rd\times \mathbb{S}) \cap L^\infty(\Rd\times \mathbb{S})$ for some $p \ge 2$ throughout this paper.
\vspace{2mm}

{\bf Cost up to an exit time.} For each $U\in\Uadm$, the associated exit time cost is defined as
 \[
    \hat{\cJ}^U_e(x,i) := \Exp_{x,i}^U\!\left[
        \int_0^{\uptau(\mathcal{O})} e^{-\int_0^t \beta(X_s,S_s,U_s)\,ds}\, c(X_t,S_t,U_t)\,dt
        + e^{-\int_0^{\uptau(\mathcal{O})} \beta(X_s,S_s,U_s)\,ds}\, h(X_{\uptau(\mathcal{O})},S_{\uptau(\mathcal{O})})
    \right]
   \]
where $\mathcal{O} \subset \Rd$ is a bounded domain, $\beta(\cdot,\cdot,\cdot): \bar{\mathcal{O}}\times\mathbb{S} \times \Act \to [0,\infty)$ is the discount function, and 
$h:\bar{\mathcal{O}}\times\mathbb{S}\to \RR_+$ is the terminal cost function. The optimal value is defined as
    \begin{equation*}
         \hat{\cJ}^{*}_e(x,i)=\inf_{U\in \Uadm}\hat{\cJ}^{U}_e(x,i)
    \end{equation*}
    and a control $U^*\in \Uadm$ is said to be optimal if we have 
\begin{equation*}
     \hat{\cJ}^{U^*}_e(x,i)=\hat{\cJ}^{*}_e(x,i)=\inf_{U\in \Uadm}\hat{\cJ}^{U}_e(x,i)
\end{equation*}

\begin{definition}(\textit{Markov control}:)
    An admissible control is called a \emph{Markov control} if it is of the form \(U_t\,=\, v(t, X_t, S_t) \), for some Borel measurable function $v : \RR_+ \times \Rd \times \mathbb{S} \to  \Act$.\\ We  denote $\Um$ as the space of all Markov controls.
\end{definition}
\begin{definition}
(\emph{Stationary and Stable Stationary Markov Controls:})
 If the function $v$ in the above definition is independent of $t$, then $U$, or by an abuse of notation $v$ itself, is called a \emph{stationary Markov control}.
 We denote the set of all such controls by \( \Usm \).\\
A stationary Markov control \(v \in \Uadm_{\mathrm{sm}}\) is said to be \emph{stable} if the 
corresponding controlled regime-switching diffusion process \((X_t, S_t)\) is positive recurrent.
The set of all stable stationary Markov controls is denoted by \( \Uadm_{\mathrm{ssm}}\).
\end{definition}

The hypotheses in \hyperlink{A1}{{(A1)}}--\hyperlink{A3}{{(A3)}} also imply the existence of unique strong solutions under Markov controls, which is a strong Feller (therefore strong Markov) process (see \cite{AGM93}*{Theorem 2.1} and \cite{ABG-book}*{Theorem 5.2.9}).
 From \cite{AGM93}*{Section 3}, we have that the set $\Usm$ is metrizable with compact metric with the following topology: a sequence $v_n\to v$ in $\Usm$ if and only if
\begin{equation*}
\lim_{n\to\infty}\int_{\Rd}f(x,i)\int_{\Act}g(x,i,\cdot)v_{n}(x,i)(\D \zeta)\D x = \int_{\Rd}f(x,i)\int_{\Act}g(x,i,\cdot)v(x,i)(\D \zeta)\D x
\end{equation*}
for all $f\in \Lp^1(\Rd\times \mathbb{S})\cap \Lp^2(\Rd\times \mathbb{S})$, $g\in \cC_b(\Rd\times \mathbb{S} \times  \Act)$ and $i\in \mathbb{S}$ (for more details, see \cite{AGM93}*{Lemma~3.2})\,.
Similarly, in view of \cite{Bor89}, from \cite{SPSY}*{Definition 2.2} we say a sequence $v_n \to v$ in $\mathcal{\Um}$ if and only if
\begin{align*}
\lim_{n \to \infty} 
&\int_0^\infty \int_{\Rd} f(t,x,i) 
\Bigg( \int_{\Act} g(t,x,i,\zeta)\, v_n(t,x,i)(d\zeta) \Bigg) dx\,dt\nonumber\\
&=
\int_0^\infty \int_{\Rd} f(t,x,i) 
\Bigg( \int_{\Act} g(t,x,i,\zeta)\, v(t,x,i)(d\zeta) \Bigg) dx\,dt,
\end{align*}
for all 
$i\in\mathbb{S,}\,\, f \in L^1([0,\infty)\times\Rd \times\mathbb{S}) \cap L^2([0,\infty)\times\Rd \times\mathbb{S} ),\,\, g \in \cC_b([0,\infty)\times\Rd\times\mathbb{S} \times \Act).$


We define a family of operators $\sL_{\zeta}$ mapping
$\cC^2(\Rd\times \mathbb{S})$ to $\cC(\Rd\times \mathbb{S})$ by
\begin{equation*}\label{E-cI}
\sL_{\zeta} f(x,i) \,\df\, \trace\bigl(a(x,i)\grad^2 f(x,i)\bigr) + \,b(x,i,\zeta)\cdot \grad f(x,i)\,+\, \sum_{j \in \mathbb{S}} m_{ij}(x,\zeta) f(x,j)\,, 
\end{equation*}
for $\zeta\in\Act$, $f\in \cC^2(\Rd\times \mathbb{S})$\,.
For $\pv \in\pV$ we extend $\sL_{\zeta}$ as follows:
\begin{equation*}\label{EExI}
\sL_\pv f(x,i) \,\df\, \int_{\Act} \sL_{\zeta} f(x,i)\pv(\D \zeta)\,.
\end{equation*} For $v \in\Usm$, we define
\begin{equation*}\label{Efixstra}
\sL_{v} f(x,i) \,\df\, \trace(a(x,i)\grad^2 f(x,i)) + b(x,i,v(x,i))\cdot\grad f(x,i)\,+\, \sum_{j \in \mathbb{S}} m_{ij}(x,v(x,i)) f(x,j)\, .
\end{equation*}
\subsection{Approximating Control Regime-Switching Diffusion Process:}
Let, $\upsigma_n\,=\,\bigl[\upsigma_n^{ij}\bigr]\colon\Rd\times \mathbb{S} \to\RR^{d\times d}$, $b_n\colon\Rd\times \mathbb{S} \times \Act\to\Rd$, $c_n\colon\Rd\times \mathbb{S} \times \Act\to\Rd$ and $m_{jk}^n\,\colon\Rd \times \Act\to\Rd$ be a sequence of functions satisfying the following assumptions 
\begin{itemize}
\item[\hypertarget{A6}{{(A6)}}]
\begin{itemize}
\item[(i)] as $n\to\infty$ 
\begin{equation*}\label{AproxiE1}
\upsigma_n(x,i)\to \upsigma(x,i)\quad \text{a.e.}\,\, (x,i)\in\Rd\times \mathbb{S},
\end{equation*}
\item[(ii)]\emph{Continuous convergence in controls}: for any sequence $\zeta_n\to \zeta$
\begin{align*}\label{AproxiE2}
&c_n(x,i,\zeta_n)\to c(x,i,\zeta),\quad b_n(x,i,\zeta_n)\to b(x,i,\zeta)\quad \text{a.e.}\,\, (x,i)\in\Rd\times \mathbb{S}\,\nonumber \\ \quad\text{and} &\quad m_{jk}^n(x,\zeta_n)\to m_{jk}(x,\zeta).\quad \text{a.e.}\,\, x\in\Rd\,\text{and}\,\,j,k\in \mathbb{S}. 
\end{align*}
\item[(iii)] for each $n\in\NN$,\, $b_n$, $m_{jk}^n$ and $\upsigma_n$ satisfy Assumptions \hyperlink{A1}{{(A1)}} - \hyperlink{A4}{{(A4)}} and $c_n$, $m_{ij}^n$'s are uniformly bounded ( in particular, $\norm{m_{ij}^n}_{\infty} ,\norm{c_n}_{\infty} \leq M$ where $M>0$ is a constant as in \hyperlink{A5}{{(A5)}}), jointly continuous in $x$ and $\zeta $, and locally Lipschitz continuous in its first argument uniformly with respect to $\zeta\in\Act$. 
\end{itemize}
\end{itemize}
Let for each $n\in\NN$, $(X_t^n,S_t^n)$ be the solution of the following SDE
\begin{equation}\label{ASE1.1}
\begin{aligned}
dX_t^n &= b_n(X_t^n,S_t^n, U_t)\,dt + \upsigma_n(X_t^n,S_t^n)\,dW_t, \\
dS_t^n &= \int_{\RR} h_n(X_t^n, S_{t-}^n, U_t, z)\, \mathcal{P}(dt, dz) 
\end{aligned}
\end{equation}
for t $\geq$ 0, with $(X_0^n,S_0^n)=(x,i)\in \Rd \times \mathbb{S}$.\\
Define a family of operators $\sL_{\zeta}^n$ mapping
$\cC^2(\Rd\times \mathbb{S})$ to $\cC(\Rd\times \mathbb{S})$ by
\begin{equation*}\label{E-cIn}
\sL_{\zeta}^n f(x,i) \,\df\, \trace\bigl(a_n(x,i)\grad^2 f(x,i)\bigr) + \,b_n(x,i,\zeta)\cdot \grad f(x,i)\,+\, \sum_{j \in \mathbb{S}} m^{n}_{ij}(x,\zeta) V_{\alpha}^{n}(x,j)\,, 
\end{equation*}
for $\zeta\in\Act$, $f\in \cC^2(\Rd\times \mathbb{S})$.

For the approximated model, for each $n\in\NN$ and $U\in\Uadm$ define the associated costs as follows

\begin{enumerate}
    \item The $\alpha$-discounted cost:
\begin{equation*}\label{EApproDiscost}
\cJ_{\alpha, n}^{U}(x,i,c_n) \,\df\, \Exp_{x,i}^{U} \left[\int_0^{\infty} e^{-\alpha t} c_n(X_s^n,S_s^n, U_s) \D s\right],\quad (x,i)\in\Rd\times \mathbb{S}\,.
\end{equation*} and the optimal value is defined as 
\begin{equation*}\label{EApproOptDisc}
V_{\alpha}^n(x,i) \,\df\, \inf_{U\in\Uadm}\cJ_{\alpha, n}^{U}(x,i,c_n)
\end{equation*}
    \item The ergodic cost:   
\begin{equation*}\label{ECostAprox1}
\sE_{x,i}^n(c_n, U) = \limsup_{T\to \infty}\frac{1}{T}\Exp_{x,i}^{U}\left[\int_0^{T} c_n(X_s^n,S_s^n, U_s) \D{s}\right]\,,
\end{equation*} and the optimal value is defined as
\begin{equation*}\label{ECost1OptAprox}
\sE^{n*}(c_n) \,\df\, \inf_{(x,i)\in\Rd\times \mathbb{S}}\inf_{U\in \Uadm}\sE_{x,i}^n(c_n, U)\,,
\end{equation*}

    \item The finite horizon cost:

\begin{equation*}\label{APFiniteCost1}
\cJ_{T,n}^U(x,i,c_n) = \Exp_{x,i}^{U}\left[\int_0^{T} c_n(X_s^n,S_s^n, U_s) \D{s} + c_{_{T}}(X_T^n,S_T^n)\right]\,,
\end{equation*}
and the optimal value is defined as
\begin{equation*}
\cJ_{T,n}^*(x,i,c_n) \,\df\, \inf_{U\in \Uadm}\cJ_{T,n}^U(x,i,c_n)\,.
\end{equation*}
    
    \item Cost up to an exit time. 
 \begin{align*}
    \hat{\cJ}^U_{e,n}(x,i):=\Exp_{x,i}^U\!\bigg[
\int_0^{\uptau(\mathcal{O})}&e^{-\int_0^t \beta(X_s^n,S_s^n, U_s)\,ds} c_n(X_s^n,S_s^n, U_s)\,dt \\&+ e^{-\int_0^{\uptau(\mathcal{O})} \beta(X_s^n,S_s^n, U_s)\,ds} h(X_{\uptau(\mathcal{O})}^n,S_{\uptau(\mathcal{O})}^n)
    \bigg],
   \end{align*}
and the optimal value is defined as
    \begin{equation*}
     \hat{\cJ}^{*}_{e,n}(x,i)=\inf_{U\in \Uadm}\hat{\cJ}^{U}_{e,n}(x,i)
    \end{equation*}
\end{enumerate} 

where state process $(X_t^n,S_t^n)$ is given by the solution of the SDE \cref{ASE1.1}\,.
\subsection{Continuity and Robustness problems}\label{CRPB}
In this work, our primary objective is to address the following two fundamental questions:
\begin{itemize}
    \item \textbf{Continuity.}  
    Suppose the approximate model \cref{ASE1.1} converges to the true model \cref{E1.1}.  
    Does this imply convergence of the corresponding value functions, namely:
    \begin{itemize}
        \item[•] \emph{Discounted cost:} $V_{\alpha}^n(x,i) \;\to\; V_{\alpha}(x,i)$ ? 
        \item[•] \emph{Ergodic cost:} $\mathcal{\sE}^{n*}(c_n) \;\to\; \mathcal{\sE}^*(c)$ ? 
        \item[•] \emph{Finite-horizon cost:} $\mathcal{J}_{T,n}^*(x,i,c_n) \;\to\; \mathcal{J}_{T}^*(x,i,c)$ ? 
        \item[•] \emph{Exit-time cost:} $\hat{\mathcal{J}}_{e,n}^*(x,i) \;\to\; \hat{\mathcal{J}}_{e}^*(x,i)$ ?
    \end{itemize}

    \item \textbf{Robustness.}  
    Suppose $v_{n}^*$ is an optimal policy designed for the approximate model \cref{ASE1.1} in the discounted/ergodic/finite-horizon/exit-time setting.  
    Does this imply asymptotic optimality for the true model \cref{E1.1}, in the sense that:
    \begin{itemize}
        \item[•] \emph{Discounted cost:} $\mathcal{J}_{\alpha}^{v_n^*}(x,i,c) \;\to\; V_{\alpha}(x,i)$ ? 
        \item[•] \emph{Ergodic cost:} $\mathcal{\sE}_{x,i}(c, v_n^*) \;\to\; \mathcal{\sE}^*(c)$ ? 
        \item[•] \emph{Finite-horizon cost:} $\mathcal{J}_{T}^{v_n^*}(x,i,c) \;\to\; \mathcal{J}_{T}^*(x,i,c)$ ? 
        \item[•] \emph{Exit-time cost:} $\hat{\mathcal{J}}_{e}^{v_n^*}(x,i) \;\to\; \hat{\mathcal{J}}_{e}^*(x,i)$ ?
    \end{itemize}
\end{itemize}
Now we introduce the notations that will be used throughout the rest of the article.
\subsection*{Notation:}
\begin{itemize}
\item For any set $A\subset\Rd$, by $\uptau(A)$ we denote \emph{first exit time} of the process $(X_{t},S_t)$ from the set $A\subset\Rd$, defined by
\begin{equation*}
\uptau(A) \,\df\, \inf\,\{t>0\,\colon (X_{t},S_t)\not\in A\times \mathbb{S}\}\,.
\end{equation*}
\item $\sB_{r}$ denotes the open ball of radius $r$ in $\Rd$, centered at the origin, and $\sB_{r}^c$ denotes the complement of $\sB_{r}$ in $\Rd$\,.
\item $\uptau_{r}$, $\uuptau_{r}$ denote the first exist time from $\sB_{r}$, $\sB_{r}^c$ respectively, i.e., $\uptau_{r}\df \uptau(\sB_{r})$, and $\uuptau_{r}\df \uptau(\sB^{c}_{r})$.
\item By $\trace A$ we denote the trace of a square matrix $A$.
\item For any domain $\cD\subset\Rd$, the space $\cC^{k}(\cD)$ ($\cC^{\infty}(\cD)$), $k\ge 0$, denotes the class of all real-valued functions on $\cD$ whose partial derivatives up to and including order $k$ (of any order) exist and are continuous.
\item $\cC_{\mathrm{c}}^k(\cD)$ denotes the subset of $\cC^{k}(\cD)$, $0\le k\le \infty$, consisting of functions that have compact support. This denotes the space of test functions.
\item $\cC_{b}(\Rd)$ denotes the class of bounded continuous functions on $\Rd$\,.
\item $\cC^{k}_{0}(\cD)$ denotes the subspace of $\cC^{k}(\cD)$, $0\le k < \infty$, consisting of functions that vanish in $\cD^c$.
\item $\cC^{k,r}(\cD)$ denotes the class of functions whose partial derivatives up to order $k$ are H\"older continuous of order $r$.
\item $\Lp^{p}(\cD)$, $p\in[1,\infty)$ denotes the Banach space
of (equivalence classes of) measurable functions $f$ satisfying
$\int_{\cD} \abs{f(x)}^{p}\,\D{x}<\infty$.
\item $\Sob^{k,p}(\cD)$, $k\ge0$, $p\ge1$ denotes the standard Sobolev space of functions on $\cD$ whose weak derivatives up to order $k$ are in $\Lp^{p}(\cD)$, equipped with its natural norm (see, \cite{Adams})\,.
\item  If $\mathcal{X}(Q)$ is a space of real-valued functions on $Q$, $\mathcal{X}_{\mathrm{loc}}(Q)$ consists of all functions $f$ such that $f\varphi\in\mathcal{X}(Q)$ for every $\varphi\in\cC_{\mathrm{c}}^{\infty}(Q)$. In a similar fashion, we define $\Sobl^{k, p}(\cD)$. 

\item Define $\mathfrak{C}$ as the collection of functions $f:\Rd\times \mathbb{S}\times \Act \to \RR$ such that, for each $ \zeta\in \Act$ and $i\in \mathbb{S}$, the mapping 
$f(\cdot,i,\zeta)$ is continuous and locally Lipschitz. 

\item We also adopt the notation \( \mathcal{X}(Q \times\mathbb{S}) \) to indicate the product space \( (\mathcal{X}(Q))^N \), where \( N \) is the cardinality of \( \mathbb{S} \). The corresponding norm on \( \mathcal{X}(Q \times\mathbb{S}) \) is defined by
\[
\|f\|_{\mathcal{X}(Q \times\mathbb{S})} := \sum_{k \in\mathbb{S}} \|f_k\|_{\mathcal{X}(Q)}
\]

Let \( f \in \cC(\Rd \times\mathbb{S}) \), then, by \( f \gg 0 \), we mean that \( f_k > 0 \) for all \( k \in\mathbb{S} \).

\item If $h \in \cC(\Rd \times \mathbb{S} \times  \Act)$ with $h>0$, $\sorder(h)$ denotes the set of functions $f \in \cC(\Rd \times \mathbb{S} \times  \Act)$ having the property 
\begin{equation*}
\limsup_{|x| \to \infty} \sup_{\zeta\in  \Act} \sup_{i \in \mathbb{S}} \frac{|f(x, i, \zeta)|}{h(x, i, \zeta)} = 0
\end{equation*}
\item $\eta_v$ denotes the invariant probability measure of controlled  regime-switching diffusion process $(X,S)$ under $v \in \Uadm_{\mathsf{ssm}}$ (see, \cite{ABG-book}*{p. 206}).
\item $\pi_v$ denotes the ergodic occupation measure for $v\in \Uadm_{\mathsf{ssm}}$ (see, \cite{ABG-book}*{Chapter 3},\cite{AGM93}*{p. 10}).

\end{itemize}
\section{Analysis of Discounted Cost Criterion}
In this section, we establish the continuity properties of the value functions and robustness of optimal controls for the discounted cost criterion introduced in \eqref{EDiscost}--\eqref{OPDcost}. Our objective is to show that, as the 
approximating sequence of regime-switching diffusion models converges to that of 
the true model, the corresponding optimal values and controls also converge to the optimal value and
control of the true model.

From \cite{AGM93}*{Theorems~6.1 and 6.2, together with Corollary~6.1}, we obtain the following characterization of the optimal $\alpha$-discounted cost function $V_{\alpha}$.

\begin{theorem}\label{TH2.1}
Suppose Assumptions \hyperlink{A1}{{(A1)}}–\hyperlink{A5}{{(A5)}} hold. 
Then the optimal discounted cost function $V_{\alpha}$, defined in \cref{OPDcost}, 
is the unique solution in 
$\mathcal{C}^2(\Rd \times \mathbb{S}) \cap \mathcal{C}_b(\Rd \times \mathbb{S})$ 
that satisfies the Hamilton–Jacobi–Bellman (HJB) equation
\begin{equation}\label{2.1}
\min_{\zeta \in \Act}
\left[
\sL_{\zeta} V_{\alpha}(x,i) + c(x,i,\zeta)
\right]
= \alpha V_{\alpha}(x,i),
\quad (x,i) \in \Rd \times \mathbb{S}.
\end{equation}
Moreover, $v^*\in \Usm$ is $\alpha$-discounted optimal control if and only if it is a measurable minimizing selector of \cref{2.1}; that is,
\begin{align*}\label{OtpHJBSelc}
&b(x,i,v^*(x,i))\cdot \grad V_{\alpha}(x,i) + c(x,i,v^*(x,i))\,+\, \sum_{j \in \mathbb{S}} m_{ij}(x,v^*(x,i)) V_{\alpha}(x,j)\nonumber\\ &= \min_{\zeta\in \Act}\Big[ b(x,i, \zeta)\cdot \grad V_{\alpha}(x,i) + c(x,i,\zeta)\,+\, \sum_{j \in \mathbb{S}} m_{ij}(x,\zeta) V_{\alpha}(x,j)\Big]\quad \text{a.e.}\,\,\, (x,i)\in\Rd\times \mathbb{S}\,.
\end{align*}
\end{theorem}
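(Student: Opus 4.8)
The plan is to establish the statement in three stages: (i) existence of a bounded classical solution $\varphi$ of the HJB system \cref{2.1}; (ii) a stochastic-representation (verification) argument that identifies any such $\varphi$ with $V_{\alpha}$ and characterizes the optimal stationary selectors; and (iii) uniqueness, which then follows at no extra cost. Observe first that boundedness of the running cost in \hyperlink{A5}{(A5)} yields the a priori bound $0\le\cJ_{\alpha}^{U}(x,i,c)\le M/\alpha$ for every $U\in\Uadm$, hence $0\le V_{\alpha}\le M/\alpha$; this is all the growth control that is needed.

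\emph{Existence.} Introduce the Hamiltonian
\[
  \Hg(x,i,p,r)\;\df\;\min_{\zeta\in\Act}\Bigl[\,b(x,i,\zeta)\cdot p+c(x,i,\zeta)+\sum_{j\in\mathbb{S}}m_{ij}(x,\zeta)\,r_j\,\Bigr],\qquad p\in\Rd,\ r\in\RR^{N},
\]
which by \hyperlink{A1}{(A1)} and $\norm{m_{ij}}_{\infty}\le M$ is locally Lipschitz in $x$ (uniformly on bounded sets of $(p,r)$) and globally Lipschitz in $(p,r)$, so that \cref{2.1} becomes the weakly coupled semilinear elliptic system $\trace\!\bigl(a(x,i)\grad^{2}\varphi(x,i)\bigr)+\Hg\bigl(x,i,\grad\varphi(x,i),\varphi(x,\cdot)\bigr)=\alpha\varphi(x,i)$. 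I would first solve the truncated Dirichlet problems on $\sB_{R}\times\mathbb{S}$ with zero boundary data — using nondegeneracy \hyperlink{A3}{(A3)}, the Lipschitz structure of $\Hg$, and a Leray--Schauder fixed-point argument adapted to the cooperative coupling ($m_{ii}<0$, $\sum_{j}m_{ij}=0$, $\alpha>0$) — obtaining $\varphi_{R}\in\Sob^{2,p}(\sB_{R}\times\mathbb{S})$ with $\norm{\varphi_{R}}_{\infty}\le M/\alpha$ uniformly in $R$, the bound coming from the maximum principle for the system. Interior $\Sob^{2,p}$ estimates, uniform on compact sets for every $p<\infty$, then permit a diagonal extraction $\varphi_{R}\to\varphi$ in $\Sobl^{2,p}(\Rd\times\mathbb{S})$, hence in $\cC^{1,\beta}_{\mathrm{loc}}$; joint continuity of $\Hg$ passes the equation to the limit, and since $x\mapsto\Hg\bigl(x,i,\grad\varphi(x),\varphi(x,\cdot)\bigr)$ is then locally Hölder, interior Schauder estimates upgrade $\varphi$ to $\cC^{2}(\Rd\times\mathbb{S})\cap\cC_b(\Rd\times\mathbb{S})$. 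A Borel-measurable minimizing selector $v^{*}\in\Usm$ of \cref{2.1} exists by a standard measurable-selection theorem, since $\Act$ is compact metric and the bracketed expression is jointly continuous in $(x,\zeta)$ once $\grad\varphi,\varphi$ are continuous.

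\emph{Verification and optimal controls.} Given a bounded $\varphi\in\cC^{2}(\Rd\times\mathbb{S})$ solving \cref{2.1}, fix $U\in\Uadm$ and apply the It\^o--Krylov formula to $s\mapsto e^{-\alpha s}\varphi(X_{s},S_{s})$ on $[0,\uptau_{r}\wedge t]$; the $\D W$-integral and the compensated jump martingale (the jump part of $S$ having generator $f\mapsto\sum_{j}m_{ij}(\cdot,U_{\cdot})f(\cdot,j)$, already built into $\sL$) have zero mean after this localization. Since $\sL_{U_{s}}\varphi(X_{s},S_{s})+c(X_{s},S_{s},U_{s})\ge\alpha\varphi(X_{s},S_{s})$ pointwise (the minimum over $\zeta$ is $\le$ the value at $\zeta=U_{s}$), taking expectations yields
\[
  \varphi(x,i)\;\le\;\Exp_{x,i}^{U}\!\Bigl[\int_{0}^{\uptau_{r}\wedge t}e^{-\alpha s}c(X_{s},S_{s},U_{s})\,\D s+e^{-\alpha(\uptau_{r}\wedge t)}\varphi(X_{\uptau_{r}\wedge t},S_{\uptau_{r}\wedge t})\Bigr].
\]
Letting $r\to\infty$ (so $\uptau_{r}\to\infty$ a.s.\ by non-explosiveness, which follows from \hyperlink{A2}{(A2)}) and then $t\to\infty$ (the boundary term vanishes since $\varphi$ is bounded and $\alpha>0$), monotone/dominated convergence gives $\varphi(x,i)\le\cJ_{\alpha}^{U}(x,i,c)$, hence $\varphi\le V_{\alpha}$ after taking the infimum over $U$. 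Repeating the computation with $U=v^{*}$ makes every inequality an equality, because $\sL_{v^{*}}\varphi+c(\cdot,\cdot,v^{*}(\cdot,\cdot))=\alpha\varphi$; thus $\varphi(x,i)=\cJ_{\alpha}^{v^{*}}(x,i,c)\ge V_{\alpha}(x,i)$. Therefore $\varphi=V_{\alpha}$ and $v^{*}$ is optimal. Conversely, if $\tilde v\in\Usm$ is optimal then $\cJ_{\alpha}^{\tilde v}(\cdot,\cdot,c)=V_{\alpha}=\varphi$; but $\cJ_{\alpha}^{\tilde v}(\cdot,\cdot,c)$ is, by the Feynman–Kac representation for the strong-Feller process controlled by $\tilde v$ together with linear elliptic theory, the unique bounded solution of $\sL_{\tilde v}\psi+c(\cdot,\cdot,\tilde v(\cdot,\cdot))=\alpha\psi$, so $\sL_{\tilde v}\varphi+c(\cdot,\cdot,\tilde v(\cdot,\cdot))=\alpha\varphi$ a.e.; comparing with \cref{2.1} and using $\sL_{\zeta}\varphi+c(\cdot,\cdot,\zeta)\ge\alpha\varphi$ for every $\zeta$ forces $\tilde v$ to attain the minimum in \cref{2.1} a.e., i.e.\ $\tilde v$ is a measurable minimizing selector.

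\emph{Uniqueness and the main obstacle.} Uniqueness in $\cC^{2}(\Rd\times\mathbb{S})\cap\cC_b(\Rd\times\mathbb{S})$ is now immediate: the verification argument showed that \emph{every} bounded classical solution of \cref{2.1} equals $V_{\alpha}$. I expect the genuine difficulty to lie in the existence step on all of $\Rd$ — specifically, producing interior $\Sob^{2,p}$ bounds uniform in the truncation radius $R$ while correctly accounting for the coupling terms $\sum_{j}m_{ij}(x,\zeta)\varphi_{j}$ (so that the maximum principle and the regularity estimates apply to the system and not merely componentwise), and then pushing the nonlinear minimizer through the $\Sobl^{2,p}$-limit; it is precisely the continuity-in-$x$ and continuity-in-$\zeta$ structure of the coefficients that legitimizes this last passage. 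The remaining ingredients — the measurable selection, the localization that renders the It\^o--Krylov martingale terms mean-zero, and non-explosiveness from \hyperlink{A2}{(A2)} — are by now routine; cf.\ \cite{ABG-book}*{Chapters 2--3} and \cite{AGM93}.
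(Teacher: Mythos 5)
The paper does not actually prove \cref{TH2.1}: it imports the result wholesale from \cite{AGM93} (Theorems~6.1--6.2 and Corollary~6.1), so there is no in-paper proof to match against. Your proposal is, in effect, a correct reconstruction of the standard argument behind that citation (the regime-switching analogue of \cite{ABG-book}, Chapter~3): solve truncated Dirichlet problems, get a uniform sup-bound $0\le\varphi_R\le M/\alpha$ from the cooperative structure of the coupling (off-diagonal $m_{ij}\ge 0$, zero row sums, $\alpha>0$), pass to the limit via interior $\Sob^{2,p}$ estimates, upgrade to $\cC^2$ by Schauder, and then identify the limit with $V_\alpha$ and characterize optimal controls through the It\^o--Krylov verification step and a measurable-selection argument; uniqueness falls out of verification, exactly as you say. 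The one point to phrase more carefully is the converse direction: for an optimal $\tilde v\in\Usm$ the drift $b(\cdot,i,\tilde v(\cdot,i))$ is only Borel measurable, so the claim that $\cJ_\alpha^{\tilde v}$ is ``the unique bounded solution'' of $\sL_{\tilde v}\psi+c_{\tilde v}=\alpha\psi$ must be understood in $\Sobl^{2,p}(\Rd\times\mathbb{S})\cap\cC_b$ with the It\^o--Krylov formula (as in \cite{AS} and the paper's own later arguments, e.g.\ the Poisson equation in \cref{TH2.4}), not in the classical $\cC^2$ sense; with that reading, and with the a.e.\ qualifier on the minimizer identity, your argument is complete and coincides with the approach of the cited source.
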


As in \cref{TH2.1}, for each approximating model, we have the following complete characterization of the optimal value and optimal policies in $\Usm$.
\begin{theorem}\label{TD1.2}
Suppose \hyperlink{A6}{{(A6)}}(iii) holds. Then for each $n\in \NN$, there exists a unique solution $V_{\alpha}^n\in\cC^2(\Rd\times \mathbb{S})\cap\cC_b(\Rd\times \mathbb{S})$ of
\begin{equation}\label{APOptDHJB1}
\min_{\zeta \in\Act}\left[\sL_{\zeta}^nV_{\alpha}^n(x,i) + c_n(x,i,\zeta)\right] = \alpha V_{\alpha}^n(x,i) \,,\quad \text{for all\ }\,\, (x,i)\in\Rd\times \mathbb{S}\,.
\end{equation}
Moreover, we have the following:
\begin{itemize}
\item[(i)] $V_{\alpha}^n$ is the optimal discounted cost, i.e.,
\begin{equation*}
V_{\alpha}^n(x,i) = \inf_{U\in \Uadm}\Exp_{x,i}^{U} \left[\int_0^{\infty} e^{-\alpha s} c_n(X_s^n,S_s^n, U_s) \D s\right]\quad (x,i)\in\Rd\times \mathbb{S},
\end{equation*}
\item[(ii)]$v_n^*\in \Usm$ is $\alpha$-discounted optimal control if and only if it is a measurable minimizing selector of \cref{APOptDHJB1}, i.e.,
\begin{align}\label{OtpHJBSelc1}
&b_n(x,i,v_n^*(x,i))\cdot \grad V_{\alpha}^n(x,i) + c_n(x,i,v_n^*(x,i))\,+\, \sum_{j \in \mathbb{S}} m^{n}_{ij}(x,v_n^*(x,i)) V_{\alpha}^{n}(x,j)\nonumber\\
&= \min_{\zeta\in \Act}\left[ b_n(x,i,\zeta)\cdot \grad V_{\alpha}^n(x,i) + c_n(x,i,\zeta)\,+\, \sum_{j \in \mathbb{S}} m^{n}_{ij}(x,\zeta) V_{\alpha}^{n}(x,j)\right]\quad \text{a.e.}\,\,\, (x,i)\in\Rd\times \mathbb{S}\,
\end{align}
\end{itemize}
\end{theorem}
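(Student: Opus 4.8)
The plan is to observe that \cref{TD1.2} is simply \cref{TH2.1} applied to the $n$-th model, so the whole content of the argument is to verify that, for each fixed $n$, the coefficients $(b_n,\upsigma_n,m^n_{ij},c_n)$ satisfy exactly the standing hypotheses under which \cref{TH2.1} (equivalently \cite{AGM93}*{Theorems~6.1 and 6.2, and Corollary~6.1}) was established, and then to quote that theorem verbatim.

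First I would fix $n\in\NN$ and read off from \hyperlink{A6}{(A6)}(iii) that $b_n,\upsigma_n,m^n_{ij}$ satisfy \hyperlink{A1}{(A1)}--\hyperlink{A4}{(A4)}, while $c_n$ is bounded by $M$, jointly continuous, and locally Lipschitz in $x$ uniformly in $\zeta\in\Act$, i.e. $c_n$ obeys \hyperlink{A5}{(A5)}. Hence the controlled regime-switching diffusion \cref{ASE1.1} with running cost $c_n$ is literally an instance of the model of \cref{PD}: \cref{ASE1.1} admits a unique strong solution for every $U\in\Uadm$, and under Markov controls the solution is strong Feller (hence strong Markov). Applying \cref{TH2.1} with $(b,\upsigma,m_{ij},c,\sL_\zeta)$ replaced by $(b_n,\upsigma_n,m^n_{ij},c_n,\sL^n_\zeta)$ then yields at once: the existence of a unique $V^n_\alpha\in\cC^2(\Rd\times\mathbb{S})\cap\cC_b(\Rd\times\mathbb{S})$ solving the weakly coupled HJB system \cref{APOptDHJB1}; the stochastic representation (i) identifying $V^n_\alpha$ with the optimal $\alpha$-discounted cost of \cref{ASE1.1}; and the characterization (ii) of $\alpha$-discounted optimal controls in $\Usm$ as measurable minimizing selectors of \cref{APOptDHJB1}, whose existence comes from the usual measurable-selection theorem using compactness of $\Act$ and continuity of $\zeta\mapsto b_n(x,i,\zeta)\cdot\grad V^n_\alpha(x,i)+c_n(x,i,\zeta)+\sum_{j\in\mathbb{S}}m^n_{ij}(x,\zeta)V^n_\alpha(x,j)$. (Alternatively, one could give a self-contained proof via Perron's method / a vanishing-viscosity approximation for the weakly coupled elliptic system, but invoking \cref{TH2.1} is the economical route.)

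There is no genuine obstacle here; the only points worth flagging are the two structural facts that make the verification clean. The regularity class $\cC^2$ and the uniqueness of $V^n_\alpha$ rely on the uniform ellipticity \hyperlink{A3}{(A3)} and on $\|m^n_{ij}\|_\infty\le M$, both guaranteed by \hyperlink{A6}{(A6)}(iii); concretely, one first produces a solution in $\Sobl^{2,p}(\Rd\times\mathbb{S})$ component by component, treating the coupling terms $\sum_{j\neq i}m^n_{ij}(\cdot,\zeta)V^n_\alpha(\cdot,j)$ as bounded lower-order data, and then the interior Sobolev/Hölder estimates upgrade this to $\cC^2$ exactly as in the single-equation case applied to each regime. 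Uniqueness follows from an It\^o--Krylov verification argument (or a comparison principle for the system). Finally, I would note the a priori bound $0\le V^n_\alpha\le M/\alpha$, which is immediate from $\|c_n\|_\infty\le M$ and is uniform in $n$; this uniform bound will be needed for the continuity theorem that follows.
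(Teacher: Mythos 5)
Your proposal matches the paper's treatment: the paper gives no separate argument for this theorem and simply invokes \cref{TH2.1} (i.e.\ the results of \cite{AGM93}) for the $n$-th model, noting that \hyperlink{A6}{(A6)}(iii) guarantees the approximating coefficients satisfy the standing assumptions \hyperlink{A1}{(A1)}--\hyperlink{A5}{(A5)}. Your additional remarks on the selection argument, the regularity upgrade, and the bound $0\le V^n_\alpha\le M/\alpha$ are consistent with how these facts are used later in the paper, so the proposal is correct and follows essentially the same route.
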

In the next theorem, we establish the continuity result that $V_{\alpha}^n(x,i) \to V_{\alpha}(x,i)$ as $n \to \infty$ for all $(x,i) \in \Rd \times \mathbb{S}$. This convergence plays a key role in demonstrating the robustness of discounted optimal controls.
\begin{theorem}\label{TH2.3}
Suppose Assumptions \hyperlink{A1}{{(A1)}}-\hyperlink{A6}{{(A6)}} hold. Then 
\begin{equation*}\label{EC1.1}
\lim_{n\to\infty} V_{\alpha}^n(x,i) = V_{\alpha}(x,i) \quad\forall\,\, (x,i)\in\Rd\times \mathbb{S}\,.
\end{equation*}
\end{theorem}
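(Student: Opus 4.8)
The plan is to establish the convergence $V_\alpha^n \to V_\alpha$ by combining uniform a priori estimates on the sequence $\{V_\alpha^n\}$ with a compactness/stability argument for the HJB equations \cref{APOptDHJB1}, and then identifying the limit via the uniqueness in \cref{TH2.1}. First I would record the uniform bound $\|V_\alpha^n\|_\infty \le M/\alpha$, which is immediate from the probabilistic representation in \cref{TD1.2}(i) together with the uniform bound $\|c_n\|_\infty \le M$ from \hyperlink{A6}{(A6)}(iii). Next, I would appeal to interior elliptic estimates for the weakly coupled system \cref{APOptDHJB1}: writing it componentwise as a nondegenerate second-order equation in $x$ for each fixed $i$, with the coupling terms $\sum_j m^n_{ij}(x,\zeta) V_\alpha^n(x,j)$ treated as a bounded zeroth-order inhomogeneity, the $\Sob^{2,p}_{\mathrm{loc}}$ bounds (uniform in $n$, on each ball $\sB_R$) follow from the uniform ellipticity in \hyperlink{A3}{(A3)}, the affine growth \hyperlink{A2}{(A2)}, and the uniform $L^\infty$ bounds on $b_n$, $m^n_{ij}$, $c_n$ and $V_\alpha^n$. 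By Sobolev embedding this gives uniform $\cC^{1,r}_{\mathrm{loc}}$ bounds.

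With these estimates in hand, I would extract (via a diagonal argument over an exhausting sequence of balls) a subsequence along which $V_\alpha^n \to \bar V$ in $\cC^{1}_{\mathrm{loc}}(\Rd\times\mathbb{S})$ and weakly in $\Sob^{2,p}_{\mathrm{loc}}(\Rd\times\mathbb{S})$, with $\bar V \in \cC_b(\Rd\times\mathbb{S})$ since $\|\bar V\|_\infty \le M/\alpha$. The next step is to pass to the limit in \cref{APOptDHJB1} to show that $\bar V$ solves the limiting HJB equation \cref{2.1}. Here the continuous convergence hypotheses in \hyperlink{A6}{(A6)}(i)--(ii) are essential: for any sequence $\zeta_n \to \zeta$ in the compact set $\Act$, we have $a_n \to a$, $b_n(\cdot,i,\zeta_n) \to b(\cdot,i,\zeta)$, $m^n_{ij}(\cdot,\zeta_n)\to m_{ij}(\cdot,\zeta)$ and $c_n(\cdot,i,\zeta_n)\to c(\cdot,i,\zeta)$ a.e., and combined with the strong $\cC^1_{\mathrm{loc}}$ convergence of $V_\alpha^n$ and the weak $L^p_{\mathrm{loc}}$ convergence of $\grad^2 V_\alpha^n$, a standard argument (using compactness of $\Act$ to handle the $\min_\zeta$ and selecting near-minimizers) yields that $\bar V$ satisfies, at a.e.\ $(x,i)$, both inequalities $\min_\zeta[\sL_\zeta \bar V + c] \le \alpha \bar V$ and $\ge \alpha \bar V$, hence the equation; elliptic regularity then upgrades $\bar V$ to $\cC^2(\Rd\times\mathbb{S})$. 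By the uniqueness assertion in \cref{TH2.1}, $\bar V = V_\alpha$. Since every subsequence of $\{V_\alpha^n\}$ has a further subsequence converging to the same limit $V_\alpha$, the full sequence converges, and the pointwise convergence claimed in the theorem follows (indeed locally uniformly).

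The main obstacle I anticipate is the passage to the limit inside the minimization over $\zeta\in\Act$ in the presence of only a.e.\ (rather than uniform or continuous-in-$x$) convergence of the coefficients $a_n, b_n, m^n_{ij}, c_n$. One direction — that $\bar V$ is a supersolution — is relatively routine since for each fixed $\zeta$ the inequality $\sL^n_\zeta V_\alpha^n + c_n(\cdot,\cdot,\zeta) \ge \alpha V_\alpha^n$ passes to the limit a.e.\ after a subsequence; the delicate direction is showing $\bar V$ is a subsequence, which requires extracting, along a subsequence, minimizing selectors $v_n^*$ of \cref{APOptDHJB1} that converge (using compactness of $\Act$, or the compact-metric topology on $\Usm$ recorded in Section~2) to a limiting selector $v^*$, and then verifying that $\sL^n_{v_n^*} V_\alpha^n + c_n(\cdot,\cdot,v_n^*) \to \sL_{v^*}\bar V + c(\cdot,\cdot,v^*)$ in $L^p_{\mathrm{loc}}$ along that subsequence — here the continuous-convergence hypothesis \hyperlink{A6}{(A6)}(ii) is exactly what is needed to control the terms $b_n(x,i,v_n^*(x,i))$ and $m^n_{ij}(x,v_n^*(x,i))$. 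A secondary technical point is ensuring the zeroth-order coupling terms do not destroy the maximum-principle/uniqueness structure; this is handled by the bound $\|m_{ij}\|_\infty \le M$ and the fact that $\sum_j m_{ij} = 0$, so that the coupling is of "$Q$-matrix" type and the Krein–Rutman/comparison arguments underlying \cref{TH2.1} apply uniformly.
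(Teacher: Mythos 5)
Your proposal is correct, and its skeleton — the uniform bound $\|V_\alpha^n\|_\infty\le M/\alpha$, uniform $\Sob^{2,p}(\sB_R\times\mathbb{S})$ estimates obtained by treating the coupling terms as bounded zeroth-order data, Banach--Alaoglu plus diagonalization to get weak $\Sobl^{2,p}$ and strong $\cC^{1,\beta}_{\mathrm{loc}}$ convergence along a subsequence, and passage to the limit in the HJB equation using the continuous-convergence hypothesis (A6)(ii) over the compact action set — is exactly the paper's argument for \cref{TH2.3}. The two routes diverge only at the identification of the limit $\bar V$: you upgrade $\bar V$ to $\cC^2$ by elliptic regularity (legitimate, since (A1) and compactness of $\Act$ make the limiting Hamiltonian locally H\"older; the paper makes the same observation in a remark in the ergodic section) and then invoke the uniqueness assertion of \cref{TH2.1}, whereas the paper stays at $\Sobl^{2,p}$ regularity and identifies the limit by a direct verification argument: it takes a minimizing selector of the limiting equation, applies the It\^o--Krylov formula to obtain the stochastic representation, and shows $\bar V=\inf_U\cJ_\alpha^U=V_\alpha$. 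Your route buys a shorter ending at the cost of a regularity upgrade; the paper's buys self-containedness of the verification at the $\Sob^{2,p}$ level. Two further minor differences are in your favor: you handle the $\min_\zeta$ passage by proving the two inequalities separately (with minimizing selectors for the subsolution direction), while the paper proves locally uniform convergence of the full Hamiltonian directly from (A6)(ii) and the $\cC^{1,\beta}_{\mathrm{loc}}$ convergence — both are sound — and you state explicitly the subsequence principle giving convergence of the full sequence, a point the paper leaves implicit. (The phrase ``$\bar V$ is a subsequence'' is evidently a slip for ``subsolution''; it does not affect the argument.)
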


\begin{proof}
Let $v_n^*\in \Usm$ be a minimizing selector of \cref{APOptDHJB1}. From \cref{APOptDHJB1} and \cref{OtpHJBSelc1} it follows that
\begin{align*}
&\trace\bigl(a_n(x,i)\grad^2 V_{\alpha}^n(x,i)\bigr) + b_n(x,i,v_n^*(x,i))\cdot \grad V_{\alpha}^n(x,i) + c_n(x,i,v_n^*(x,i))\\&\qquad+ \sum_{j \in \mathbb{S}} m^{n}_{ij}(x,v^*_n(x,i)) V_{\alpha}^{n}(x,j)
 = \alpha V_{\alpha}^n(x,i)\,.
\end{align*}

Rewriting the above equation, we have 
\begin{align*}
&\trace\bigl(a_n(x,i)\grad^2 V_{\alpha}^n(x,i)\bigr) + b_n(x,i,v_n^*(x,i))\cdot \grad V_{\alpha}^n(x,i)+ (m^{n}_{ii}(x,v^*_n(x,i))-\alpha) V_{\alpha}^n(x,i)\\& =  f(x,i)\,,\quad \text{a.e.}\,\, (x,i)\in\Rd\times \mathbb{S}\,,
\end{align*} where 
\begin{equation*}
f(x,i) = -[ c_n(x,i,v^*_n(x,i)) + \sum_{j \neq i} m^{n}_{ij}(x,v^*_n(x,i)) V_{\alpha}^{n}(x,j)  ]\,.
\end{equation*}
Then using the standard elliptic PDE estimate as in \cite{GilTru}*{Theorem~9.11},  for any $p\geq d+1$ and $R >0$, we deduce that
\begin{equation}\label{ETC1.3A}
\norm{V_{\alpha}^n(x,i)}_{\Sob^{2,p}(\sB_R)}
\,\le\, \kappa_1\bigl(\norm{V_{\alpha}^n(x,i)}_{\Lp^p(\sB_{2R})} + \norm{f(x,i)}_{L^p(\sB_{2R})}\bigr)\,,
\end{equation}
where $\kappa_1$ is a positive constant which is independent of $n$\,.\\
Since 
\begin{align*}
&\norm{c_n(x,i,\zeta)}_{\infty} \,\df\, \sup_{(x,i, \zeta)\in\Rd\times \mathbb{S}\times\Act} c_n(x,i, \zeta) \leq M, \quad  V_{\alpha}^n(x,i) \leq \frac{\norm{c_n(x,i, \zeta)}_{\infty}}{\alpha}\,,\\
&\qquad\qquad\text{and}\quad \norm{m_{ij}^n(x,i, \zeta)}_{\infty} \,\df\, \sup_{(x,\zeta)\in\Rd\times\Act} m_{ij}^n(x,\zeta) \leq M,
\end{align*}
we have,
\begin{align}\label{fb}
\norm{f(x,i)}_{L^p(\sB_{2R})}
&\leq \norm{c_n(x,i, \zeta)}_{L^p(\sB_{2R})} +\norm{\sum_{j\neq i}m^{n}_{ij}(x,v^*_n(x,i))V_{\alpha}^n(x,j)}_{L^p(\sB_{2R})}\nonumber\\
&\leq \norm{c_n(x,i, \zeta)}_{\infty}|\sB_{2R}|^{\frac{1}{p}}\,+\,\norm{\sum_{j\neq i}m^{n}_{ij}(x,v^*_n(x,i))}_{\infty}\norm{V_{\alpha}^n(x,j)}_{\infty}|\sB_{2R}|^{\frac{1}{p}}\nonumber\\
&\leq M|\sB_{2R}|^{\frac{1}{p}}\,+\,|\mathbb{S}|M.\frac{M}{\alpha}|\sB_{2R}|^{\frac{1}{p}}\nonumber\\
&\leq  M|\sB_{2R}|^{\frac{1}{p}}\bigl( 1+ \frac{|\mathbb{S}|M}{\alpha}\bigr)
\end{align}
from \cref{ETC1.3A} and \cref{fb}, we get
\begin{equation}\label{ETC1.3B}
\norm{V_{\alpha}^n(x,i)}_{\Sob^{2,p}(\sB_R\times \mathbb{S})}=\sum_{i \in \mathbb{S}}\norm{V_{\alpha}^n(x,i)}_{\Sob^{2,p}(\sB_R)}
\,\le\,\kappa_1 |\mathbb{S}| M |\sB_{2R}|^{\frac{1}{p}}\bigl(1+ \frac{M|\mathbb{S}|+1}{\alpha}\bigr)\,.
\end{equation}

We know that for $1< p < \infty$, the space $\Sob^{2,p}(\sB_R)$ is reflexive and separable, and reflexivity and separability are preserved under finite Cartesian product, it follows that \( \Sob^{2,p}(\cB_R \times \mathbb{S}) \) is also reflexive and separable for \( 1 < p < \infty \); hence, as a corollary of the Banach Alaoglu theorem, we have that every bounded sequence in $\Sob^{2,p}(\sB_R\times \mathbb{S})$ has a weakly convergent subsequence (see, \cite{HB-book}*{Theorem~3.18}). Also, we know that for $p\geq d+1$ the space $\Sob^{2,p}(\sB_R)$ is compactly embedded in $\cC^{1, \beta}(\bar{\sB}_R)$\,, where $\beta < 1 - \frac{d}{p}$ (see \cite{ABG-book}*{Theorem~A.2.15 (2b)}). Since $\mathbb{S}$ is finite, the space $\Sob^{2,p}(\cB_R \times \mathbb{S})$ can be thought of as a finite product of such Sobolev spaces, one for each $i \in \mathbb{S}$. Therefore, the embedding $
\Sob^{2,p}(\cB_R \times \mathbb{S}) \hookrightarrow \cC^{1,\beta}(\cB_R \times \mathbb{S})$
is compact, which implies that every weakly convergent sequence in $\Sob^{2,p}(\sB_R\times \mathbb{S})$ will converge strongly in $\cC^{1, \beta}(\bar{\sB}_R\times \mathbb{S})$\,. Thus, in view of estimate \cref{ETC1.3B}, by standard diagonalization argument and Banach Alaoglu theorem, we can extract a subsequence $\{V_{\alpha}^{n_k}\}$ such that for some $V_{\alpha}^*\in \Sobl^{2,p}(\Rd\times \mathbb{S})$
\begin{equation}\label{ETC1.3BC}
\begin{cases}
V_{\alpha}^{n_k}\to & V_{\alpha}^*\quad \text{in}\quad \Sobl^{2,p}(\Rd\times \mathbb{S})\quad\text{(weakly)}\\
V_{\alpha}^{n_k}\to & V_{\alpha}^*\quad \text{in}\quad \cC^{1, \beta}_{loc}(\Rd\times \mathbb{S}) \quad\text{(strongly)}\,.
\end{cases}       
\end{equation} 
Next, we will show that $V^*_{\alpha} = V_{\alpha}$. Now, for any compact set $K\subset \Rd$, it is easy to see that

\begin{align*}
&\max_{x\in K} \Big| \min_{\zeta\in \Act} \big\{ b_{n_k}(x,i,\zeta)\cdot \nabla V_{\alpha}^{n_k}(x,i) + c_{n_k}(x,i,\zeta) + \sum_{j \in \mathbb{S}} m^{n_{k}}_{ij}(x,\zeta) V_{\alpha}^{n_k}(x,j) \big\}\nonumber\\
 &\quad \quad- \min_{\zeta\in \Act} \big\{ b(x,i,\zeta)\cdot \nabla V_{\alpha}^*(x,i) + c(x,i,\zeta) 
+ \sum_{j \in \mathbb{S}} m_{ij}(x,\zeta)V^{*}_{\alpha}(x,j) \big\} \Big|\nonumber \\
&\leq \max_{x\in K} \max_{\zeta\in \Act} | 
b_{n_k}(x,i,\zeta)\cdot \nabla V_{\alpha}^{n_k}(x,i) - b(x,i,\zeta)\cdot \nabla V_{\alpha}^*(x,i) |\nonumber\\
&+ \max_{x\in K} \max_{\zeta\in \Act} | c_{n_k}(x,i,\zeta) - c(x,i,\zeta) |+ \max_{x\in K} \max_{\zeta\in \Act} \Big| 
\sum_{j \in \mathbb{S}} m^{n_{k}}_{ij}(x,\zeta) V_{\alpha}^{n_k}(x,j)
- \sum_{j \in \mathbb{S}} m_{ij} (x,\zeta)V^{*}_{\alpha}(x,j) \Big|
\end{align*}

Since $c_{n}(x,i, \cdot)\to c(x,i, \cdot)$, $b_{n}(x,i, \cdot)\to b(x,i, \cdot)$, $m^{n}_{ij}(x,\cdot)\to m_{ij}(x,\cdot)$ continuously on compact set $\Act$ and $V_{\alpha}^{n_k}\to V_{\alpha}^*$ in $\cC^{1, \beta}_{loc}(\Rd\times \mathbb{S})\,,$ for any compact set $K\subset \Rd$, as $k\to \infty$ we deduce that 
\begin{align}\label{2.11}
&\max_{x\in K} \Big| \min_{\zeta\in \Act} \big\{ b_{n_k}(x,i,\zeta)\cdot \nabla V_{\alpha}^{n_k}(x,i) + c_{n_k}(x,i,\zeta) 
+ \sum_{j \in \mathbb{S}} m^{n_{k}}_{ij}(x,\zeta) V_{\alpha}^{n_k}(x,j) \big\} \nonumber \\
&\quad - \min_{\zeta\in \Act} \big\{ b(x,i,\zeta)\cdot \nabla V_{\alpha}^*(x,i) + c(x,i,\zeta) 
+ \sum_{j \in \mathbb{S}} m_{ij}(x,\zeta) V^{*}_{\alpha}(x,i) \big\} \Big| \to 0 
\end{align}
Thus, multiplying by a test function $\phi\in \cC_{c}^{\infty}(\Rd\times \mathbb{S})$, from \cref{APOptDHJB1}, we obtain
\begin{align*}
&\int_{\Rd} 
\trace\bigl(a_{n_k}(x,i)\nabla^2 V_{\alpha}^{n_k}(x,i)\bigr)\, \phi(x,i)\,\mathrm{d}x \\& + \int_{\Rd} \min_{\zeta\in \Act} 
\Big\{ b_{n_k}(x,i,\zeta)\cdot \nabla V_{\alpha}^{n_k}(x,i) 
+ c_{n_k}(x,i,\zeta) 
+ \sum_{j \in \mathbb{S}} m^{n_{k}}_{ij}(x,\zeta) V_{\alpha}^{n_k}(x,j) \Big\} 
\phi(x,i)\,\mathrm{d}x \\
&= \alpha \int_{\Rd} V_{\alpha}^{n_k}(x,i)\, \phi(x,i)\,\mathrm{d}x\,.
\end{align*}
In view of \cref{ETC1.3BC} and \cref{2.11}, letting $k\to\infty$ it follows that
\begin{align}\label{2.12}
&\int_{\Rd} 
\trace\bigl(a(x,i)\nabla^2 V_{\alpha}^*(x,i)\bigr)\, \phi(x,i)\, \mathrm{d}x \nonumber\\ &\qquad+ \int_{\Rd} 
\min_{\zeta\in \Act} \Big\{ 
b(x,i,\zeta)\cdot \nabla V_{\alpha}^*(x,i) 
+ c(x,i,\zeta) 
+ \sum_{j \in \mathbb{S}} m_{ij}(x,\zeta)\,V_{\alpha}^*(x,j) \Big\} 
\phi(x,i)\, \mathrm{d}x \nonumber\\
&\quad \quad= \alpha \int_{\Rd} V_{\alpha}^*(x,i)\, \phi(x,i)\, \mathrm{d}x\,.
\end{align} 
Since $\phi\in \cC_{c}^{\infty}(\Rd\times \mathbb{S})$ is arbitrary and $V_{\alpha}^*\in \Sobl^{2,p}(\Rd\times \mathbb{S})$ from \cref{{2.12}} we deduce that
\begin{equation}\label{ETC1.3E}
\min_{\zeta \in\Act}\left[\sL_{\zeta}V_{\alpha}^*(x,i) + c(x,i,\zeta)\right] = \alpha V_{\alpha}^*(x,i) \,,\quad \text{a.e.\ }\,\, (x,i)\in\Rd\times \mathbb{S}\,.
\end{equation}
Consider a minimizing selector $\tilde{v}^* \in \Usm$ of \cref{ETC1.3E}, and let $(\tilde{X}, \tilde{S})$ be the solution of the SDE \cref{E1.1} corresponding to $\tilde{v}^*$. Then, by It$\hat{\rm o}$–Krylov formula (\cite{ABG-book}*{Lemma~5.1.4}), we deduce the following.
\begin{align*}
&\Exp_{x,i}^{\tilde{v}^*}\left[ e^{-\alpha T} V_{\alpha}^{*}(\tilde{X}_{T},\tilde{S}_{T})\right] - V_{\alpha}^{*}(x,i)\\
&\,=\,\Exp_{x,i}^{\tilde{v}^*}\Bigg[\int_0^{T} e^{-\alpha s}\{\trace\bigl(a(\tilde{X}_s,\tilde{S}_s)\grad^2 V_{\alpha}^{*}(\tilde{X}_s,\tilde{S}_s)\bigr) + b(\tilde{X}_s,\tilde{S}_s, \tilde{v}^*(\tilde{X}_s,\tilde{S}_s))\cdot \grad V_{\alpha}^{*}(\tilde{X}_s,\tilde{S}_s)\\
& \quad+ \sum_{j \in \mathbb{S}} m_{\tilde{S}_sj}(x,\tilde{v}^*(\tilde{X}_s,\tilde{S}_s))\,V_{\alpha}^*(\tilde{X}_s,j) - \alpha V_{\alpha}^{*}(\tilde{X}_s,\tilde{S}_s)\} \D{s}\Bigg]
\end{align*}

Hence, using \cref{ETC1.3E} and rewriting the above equation, we obtain,
\begin{align}\label{ETC1.3FC}
e^{-\alpha T}\Exp_{x,i}^{\tilde{v}^*}\left[  V_{\alpha}^{*}(\tilde{X}_T,\tilde{S}_{T})\right] - V_{\alpha}^{*}(x,i) \,=\,- \Exp_{x,i}^{\tilde{v}^*}\left[\int_0^{T} e^{-\alpha s}c(\tilde{X}_s,\tilde{S}_s, \tilde{v}^*(\tilde{X}_s,\tilde{S}_s))\D{s}\right] \,.
\end{align}
Since $V_{\alpha}^{*}$ is bounded, 
it follows that 
\(
e^{-\alpha T}\Exp_{x,i}^{\tilde{v}^*}\left[  V_{\alpha}^{*}(\tilde{X}_T,\tilde{S}_{T})\right] \to 0
\)  as $T\to\infty$.
Now, by monotone convergence theorem and letting $T \to \infty$ in \cref{ETC1.3FC} we obtain,
\begin{align}\label{ETC1.3FD}
 V_{\alpha}^{*}(x,i) \,=\, \Exp_{x,i}^{\tilde{v}^*}\left[\int_0^{\infty} e^{-\alpha s}c(\tilde{X}_s,\tilde{S}_s, \tilde{v}^*(\tilde{X}_s,\tilde{S}_s)) \D{s}\right] \,
\end{align}
By a similar argument, for any $U \in \Uadm$, 
using \cref{ETC1.3E} and applying the It\^o--Krylov formula 
(\cite{ABG-book}*{Lemma~5.1.4}), we obtain

\begin{align*}
 V_{\alpha}^{*}(x,i) \,
 \leq\, \Exp_{x,i}^{U}\left[\int_0^{\infty} e^{-\alpha s}c(\tilde{X}_s,\tilde{S}_s, U_s) \D{s}\right] \,.
\end{align*}
By taking the infimum over all $U\in \Uadm$, we then have
\begin{align}\label{ETC1.3FE}
 V_{\alpha}^{*}(x,i) \,
 \leq\,\inf_{U\in\Uadm} \Exp_{x,i}^{U}\left[\int_0^{\infty} e^{-\alpha s}c(\tilde{X}_s,\tilde{S}_s, U_s) \D{s}\right] \,.
\end{align}
 Thus, from \cref{ETC1.3FD} and \cref{ETC1.3FE}, we deduce that
\begin{align}\label{ETC1.3FF}
 V_{\alpha}^{*}(x,i) \,=
 \, \inf_{U\in\Uadm}\Exp_{x,i}^{U}\left[\int_0^{\infty} e^{-\alpha s}c(\tilde{X}_s,\tilde{S}_s, U_s) \D{s}\right] \,.
\end{align}
Since both $V_{\alpha}$ and $V_{\alpha}^*$ are continuous functions on $\Rd$, it follows  from \cref{OPDcost} and \cref{ETC1.3FF} that $V_{\alpha}(x,i) = V_{\alpha}^*(x,i)$ for all $(x,i)\in\Rd\times \mathbb{S}$. This completes the proof.
\end{proof}
Let $(\hat{X}^{n}, \hat{S}^{n})$ denote the solution of the SDE \cref{E1.1} corresponding 
to the optimal control $v_n^*$. Then we have
\begin{equation*}\label{EDiscostRobust}
\cJ_{\alpha}^{v_n^*}(x,i,c) \,=\, \Exp_{x,i}^{v_n^*} \left[\int_0^{\infty} e^{-\alpha t} c(\hat{X}^{n}_s,\hat{S}^n_s, v_n^*(\hat{X}^{n}_s,\hat{S}^n_s)) \D s\right],\quad (x,i)\in\Rd\times \mathbb{S}\,,
\end{equation*}
Next, using the above convergence result, we show that the optimal controls designed for approximate models yield asymptotically optimal performance for the true model. Formally, if $v_n^*\in\Usm$ denotes an optimal stationary Markov control corresponding to $V_\alpha^n$, then
$J_{\alpha}^{v_n^*}(x,i,c) \;\longrightarrow\; V_\alpha(x,i)
\quad \text{as } n\to\infty,$
for every $(x,i)\in\Rd\times\mathbb{S}$.  
Hence, the discounted optimal control problem for controlled  regime-switching diffusions is robust under model approximations satisfying \hyperlink{A6}{{(A6)}}.
Following the approach in \cite{PradhanYuksel2023}, 
we employ the continuity result obtained above as an intermediate step in the proof.
\begin{theorem}\label{TH2.4}
Suppose Assumptions \hyperlink{A1}{{(A1)}}-\hyperlink{A6}{{(A6)}} hold. Then 
\begin{equation*}\label{ETC1.4A}
\lim_{n\to\infty} \cJ_{\alpha}^{v_n^*}(x,i,c) = V_{\alpha}(x,i) \quad\forall\,\, (x,i)\in\Rd\times \mathbb{S},\,.
\end{equation*}
\end{theorem}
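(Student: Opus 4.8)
The plan is to run a compactness argument on the family of \emph{controlled} costs $\psi_n\df\cJ_\alpha^{v_n^*}(\cdot,\cdot,c)$ and to identify every subsequential limit with $V_\alpha$, using the continuity result \cref{TH2.3} (more precisely the $\cC^{1,\beta}_{\mathrm{loc}}$–convergence established in its proof) and the verification theorem \cref{TH2.1}. Fix $(x,i)\in\Rd\times\mathbb{S}$. For each $n$, freezing the control $v_n^*\in\Usm$ in the \emph{true} dynamics, $\psi_n$ is the unique bounded solution in $\Sobl^{2,p}(\Rd\times\mathbb{S})$ of the linear system
\begin{equation*}
\trace\bigl(a(x,i)\grad^2\psi_n(x,i)\bigr)+b\bigl(x,i,v_n^*(x,i)\bigr)\cdot\grad\psi_n(x,i)+\sum_{j\in\mathbb{S}}m_{ij}\bigl(x,v_n^*(x,i)\bigr)\psi_n(x,j)+c\bigl(x,i,v_n^*(x,i)\bigr)=\alpha\psi_n(x,i).
\end{equation*}
Since $0\le\psi_n\le M/\alpha$, $\norm{c}_\infty\le M$ and $\norm{m_{ij}}_\infty\le M$, the interior $\Sob^{2,p}$ estimate used in the derivation of \cref{ETC1.3A}--\cref{ETC1.3B} gives a bound on $\norm{\psi_n}_{\Sob^{2,p}(\sB_R\times\mathbb{S})}$ independent of $n$, for every $p\ge d+1$ and $R>0$. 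As $\Usm$ is a compact metric space and $\Sob^{2,p}(\sB_R\times\mathbb{S})$ embeds compactly in $\cC^{1,\beta}(\bar{\sB}_R\times\mathbb{S})$ for $\beta<1-d/p$, a diagonalization argument shows that from any subsequence of $\NN$ one can extract a further subsequence $\{n_k\}$, a control $v^*\in\Usm$, and a function $\psi^*\in\Sobl^{2,p}(\Rd\times\mathbb{S})$ with $v_{n_k}^*\to v^*$ in $\Usm$ and $\psi_{n_k}\to\psi^*$ weakly in $\Sobl^{2,p}(\Rd\times\mathbb{S})$ and strongly in $\cC^{1,\beta}_{\mathrm{loc}}(\Rd\times\mathbb{S})$.

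Next I would identify $\psi^*$. Testing the equation for $\psi_{n_k}$ against $\phi\in\cC_c^\infty(\Rd\times\mathbb{S})$ and integrating: the coefficients $a,b,m_{ij},c$ here carry no $n$–dependence, so the only terms needing care are those evaluated at $v_{n_k}^*$. Splitting, for instance, $b(x,i,v_{n_k}^*(x,i))\cdot\grad\psi_{n_k}=b(x,i,v_{n_k}^*(x,i))\cdot(\grad\psi_{n_k}-\grad\psi^*)+b(x,i,v_{n_k}^*(x,i))\cdot\grad\psi^*$, the first term tends to $0$ uniformly on $\supp\phi$ (by the strong $\cC^1_{\mathrm{loc}}$–convergence and the boundedness of $b$ on $\supp\phi\times\mathbb{S}\times\Act$), while the second converges because $v_{n_k}^*\to v^*$ in $\Usm$ (the weight $\phi\,\grad\psi^*$ is admissible, and $b$ truncated by a cut-off equal to $1$ on $\supp\phi$ is an admissible integrand; the globally bounded $c$ and $m_{ij}$ need no truncation). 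The second-order term passes to the limit via the weak $\Lp^p_{\mathrm{loc}}$–convergence of $\grad^2\psi_{n_k}$. Hence $\psi^*$ solves $\sL_{v^*}\psi^*(x,i)+c(x,i,v^*(x,i))=\alpha\psi^*(x,i)$ a.e.\ in $\Rd\times\mathbb{S}$. Applying the It\^o--Krylov formula to $e^{-\alpha t}\psi^*$ along the solution of \cref{E1.1} under $v^*$ and letting $t\to\infty$ (using the boundedness of $\psi^*$ and monotone convergence), exactly as in \cref{ETC1.3FC}--\cref{ETC1.3FD}, yields $\psi^*(x,i)=\cJ_\alpha^{v^*}(x,i,c)$.

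It then remains to show $v^*$ is optimal for the true model. Set $H_n(x,i,\zeta)\df b_n(x,i,\zeta)\cdot\grad V_\alpha^n(x,i)+c_n(x,i,\zeta)+\sum_{j\in\mathbb{S}}m^n_{ij}(x,\zeta)V_\alpha^n(x,j)$ and let $H$ be the corresponding Hamiltonian of the true model built from $V_\alpha$. By \hyperlink{A6}{{(A6)}}(ii), the continuity in $\zeta$, and the convergence $V_\alpha^n\to V_\alpha$ in $\cC^{1,\beta}_{\mathrm{loc}}(\Rd\times\mathbb{S})$ from the proof of \cref{TH2.3}, one gets $H_{n_k}\to H$ uniformly on $K\times\mathbb{S}\times\Act$ for every compact $K\subset\Rd$, hence $\min_{\zeta}H_{n_k}(\cdot,i,\zeta)\to\min_{\zeta}H(\cdot,i,\zeta)$ uniformly on $K$. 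Since $v_{n_k}^*$ is a minimizing selector of \cref{APOptDHJB1}, $H_{n_k}(x,i,v_{n_k}^*(x,i))=\min_\zeta H_{n_k}(x,i,\zeta)$ a.e.; integrating against $0\le\phi\in\cC_c^\infty$ and letting $k\to\infty$ — on the left using $H_{n_k}(\cdot,\cdot,v_{n_k}^*)\to H(\cdot,\cdot,v^*)$ in the weak sense afforded by convergence in $\Usm$ (replace $H_{n_k}$ by $H$ up to a uniformly small error, then test a cut-off of $H$), and on the right using the uniform convergence of the minima — gives $\int_{\Rd} H(x,i,v^*(x,i))\phi(x,i)\,\D x=\int_{\Rd}\min_\zeta H(x,i,\zeta)\phi(x,i)\,\D x$. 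Since $H(x,i,v^*(x,i))\ge\min_\zeta H(x,i,\zeta)$ pointwise and $\phi\ge0$ is arbitrary, $v^*$ is a measurable minimizing selector of the true HJB \cref{2.1}, so by \cref{TH2.1} it is $\alpha$–discounted optimal: $\cJ_\alpha^{v^*}(x,i,c)=V_\alpha(x,i)$. Combined with the previous step, $\psi^*=V_\alpha$; since the subsequence was arbitrary and the limit does not depend on it, the full sequence converges, $\cJ_\alpha^{v_n^*}(x,i,c)\to V_\alpha(x,i)$ for every $(x,i)$.

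I expect the main obstacle to be the two weak-limit passages above: the control-dependent first– and zeroth–order terms in the equation for $\psi_{n_k}$, and the ``$\min$'' identity for the selectors. The selectors $v_{n_k}^*$ converge only in the weak topology of $\Usm$, so one cannot substitute $v^*$ for $v_{n_k}^*$ pointwise. The resolution is structural: in each integrand the control enters through a factor $g(x,i,v_{n_k}^*(x,i))$ with $g$ locally continuous, multiplied by a factor built from $\grad\psi_{n_k}$ or $\grad V_\alpha^{n_k}$ that converges \emph{strongly} on compacts, so convergence in $\Usm$ applies after localizing by the test function, and the affine growth of $b$ is harmless because $\phi$ has compact support.
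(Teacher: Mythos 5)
Your proposal is correct and follows the same skeleton as the paper's proof: freeze the selector $v_n^*$ in the true dynamics to get the linear (Poisson) system for $\cJ_\alpha^{v_n^*}$, obtain $n$-uniform $\Sob^{2,p}(\sB_R\times\mathbb{S})$ bounds from $0\le\cJ_\alpha^{v_n^*}\le M/\alpha$, use compactness of $\Usm$ together with weak $\Sobl^{2,p}$/strong $\cC^{1,\beta}_{\mathrm{loc}}$ extraction, pass to the limit against test functions with exactly the splitting $b(\cdot,v_{n_k}^*)\cdot\grad(\psi_{n_k}-\psi^*)+\bigl(b(\cdot,v_{n_k}^*)-b(\cdot,v^*)\bigr)\cdot\grad\psi^*$ (and its analogue for the switching terms), and identify the limit via the It\^o--Krylov representation. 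The only place you deviate is the closing identification: the paper performs a second PDE limit passage in the approximating HJB satisfied by $V_\alpha^{n_k}$ with selector $v_{n_k}^*$, concluding that $V_\alpha$ itself solves the frozen equation under the limit control $\hat v^*$, and then represents both limits as $\cJ_\alpha^{\hat v^*}$ by It\^o--Krylov; you instead prove that the limit control is a measurable minimizing selector of the true HJB by uniform convergence of the Hamiltonians $H_{n_k}\to H$ on compacts (using \cref{TH2.3} and \hyperlink{A6}{(A6)}(ii)) combined with the integrated selector identity and the pointwise inequality $H(x,i,v^*(x,i))\ge\min_\zeta H(x,i,\zeta)$, and then invoke the verification result \cref{TH2.1} to get $\cJ_\alpha^{v^*}=V_\alpha$. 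The two closings carry the same content (the paper's \cref{ETC1.4I} likewise exhibits $\hat v^*$ as a minimizer of the true Hamiltonian); yours leans a bit more on \cref{TH2.1}, avoids the second It\^o--Krylov computation, and has the merit of making the ``every subsequence has a further subsequence converging to the same limit'' step explicit, whereas the paper organizes the conclusion through the triangle inequality with $V_\alpha^n$. One bookkeeping point to keep in mind: the $\cC^{1,\beta}_{\mathrm{loc}}$ convergence of $V_\alpha^{n_k}$ needed for $H_{n_k}\to H$ is available only along a further subsequence (as in the proof of \cref{TH2.3}), which your subsequence framework accommodates but should be stated when extracting.
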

\begin{proof}
By the triangle inequality, we have
\[
|\cJ_{\alpha}^{v_{n}^*}(x,i,c) - V_{\alpha}(x,i)| \leq |\cJ_{\alpha}^{v_{n}^*}(x,i,c) - V_{\alpha}^{n}(x,i)| + |V_{\alpha}^{n}(x,i) - V_{\alpha}(x,i)|\,. \] Applying~\cref{TH2.3}, we have $|V_{\alpha}^{n}(x,i) - V_{\alpha}(x,i)|\to 0$ as $n\to \infty$. To complete the proof, it remains to show that $|\cJ_{\alpha}^{v_{n}^*}(x,i,c) - V_{\alpha}^{n}(x,i)|\to 0$ as $n\to \infty$.

For each $n\in \NN$ and $v_n^*\in \Usm$ (an optimal control for the approximating model),
 consider the corresponding Poisson equation, 
\begin{align}\label{ETC1.4B}
&\trace\bigl(a(x,i)\grad^2 \varphi_{n,R}(x,i)\bigr) + b(x,i,v_n^*(x,i))\cdot \grad \varphi_{n,R}(x,i) + c(x,i,v_n^*(x,i))\nonumber\\&\qquad+\sum_{j \in \mathbb{S}} m_{ij}(x,v_{n}^*(x,i))\,\varphi_{n,R}(x,j)= \alpha \varphi_{n,R}(x,i)\,.
\end{align}
By applying \cite{AS}*{Theorem~2.1}, we obtain a unique solution to \cref{ETC1.4B} in every ball \(\cB_R\subset \Rd\).  
Using Sobolev estimates, standard diagonalization and the Banach--Alaoglu theorem, one can then extract a subsequence that converges weakly in \(\Sob^{2,p}\) and strongly in \(\cC^{1,\beta}_{\mathrm{loc}}\), yielding a global solution  $\Psi_{\alpha}^{n}\in \Sobl^{2,p}(\Rd\times \mathbb{S})\cap \cC_{b}(\Rd\times \mathbb{S})$ satisfying \cref{ETC1.4B}.
By applying It$\hat{\rm o}$-Krylov formula (\cite{ABG-book}*{Lemma 5.1.4}), we obtain
\begin{align*} 
&\mathbb{E}_{x,i}^{v_n^*}\left[ e^{-\alpha T} \Psi_{\alpha}^{n}(\hat{X}^{n}_T,\hat{S}^n_T)\right] 
- \Psi_{\alpha}^{n}(x,i)\\
&=\,\mathbb{E}_{x,i}^{v_n^*} \Bigg[ \int_0^{T} e^{-\alpha s} \Big\{
\trace\bigl(a(\hat{X}^{n}_s,\hat{S}^n_s)\nabla^2 \Psi_{\alpha}^{n}(\hat{X}^{n}_s,\hat{S}^n_s)+ \, b\big(\hat{X}^{n}_s,\hat{S}^n_s, v_n^*(\hat{X}^{n}_s,\hat{S}^n_s)\big)\cdot \nabla \Psi_{\alpha}^{n}(\hat{X}^{n}_s,\hat{S}^n_s)\\
&\hspace{10mm}+\, \sum_{j \in \mathbb{S}} m_{\hat{S}_sj}(x,v_{n}^*(\hat{X}^{n}_s,\hat{S}^n_s))\,\Psi_{\alpha}^{n}(\hat{X}^{n}_s,j) - \alpha \Psi_{\alpha}^{n}(\hat{X}^{n}_s,\hat{S}^n_s) \Big\} \, \mathrm{d}s \Bigg]
\end{align*}
Now using \cref{ETC1.4B} with $\Psi_{\alpha}^{n}$ and arguing as in \cref{ETC1.3FC}-\cref{ETC1.3FD}, we obtain
\begin{align*}\label{ETC1.4D}
 \Psi_{\alpha}^{n}(x,i) \,=\, \Exp_{x,i}^{v_n^*}\left[\int_0^{\infty} e^{-\alpha s}c(\hat{X}^{n}_s,\hat{S}^n_s, v_n^*(\hat{X}^{n}_s,\hat{S}^n_s)) \D{s}\right] \,=\, \cJ_{\alpha}^{v_n^*}(x,i,c)\,.
\end{align*}
This implies that $\Psi_{\alpha}^{n} \leq \frac{\norm{c}_{\infty}}{\alpha}$. Hence, as in Theorem \ref{TH2.3} (see, \cref{ETC1.3A}), by standard Sobolev estimate, for any $R>0$ we obtain
$\norm{\Psi_{\alpha}^{n}}_{\Sob^{2,p}(\sB_R\times \mathbb{S})} \leq \kappa_2$\,, for some positive constant $\kappa_2$ independent of $n$. Therefore, by the Banach--Alaoglu theorem and standard diagonalization argument 
(as in \cref{ETC1.3BC}), there exists $\hat{V}_{\alpha}^*\in \Sobl^{2,p}(\Rd\times \mathbb{S})\cap \cC_{b}(\Rd\times \mathbb{S})$ such that, along some subsequence, $\{\Psi_{\alpha}^{n_k}\}$ 
\begin{equation}\label{ETC1.4E}
\begin{cases}
\Psi_{\alpha}^{n_k}\to & \hat{V}_{\alpha}^*\quad \text{in}\quad \Sobl^{2,p}(\Rd\times \mathbb{S})\quad\text{(weakly)}\\
\Psi_{\alpha}^{n_k}\to & \hat{V}_{\alpha}^*\quad \text{in}\quad \cC^{1, \beta}_{loc}(\Rd\times \mathbb{S})\quad\text{(strongly)}\,.
\end{cases}       
\end{equation}
Since the space of stationary Markov strategies $\Usm$ is compact, there exists a subsequence (which we denote by the same index for convenience) such that $v_{n_k}^*\to \hat{v}^*$ in $\Usm$\,. It is straightforward to verify that
\begin{align*}
&b(x,i,v_{n_k}^*(x,i))\cdot \grad \Psi_{\alpha}^{n_k}(x,i) - b(x,i,\hat{v}^*(x,i))\cdot \grad \hat{V}_{\alpha}^*(x,i) \\
&= b(x,i,v_{n_k}^*(x,i))\cdot \grad \left(\Psi_{\alpha}^{n_k} - \hat{V}_{\alpha}^*\right)(x,i) + \left(b(x,i,v_{n_k}^*(x,i)) - b(x,i,\hat{v}^*(x,i))\right)\cdot \grad \hat{V}_{\alpha}^*(x,i)\,.
\end{align*}
Also,
\begin{align*}
    &\sum_{j \in \mathbb{S}} m_{ij}(x,v_{n_{k}}^*(x,i))\,\Psi_{\alpha}^{n_k}(x,j)\,-\, \sum_{j \in \mathbb{S}} m_{ij}(x,\hat{v}^*(x,i))\,\hat{V}^*_{\alpha}(x,j)\\&=\,\sum_{j \in \mathbb{S}} m_{ij}(x,v_{n_{k}}^*(x,i))\,(\Psi_{\alpha}^{n_k}(x,j)\,-\,\hat{V}_{\alpha}^{*}(x,j))+ \sum_{j \in \mathbb{S}} (m_{ij}(x,v_{n_{k}}^*(x,i))-m_{ij}(x,\hat{v}^*(x,i)))\,\hat{V}_{\alpha}^{*}(x,j). 
\end{align*}

Since $\Psi_{\alpha}^{n_k}\to \hat{V}_{\alpha}^*$ in $\cC^{1, \beta}_{loc}(\Rd\times \mathbb{S}),$ it follows that on every compact set $\left(b(x,i,v_{n_k}^*(x,i))\right)\cdot \grad \left(\Psi_{\alpha}^{n_k} - \hat{V}_{\alpha}^*\right)(x,i)\to 0$ and $m_{ij}(x,v_{n_{k}}^*(x,i))\,(\Psi_{\alpha}^{n_k}(x,j)\,-\,\hat{V}_{\alpha}^{*}(x,j))\to 0$\, strongly (since $m_{ij}$'s are bounded). Moreover, by the topology of $\Usm$, we have \begin{align*}
&\left(b(x,i,v_{n_k}^*(x,i)) - b(x,i,\hat{v}^*(x,i))\right)\cdot \grad \hat{V}_{\alpha}^*(x,i)\to 0\,\quad\, \quad\text{weakly}\\
&\sum_{j \in \mathbb{S}} m_{ij}(x,v_{n_{k}}^*(x,i))\,\Psi_{\alpha}^{n_k}(x,j)\,-\, \sum_{j \in \mathbb{S}} m_{ij}(x,\hat{v}^*(x,i))\,V_{\alpha}(x,j)\to 0\, \quad\text{weakly}
\end{align*}
Thus, in view of the topology of $\Usm$, and the convergence $\Psi_{\alpha}^{n_k}\to \hat{V}_{\alpha}^*$ in $\cC^{1, \beta}_{loc}(\Rd\times \mathbb{S})\,,$ as $k\to \infty$ we obtain 

\begin{align}\label{ETC1.4EA}
b(x,i,v_{n_k}^*(x,i))\cdot \grad \Psi_{\alpha}^{n_k}(x,i) + c(x,i,v_{n_k}^*(x,i))\,+\,\sum_{j \in \mathbb{S}} m_{ij}(x,v_{n_{k}}^*(x,i))\,\Psi_{\alpha}^{n_k}(x,j)\,\nonumber\\ \to b(x,i,\hat{v}^*(x,i))\cdot \grad \hat{V}_{\alpha}^*(x,i) + c(x,i,\hat{v}^*(x,i))\,+\,\sum_{j \in \mathbb{S}} m_{ij}(x,\hat{v}^*(x,i))\,\hat{V}_{\alpha}^{*}(x,j)\quad\text{weakly}\,.
\end{align}

Now, multiplying by a test function $\phi\in \cC_{c}^{\infty}(\Rd\times \mathbb{S})$, from \cref{ETC1.4B}, we obtain
\begin{align*}
&\int_{\Rd}\trace\bigl(a(x,i)\grad^2 \Psi_{\alpha}^{n_k}(x,i)\bigr)\phi(x,i)\D x + \int_{\Rd}\{b(x,i,v_{n_k}^*(x,i))\cdot \grad \Psi_{\alpha}^{n_k}(x,i) +  c(x,i,v_{n_k}^*(x,i))\\
&+\,\sum_{j \in \mathbb{S}} m_{ij}(x,v_{n_{k}}^*(x,i))\,\Psi_{\alpha}^{n_k}(x,j)\}\phi(x,i)\D x 
 = \alpha\int_{\Rd} \Psi_{\alpha}^{n_k}(x,i)\phi(x,i)\D x\,.
\end{align*}
Hence, by \cref{ETC1.4E}, \cref{ETC1.4EA}, and letting $k\to\infty$, we obtain
\begin{align}\label{ETC1.4EB}
&\int_{\Rd}\trace\bigl(a(x,i)\grad^2 \hat{V}_{\alpha}^*(x,i)\bigr)\phi(x,i)\D x + \int_{\Rd} \{b(x,i,\hat{v}^*(x,i))\cdot \grad \hat{V}_{\alpha}^*(x,i) + c(x,i,\hat{v}^*(x,i))\nonumber\\
&+\,\sum_{j \in \mathbb{S}} m_{ij}(x,\hat{v}^*(x,i))\,\hat{V}_{\alpha}^{*}(x,j)\}\phi(x,i)\D x\,=\, \alpha\int_{\Rd} \hat{V}_{\alpha}^*(x,i)\phi(x,i)\D x\,.
\end{align}
Since $\phi\in \cC_{c}^{\infty}(\Rd\times \mathbb{S})$ is arbitrary and $\hat{V}_{\alpha}^*\in \Sobl^{2,p}(\Rd\times \mathbb{S})$, it follows from \cref{ETC1.4EB} that 
the function $\hat{V}_{\alpha}^*\in \Sobl^{2,p}(\Rd\times \mathbb{S})\cap \cC_{b}(\Rd\times \mathbb{S})$ satisfies
\begin{align}\label{ETC1.4F}
&\trace\bigl(a(x,i)\grad^2 \hat{V}_{\alpha}^{*}(x,i)\bigr) + b(x,i,\hat{v}^*(x,i))\cdot \grad \hat{V}_{\alpha}^{*}(x,i) + c(x,i,\hat{v}^*(x,i))+\,\sum_{j \in \mathbb{S}} m_{ij}(x,\hat{v}^*(x,i))\,\hat{V}_{\alpha}^{*}(x,j)\nonumber\\ &= \alpha \hat{V}_{\alpha}^{*}(x,i)\,
\end{align}
As before, applying It$\hat{\rm o}$-Krylov formula (\cite{ABG-book}*{Lemma 5.1.4}) and using \cref{{ETC1.4F}}, we obtain
\begin{align}\label{ETC1.4G}
 \hat{V}_{\alpha}^{*}(x,i) \,=\, \Exp_{x,i}^{\hat{v}^*}\left[\int_0^{\infty} e^{-\alpha s}c(\hat{X}_s,\hat{S}_s, \hat{v}^*(\hat{X}_s,\hat{S}_s)) \D{s}\right] \,,
\end{align} where $(\hat{X},\hat{S})$ is the solution of SDE \cref{E1.1} corresponding to $\hat{v}^*$\,.
 
From \cref{TD1.2}, we know that $v_n^*\in \Usm$ is a minimizing selector of the HJB equation \cref{APOptDHJB1} of the approximated model, thus it follows that
\begin{align}\label{ETC1.4GA1}
\alpha V_{\alpha}^{n_k}(x,i) &\,=\, \min_{\zeta \in\Act}\left[\sL_{\zeta}^{n_k}V_{\alpha}^{n_k}(x,i) + c_{n_k}(x,i,\zeta)\right]\nonumber\\
& \,=\, \trace\bigl(a_{n_k}(x,i)\grad^2 V_{\alpha}^{n_k}(x,i)\bigr) + b_{n_k}(x,i,v_{n_k}^*(x,i))\cdot \grad V_{\alpha}^{n_k}(x,i)\nonumber\\
& + c_{n_k}(x,i,v_{n_k}^*(x,i))+\sum_{j \in \mathbb{S}} m_{ij}^{n_{k}}(x,v_{n_k}^*(x,j))V_{\alpha}^{n_{k}}(x,j),\,\,\,\, \text{a.e.\ }\, (x,i)\in\Rd\times \mathbb{S}
\end{align}
By standard Sobolev estimate (as in \cref{TH2.3}), it follows that for each $R>0$, $\norm{V_{\alpha}^{n_k}}_{\Sob^{2,p}(\sB_R\times \mathbb{S})} \leq \kappa_3$\,, for some constant $\kappa_3>0$ independent of $k$. Thus, one can extract a further sub-sequence denoted again by $\{V_{\alpha}^{n_k}\}$, without loss of generality) such that for some $\Tilde{V}_{\alpha}^*\in \Sobl^{2,p}(\Rd\times \mathbb{S})\cap \cC_{b}(\Rd\times \mathbb{S})$ (as in~\cref{ETC1.3BC}), such that the following convergences hold:
\begin{equation*}\label{ETC1.4H}
\begin{cases}
V_{\alpha}^{n_k}\to & \Tilde{V}_{\alpha}^*\quad \text{in}\quad \Sobl^{2,p}(\Rd\times \mathbb{S})\quad\text{(weakly)}\\
V_{\alpha}^{n_k}\to & \Tilde{V}_{\alpha}^*\quad \text{in}\quad \cC^{1, \beta}_{loc}(\Rd\times \mathbb{S})\quad\text{(strongly)}\,.
\end{cases}       
\end{equation*}
Following arguments analogous to those in \cref{TH2.3}, 
we multiply~\cref{ETC1.4GA1} by a suitable test function and let $k\to\infty$. 
Then, we deduce that $\Tilde{V}_{\alpha}^*\in \Sobl^{2,p}(\Rd\times \mathbb{S})\cap \cC_{b}(\Rd\times \mathbb{S})$ satisfies
\begin{align}\label{ETC1.4I}
\alpha \Tilde{V}_{\alpha}^{*}(x,i) &\,=\, \min_{\zeta \in\Act}\left[\sL_{\zeta}\Tilde{V}_{\alpha}^{*}(x,i) + c(x,i,\zeta)\right]\nonumber\\
& \,=\, \trace\bigl(a(x,i)\grad^2 \Tilde{V}_{\alpha}^{*}(x,i)\bigr) + b(x,i,\hat{v}^*(x,i))\cdot \grad \Tilde{V}_{\alpha}^{*}(x,i)\nonumber\\
&\quad+ c(x,i,\hat{v}^*(x,i))+\,\sum_{j \in \mathbb{S}} m_{ij}(x,\hat{v}^*(x,i))\,\Tilde{V}_{\alpha}^{*}(x,j)\,.
\end{align}
From the continuity results established in~\cref{TH2.3}, 
it follows that $\Tilde{V}_{\alpha}^{*}(x,i) = V_{\alpha}(x,i)$ for all $(x,i)\in\Rd \times \mathbb{S}$. Moreover, applying the It$\hat{\rm o}$-Krylov formula~(\cite{ABG-book}*{Lemma~5.1.4}) 
and using~\cref{ETC1.4I}, we obtain 
\begin{align}\label{ETC1.4J}
 \Tilde{V}_{\alpha}^{*}(x,i) \,=\, \Exp_{x,i}^{\hat{v}^*}\left[\int_0^{\infty} e^{-\alpha s}c(\hat{X}_s,\hat{S}_s, \hat{v}^*(\hat{X}_s,\hat{S}_s)) \D{s}\right] \,.
\end{align}
Since both $\hat{V}_{\alpha}^{*}$ and $\Tilde{V}_{\alpha}^{*}$ are continuous,
it follows from~\cref{ETC1.4G} and~\cref{ETC1.4J} that $\cJ_{\alpha}^{v_{n_k}^*}(x,i,c)$ (which is equal to $\Psi_{\alpha}^{n_k}(x,i)$) and $V_{\alpha}^{n_{k}}(x,i)$ converge to the same limit. This completes the proof.
\end{proof}
\begin{remark}
It is worth noting that the above analysis also establishes the continuity of the value function with respect to the control policy (under the considered topology).  
Indeed, the uniqueness of the solution to the corresponding PDE (coupled HJB equation) implies this continuity property.  
While in the diffusion setting a related observation was made earlier by Borkar~\cite{Bor-book},  
the present result is derived directly through a detailed optimality analysis.  
This continuity property has significant implications for the development of numerical schemes and for approximation results.
\end{remark}


\section{Analysis of Ergodic Cost}
We now consider the ergodic (long-run average) cost problem for the controlled 
regime-switching diffusion model introduced in \cref{Cost}. Our objective is to establish the robustness of the corresponding optimal controls. 
The associated optimal control problem for this cost criterion has been studied 
extensively in the literature; see, for example, \cite{Ghosh97}.

For this cost evolution criterion we will study the robustness problem under the assumption of Lyapunov stability.

\subsection{Analysis under Lyapunov stability}\label{Lyapunov stability} We assume the following Foster-Lyapunov condition on the dynamics.
\begin{itemize}
\item[\hypertarget{A7}{{(A7)}}]
\begin{itemize}
\item[(i)]
There exists a positive constant $\widehat{C}_0$, and a pair of inf-compact  functions $(\Lyap, h)\in \cC^{2}(\Rd\times \mathbb{S})\times\cC(\Rd\times \mathbb{S} \times \Act)$ (i.e., the sub-level sets $\{\Lyap(\cdot,i)\leq k\} \,,\{h(\cdot,i)\leq k\}$ are compact or empty sets in $\Rd$\,, $\Rd \times \Act$ respectively for each $k\in\RR,\,i\in \mathbb{S}$) such that
\begin{equation}\label{Lyap1}
\sL_{\zeta}\Lyap(x,i) \leq \widehat{C}_{0} - h(x,i, \zeta)\quad \forall\,\,\, (x,i,\zeta)\in \Rd\times \Act\,,
\end{equation} where  $h$ is locally Lipschitz continuous in its first argument uniformly with respect to rest. 
\item[(ii)] For each $n\in\NN$\,, we have
\begin{equation}\label{Lyap2}
\sL_{\zeta}^{n}\Lyap(x,i) \leq \widehat{C}_{0} - h(x,i, \zeta)\quad \forall\,\,\, (x,i, \zeta)\in \Rd\times \mathbb{S}\times \Act\,,
\end{equation} where the functions $\Lyap, h$ are as in \cref{Lyap1}\,. 
\end{itemize} 
\end{itemize} 
By combining the results of \cref{vd&hjb} and \cref{3.7.12}, we obtain a complete characterization of the optimal control in the ergodic case.
\begin{theorem}\label{TErgoOpt1}
Suppose that assumptions \hyperlink{A1}{{(A1)}}-\hyperlink{A5}{{(A5)}} and \hyperlink{A7}{{(A7)}}(i) hold. Then there exists a unique solution pair $(V^*, \rho)\in \Sobl^{2,p}(\Rd\times \mathbb{S})\cap \sorder(\Lyap)\times \RR$, with $V^*(0,1) = 0$\,, satisfying the ergodic HJB equation
\begin{equation}\label{EErgoOpt1A}
\rho = \min_{\zeta \in\Act}\left[\sL_{\zeta}V^*(x,i) + c(x,i,\zeta)\right]
\end{equation}
Moreover, we have
\begin{itemize}
\item[(i)]$\rho = \sE^*(c)$
\item[(ii)] a stationary Markov control $v^*\in \Usm$ is an optimal control (i.e., $\sE_{x,i}(c, v^*) = \sE^*(c)$) if and only if it satisfies
\begin{align}\label{EErgoOpt1B}
\min_{\zeta \in\Act}\left[\sL_{\zeta}V^*(x,i) + c(x,i,\zeta)\right] \,
&=\, \trace\bigl(a(x,i)\grad^2 V^*(x,i)\bigr) + b(x,i,v^*(x,i))\cdot \grad V^*(x,i)\,\nonumber\\
&+\, \sum_{j \in \mathbb{S}} m_{ij}(x,v^*(x,i)) V^*(x,j) + c(x,i, v^*(x,i))\,, 
\end{align}
\text{a.e.}\, $(x,i)\in\Rd\times \mathbb{S}\,.$
\item[(iii)] for any $v^*\in \Usm$ satisfying \cref{EErgoOpt1B}, we have
\begin{align*}\label{EErgoOpt1C}
V^*(x,i) \,=\, \lim_{r\downarrow 0}\Exp_{x,i}^{v^*}\left[\int_{0}^{\uuptau_{r}} \left( c(X_t,S_t, v^*(X_t,S_t)) - \sE^*(c)\right)\D t\right] \quad\forall\,\,\, x\in \Rd,\,i \in \mathbb{S}.
\end{align*}
\end{itemize} 
\end{theorem}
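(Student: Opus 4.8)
The plan is to use the \emph{vanishing discount} method, constructing $(V^*,\rho)$ as a limit of the $\alpha$-discounted value functions $V_\alpha$ of \cref{TH2.1} and controlling them uniformly in $\alpha$ by means of the Foster--Lyapunov inequality \cref{Lyap1}. Since $0\le c\le M$ by \hyperlink{A5}{(A5)}, the discounted HJB \cref{2.1} immediately yields $0\le\alpha V_\alpha\le M$ on $\Rd\times\mathbb S$, so $\{\alpha V_\alpha(0,1)\}_{\alpha\in(0,1)}$ is bounded and along a subsequence $\alpha_k\downarrow0$ one has $\alpha_k V_{\alpha_k}(0,1)\to\rho$ for some $\rho\ge0$. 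I would then set $\widehat V_\alpha:=V_\alpha-V_\alpha(0,1)$, which satisfies \cref{2.1} with the scalar $\alpha V_\alpha(0,1)$ carried to the right-hand side.

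The heart of the proof---and the step I expect to be the main obstacle---is a two-sided bound $-\kappa_1\le\widehat V_\alpha(x,i)\le\kappa_2\bigl(1+\Lyap(x,i)\bigr)$ that is uniform in $\alpha\in(0,1)$. For the upper bound I would freeze a measurable minimizing selector $v_\alpha^*$ of \cref{2.1}, apply the It\^o--Krylov formula (\cite{ABG-book}*{Lemma~5.1.4}) to $V_\alpha$ along the controlled process, and compare against $\Lyap$ using \cref{Lyap1} (so $\sL_{v_\alpha^*}\Lyap\le\widehat{C}_0-h$) together with $c\ge0$; for the lower bound I would apply a Harnack inequality for the weakly coupled operator $\sL_{v_\alpha^*}$ to $V_\alpha-\inf V_\alpha$ on a fixed reference ball, combined with a recurrence estimate bounding the expected cost of reaching that ball. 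The delicate point is the coupling term $\sum_{j\in\mathbb S}m_{ij}(\cdot,\zeta)(\cdot)$ in $\sL_\zeta$, which must be handled as a zeroth-order perturbation of a nondegenerate elliptic operator on each $\Rd$-slice, exploiting the sign structure $m_{ii}\le0$, $m_{ij}\ge0$ for $i\ne j$, $\sum_j m_{ij}=0$ and the bound $\|m_{ij}\|_\infty\le M$ to keep all constants $\alpha$-independent; this is where \hyperlink{A7}{(A7)} is essential and where one simultaneously sees that a minimizing selector produces a positive recurrent $(X,S)$.

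Granting these bounds, the remaining steps parallel the proof of \cref{TH2.3}. Freezing $v_\alpha^*$ turns \cref{2.1} into a linear elliptic system for $V_\alpha$ with coefficients and right-hand side bounded on each $\sB_R$ uniformly in $\alpha$, so \cite{GilTru}*{Theorem~9.11} gives $\|\widehat V_\alpha\|_{\Sob^{2,p}(\sB_R\times\mathbb S)}\le C_R$; by reflexivity and separability of $\Sob^{2,p}(\sB_R\times\mathbb S)$, its compact embedding in $\cC^{1,\beta}(\bar\sB_R\times\mathbb S)$, Banach--Alaoglu and a diagonalization, a further subsequence satisfies $\widehat V_{\alpha_k}\to V^*$ weakly in $\Sobl^{2,p}(\Rd\times\mathbb S)$ and strongly in $\cC^{1,\beta}_{\mathrm{loc}}(\Rd\times\mathbb S)$. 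Passing to the limit in the discounted HJB---treating $\min_{\zeta\in\Act}$ over the compact action space exactly as in the proof of \cref{TH2.3}, via continuous convergence of $b,c,m_{ij}$ in $\zeta$ and strong $\cC^{1,\beta}_{\mathrm{loc}}$ convergence---shows $(V^*,\rho)$ solves \cref{EErgoOpt1A}; the bound of the previous paragraph gives $V^*\in\sorder(\Lyap)$, and $V^*(0,1)=0$ passes to the limit.

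For (i)--(iii) I would argue by verification. Let $v^*\in\Usm$ be a measurable minimizing selector of \cref{EErgoOpt1A}, so $\sL_{v^*}V^*+c(\cdot,\cdot,v^*)=\rho$. Applying It\^o--Krylov to $V^*$ over $[0,T]$ under an arbitrary $U\in\Uadm$, dividing by $T$ and letting $T\to\infty$---using $V^*\in\sorder(\Lyap)$ and the Lyapunov control of $\tfrac1T\Exp^U_{x,i}[\Lyap(X_T,S_T)]$ to kill the boundary term---gives $\sE_{x,i}(c,U)\ge\rho$, hence $\sE^*(c)\ge\rho$. Along $v^*$ one gets equality: \cref{Lyap1} makes $v^*$ stable, so $v^*\in\Uadm_{\mathsf{ssm}}$ and $\sE_{x,i}(c,v^*)=\pi_{v^*}(c)$, while integrating $\sL_{v^*}V^*+c(\cdot,\cdot,v^*)=\rho$ against the invariant measure $\eta_{v^*}$ (drift and coupling terms vanishing) yields $\pi_{v^*}(c)=\rho$; this proves (i) and the ``if'' part of (ii), and for the ``only if'' part one integrates the pointwise inequality $\sL_v V^*+c(\cdot,\cdot,v)\ge\rho$ against $\eta_v$ for an optimal $v$ and finds it must be an equality a.e. For (iii), apply It\^o--Krylov to $V^*$ on $[0,\uuptau_r\wedge T]$ under $v^*$, use $V^*(0,1)=0$, continuity of $V^*$ and uniform integrability from the Lyapunov bound, then let $T\to\infty$ and $r\downarrow0$. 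Uniqueness in the stated class follows because the verification step forces $\rho=\sE^*(c)$ for any solution, after which a maximum principle for the recurrent weakly coupled operator applied to the difference of two solutions---which lies in $\sorder(\Lyap)$ and vanishes at $(0,1)$---gives equality.
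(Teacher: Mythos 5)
Your proposal follows essentially the same route as the paper's own proof, which is carried out through \cref{5.5.2}, \cref{vd&hjb} and \cref{3.7.12}: a vanishing-discount limit of $V_\alpha-V_\alpha(0,1)$ with $\alpha$-uniform local $\Sob^{2,p}$ bounds and Lyapunov-based growth control showing the limit lies in $\sorder(\Lyap)$, passage to the limit in the HJB exactly as in \cref{TH2.3}, It\^o--Krylov verification for (i)--(iii), and uniqueness via a stochastic representation combined with the strong maximum principle. The differences are only organizational (the paper obtains the $\alpha$-uniform estimates via ABP/Harnack oscillation bounds and handles the ``only if'' direction of (ii) and the uniqueness through the representation $\Psi^*$ rather than by integrating against invariant measures), so your outline is consistent with the paper's argument.
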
 
Again, from \cref{vd&hjb} and \cref{3.7.12}, it follows that for each approximating model indexed by $n \in \NN$, the optimal control can be fully characterized as follows.
\begin{theorem}\label{TErgoOptApprox1}
Suppose that Assumptions \hyperlink{A6}{(A6)}(iii) and \hyperlink{A7}{(A7)}(ii) hold. Then the ergodic HJB equation
\begin{equation}\label{TErgoOptApprox1A}
\rho_n = \min_{\zeta \in\Act}\left[\sL_{\zeta}^nV^{n*}(x,i) + c_n(x,i,\zeta)\right]
\end{equation} admits unique solution $(V^{n*}, \rho_n)\in \Sobl^{2,p}(\Rd\times \mathbb{S})\cap \sorder(\Lyap)\times \RR$ satisfying $V^{n*}(0,1) = 0$\,.
Moreover, we have
\begin{itemize}
\item[(i)]$\rho_n = \sE^{n*}(c_n)$
\item[(ii)] a stationary Markov control $v_n^*\in \Usm$ is an optimal control (i.e., $\sE_{x,i}^n(c_n, v_n^{*}) = \sE^{n*}(c_n)$) if and only if it satisfies
\begin{align}\label{TErgoOptApprox1B}
\min_{\zeta \in\Act}\left[\sL_{\zeta}^n V^{n*}(x,i) + c_n(x,i,\zeta)\right] \,&=\, \trace\bigl(a_n(x,i)\grad^2 V^{n*}(x,i)\bigr) + b_n(x,i,v_n^*(x,i))\cdot \grad V^{n*}(x,i)\nonumber\\
&+\, \sum_{j \in \mathbb{S}} m_{ij}^n(x,v_n^*(x,i)) V^{n*}(x,j) + c(x,i, v_n^*(x,i))
\end{align}
for almost every $(x,i)\in\Rd\times \mathbb{S}.$
\item[(iii)] for any $v_n^*\in \Usm$ satisfying \cref{TErgoOptApprox1B}, we have
\begin{equation*}\label{TErgoOptApprox1C}
V^{n*}(x,i) \,=\, \lim_{r\downarrow 0}\Exp_{x,i}^{v_n^*}\left[\int_{0}^{\uuptau_{r}} \left( c_n(X_t^n,S_t^n, v_n^*(X_t^n,S_t^n)) - \sE^{n*}(c_n)\right)\D t\right] \quad\forall\, (x,i)\in \Rd\times \mathbb{S}.
\end{equation*}
\end{itemize} 
\end{theorem}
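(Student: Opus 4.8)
The plan is to reduce \cref{TErgoOptApprox1} to \cref{TErgoOpt1} by checking that, for each fixed $n$, the approximating system \cref{ASE1.1} together with the running cost $c_n$ satisfies exactly the hypotheses under which \cref{TErgoOpt1} was established for the true model. By \hyperlink{A6}{(A6)}(iii) the coefficients $b_n,\upsigma_n,m^n_{ij}$ obey \hyperlink{A1}{(A1)}--\hyperlink{A4}{(A4)} and $c_n$ is bounded, jointly continuous, and locally Lipschitz in $x$ uniformly in $\zeta$, so that \hyperlink{A5}{(A5)} holds with the data $(b_n,\upsigma_n,m^n,c_n)$; and by \hyperlink{A7}{(A7)}(ii) the \emph{same} inf-compact pair $(\Lyap,h)$ appearing in \cref{Lyap1} satisfies the Foster--Lyapunov inequality $\sL^n_\zeta\Lyap(x,i)\le\widehat{C}_0-h(x,i,\zeta)$, which is precisely \hyperlink{A7}{(A7)}(i) for the generator $\sL^n_\zeta$. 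Hence \cref{TErgoOpt1}, applied with every object carrying the index $n$, delivers a unique pair $(V^{n*},\rho_n)\in\Sobl^{2,p}(\Rd\times\mathbb{S})\cap\sorder(\Lyap)\times\RR$ with $V^{n*}(0,1)=0$ solving \cref{TErgoOptApprox1A}, the identity $\rho_n=\sE^{n*}(c_n)$, the minimizing-selector characterization \cref{TErgoOptApprox1B} of optimal stationary Markov controls, and the stochastic representation in part (iii).

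For completeness I would recall the structure of the argument that \cref{TErgoOpt1} packages, since that is the template we invoke. One first solves the Dirichlet problem for the $\alpha$-discounted HJB equation on each ball $\sB_R$, obtains interior $\Sob^{2,p}$ bounds from the nondegeneracy \hyperlink{A3}{(A3)}, and uses \cref{Lyap2} with an It$\hat{\rm o}$--Krylov estimate on $\Lyap$ to bound the normalized discounted value functions $V_{\alpha}^n-V_{\alpha}^n(0,1)$ by a constant times $\Lyap$ uniformly in $\alpha$ and $R$. Passing to the limit first in $R\to\infty$ and then in $\alpha\downarrow 0$ via Banach--Alaoglu and a diagonal argument, and using the compact embedding $\Sob^{2,p}(\sB_R\times\mathbb{S})\hookrightarrow\cC^{1,\beta}(\overline{\sB}_R\times\mathbb{S})$ to pass the nonlinear Hamiltonian through the limit exactly as in the proof of \cref{TH2.3}, one produces a global solution $(V^{n*},\rho_n)$ of \cref{TErgoOptApprox1A} with $V^{n*}\in\sorder(\Lyap)$. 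Finally the It$\hat{\rm o}$--Krylov formula combined with \cref{Lyap2} identifies $\rho_n$ with the optimal average cost, proves the verification theorem in both directions, and---using the inf-compactness of $h$---shows that any measurable minimizing selector of \cref{TErgoOptApprox1A} yields a positive recurrent process, so that $V^{n*}$ admits the representation in (iii); uniqueness of $(V^{n*},\rho_n)$ under $V^{n*}(0,1)=0$ then follows because $\rho_n$ is the intrinsically defined optimal value and $V^{n*}$ is pinned down by its stochastic representation.

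The one genuinely nontrivial point is the last step: identifying the PDE-generated constant $\rho_n$ with the \emph{optimal} ergodic value $\sE^{n*}(c_n)$ and proving the ``only if'' direction in \cref{TErgoOptApprox1B}. This is exactly where \hyperlink{A7}{(A7)}(ii) is indispensable---the drift inequality \cref{Lyap2} controls the growth of $V^{n*}$ along trajectories of \cref{ASE1.1}, forces the martingale remainder to vanish in the Ces\`aro limit, and guarantees that a minimizing selector produces a process with a finite invariant measure, whence $\sE^{n*}_{x,i}(c_n,v^*_n)=\rho_n$ while any control failing to be a minimizing selector on a set of positive measure incurs a strictly larger long-run average. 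Since the pair $(\Lyap,h)$ in \hyperlink{A7}{(A7)}(ii) does not depend on $n$, these bounds are in fact uniform in $n$, which is what the continuity results of \cref{Lyapunov stability} will exploit; for the present theorem, however, only the fixed-$n$ statement is needed, and it is obtained verbatim from \cref{TErgoOpt1}.
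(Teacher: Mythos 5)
Your proposal is correct and follows essentially the same route as the paper: the paper also obtains this theorem by applying the general ergodic characterization results (the vanishing-discount theorem \cref{vd&hjb} and the uniqueness theorem \cref{3.7.12}, i.e.\ the machinery behind \cref{TErgoOpt1}) verbatim to the $n$-th model, using \hyperlink{A6}{(A6)}(iii) to verify \hyperlink{A1}{(A1)}--\hyperlink{A5}{(A5)} for $(b_n,\upsigma_n,m^n,c_n)$ and \hyperlink{A7}{(A7)}(ii) as the Foster--Lyapunov condition for $\sL^n_\zeta$. Your recollection of the underlying vanishing-discount/It\^o--Krylov argument matches the paper's appendix proof, so no gap.
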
 
Next, we establish that under the stability assumption \hyperlink{A7}{{(A7)}}, the optimal value $\sE^{n*}(c_n)$ of the approximating model converges to the optimal value $\sE^{*}(c)$ of the true model as $n \to \infty$.
\begin{theorem}\label{TH3.3}
Suppose that Assumptions \hyperlink{A1}{{(A1)}}-\hyperlink{A7}{{(A7)}} hold. Then, it follows that 
\begin{equation*}\label{ETErgoOptCont1}
\lim_{n\to\infty} \sE^{n*}(c_n) = \sE^{*}(c)\,.
\end{equation*}
\end{theorem}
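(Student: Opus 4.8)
The plan is to adapt the subsequence-extraction argument from the proof of \cref{TH2.3} to the ergodic setting, accounting for the two new features: the scalar $\rho_n$ that accompanies the value function, and the fact that the ergodic value functions $V^{n*}$ are no longer bounded but only lie in $\sorder(\Lyap)$. First I would note that $\{\rho_n\}$ is bounded. Indeed, since $c_n\geq 0$ and $\norm{c_n}_\infty\leq M$ for every $n$, one has $0\leq \frac{1}{T}\Exp_{x,i}^{U}\bigl[\int_0^{T}c_n(X_s^n,S_s^n,U_s)\,\D s\bigr]\leq M$ for all $U\in\Uadm$, hence $0\leq\rho_n=\sE^{n*}(c_n)\leq M$. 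Fixing an arbitrary subsequence and passing to a further one (not relabelled), I may therefore assume $\rho_n\to\bar\rho$ for some $\bar\rho\in[0,M]$, and the goal reduces to showing $\bar\rho=\sE^*(c)$.

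The heart of the argument --- and the step I expect to be the main obstacle --- is to obtain a priori bounds on $V^{n*}$ that are \emph{uniform in $n$}. Here the precise form of \hyperlink{A7}{(A7)} is essential: the inf-compact Lyapunov pair $(\Lyap,h)$ appearing in \cref{Lyap2} does not depend on $n$ and coincides with the one in \cref{Lyap1}. Using \cref{Lyap2}, the stochastic representation in \cref{TErgoOptApprox1}(iii) (equivalently, an It$\hat{\rm o}$--Krylov computation applied to $\Lyap$ with optional stopping at $\uuptau_r\wedge\uptau_R$ together with standard Dynkin/Lyapunov estimates), and the uniform bounds $\norm{c_n}_\infty,\norm{m_{ij}^n}_\infty\leq M$, I expect to produce constants $\kappa_1,\kappa_2>0$ independent of $n$ with $\abs{V^{n*}(x,i)}\leq\kappa_1\Lyap(x,i)+\kappa_2$ on $\Rd\times\mathbb{S}$, together with the sharper uniform estimate $\sup_{n}\sup_{i\in\mathbb{S}}\sup_{\abs{x}\geq R}\abs{V^{n*}(x,i)}/\Lyap(x,i)\to 0$ as $R\to\infty$. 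This last $\sorder(\Lyap)$-type bound, uniform over $n$, is the delicate point: it is precisely what guarantees that the limit function extracted below again belongs to $\sorder(\Lyap)$, which is the class in which uniqueness holds, and it uses the inf-compactness of $h$. In particular, $\sup_n\norm{V^{n*}}_{L^\infty(\sB_R\times\mathbb{S})}<\infty$ for each $R>0$.

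Given these bounds the remainder follows the discounted case closely. For each $i$ I would rewrite \cref{TErgoOptApprox1A} as a linear nondegenerate elliptic equation for $V^{n*}(\cdot,i)$, moving the coupling terms $\sum_{j\neq i}m_{ij}^n(\cdot,v_n^*)V^{n*}(\cdot,j)$, the zeroth-order term $m_{ii}^n(\cdot,v_n^*)V^{n*}(\cdot,i)$, $c_n(\cdot,i,v_n^*)$ and $-\rho_n$ to the right-hand side, which by the previous step is bounded in $L^p(\sB_{2R})$ uniformly in $n$. The interior estimate \cite{GilTru}*{Theorem~9.11}, exactly as in \cref{ETC1.3A}, then gives $\norm{V^{n*}}_{\Sob^{2,p}(\sB_R\times\mathbb{S})}\leq\kappa_R$ with $\kappa_R$ independent of $n$; by the Banach--Alaoglu theorem, the compact embedding $\Sob^{2,p}(\sB_R\times\mathbb{S})\hookrightarrow\cC^{1,\beta}(\bar\sB_R\times\mathbb{S})$ for $p\geq d+1$, and a diagonal argument, I extract a further subsequence along which $V^{n*}\to\bar V$ weakly in $\Sobl^{2,p}(\Rd\times\mathbb{S})$ and strongly in $\cC^{1,\beta}_{\mathrm{loc}}(\Rd\times\mathbb{S})$; the normalization passes to the limit ($\bar V(0,1)=0$) and the uniform bounds give $\bar V\in\sorder(\Lyap)$. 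Finally, arguing verbatim as in \crefrange{2.11}{2.12}, the continuous convergence $b_n\to b$, $c_n\to c$, $m_{ij}^n\to m_{ij}$ from \hyperlink{A6}{(A6)}, the strong $\cC^{1,\beta}_{\mathrm{loc}}$ convergence of $V^{n*}$, and $\rho_n\to\bar\rho$ allow me to pass to the limit in the weak formulation of \cref{TErgoOptApprox1A} tested against $\phi\in\cC_c^\infty(\Rd\times\mathbb{S})$, obtaining $\bar\rho=\min_{\zeta\in\Act}\bigl[\sL_\zeta\bar V(x,i)+c(x,i,\zeta)\bigr]$ a.e. Since $(\bar V,\bar\rho)\in\Sobl^{2,p}(\Rd\times\mathbb{S})\cap\sorder(\Lyap)\times\RR$ with $\bar V(0,1)=0$, the uniqueness in \cref{TErgoOpt1} forces $\bar V=V^*$ and $\bar\rho=\rho=\sE^*(c)$. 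As every subsequence of $\{\sE^{n*}(c_n)\}$ thus has a further subsequence converging to $\sE^*(c)$, the whole sequence converges, which is the claim.
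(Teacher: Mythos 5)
Your overall architecture coincides with the paper's: bound $\rho_n$ by $M$, extract a subsequence along which $V^{n*}$ converges weakly in $\Sobl^{2,p}$ and strongly in $\cC^{1,\beta}_{\mathrm{loc}}$ and $\rho_{n}\to\bar\rho$, pass to the limit in the weak formulation to get the ergodic HJB for $(\bar V,\bar\rho)$, verify $\bar V\in\sorder(\Lyap)$ with $\bar V(0,1)=0$, and invoke the uniqueness of \cref{TErgoOpt1}. The genuine gap sits exactly where you flag the ``main obstacle'': the uniform-in-$n$ bounds on $V^{n*}$. The Lyapunov condition \cref{Lyap2} together with It\^o--Krylov/Dynkin arguments controls the process \emph{until it reaches} $\sB_r$ (it gives $\sup_n\sup_{v}\Exp^v_{x,i}[\uuptau_r^n]\le \Lyap(x,i)/\epsilon$ and bounds on $\Exp[\int_0^{\uuptau_r^n}h\,ds]$), but it says nothing about the size of $V^{n*}$ \emph{on} $\sB_r$. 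Nor can the representation in \cref{TErgoOptApprox1}(iii) be used with absolute values: $|c_n-\rho_n|\le M$ would only give $|V^{n*}(x,i)|\le M\lim_{r\downarrow0}\Exp[\uuptau_r^n]$, and that limit is generically infinite for $d\ge2$, so one must keep $r>0$ fixed and carry the boundary term $V^{n*}(X_{\uuptau_r^n},S_{\uuptau_r^n})$ --- which requires precisely the uniform bound $\sup_n\sup_{\sB_r\times\mathbb{S}}|V^{n*}|<\infty$ you are trying to establish. Hence your asserted estimates $|V^{n*}|\le\kappa_1\Lyap+\kappa_2$ and $\sup_n\|V^{n*}\|_{L^\infty(\sB_R\times\mathbb{S})}<\infty$ do not follow from \hyperlink{A7}{(A7)} and the stochastic representation alone; without them you can neither start the elliptic bootstrap (the right-hand side of the Gilbarg--Trudinger estimate needs $\|V^{n*}\|_{L^p(\sB_{2R})}$ uniformly in $n$) nor close the $\sorder(\Lyap)$ argument.

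The paper fills exactly this hole with \cref{5.5.2} and the vanishing-discount theorem \cref{vd&hjb}: ABP and Harnack estimates applied to the \emph{discounted} equations give $\|V_\alpha^n-V_\alpha^n(0,1)\|_{\Sob^{2,p}(\sB_R\times\mathbb{S})}\le\widehat C_1(R)$ and $\sup_{\sB_R\times\mathbb{S}}\alpha V_\alpha^n\le\widehat C_2$ uniformly in $\alpha$ and $n$, and letting $\alpha\to0$ transfers these bounds to $V^{n*}$. This yields both the uniform Sobolev bound used for compactness and the local bound $\sup_n\sup_{\sB_r\times\mathbb{S}}|V^{n*}|\le\widehat M_2$ that enters \cref{TErgoOptCont1F}--\cref{TErgoOptCont1H}, together with the lower estimate \cref{3.7.42}, to conclude membership in $\sorder(\Lyap)$. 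If you insert this ingredient --- or an equivalent Harnack/oscillation estimate applied directly to the coupled ergodic system, exploiting the normalization $V^{n*}(0,1)=0$ --- the remainder of your argument goes through essentially as you wrote it.
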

\begin{proof}Since $\norm{c_n}_{\infty} \le M$, it follows that $\sE^{n*}(c_n) \le M$.
Moreover, by \cref{5.5.2}, there exist positive constants $\widehat{C}_1(R)$ (depending only on the radius $R>0$) and $\widehat{C}_2$ (independent of $R$), such that for all $\alpha > 0$, we have

\begin{equation}\label{ETErgoOptCont1A}
\|V_\alpha^n(\cdot,\cdot) - V_\alpha^n(0,1)\|_{\Sob^{2,p}(\sB_{R}\times \mathbb{S})} \leq \widehat{C}_1 \quad \text{and}\,\,\, \sup_{\sB_R\times \mathbb{S}}\alpha V_{\alpha}^{n} \leq \widehat{C}_2\,.
\end{equation}
By standard vanishing discount argument (see \cref{vd&hjb}) as $\alpha\to 0$ we obtain $V_\alpha^n(\cdot,\cdot) - V_\alpha^n(0,1) \to V^{n*}$ and $\alpha V_\alpha^n(0,1) \to \rho_n$\,. Consequently, the estimates \cref{ETErgoOptCont1A} yield $\|V^{n*}\|_{\Sob^{2,p}(\sB_{R}\times \mathbb{S})} \leq \widehat{C}_1$\,. Since the constant $\widehat{C}_1$ does not depend on $n$, the sequence ${V^{n*}}$ is uniformly bounded in $\Sob^{2,p}(\sB_R \times \mathbb{S})$. Therefore, by standard diagonalization argument and the Banach–Alaoglu theorem, we can extract a subsequence ${V^{n_k*}}$ such that for some $\widehat{V}^*\in \Sobl^{2,p}(\Rd\times \mathbb{S})$ (as in \cref{ETC1.3BC})
\begin{equation}\label{ETErgoOptCont1B}
\begin{cases}
V^{n_k*}\to & \widehat{V}^*\quad \text{in}\quad \Sobl^{2,p}(\Rd\times \mathbb{S})\quad\text{(weakly)},\\
V^{n_k*}\to & \widehat{V}^*\quad \text{in}\quad \cC^{1, \beta}_{loc}(\Rd\times \mathbb{S}) \quad\text{(strongly)}\,.
\end{cases} 
\end{equation}
Moreover, since $\rho_n \le M$, we can, without loss of generality, assume that along a further subsequence (denoted by the same index ${n_k}$), we have $\rho_{n_k} \to \widehat{\rho}$\, as\, $k \to \infty$.
Multiplying both sides of the equation \cref{TErgoOptApprox1A} by a test function $\phi$ and integrating over $\Rd$, we obtain

\begin{align*}
&\int_{\Rd}\trace\bigl(a_{n_k}(x,i)\grad^2 V^{n_k*}(x,i)\bigr)\phi(x,i)\D x \\
&+ \int_{\Rd} \min_{\zeta\in \Act} \{b_{n_k}(x,i,\zeta)\cdot \grad V^{n_k*}(x,i)+\, \sum_{j \in \mathbb{S}} m_{ij}^{n_k}(x,\zeta) V^{n_k*}(x,i) + c_{n_k}(x,i,\zeta)\}\phi(x,i)\D x \\
&= \int_{\Rd} \rho_{n_k}\phi(x,i)\D x\,.
\end{align*}

Following the argument used in \cref{TH2.3}, and employing the convergence results from \cref{ETErgoOptCont1B}, letting $k \to \infty$, we deduce that $\widehat{V}^*\in \Sobl^{2,p}(\Rd\times \mathbb{S})$ satisfies
\begin{equation}\label{TErgoOptCont1C}
\widehat{\rho} = \min_{\zeta \in\Act}\left[\sL_{\zeta}\widehat{V}^*(x,i) + c(x,i,\zeta)\right]\,.
\end{equation}

Next we show that $\widehat{V}^*\in \sorder{(\Lyap)}$. Since $\sup_{n}\|c_n\| \leq M$, it follows that $1 + \tilde{c} \in \sorder{(h)}$, where $\tilde{c} \,\df\, \sup_n c_n$\,. Moreover, as $h$ is inf-compact, there exists $r > 0$ such that $ \inf_{\zeta\in\Act} h(x,i,\zeta)-\displaystyle{\widehat{C}_0  \geq \epsilon}$ for all $(x,i)\in \sB_r^c\times \mathbb{S}$, for some $\epsilon>0$. Let $(X_t^n, S_t^n)$ denote the solution to \cref{ASE1.1} under a stationary Markov control $v \in \Usm$. Then, by \cref{Lyap2} and applying the It$\hat{\rm o}$-Krylov formula (\cite{ABG-book}*{Lemma~5.1.4}), we deduce that for any $v \in \Usm$ and  $(x,i)\in \sB_r^c\cap\sB_R\times \mathbb{S}$, the following holds:
\begin{align*}
\Exp_{x,i}^{v}\left[\Lyap(X_{\uuptau_{r}^n \wedge \uptau_{R}^n}^n, S_{\uuptau_{r}^n \wedge \uptau_{R}^n}^n)\right] - \Lyap(x,i)
&= \Exp_{x,i}^{v}\left[\int_{0}^{\uuptau_{r}^n \wedge \uptau_{R}^n} \sL_{v}^n \Lyap(X_s^n,S_s^n) \D s\right]\\ 
& \leq \Exp_{x,i}^{v}\left[\int_0^{\uuptau_{r}^n \wedge \uptau_{R}^n} (\widehat{C}_0 - h(X_s^n,S_s^n, v(X_s^n,S_s^n)))\D s \right]\\
&\leq -\epsilon \Exp_{x,i}^{v}\left[\uuptau_{r}^n \wedge \uptau_{R}^n\right]\,,  
\end{align*} where $\uuptau_{r}^n \df \inf\{t\geq 0: (X_t^n,S_t^n)\in \sB_r\times \mathbb{S}\}$ and $\uptau_{R}^n \df \inf \{t\geq 0:(X_t^n,S_t^n)\in \sB_R^c\times \mathbb{S}\}$\,. Letting $R\to \infty$ and applying Fatou’s lemma, we obtain
\begin{equation}\label{3.13}
\Exp_{x,i}^{v}\left[\uuptau_{r}^n\right] \leq \frac{1}{\epsilon} \Lyap(x,i)\quad \forall\,\,\, (x,i)\in \sB_r^c\times \mathbb{S}\,\,\,\text{and}\,\,\, n\in\NN\,. 
\end{equation}

Again, by It$\hat{\rm o}$-Krylov formula (\cite{ABG-book}*{Lemma 5.1.4}), for any $v\in \Usm$ and $(x,i)\in \sB_r^c\cap\sB_R\times \mathbb{S}$ we have
\begin{align*}
\Exp_{x,i}^{v}\left[\Lyap(X_{\uuptau_{r}^n \wedge \uptau_{R}^n}^n, S_{\uuptau_{r}^n \wedge \uptau_{R}^n}^n)\right] - \Lyap(x,i)
&= \Exp_{x,i}^{v}\left[\int_{0}^{\uuptau_{r}^n \wedge \uptau_{R}^n} \sL_{v}^n \Lyap(X_s^n,S_s^n) \D s\right]\\ 
&\leq  \Exp_{x,i}^{v}\left[\int_0^{\uuptau_{r}^n \wedge \uptau_{R}^n} (\widehat{C}_0 - h(X_s^n,S_s^n, v(X_s^n,S_s^n)))\D s \right]\,,  
\end{align*}
Therefore
\[ \Exp_{x,i}^{v}\left[\int_{0}^{\uuptau_{r}^n \wedge \uptau_{R}^n} h(X_s^n,S_s^n,v(X_s^n,S_s^n)) \D s\right] \leq \Lyap(x,i)+\widehat{C}_{0}\Exp_{x,i}^{v}\left[\uuptau_{r}^n \wedge \uptau_{R}^n\right]\]
Applying Fatou’s lemma and letting $R \to \infty$, and using \cref{3.13}, we obtain
\begin{equation*}
\sup_{n\in\NN}\sup_{v\in\Usm}\Exp_{x,i}^{v}\left[\int_0^{\uuptau_{r}^n} h(X_s^n,S_s^n, v(X_s^n,S_s^n)) \D s\right] \leq \widehat{M}_1 \Lyap(x,i)\,,  
\end{equation*} for some positive constant $\widehat{M}_1$\,. Proceeding as in the proof of \cite{ABG-book}*{Lemma~3.7.2(i)}, we further obtain
\begin{equation}\label{TErgoOptCont1E}
\sup_{n\in \NN}\sup_{v\in\Usm} \Exp_{x,i}^{v}\left[\int_{0}^{\uuptau_{r}^n}(1 + \tilde{c}(X_s^n,S_s^n, v(X_s^n,S_s^n)))\D s\right]\in \sorder{(\Lyap)}\,.
\end{equation}

Next, following the argument of \cref{vd&hjb} (see also Eq.~(3.7.47) in \cite{ABG-book}), we have
  \begin{equation}\label{TErgoOptCont1F}
V^{n*}(x,i) \,\leq\, \sup_{v\in\Usm}\Exp_{x,i}^{v}\left[\int_{0}^{\uuptau_{r}^n} \left( c_n(X_t^n,S_t^n, v(X_t^n,S_t^n)) - \sE^{n*}(c_n)\right)\D t + V^{n*}(X_{\uuptau_{r}^n},S_{\uuptau_{r}^n})\right]\,
\end{equation}
Recall that for $p \geq d+1$ the Sobolev space $\Sob^{2,p}(\sB_{R}\times \mathbb{S})$ is compactly embedded in $\cC^{1, \beta}(\bar{\sB}_R\times \mathbb{S})$ where $0< \beta < 1 - \frac{d}{p}$\,. Since $\|V^{n*}\|_{\Sob^{2,p}(\sB_{R}\times \mathbb{S})} \leq \widehat{C}_1$ for some positive constant $\widehat{C}_1$ depending only on $R$, it follows that $\sup_{n\in\NN}\sup_{\sB_r\times\mathbb{S}}|V^{n*}| \leq \widehat{M}_2$, for some $\widehat{M}_2 (>0)$. Moreover, since $\sE^{n*}(c_n) \leq \|c_n\|_{\infty} \leq M$,  combining \cref{TErgoOptCont1F} with \cref{3.7.42} yields
\begin{equation}\label{TErgoOptCont1H}
|V^{n*}(x,i)| \,\leq\, M\sup_{n\in\NN}\sup_{v\in\Usm}\Exp_{x,i}^{v}\left[\int_{0}^{\uuptau_{r}^n} \left( \tilde{c}(X_t^n,S_t^n, v(X_t^n,S_t^n)) + 1\right)\D t + \sup_{n\in\NN}\sup_{\sB_r\times\mathbb{S}}|V^{n*}|\right]\,.
\end{equation}
Therefore, combining \cref{ETErgoOptCont1B}, \cref{TErgoOptCont1E}, and \cref{TErgoOptCont1H}, we conclude that $\widehat{V}^*\in \sorder{(\Lyap)}$\,.
Since $(\widehat{V}^*, \widehat{\rho})\in \Sobl^{2,p}(\Rd\times \mathbb{S})\cap \sorder(\Lyap)\times \RR$ satisfies $\widehat V^*(0,1) = 0$ and the ergodic HJB equation \cref{EErgoOpt1A}, the uniqueness result of \cref{TErgoOpt1} implies that
 $(\widehat{V}^*, \widehat{\rho}) \equiv (V^*, \rho)$. This completes the proof of the theorem.      
\end{proof}
\begin{remark} Note that
    rewriting \cref{TErgoOptCont1C}, we get 
\(
\trace\bigl(a(x,i)\grad^2 \widehat{V}^*(x,i)\bigr) =  \hat{f}(x,i)\,,\quad \text{a.e.}\,\, (x,i)\in\Rd\times \mathbb{S}\,,
\) where 
\(
\hat{f}(x,i) = - \inf_{\zeta\in \Act}\left[b(x,i,\zeta)\cdot \grad \widehat{V}^*(x,i)+\sum_{j \in \mathbb{S}} m_{ij}(x,\zeta) \widehat{V}^*(x,j) + c(x,i,\zeta) - \widehat{\rho} \right]\,.
\)
In view of \cref{ETErgoOptCont1B} and assumptions \hyperlink{A1}{{(A1)}}–\hyperlink{A2}{{(A2)}}, it follows that $\hat{f} \in \cC^{0,\beta}_{\mathrm{loc}}(\Rd\times\mathbb{S})$ for some $0 < \beta < 1 - \tfrac{d}{p}$. Therefore, by the elliptic regularity results of \cite{CL89}*{Theorem~3} and \cite{GilTru}*{Theorem~9.19}, we conclude that $\widehat{V}^* \in \cC^{2}(\Rd \times \mathbb{S})$. But for our analysis $\widehat{V}^* \in \Sobl^{2,p}(\Rd \times \mathbb{S})$ is enough.
\end{remark}

We now present an existence and uniqueness result for the Poisson equation associated with a fixed stationary Markov control. This theorem provides the analytical foundation for the ergodic control problem and will later play a key role in establishing robustness.
\begin{theorem}\label{TErgoExisPoiss1}
Suppose that assumptions \hyperlink{A1}{{(A1)}}-\hyperlink{A5}{{(A5)}} and \hyperlink{((A7)}{(A7)}(i) hold. Then, for each $v\in \Usm$ there exists a unique  pair $(V^v, \rho^{v})\in \Sobl^{2,p}(\Rd\times \mathbb{S})\cap \sorder(\Lyap)\times \RR$ for any $p >1$ satisfying
\begin{equation}\label{TErgoExisPoiss1A}
\rho^{v} = \sL_{v}V^v(x,i) + c(x,i, v(x,i))\quad\text{with}\quad V^v(0,1) = 0\,.
\end{equation}
Furthermore, the following hold:
\begin{itemize}
\item[(i)]$\rho^{v} = \sE_{x,i}(c, v)$
\item[(ii)] for all $(x,i)\in\Rd \times \mathbb{S}$, we have
\begin{equation}\label{TErgoExisPoiss1B}
V^v(x,i) \,=\, \lim_{r\downarrow 0}\Exp_{x,i}^{v}\left[\int_{0}^{\uuptau_{r}} \left( c(X_t,S_t, v(X_t,S_t)) - \sE_{x,i}(c, v)\right)\D t\right]\,.
\end{equation}
\end{itemize} 
\end{theorem}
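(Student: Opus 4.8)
The plan is to prove \cref{TErgoExisPoiss1} by a vanishing‑discount argument applied to the fixed control $v$. For $v\in\Usm$ the operator $\sL_v$ is a weakly coupled, cooperative ($m_{ij}\ge0$ for $i\neq j$) second‑order elliptic system, so this is the linear counterpart of the proof of \cref{TErgoOpt1}, with the $\min_{\zeta\in\Act}$ replaced by the frozen coefficients $b(\cdot,\cdot,v(\cdot,\cdot))$, $m_{ij}(\cdot,v(\cdot,\cdot))$, $c(\cdot,\cdot,v(\cdot,\cdot))$, which lie in $\mathfrak C$ by \hyperlink{A1}{(A1)}, \hyperlink{A5}{(A5)}. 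First I would treat, for $\alpha>0$, the $\alpha$‑discounted problem under $v$: by the linear elliptic theory for such systems (the Dirichlet problem in balls via \cite{AS}*{Theorem~2.1}, interior $\Sob^{2,p}$ estimates, and a Banach--Alaoglu/diagonalization passage to $\Rd$, exactly as in the derivation of \cref{ETC1.3A}) there is a unique $V_\alpha^v\in\cC^2(\Rd\times\mathbb{S})\cap\cC_b(\Rd\times\mathbb{S})$ solving $\sL_v V_\alpha^v(x,i)+c(x,i,v(x,i))=\alpha V_\alpha^v(x,i)$, and the It\^o--Krylov formula (\cite{ABG-book}*{Lemma~5.1.4}) identifies $V_\alpha^v(x,i)=\Exp_{x,i}^{v}\bigl[\int_0^\infty e^{-\alpha t}c(X_t,S_t,v(X_t,S_t))\,dt\bigr]$; in particular $0\le\alpha V_\alpha^v\le\|c\|_\infty$.

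Next I would establish the uniform‑in‑$\alpha$ estimates that make the vanishing‑discount limit work. Put $\bar V_\alpha^v\df V_\alpha^v-V_\alpha^v(0,1)$. Using the Foster--Lyapunov inequality \cref{Lyap1} together with the exit‑time bounds $\Exp_{x,i}^{v}[\uuptau_r]\le\epsilon^{-1}\Lyap(x,i)$ on $\sB_r^c$ and $\Exp_{x,i}^{v}\bigl[\int_0^{\uuptau_r}(1+c)\,dt\bigr]\in\sorder(\Lyap)$ — obtained from \cref{Lyap1} and It\^o--Krylov exactly as in the proof of \cref{TH3.3}, see \cref{3.13}-\cref{TErgoOptCont1E} — and a Harnack‑type estimate exploiting $V_\alpha^v\ge0$ and the cooperative structure, one gets $|\bar V_\alpha^v(x,i)|\le\kappa(1+\Lyap(x,i))$ and $\|\bar V_\alpha^v\|_{\Sob^{2,p}(\sB_R\times\mathbb{S})}\le\widehat C_1(R)$, both independent of $\alpha\in(0,1]$ (the analogue of \cref{ETErgoOptCont1A}). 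By Banach--Alaoglu and diagonalization I would extract $\alpha_k\downarrow0$ with $\bar V_{\alpha_k}^v\to V^v$ weakly in $\Sobl^{2,p}(\Rd\times\mathbb{S})$ and strongly in $\cC^{1,\beta}_{loc}(\Rd\times\mathbb{S})$, and $\alpha_k V_{\alpha_k}^v(0,1)\to\rho^v$ for some $\rho^v\in\RR$. Rewriting the discounted equation as $\sL_v\bar V_{\alpha_k}^v+c(\cdot,v)=\alpha_k\bar V_{\alpha_k}^v+\alpha_k V_{\alpha_k}^v(0,1)$, testing against $\phi\in\cC_{c}^{\infty}(\Rd\times\mathbb{S})$ and letting $k\to\infty$ gives \cref{TErgoExisPoiss1A} with $V^v(0,1)=0$, while $V^v\in\sorder(\Lyap)$ is inherited from the uniform bound.

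To obtain (i) and (ii) I would apply It\^o--Krylov to $V^v$ along the process controlled by $v$. On $[0,\uuptau_r\wedge T]$, \cref{TErgoExisPoiss1A} gives $\Exp_{x,i}^{v}[V^v(X_{\uuptau_r\wedge T},S_{\uuptau_r\wedge T})]-V^v(x,i)=\Exp_{x,i}^{v}\bigl[\int_0^{\uuptau_r\wedge T}(\rho^v-c)\,dt\bigr]$; letting $T\to\infty$ (positive recurrence from \cref{Lyap1} makes $\uuptau_r<\infty$ a.s., and the moment bounds above give uniform integrability) and then $r\downarrow0$, and using continuity of $V^v$ together with the normalization $V^v(0,1)=0$ via a standard argument as in \cite{ABG-book}*{Chapter~3} and \cite{Ghosh97}, yields \cref{TErgoExisPoiss1B}. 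Applying the same identity on $[0,T]$, dividing by $T$, and using $\frac1T\Exp_{x,i}^{v}[V^v(X_T,S_T)]\to0$ — valid since $V^v\in\sorder(\Lyap)$ and, by the argument of \cite{ABG-book}*{Lemma~3.7.2} adapted to the switching setting, $\frac1T\Exp_{x,i}^{v}[\Lyap(X_T,S_T)]$ stays bounded — gives $\rho^v=\sE_{x,i}(c,v)$ for every $(x,i)$, which is (i) and also shows $\sE_{x,i}(c,v)$ is independent of $(x,i)$. For uniqueness, if $(\tilde V^v,\tilde\rho^v)$ is another solution in the stated class, the computation just given (which uses only \cref{TErgoExisPoiss1A} and membership in $\sorder(\Lyap)$) forces $\tilde\rho^v=\sE_{x,i}(c,v)=\rho^v$; then $w\df V^v-\tilde V^v\in\Sobl^{2,p}(\Rd\times\mathbb{S})\cap\sorder(\Lyap)$ satisfies $\sL_v w=0$ with $w(0,1)=0$, and applying It\^o--Krylov to $w$ on $[0,\uuptau_r\wedge T]$, letting $T\to\infty$ and invoking recurrence gives $w(x,i)=\Exp_{x,i}^{v}[w(X_{\uuptau_r},S_{\uuptau_r})]$, whence $w$ is bounded; a Liouville‑type/maximum‑principle argument using positive recurrence and the irreducibility \hyperlink{A4}{(A4)} then forces $w$ to be a constant vector annihilated by $\mathbb M$, hence a scalar, so $w\equiv0$ by $w(0,1)=0$.

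The main difficulty will be the uniform‑in‑$\alpha$ estimate of $\bar V_\alpha^v$ with growth controlled by $\Lyap$: the coupling term $\sum_{j}m_{ij}(\cdot,v)V_\alpha^v(\cdot,j)$ must be absorbed, which is where the Harnack inequality in its cooperative form and the Lyapunov exit‑time estimates (carried out for the coupled generator, as in the proof of \cref{TH3.3}) are essential; once these are in place, the ergodic inputs needed at the end ($\uuptau_r<\infty$ a.s., boundedness of $\frac1T\Exp_{x,i}^{v}[\Lyap(X_T,S_T)]$, uniform integrability, and the bounded‑harmonic Liouville step) are routine consequences of \cref{Lyap1} and \hyperlink{A4}{(A4)}.
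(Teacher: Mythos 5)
Your proposal is correct in outline and, for existence and for items (i)--(ii), follows essentially the paper's route: the paper obtains existence and the representation by citing its appendix result \cref{vd&hjb}(2), which is itself proved by exactly the vanishing-discount scheme you describe (uniform $\Sob^{2,p}$ bounds as in \cref{5.5.2}, Banach--Alaoglu plus diagonalization, It\^o--Krylov representations, and \cref{3.7.2} to remove localizations and identify $\rho^v=\sE_{x,i}(c,v)$). Where you genuinely diverge is the uniqueness endgame. The paper first shows $\bar\rho^{v}=\rho^{v}$ as you do, then uses the stochastic representation up to $\uuptau_r\wedge\uptau_R$ with Fatou and the Lyapunov estimate to get the one-sided comparison $V^v-\bar V^v\le 0$, and concludes by moving the nonnegative off-diagonal coupling terms to one side and applying the scalar strong maximum principle (\cite{GilTru}*{Theorem~9.6}) at the interior maximum attained at $(0,1)$. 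You instead set $w=V^v-\tilde V^v$, note $\sL_v w=0$ with $w(0,1)=0$, deduce boundedness of $w$ from $w(x,i)=\Exp_{x,i}^{v}\bigl[w(X_{\uuptau_r},S_{\uuptau_r})\bigr]$, and invoke a recurrence/irreducibility-based Liouville property to force $w$ to be constant, hence zero. Both endgames are viable: the paper's buys a purely local PDE conclusion at the cost of first proving the representation-based inequality, while yours avoids that comparison but rests on a Liouville lemma for bounded $\sL_v$-harmonic functions of the positive recurrent switching diffusion which you assert rather than prove; it is standard (bounded martingale plus the ergodic theorem, as for recurrent diffusions in \cite{ABG-book}), but it should be stated and verified in the switching setting. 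Two smaller points to tighten: the identity $w(x,i)=\Exp_{x,i}^{v}\bigl[w(X_{\uuptau_r},S_{\uuptau_r})\bigr]$ requires spatial localization by $\uptau_R$ (using $w\in\sorder(\Lyap)$ together with the bound $\Exp_{x,i}^{v}\bigl[\Lyap(X_{\uptau_R},S_{\uptau_R})\Ind_{\{\uptau_R\le\uuptau_r\}}\bigr]\le\widehat C_0\,\Exp_{x,i}^{v}[\uuptau_r]+\Lyap(x,i)$ and $\Prob(\uptau_R<\uuptau_r)\to 0$), not merely a time truncation, since It\^o--Krylov for $\Sobl^{2,p}$ functions needs it; and once constancy in $x$ is obtained, the step ``constant vector annihilated by $\mathbb M$ hence scalar'' correctly uses irreducibility, matching the paper's reliance on \hyperlink{A4}{(A4)} only implicitly through recurrence.
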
 
\begin{proof}
The existence of a solution pair $(V^v, \rho^{v})\in \Sobl^{2,p}(\Rd\times \mathbb{S})\cap \sorder(\Lyap)\times \RR$ for any $p >1$ satisfying (i) and (ii) follows from \cref{vd&hjb} (2)\,.We now prove the uniqueness of the solution to \cref{TErgoExisPoiss1A}. Let $(\bar{V}^v, \bar{\rho}^{v})\in \Sobl^{2,p}(\Rd\times \mathbb{S})\cap \sorder(\Lyap)\times \RR$ for any $p >1$ be another solution pair of \cref{TErgoExisPoiss1A} with $\bar{V}^v(0,1) = 0$. Applying the It$\hat{\rm o}$-Krylov formula (\cite{ABG-book}*{Lemma 5.1.4}), for $R>0$ we obtain
\begin{align*}\label{TErgoExisPoiss1C}
\Exp_{x,i}^{v}\left[\bar{V}^v(X_{T\wedge\uptau_{R}},S_{T\wedge\uptau_{R}})\right] - \bar{V}^v(x,i) &= \Exp_{x,i}^{v}\left[\int_{0}^{T\wedge\uptau_{R}} \sL_{v} \bar{V}^v(X_s,S_s) \D s\right]\nonumber\\
& = \Exp_{x,i}^{v}\left[\int_{0}^{T\wedge\uptau_{R}} \left(\bar{\rho}^{v} - c(X_s,S_s, v(X_s,S_s))\right)\D s \right]\,.  
\end{align*}
Note that 
\begin{equation*}
\int_{0}^{T\wedge\uptau_{R}} \left(\bar{\rho}^{v} - c(X_s,S_s, v(X_s,S_s))\right)\D s = \int_{0}^{T\wedge\uptau_{R}} \bar{\rho}^{v} - \int_{0}^{T\wedge\uptau_{R}}c(X_s,S_s, v(X_s,S_s))\D s
\end{equation*} 
Letting $R \to \infty$ and applying the monotone convergence theorem, we get
\begin{equation*}
\lim_{R\to\infty}\Exp_{x,i}^{v}\left[\int_{0}^{T\wedge\uptau_{R}} \left(\bar{\rho}^{v} - c(X_s,S_s, v(X_s,S_s))\right)\D s \right] = \Exp_{x,i}^{v}\left[\int_{0}^{T} \left(\bar{\rho}^{v} - c(X_s,S_s, v(X_s,S_s))\right)\D s \right]\,.
\end{equation*} 
Since $\bar{V}^v \in \sorder{(\Lyap)}$, in view of \cref{3.7.2} (2), letting $R\to\infty$, yields
\begin{align}\label{TErgoExisPoiss1D}
\Exp_{x,i}^{v}\left[\bar{V}^v(X_{T},S_{T})\right] - \bar{V}^v(x,i) = \Exp_{x,i}^{v}\left[\int_{0}^{T} \left(\bar{\rho}^{v} - c(X_s,S_s, v(X_s,S_s))\right)\D s \right]\,.  
\end{align} Moreover, from \cref{3.7.2} (2) we have
\begin{equation*}
\lim_{T\to\infty}\frac{\bar{V}^v(X_{T},S_{T})}{T} = 0\,. 
\end{equation*}
Dividing both sides of \cref{TErgoExisPoiss1D} by $T$ and letting $T \to \infty$, we obtain
\begin{align*}
\bar{\rho}^{v} = \limsup_{T\to \infty}\frac{1}{T}\Exp_{x,i}^{v}\left[\int_{0}^{T} \left(c(X_s,S_s, v(X_s,S_s))\right)\D s \right]\,.
\end{align*}This implies that $\bar{\rho}^{v} = \rho^{v}$\,. Next, using \cref{TErgoExisPoiss1A} and again applying the It$\hat{\rm o}$–Krylov formula (\cite{ABG-book}*{Lemma~5.1.4}), we obtain 
\begin{align}\label{TErgoExisPoiss1E}
\bar{V}^v(x,i)\,=\, \Exp_{x,i}^{v}\left[\int_0^{\uuptau_{r}\wedge \uptau_{R}} \left(c(X_t,S_t, v(X_t,S_t)) - \bar{\rho}^{v}\right) \D{t} + \bar{V}^{v}\left(X_{\uuptau_{r}\wedge \uptau_{R}},S_{\uuptau_{r}\wedge \uptau_{R}}\right)\right]\,.
\end{align} Furthermore, by \cref{Lyap1} and the It$\hat{\rm o}$–Krylov formula (\cite{ABG-book}*{Lemma~5.1.4}), we have 
\begin{equation*}
\Exp_{x,i}^{v}\left[\Lyap\left(X_{\uptau_{R}},S_{\uptau_{R}}\right)\Ind_{\{\uuptau_{r}\geq \uptau_{R}\}}\right]\leq \widehat{C}_0 \Exp_{x,i}^{v}\left[\uuptau_{r}\right] + \Lyap(x,i)\quad \forall \,\,\, r <|x|<R\,.
\end{equation*} Since $\bar{V}^{v} \in \sorder(\Lyap)$, the above estimate implies
\begin{equation*}
\liminf_{R\to\infty}\Exp_{x,i}^{v}\left[\bar{V}^{v}\left(X_{\uptau_{R}},S_{\uptau_{R}}\right)\Ind_{\{\uuptau_{r}\geq \uptau_{R}\}}\right] = 0\,.
\end{equation*}
Letting $R \to \infty$ and applying Fatou’s lemma in \cref{TErgoExisPoiss1E}, we obtain
\begin{align*}
\bar{V}^v(x,i)&\,\geq\, \Exp_{x,i}^{v}\left[\int_0^{\uuptau_{r}} \left(c(X_t,S_t, v(X_t,S_t)) - \bar{\rho}^{v}\right) \D{t} +\bar{V}^{v}\left(X_{\uuptau_{r}},S_{\uuptau_{r}}\right)\right]\nonumber\\
&\,\geq\, \Exp_{x,i}^{v}\left[\int_0^{\uuptau_{r}} \left(c(X_t,S_t, v(X_t,S_t)) - \bar{\rho}^{v}\right) \D{t}\right] +\inf_{\sB_r\times\mathbb{S}}\bar{V}^{v}\,.
\end{align*}Since $\bar{V}^{v}(0,1) =0$, letting $r\to 0$ gives
\begin{align}\label{TErgoExisPoiss1F}
\bar{V}^v(x,i)\,\geq\, \limsup_{r\downarrow 0}\Exp_{x,i}^{v}\left[\int_0^{\uuptau_{r}} \left(c(X_t,S_t, v(X_t,S_t)) - \bar{\rho}^{v}\right) \D{t} \right]\,.
\end{align}
Since $\bar{\rho}^{v} = \rho^{v}$, combining \cref{TErgoExisPoiss1B} and \cref{TErgoExisPoiss1F} gives $V^v - \bar{V}^v \leq 0$ in $\Rd\times \mathbb{S}$. Finally, since both $(V^{v}, \rho^{v})$ and $(\bar{V}^{v}, \bar{\rho}^{v})$ satisfy \cref{TErgoExisPoiss1A}, we have
\begin{align*}\label{3.23}
  0 &= \sL_{v}\left(V^v - \bar{V}^v\right)(x,i) \nonumber\\
  &= \trace\bigl(a(x,i)\grad^2 \left(V^v - \bar{V}^v\right)(x,i) + b(x,i,v(x,i))\cdot \grad \left(V^v - \bar{V}^v\right)(x,i)\nonumber\\
&\quad\qquad+m_{ii}(x,v(x,i))\left(V^v - \bar{V}^v\right)(x,i)+\, \sum_{j \neq i} m_{ij}(x,v(x,i)) \left(V^v - \bar{V}^v\right)(x,j)\nonumber\\
&\leq \trace\bigl(a(x,i)\grad^2 \left(V^v - \bar{V}^v\right)(x,i)+ b(x,i,v(x,i))\cdot \grad \left(V^v - \bar{V}^v\right)(x,i)\nonumber\\
&\qquad \qquad+m_{ii}(x,v(x,i))\left(V^v - \bar{V}^v\right)(x,i)\,,\quad \text{a.e.}\, (x,i)\in\Rd\times \mathbb{S}.
\end{align*}
Therefore, by the strong maximum principle (\cite{GilTru}*{Theorem~9.6}), it follows that $V^{v} = \bar{V}^{v}$, which proves the uniqueness of the solution.
\end{proof}
\noindent
\begin{remark}
The above theorem guarantees that for every stationary Markov control $v \in \Usm$, the associated Poisson equation admits a unique solution pair $(V^{v}, \rho^{v})$ within the class $\Sobl^{2,p}(\Rd \times \mathbb{S}) \cap \sorder(\Lyap) \times \RR$.
This establishes the well-posedness of the ergodic equation under each markov control and ensures that the average cost $\rho^{v}$ and potential function $V^{v}$ are uniquely determined by the underlying dynamics.
This characterization serves as the analytical basis for the ensuing robustness analysis, which in particular establishes the
stability of $(V^{v}, \rho^{v})$ under perturbations of the system coefficients.
\end{remark}
We now turn to the main robustness result. Recall that for each approximating model (indexed by $n$) we denote by $v_n^*$ an optimal stationary Markov ergodic control (see \cref{TErgoOptApprox1}), and by
$\sE_{x,i}(c, v_{n}^*)$ the long-run average cost under control
$v$ starting from $(x,i)$ . Our aim is to prove that $\sE_{x,i}(c, v_{n}^*)\to \sE^*(c)$ as $n\to \infty$
where 
$\sE^*(c)$ denotes the optimal ergodic value for the true model.
\begin{theorem}\label{ErgodLyapRobu1}
Suppose that Assumptions \hyperlink{A1}{{(A1)}}-\hyperlink{A7}{{(A7)}} hold. Then,
\begin{equation*}\label{ErgodLyapRobu1A}
\lim_{n\to\infty} \inf_{(x,i)\in\Rd\times \mathbb{S}}\sE_{x,i}(c, v_{n}^*) = \sE^{*}(c)\,.
\end{equation*}
\end{theorem}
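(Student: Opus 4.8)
The plan is to follow the pattern of Theorem \ref{TH2.4}, with the Poisson equation of Theorem \ref{TErgoExisPoiss1} and the continuity statement of Theorem \ref{TH3.3} playing the roles of their discounted-cost counterparts. First I would note that under \hyperlink{A7}{(A7)} the Foster--Lyapunov inequality \cref{Lyap1} with $h$ inf-compact forces every $v\in\Usm$ to be positive recurrent, so (as in Theorem \ref{TErgoExisPoiss1}) the ergodic cost $\sE_{x,i}(c,v)=\rho^{v}$ is independent of $(x,i)$; hence $\inf_{(x,i)}\sE_{x,i}(c,v_n^*)=\rho^{v_n^*}$, where for each $n$ the pair $(V^{v_n^*},\rho^{v_n^*})\in\Sobl^{2,p}(\Rd\times\mathbb{S})\cap\sorder(\Lyap)\times\RR$ is the unique solution, furnished by Theorem \ref{TErgoExisPoiss1}, of the Poisson equation $\rho^{v_n^*}=\sL_{v_n^*}V^{v_n^*}(x,i)+c(x,i,v_n^*(x,i))$, $V^{v_n^*}(0,1)=0$, for the true dynamics \cref{E1.1}. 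It therefore suffices to prove $\rho^{v_n^*}\to\sE^*(c)$.

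Second, I would extract uniform-in-$n$ bounds. Since $\|c\|_\infty\le M$ we have $|\rho^{v_n^*}|\le M$, so along a subsequence $\rho^{v_{n_k}^*}\to\widehat\rho$. Because the Lyapunov pair $(\Lyap,h)$ is common to all controls, the estimates used in the proof of Theorem \ref{TH3.3} --- namely $\sup_{v\in\Usm}\Exp_{x,i}^{v}[\uuptau_r]\le\epsilon^{-1}\Lyap(x,i)$, with $r$ chosen so that $\inf_\zeta h(\cdot,i,\zeta)-\widehat C_0\ge\epsilon$ on $\sB_r^c$, and $\sup_{v\in\Usm}\Exp_{x,i}^{v}[\int_0^{\uuptau_r}(1+\tilde c)\,\D t]\in\sorder(\Lyap)$ --- hold verbatim for the true SDE. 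Feeding these into the stochastic representation \cref{TErgoExisPoiss1B} and bounding the boundary term $V^{v_n^*}(X_{\uuptau_r},S_{\uuptau_r})$ exactly as in \cref{TErgoOptCont1F}--\cref{TErgoOptCont1H} yields $\sup_n|V^{v_n^*}|\in\sorder(\Lyap)$, and then the elliptic estimate of \cite{AS}*{Theorem~2.1}, applied as in \cref{ETC1.3A}, gives $\|V^{v_n^*}\|_{\Sob^{2,p}(\sB_R\times\mathbb{S})}\le\kappa(R)$ with $\kappa(R)$ independent of $n$. By Banach--Alaoglu and a diagonalization argument I may then assume, along a further subsequence (same index $n_k$), that $V^{v_{n_k}^*}\to\widehat V$ weakly in $\Sobl^{2,p}(\Rd\times\mathbb{S})$ and strongly in $\cC^{1,\beta}_{\mathrm{loc}}(\Rd\times\mathbb{S})$, and --- by compactness of $\Usm$ --- that $v_{n_k}^*\to\widehat v^*$ in $\Usm$.

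Third, I would pass to the limit. Testing the Poisson equation against $\phi\in\cC_c^\infty(\Rd\times\mathbb{S})$ and handling the drift and switching terms by the splitting used in \cref{ETC1.4EA}--\cref{ETC1.4F} (a ``$v_{n_k}^*$ against $\grad(V^{v_{n_k}^*}-\widehat V)$'' piece vanishing strongly on compacts, plus a ``$(b(\cdot,v_{n_k}^*)-b(\cdot,\widehat v^*))$ against $\grad\widehat V$'' piece vanishing weakly by the $\Usm$-topology, and likewise for the $m_{ij}$ terms), one obtains that $(\widehat V,\widehat\rho)$ solves $\widehat\rho=\sL_{\widehat v^*}\widehat V(x,i)+c(x,i,\widehat v^*(x,i))$ with $\widehat V(0,1)=0$, and the uniform bounds give $\widehat V\in\sorder(\Lyap)$; uniqueness in Theorem \ref{TErgoExisPoiss1} then forces $(\widehat V,\widehat\rho)=(V^{\widehat v^*},\rho^{\widehat v^*})$, so $\widehat\rho=\sE_{x,i}(c,\widehat v^*)$. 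To identify this with $\sE^*(c)$ I would reuse Theorem \ref{TH3.3}: along the same subsequence $\rho_{n_k}=\sE^{n_k*}(c_{n_k})\to\sE^*(c)$ and $V^{n_k*}\to V^*$ in $\cC^{1,\beta}_{\mathrm{loc}}$, and since $v_{n_k}^*$ is a minimizing selector of the approximating ergodic HJB equation \cref{TErgoOptApprox1B}, the same limiting procedure shows $\widehat v^*$ is a minimizing selector of the true ergodic HJB equation \cref{EErgoOpt1B}; hence $\widehat v^*$ is ergodic-optimal for the true model by Theorem \ref{TErgoOpt1}(ii), so $\sE_{x,i}(c,\widehat v^*)=\sE^*(c)$, i.e.\ $\widehat\rho=\sE^*(c)$. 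As this limit is independent of the chosen subsequence, $\rho^{v_n^*}\to\sE^*(c)$, which is the claim. The main obstacle is the second step --- securing the $\sorder(\Lyap)$ growth bound and the local $\Sob^{2,p}$ bound on $V^{v_n^*}$ \emph{uniformly} in $n$ --- because everything there rests on the Foster--Lyapunov estimates being uniform over the whole class $\Usm$ (exactly what \hyperlink{A7}{(A7)}(i) provides, the pair $(\Lyap,h)$ being common to all $\zeta$) and on transferring the hitting-time and boundary-term bookkeeping of \cref{TErgoOptCont1F}--\cref{TErgoOptCont1H} from the approximating models to the true model; once these uniform bounds are in hand, both limit-passages are routine adaptations of the discounted-cost arguments.
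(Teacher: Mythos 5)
Your overall architecture matches the paper's proof: you write $\inf_{(x,i)}\sE_{x,i}(c,v_n^*)=\rho^{v_n^*}$ via the Poisson equation of \cref{TErgoExisPoiss1}, extract subsequential limits $(\widehat V,\widehat\rho,\widehat v^*)$ by Banach--Alaoglu, diagonalization and compactness of $\Usm$, pass to the limit with test functions, and invoke uniqueness. Your identification of $\widehat\rho$ with $\sE^*(c)$ is a legitimate variant: you pass to the limit in the minimizing-selector identity to conclude that $\widehat v^*$ satisfies \cref{EErgoOpt1B} and then apply \cref{TErgoOpt1}(ii), whereas the paper passes to the limit in the approximating HJB equations along the same subsequence and uses uniqueness of the Poisson equation to show that $\rho^{v_{n_k}^*}=\sE_{x,i}(c,v_{n_k}^*)$ and $\rho_{n_k}=\sE^{n_k*}(c_{n_k})$ converge to the same limit $\rho^{\tilde v^*}$, concluding with the triangle inequality and \cref{TH3.3}. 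Either route works.

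There is, however, a genuine gap in your second step, precisely the one you flag as the crux. You propose the order: hitting-time estimates $\Rightarrow$ stochastic representation with the boundary term handled ``exactly as in \cref{TErgoOptCont1F}--\cref{TErgoOptCont1H}'' $\Rightarrow$ $\sup_n|V^{v_n^*}|\in\sorder(\Lyap)$ $\Rightarrow$ interior elliptic estimate as in \cref{ETC1.3A} $\Rightarrow$ uniform $\Sob^{2,p}(\sB_R\times\mathbb{S})$ bound. This is circular as written: the boundary-term bookkeeping in \cref{TErgoOptCont1F}--\cref{TErgoOptCont1H} is not self-contained, since it uses $\sup_{n}\sup_{\sB_r\times\mathbb{S}}|V^{n*}|\le\widehat M_2$, which the paper obtains from the \emph{already established} uniform Sobolev bound \cref{ETErgoOptCont1A} (i.e.\ \cref{5.5.2}) and the compact embedding into $\cC^{1,\beta}$; conversely, the interior estimate \cref{ETC1.3A} requires an a priori $L^p(\sB_{2R})$ bound on $V^{v_n^*}$, which in your plan comes only from the $\sorder(\Lyap)$ bound. (Harnack cannot be applied directly to $V^{v_n^*}$, which is not sign-definite, so there is no cheap local sup bound to break the loop.) The paper breaks the circle by taking $\|V^{v_n^*}\|_{\Sob^{2,p}(\sB_R\times\mathbb{S})}\le\hat\kappa_1$ directly from \cref{5.5.2}, whose ABP/Harnack-based constants are uniform in $\alpha\in(0,1)$ and over the control, combined with the vanishing-discount convergence $\cJ_\alpha^{v}-\cJ_\alpha^{v}(0,1)\to V^{v}$ of \cref{vd&hjb}(2); only afterwards does it run the hitting-time/boundary-term argument to obtain the $\sorder(\Lyap)$ growth. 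With that substitution (an appeal to \cref{5.5.2} rather than to \cref{ETC1.3A}) your argument goes through; without it, the uniform bounds on which both of your limit passages rest are not secured.
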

\begin{proof} We follow a proof strategy analogous to that used in \cref{TH2.4} for the discounted case. From \cref{TErgoExisPoiss1}, for each $n \in \NN$, there exists a unique pair $(V^{v_{n}^*}, \rho^{v_{n}^*})\in \Sobl^{2,p}(\Rd\times \mathbb{S})\cap\sorder{(\Lyap)}\times \RR$, $1< p < \infty$, with $V^{v_{n}^*}(0,1) = 0$, satisfying
\begin{equation}\label{ErgodLyapRobu1B}
\rho^{v_{n}^*} = \left[\sL_{v_{n}^*}V^{v_{n}^*}(x,i) + c(x,i,{v_{n}^*}(x,i))\right]
\end{equation}
From \cref{5.5.2}, there exists a constant $\hat{\kappa}_1 > 0$, independent of $n \in \NN$, such that \(\norm{V^{v_{n}^*}}_{\Sob^{2,p}(\sB_R\times \mathbb{S})}\leq \hat{\kappa}_1\). By the Banach–Alaoglu theorem and standard diagonalization argument (as in \cref{ETC1.3BC}), we deduce the existence of $\tilde{V}\in \Sobl^{2,p}(\Rd\times \mathbb{S})$ such that, along a subsequence
\begin{equation*}\label{ErgodLyapRobu1C}
\begin{cases}
V^{v_{n_k}^*}\to & \tilde{V}\quad \text{in}\quad \Sobl^{2,p}(\Rd\times \mathbb{S})\quad\text{(weakly)}\\
V^{v_{n_k}^*}\to & \tilde{V}\quad \text{in}\quad \cC^{1, \beta}_{loc}(\Rd\times \mathbb{S})\quad\text{(strongly)}\,.
\end{cases}       
\end{equation*}
for some $0 < \beta < 1 - \tfrac{d}{p}$.
Since $\rho^{v_{n}^*} \leq M$, there exists a further subsequence (denoted by the same index) such that
$\rho^{v_{n_k}^*}\to \tilde{\rho}$ as $k\to \infty$\,. As $\Usm$ is compact, we may also assume $v_{n_k}^* \to \tilde{v}^*$ as $k \to \infty$. 
Multiplying \cref{ErgodLyapRobu1B} by a test function and passing to the limit $k \to \infty$, it follows that $(\tilde{V}, \tilde{\rho})\in \Sobl^{2,p}(\Rd\times \mathbb{S})\times \RR$, \, $1< p < \infty$ satisfies 
\begin{equation*}\label{ErgodLyapRobu1D}
\tilde{\rho} = \left[\sL_{\tilde{v}^*}\tilde{V}(x,i) + c(x,i,{\tilde{v}^*}(x,i))\right]
\end{equation*}
Since $V^{v_{n_k}^*}(0,1) = 0$ for all $k \in \NN$, we have $\tilde{V}(0,1) = 0$.
Using the estimate $\norm{V^{v_{n}^*}}_{\Sob^{2,p}(\sB_R\times \mathbb{S})}\leq \hat{\kappa}_1$ and arguing as in the proof of \cref{TH3.3}, we obtain
\begin{equation*}\label{ErgodLyapRobu1E}
|\tilde{V}(x,i)| \,\leq\, M\sup_{v\in\Usm}\Exp_{x,i}^{v}\left[\int_{0}^{\uuptau_{r}} \left( c(X_t,S_t, v(X_t,S_t)) + 1\right)\D t + \sup_{n\in\NN}\sup_{\sB_r\times \mathbb{S}}|V^{v_n^*}|\right] \in \sorder{(\Lyap)}\,.
\end{equation*}
Hence, by the uniqueness result of \cref{TErgoExisPoiss1}, we conclude that $(\tilde{V}, \tilde{\rho})\equiv (V^{\tilde{v}^*}, \rho^{\tilde{v}^*})$.

By the triangle inequality,
\begin{equation*}
|\sE_{x,i}(c,v_{n_k}^*) - \sE^*(c)| \leq |\sE_{x,i}(c,v_{n_k}^*) - \sE^*(c_{n_k},v_{n_k}^*)| + |\sE^*(c_{n_k},v_{n_k}^*) - \sE^*(c)|\,. 
\end{equation*}
From \cref{TH3.3}, we know that $|\sE^*(c_{n_k},v_{n_k}^*) - \sE^*(c)| \to 0$ as $k \to \infty$.
Thus, it remains to show that $|\sE_{x,i}(c,v_{n_k}^*) - \sE^*(c_{n_k},v_{n_k}^*)|\to 0$. For any minimizing selector $v_{n_k}^*\in \Usm$ of \cref{TErgoOptApprox1A}, we have
\begin{equation}\label{ErgodLyapRobu1F}
\rho_{n_k} = \left[\sL_{v_{n_k}^*}^{n_k}V^{n_k}(x,i) + c_{n_k}(x,i,v_{n_k}^*(x,i))\right]\,.
\end{equation} 
By \cref{ETErgoOptCont1A}, there exists a constant $\hat{\kappa} > 0$, independent of $k \in \NN$, such that
\begin{equation*}\label{ErgodLyapRobu1G}
\norm{V^{n_k}}_{\Sob^{2,p}(\sB_R\times \mathbb{S})}\leq \hat{\kappa}
\end{equation*}
Hence, by the Banach–Alaoglu theorem and standard diagonalization argument (see \cref{ETC1.3BC}), there exists $\tilde{V}^* \in \Sobl^{2,p}(\Rd \times \mathbb{S})$ such that, along a subsequence,
\begin{equation*}\label{ErgodLyapRobu1H}
\begin{cases}
V^{n_k}\to & \tilde{V}^*\quad \text{in}\quad \Sobl^{2,p}(\Rd\times \mathbb{S})\quad\text{(weakly)}\\
V^{n_k}\to & \tilde{V}^*\quad \text{in}\quad \cC^{1, \beta}_{loc}(\Rd\times \mathbb{S}) \quad\text{(strongly)}\,.
\end{cases}       
\end{equation*}
Since $\rho_{n_k} \leq M$, along a further subsequence (denoted by the same index) $\rho_{n_k} \to \tilde{\rho}^*$.
As $v_{n_k}^* \to \tilde{v}^*$ in $\Usm$, multiplying both sides of \cref{ErgodLyapRobu1F} by a test function and letting $k \to \infty$, we find that $(\tilde{V}^*, \tilde{\rho}^*) \in \Sobl^{2,p}(\Rd \times \mathbb{S}) \times \RR$ satisfies
\begin{equation*}\label{ErgodLyapRobu1I}
\tilde{\rho}^* = \left[\sL_{\tilde{v}^*}\tilde{V}^*(x,i) + c(x,i,\tilde{v}^*(x,i))\right]\,.
\end{equation*}
Arguing as in \cref{TH3.3}, we have $\tilde{V}^* \in \sorder{(\Lyap)}$.
Therefore, by the uniqueness of the solution to \cref{ErgodLyapRobu1F} (see \cref{TErgoExisPoiss1}), it follows that $(\tilde{V}^*, \tilde{\rho}^*) \equiv (V^{\tilde{v}^*}, \rho^{\tilde{v}^*})$.
Since both $\rho^{v_{n_k}^*}=\sE_{x,i}(c,v_{n_k}^*)$ and $\rho_{n_k}=\sE^*(c_{n_k},v_{n_k}^*)$ converge to the same limit $\rho^{\tilde{v}^*}$, we conclude that $|\sE_{x,i}(c,v_{n_k}^*) - \sE^*(c_{n_k},v_{n_k}^*)| \to 0$ as $k \to \infty$. This completes the proof of the theorem.
\end{proof}
\section{Finite Horizon Cost}

In this section, we establish the robustness of the finite-horizon optimal control problem.
Unlike the discounted or ergodic cases, the value function here depends explicitly on time and satisfies a parabolic system of coupled HJB equations with a prescribed terminal condition.



The following theorem provides a complete characterization of the finite-horizon optimal control $v\in\Um$ within the class of Markov policies.
\begin{theorem}\label{thm:finite-horizon-TM-verification}
    Under assumptions \hyperlink{A1}{{(A1)}}-\hyperlink{A5}{{(A5)}}, the finite-horizon HJB equation
   \begin{equation}\label{FH-hjb}
    \begin{aligned}
\partial_t\psi(t,x,i) + \inf_{\zeta \in \Act} \left[\sL_\zeta \psi(t,x,i) + c(x,i,\zeta)\right] &= 0, \quad\forall\, (t,x,i)\in (0,T)\times \Rd\times \mathbb{S}\\
\psi(T,x,i) &= c_{_{T}}(x,i),\quad\forall\, (x,i)\in \Rd\times \mathbb{S}
\end{aligned} 
\end{equation}
admits a unique solution $\psi \in \Sobl^{1,2,p}((0,T)\times \Rd\times \mathbb{S})
$, for some $p > d+2$.  Moreover, 
\begin{itemize}
    \item[(1)] \(\psi(0,x,i)=\cJ^*_{T}(x,i,c)\) represents the optimal finite-horizon cost.
    \item[(2)]  a markov policy $v^*\in \Um$ is optimal if and only if it attains the pointwise minimum in \cref{FH-hjb}, i.e.,
  \begin{align*}\label{ftmmin}
&b(x,i,v^*(x,i))\cdot \grad \psi(t,x,i) \,+\, \sum_{j \in \mathbb{S}} m_{ij}(x,v^*(x,i))\psi(t,x,j)+ c(x,i,v^*(x,i))\nonumber\\ &= \min_{\zeta\in \Act}\bigg[ b(x,i,\zeta)\cdot \grad \psi(t,x,i) \,+\, \sum_{j \in \mathbb{S}} m_{ij}(x,\zeta) \psi(t,x,j)+c(x,i,\zeta)\bigg]\, \quad\text{a.e.}\,\,\, (x,i)\in\Rd\times \mathbb{S}\,.
\end{align*}
\end{itemize}
\end{theorem}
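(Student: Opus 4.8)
The plan is to establish, in order, existence of a solution $\psi$ to \cref{FH-hjb} in $\Sobl^{1,2,p}((0,T)\times\Rd\times\mathbb S)$ with an $L^\infty$ bound; then the It\^o--Krylov (Feynman--Kac type) representation $\psi(0,x,i)=\cJ_T^*(x,i,c)$, which simultaneously yields optimality of any minimizing selector; and finally the converse characterization of optimal Markov controls, from which uniqueness of $\psi$ also follows. For existence, observe that \cref{FH-hjb} is a weakly coupled system of semilinear parabolic equations, the coupling entering only through the zeroth-order terms $\sum_{j}m_{ij}(x,\zeta)\psi(t,x,j)$; by \hyperlink{A3}{{(A3)}} the principal part is uniformly parabolic on bounded cylinders. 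I would first solve the Dirichlet problems on $Q_R\df(0,T)\times\sB_R$: freezing the vector $\bigl(\psi(\cdot,\cdot,j)\bigr)_{j\in\mathbb S}$ inside the coupling reduces \cref{FH-hjb} to $N$ decoupled scalar parabolic HJB equations with bounded sources and terminal data $c_{_T}(\cdot,i)$, each classically solvable in $\Sob^{1,2,p}(Q_R)$; since $\|m_{ij}\|_\infty\le M$, the induced map on the frozen data is an $L^\infty$-contraction on short time intervals $[T-\delta,T]$, and iterating over successive intervals covers $[0,T]$ and produces $\psi_R\in\Sob^{1,2,p}(Q_R\times\mathbb S)$. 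Because $\|c\|_\infty\le M$, $\|m_{ij}\|_\infty\le M$ and $c_{_T}$ is bounded, the parabolic maximum principle bounds $\|\psi_R\|_{L^\infty}$ independently of $R$, and interior parabolic estimates (the parabolic counterpart of \cite{GilTru}*{Theorem~9.11}) bound $\|\psi_R\|_{\Sob^{1,2,p}(Q_{R'}\times\mathbb S)}$ uniformly in $R$ for each $R'<R$. Exactly as in the proof of \cref{TH2.3}, a diagonalization together with the Banach--Alaoglu theorem and the compact embedding $\Sob^{1,2,p}_{loc}\hookrightarrow\cC^{0,\beta}_{loc}$ (valid since $p>d+2$) extracts a limit $\psi\in\Sobl^{1,2,p}((0,T)\times\Rd\times\mathbb S)\cap L^\infty$ solving \cref{FH-hjb}.

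For the representation and items (1)--(2), fix $(x,i)$ and $U\in\Uadm$, and let $(X_\cdot,S_\cdot)$ solve \cref{E1.1} from $(x,i)$. Since $\psi\in\Sobl^{1,2,p}$ with $p>d+2$ and $\psi$ is bounded, the It\^o--Krylov formula \cite{ABG-book}*{Lemma~5.1.4} applied to $s\mapsto\psi(s,X_s,S_s)$ on $[0,T]$ gives
\begin{equation*}
\Exp_{x,i}^{U}\!\left[c_{_T}(X_T,S_T)\right]-\psi(0,x,i)=\Exp_{x,i}^{U}\!\left[\int_0^T\bigl(\partial_t\psi+\sL_{U_s}\psi\bigr)(s,X_s,S_s)\,\D s\right],
\end{equation*}
where the jump part of $S$ contributes precisely the switching term already built into $\sL_{U_s}$, and $\psi(T,\cdot,\cdot)=c_{_T}$ was used. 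Since $\partial_t\psi+\sL_\zeta\psi+c(\cdot,\cdot,\zeta)\ge0$ for every $\zeta$ by \cref{FH-hjb}, the integrand dominates $-c(X_s,S_s,U_s)$, whence $\psi(0,x,i)\le\cJ_T^{U}(x,i,c)$; taking the infimum over $U\in\Uadm$ gives $\psi(0,x,i)\le\cJ_T^*(x,i,c)$. A standard measurable selection theorem (the map $\zeta\mapsto\sL_\zeta\psi(t,x,i)+c(x,i,\zeta)$ is continuous on the compact set $\Act$) yields $v^*\in\Um$ attaining the pointwise minimum in \cref{FH-hjb} a.e.; along the solution under $v^*$ the integrand above is an equality, so $\psi(0,x,i)=\cJ_T^{v^*}(x,i,c)\ge\cJ_T^*(x,i,c)$. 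Hence $\psi(0,\cdot,\cdot)=\cJ_T^*(\cdot,\cdot,c)$ and any such $v^*$ is optimal, proving (1) and the sufficiency in (2). Running the same computation on $[t,T]$ and using that the coefficients are time-homogeneous, so that $\psi(t,x,i)$ is the value at time $t$, shows that any two $\Sobl^{1,2,p}\cap L^\infty$ solutions of \cref{FH-hjb} coincide, giving uniqueness.

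For the necessity in (2), let $v^*\in\Um$ be optimal. By the dynamic programming principle $\psi$ is the value function at every time, so applying the It\^o--Krylov identity to $\psi$ along the trajectory under $v^*$ on $[t,T]$ and using optimality forces $\Exp_{x,i}^{v^*}\!\bigl[\int_t^T\bigl(\partial_t\psi+\sL_{v^*}\psi+c(\cdot,\cdot,v^*)\bigr)(s,X_s,S_s)\,\D s\bigr]=0$. Since the integrand is nonnegative a.e.\ (it dominates $\partial_t\psi+\inf_\zeta[\sL_\zeta\psi+c]=0$), it must vanish $\D s\otimes\Prob$-a.e.\ along the trajectory; by the nondegeneracy \hyperlink{A3}{{(A3)}} the law of $(s,X_s,S_s)$ is absolutely continuous with respect to Lebesgue measure, and letting the starting pair $(x,i)$ and time $t$ vary we conclude that $v^*$ attains the minimum in \cref{FH-hjb} for a.e.\ $(t,x,i)$.

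The technically heaviest step is the construction of the solution of the \emph{coupled} parabolic HJB system on the unbounded domain together with the $\Sobl^{1,2,p}$ regularity: the off-diagonal terms $m_{ij}\psi(\cdot,\cdot,j)$ must be handled by the freezing/short-time-contraction scheme, and the passage $R\to\infty$ requires the maximum-principle $L^\infty$ bound in tandem with uniform interior parabolic estimates. Once $\psi$ is in hand, the verification argument is essentially the parabolic analogue of the computation carried out for the discounted problem in \cref{TH2.3}, the only genuinely new ingredients being the time-derivative term and the switching contribution in the It\^o--Krylov formula.
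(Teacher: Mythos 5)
Your overall strategy coincides with the paper's: solve the coupled parabolic HJB system on bounded cylinders, pass to the whole space via uniform estimates, diagonalization and Banach--Alaoglu, and then obtain (1)--(2) by the It\^o--Krylov verification argument. The difference is that the paper outsources both halves --- it invokes \cite{DTSB}*{Theorem 1} for unique solvability in $\Sob^{1,2,p}((0,T)\times\sB_R\times\mathbb{S})$ on each ball, and \cite{FH}*{Theorem 3.5, Proposition 3.6} for the representation and the characterization of optimal Markov policies --- whereas you build the cylinder solutions yourself by freezing the zeroth-order coupling and running a short-time $L^\infty$-contraction (legitimate, since $\norm{m_{ij}}_\infty\le M$ and the coupling is of order zero), and you carry out the verification and measurable-selection steps directly. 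This is a sound and more self-contained route; the paper buys brevity at the price of external dependencies. One small omission: you never specify lateral boundary data for the Dirichlet problems on $(0,T)\times\sB_R$; any bounded choice works, since only the interior estimates and the subsequent verification are used, but it should be said.

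Two points need tightening. First, your uniqueness argument identifies a solution with the value function through the stochastic representation, and on the unbounded domain that representation requires localization by the exit times $\uptau_R$ together with boundedness (or at least a growth condition) of the solution to discard the boundary terms; what you actually prove is therefore uniqueness within $\Sobl^{1,2,p}\cap L^\infty$, slightly narrower than the class stated in the theorem --- say so explicitly (the paper's one-line uniqueness claim is no sharper on this point). Second, in the necessity half of (2) the measure-theoretic step is stated in the wrong direction: absolute continuity of the law of $(s,X_s,S_s)$ with respect to Lebesgue measure transfers Lebesgue-null sets to $\Prob$-null sets, whereas you need the converse, namely that the occupation measure dominates Lebesgue measure on each regime sheet. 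The standard fix is to restrict to the event of no switch on $[0,T]$ (which has positive probability because the switching rates are bounded by $M$) and use the strict positivity of the transition density of the nondegenerate diffusion guaranteed by \hyperlink{A3}{(A3)}, then let the initial regime $i\in\mathbb{S}$ and the initial point vary; with this correction, the vanishing of the nonnegative integrand $\D s\otimes\Prob$-a.e.\ does yield pointwise minimality of $v^*$ for a.e.\ $(t,x)$ and every $i$, as required.
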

\begin{proof}
    Assumptions \hyperlink{A1}{{(A1)}}-\hyperlink{A5}{{(A5)}} meet the conditions of \cite{DTSB}*{Theorem 1}. Hence, by \cite{DTSB}*{Theorem 1}(see footnote-8, page 9) for each ball $\cB_R$ of radius $R$, there exists a unique solution  $\psi_R \in \Sob^{1,2,p}((0,T)\times \cB_R\times \mathbb{S})$ of \cref{FH-hjb}. Applying standard diagonalization argument and the Banach–Alaoglu theorem, and letting $R\to\infty$ we obtain a unique global solution $\psi \in \Sobl^{1,2,p}((0,T)\times \Rd\times \mathbb{S})$ of \cref{FH-hjb}. Finally, by It$\hat{\rm o}$–Krylov formula (\cite{ABG-book}*{Lemma~5.1.4}) as in \cite{FH}*{Theorem~3.5} and \cite{FH}*{Proposition~3.6}, one can prove $(1)$ and $(2)$.
\end{proof}

Similarly, for each $n\in\NN$ corresponding to the approximating models, we obtain the following complete characterization of the finite-horizon optimal control.
 
\begin{theorem}\label{thm:finite-horizon-APM-verification}
Under assumption \hyperlink{A6}{{(A6)}}(iii), for each $n\in\NN$, there exists a unique solution  $\psi_n \in \Sobl^{1,2,p}((0,T)\times\Rd\times \mathbb{S})$, $p > d+2$ to the finite-horizon HJB equation associated with the $n$-th approximating model:
\begin{equation}
    \begin{aligned}\label{FHAP-hjb}
\partial_t\psi_n(t,x,i) + \inf_{\zeta \in \Act}\Big[\sL_{\zeta} \psi_n(t,x,i) + c_n(x,i,\zeta)\Big] &= 0,\quad\forall\, (t,x,i)\in (0,T)\times \Rd\times \mathbb{S}\\
\psi_n(T,x,i) &= c_{_{T}}(x,i), \quad\forall\, (x,i)\in \Rd\times \mathbb{S}
\end{aligned} 
\end{equation}
Moreover,
\begin{itemize}
    \item[(1)] \(\psi_n(0,x,i)=\cJ^*_{T,n}(x,i,c_n)\) represents the optimal cost for the $n$-th approximating model.
    \item[(2)]  a markov policy $v^*_n\in \Um$ is optimal control if and only if it minimizes the Hamiltonian in \cref{FHAP-hjb} pointwise, that is,
  \begin{align*}
&b_n(x,i,v^*_n(x,i))\cdot \grad \psi_n(t,x,i) \,+\, \sum_{j \in \mathbb{S}} m_{ij}^n(x,v^*_n(x,i))\psi_n(t,x,j)\,+ c_n(x,i,v^*_n(x,i))\nonumber\\ &= \min_{\zeta\in \Act}\Big[ b_n(x,i,\zeta)\cdot \grad \psi_n(t,x,i) \,+\, \sum_{j \in \mathbb{S}} m_{ij}^n(x,\zeta) \psi_n(t,x,j)+ c_n(x,i,\zeta)\Big]\, \quad\text{a.e.}\,\,\, (x,i)\in\Rd\times \mathbb{S}\,.
\end{align*}
\end{itemize}
\end{theorem}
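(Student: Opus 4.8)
\emph{Overall approach.} The plan is to follow the proof of \cref{thm:finite-horizon-TM-verification} almost verbatim: the point is that Assumption \hyperlink{A6}{{(A6)}}(iii) is designed so that, for each fixed $n$, the data $b_n,\upsigma_n,m_{ij}^n,c_n$ satisfy exactly the structural hypotheses \hyperlink{A1}{{(A1)}}--\hyperlink{A4}{{(A4)}} together with the boundedness ($\|m_{ij}^n\|_\infty,\|c_n\|_\infty\le M$), joint continuity, and local Lipschitz-in-$x$ requirements needed to invoke \cite{DTSB}*{Theorem~1}. Hence the existence--uniqueness theory and the verification argument used for the true model transfer directly to the $n$-th approximating equation \cref{FHAP-hjb}; no uniformity in $n$ is needed here.

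\emph{Existence and uniqueness.} Fix $n\in\NN$ and $R>0$. As in the true-model case, \cite{DTSB}*{Theorem~1} (cf.\ the footnote on p.~9 there) yields a unique $\psi_{n,R}\in\Sob^{1,2,p}((0,T)\times\cB_R\times\mathbb{S})$ solving \cref{FHAP-hjb} on $\cB_R$ with terminal datum $c_{_{T}}$, for $p>d+2$. Interior parabolic $\Sob^{1,2,p}$ estimates applied to the linear equation obtained by freezing a measurable minimizing selector (in the spirit of the elliptic estimate \cref{ETC1.3A} used in \cref{TH2.3}), together with the uniform bound $\|\psi_{n,R}\|_\infty\le T\|c_n\|_\infty+\|c_{_{T}}\|_\infty\le TM+\|c_{_{T}}\|_\infty$, bound $\psi_{n,R}$ in $\Sob^{1,2,p}$ on compact space--time subsets independently of $R$. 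The Banach--Alaoglu theorem and a diagonal extraction along an exhausting sequence of balls then give a limit $\psi_n\in\Sobl^{1,2,p}((0,T)\times\Rd\times\mathbb{S})$; since $p>d+2$ the embedding into $\cC^{1,\beta}_{\mathrm{loc}}$ in the space variable is compact, so the spatial gradients converge locally uniformly, which permits passage to the limit inside the Hamiltonian $\inf_{\zeta\in\Act}[\,b_n(\cdot,\zeta)\cdot\grad\psi_n+\sum_{j\in\mathbb{S}} m_{ij}^n(\cdot,\zeta)\psi_n(\cdot,\cdot,j)+c_n(\cdot,\zeta)\,]$, exactly as in the min-term estimate \cref{2.11}. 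Hence $\psi_n$ solves \cref{FHAP-hjb} globally. Uniqueness in $\Sobl^{1,2,p}\cap L^\infty$ follows either from the verification identity below or from a parabolic comparison principle applied to the difference of two solutions.

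\emph{Verification of (1) and (2).} Fix $(x,i)\in\Rd\times\mathbb{S}$, let $(X^n,S^n)$ solve \cref{ASE1.1}, and apply the It\^o--Krylov formula \cite{ABG-book}*{Lemma~5.1.4} to $\psi_n$ (legitimate since $\psi_n\in\Sobl^{1,2,p}$ with $p>d+2$): for any $U\in\Uadm$,
\[
\Exp_{x,i}^{U}\!\left[\psi_n(T,X_T^n,S_T^n)\right]-\psi_n(0,x,i)
=\Exp_{x,i}^{U}\!\left[\int_0^T\Big(\partial_t\psi_n+\sL_{U_s}\psi_n\Big)(s,X_s^n,S_s^n)\,\D s\right].
\]
By \cref{FHAP-hjb} the integrand is $\ge-c_n(X_s^n,S_s^n,U_s)$ and $\psi_n(T,\cdot,\cdot)=c_{_{T}}$, so $\psi_n(0,x,i)\le\cJ_{T,n}^U(x,i,c_n)$; taking the infimum over $U\in\Uadm$ gives $\psi_n(0,x,i)\le\cJ_{T,n}^*(x,i,c_n)$. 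Conversely, choosing a measurable selector $v_n^*\in\Um$ attaining the pointwise infimum in \cref{FHAP-hjb} (which exists by a standard measurable selection argument, $\Act$ being compact and the bracket continuous in $\zeta$) turns the integrand into an equality, so $\psi_n(0,x,i)=\cJ_{T,n}^{v_n^*}(x,i,c_n)\ge\cJ_{T,n}^*(x,i,c_n)$. Combining the two bounds proves $\psi_n(0,x,i)=\cJ_{T,n}^*(x,i,c_n)$ and optimality of such $v_n^*$; the converse (an optimal Markov control must attain the pointwise minimum a.e.) follows exactly as in \cite{FH}*{Proposition~3.6}, the It\^o--Krylov inequality being forced to be tight a.e.\ because the occupation measure of $(X^n,S^n)$ is mutually absolutely continuous with Lebesgue measure (nondegeneracy \hyperlink{A3}{{(A3)}}). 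This is the regime-switching counterpart of \cite{FH}*{Theorem~3.5} and \cite{FH}*{Proposition~3.6}.

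\emph{Main obstacle.} The delicate step is the passage to the limit $R\to\infty$ while retaining a solution of the fully nonlinear \emph{coupled} system: one must combine weak $\Sobl^{1,2,p}$ convergence (which handles the linear second-order and $\partial_t$ terms after testing against $\cC_{\mathrm{c}}^{\infty}((0,T)\times\Rd\times\mathbb{S})$) with strong $\cC^{1,\beta}_{\mathrm{loc}}$ convergence in $x$ (needed to pass to the limit inside $\inf_{\zeta\in\Act}$, since the Hamiltonian involves $\grad\psi_n$ and the coupling terms $m_{ij}^n\psi_n(\cdot,\cdot,j)$), precisely as was done for the elliptic systems in \cref{TH2.3}--\cref{TH2.4}; the parabolic setting only adds bookkeeping in the time variable. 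A secondary point is verifying that \hyperlink{A6}{{(A6)}}(iii) does deliver a genuine $\Sob^{1,2,p}$ (rather than merely viscosity) solution on each ball through \cite{DTSB}*{Theorem~1}, which is exactly where the bounds $\|m_{ij}^n\|_\infty,\|c_n\|_\infty\le M$ and the local Lipschitz continuity of the $n$-th coefficients in their first argument are used.
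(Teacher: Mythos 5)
Your proposal is correct and follows essentially the same route as the paper: the paper proves the approximating-model theorem exactly as the true-model case (\cref{thm:finite-horizon-TM-verification}), noting that \hyperlink{A6}{(A6)}(iii) places the $n$-th coefficients under \hyperlink{A1}{(A1)}--\hyperlink{A4}{(A4)} with the required bounds, then invoking \cite{DTSB}*{Theorem~1} on each ball, a diagonalization/Banach--Alaoglu passage to a global $\Sobl^{1,2,p}$ solution, and the It\^o--Krylov formula as in \cite{FH}*{Theorem~3.5} and \cite{FH}*{Proposition~3.6} for the verification of (1) and (2). Your additional detail on the limit passage inside the Hamiltonian and the verification identity merely fleshes out the same argument.
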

Next, we establish the continuity result, showing that as the approximating models converge to the true model, the corresponding optimal finite-horizon values also converge to the optimal value of the true model. 
\begin{theorem}[Convergence of Optimal Value]\label{thm:finite-horizon-conv}
Suppose assumptions  \hyperlink{A1}{{(A1)}}-\hyperlink{A6}{{(A6)}} hold. Then
\[
\lim_{n\to\infty} \cJ_{T,n}^*(x,i,c_n) = \cJ_T^*(x,i,c).
\]
\end{theorem}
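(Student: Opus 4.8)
The plan is to adapt, in the parabolic setting, the compactness-and-uniqueness argument used for the discounted case in Theorem~\ref{TH2.3}. Let $v_n^*\in\Um$ be a measurable minimizing selector of \cref{FHAP-hjb}, which exists by Theorem~\ref{thm:finite-horizon-APM-verification}(2). Freezing this selector and moving the running cost together with the off-diagonal coupling terms to the right-hand side, \cref{FHAP-hjb} becomes, for each fixed $i\in\mathbb{S}$, the linear parabolic equation
\begin{equation*}
\partial_t\psi_n(t,x,i)+\trace\bigl(a_n(x,i)\grad^2\psi_n(t,x,i)\bigr)+b_n(x,i,v_n^*(x,i))\cdot\grad\psi_n(t,x,i)+m_{ii}^n(x,v_n^*(x,i))\,\psi_n(t,x,i)=g_n(t,x,i),
\end{equation*}
with terminal data $\psi_n(T,\cdot,i)=c_{_T}(\cdot,i)$ and
\begin{equation*}
g_n(t,x,i)=-\Bigl[c_n(x,i,v_n^*(x,i))+\sum_{j\neq i}m_{ij}^n(x,v_n^*(x,i))\,\psi_n(t,x,j)\Bigr].
\end{equation*}
Here the leading coefficient $a_n$ is continuous (indeed locally Lipschitz in $x$ by \hyperlink{A1}{(A1)}) and independent of the selector, so the linear parabolic interior $L^p$ theory applies despite $v_n^*$ being only measurable.

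Next I establish an $n$-uniform a priori bound. By the stochastic representation of $\psi_n$ on $[t,T]$ (obtained exactly as in Theorem~\ref{thm:finite-horizon-APM-verification}(1) via the It\^o--Krylov formula), or equivalently by parabolic comparison with constant-in-$x$ super/subsolutions, one has $\|\psi_n\|_{L^\infty((0,T)\times\Rd\times\mathbb{S})}\le TM+\|c_{_T}\|_\infty$ for every $n$; together with $\|m_{ij}^n\|_\infty\le M$ and $\|c_n\|_\infty\le M$ this yields $\sup_n\|g_n\|_{L^p((0,T)\times\sB_{2R})}<\infty$ for every $R>0$ and $p>d+2$. Applying the interior $L^p$-estimate for parabolic equations on $(0,T)\times\sB_{2R}$ (as used in the existence proof of Theorem~\ref{thm:finite-horizon-TM-verification}; see \cite{DTSB}), and using $c_{_T}\in\Sob^{2,p}(\Rd\times\mathbb{S})$ to control the contribution of the terminal data, gives
\begin{equation*}
\|\psi_n\|_{\Sob^{1,2,p}((0,T)\times\sB_R\times\mathbb{S})}\le\kappa(R),
\end{equation*}
with $\kappa(R)$ independent of $n$. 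Since for $p>d+2$ the space $\Sob^{1,2,p}((0,T)\times\sB_R)$ embeds compactly into $\cC^{1,\beta}([0,T]\times\bar\sB_R)$ in the parabolic sense (in particular, convergence is uniform on $[0,T]\times\bar\sB_R$ together with the spatial gradient), a standard diagonalization argument combined with the Banach--Alaoglu theorem produces a subsequence $\{\psi_{n_k}\}$ and a function $\psi^*\in\Sobl^{1,2,p}((0,T)\times\Rd\times\mathbb{S})$ with $\psi_{n_k}\to\psi^*$ weakly in $\Sobl^{1,2,p}$ and strongly in $\cC^{1,\beta}_{loc}((0,T)\times\Rd\times\mathbb{S})$.

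It remains to identify $\psi^*$ with $\psi$. Using the continuous convergence $b_{n_k}(x,i,\cdot)\to b(x,i,\cdot)$, $c_{n_k}(x,i,\cdot)\to c(x,i,\cdot)$ and $m^{n_k}_{ij}(x,\cdot)\to m_{ij}(x,\cdot)$ on the compact set $\Act$ from \hyperlink{A6}{(A6)}(ii) together with the strong $\cC^{1,\beta}_{loc}$-convergence of $\psi_{n_k}$, one shows — exactly as in the derivation of \cref{2.11} — that, as $k\to\infty$,
\begin{equation*}
\max_{(t,x)\in[0,T]\times K}\Babs{\inf_{\zeta\in\Act}\bigl[\sL_\zeta^{n_k}\psi_{n_k}(t,x,i)+c_{n_k}(x,i,\zeta)\bigr]-\inf_{\zeta\in\Act}\bigl[\sL_\zeta\psi^*(t,x,i)+c(x,i,\zeta)\bigr]}\longrightarrow 0
\end{equation*}
for every compact $K\subset\Rd$ and every $i\in\mathbb{S}$. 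Testing \cref{FHAP-hjb} against $\phi\in\cC_{\mathrm{c}}^{\infty}((0,T)\times\Rd\times\mathbb{S})$, using the weak convergence of $\partial_t\psi_{n_k}$ and $\grad^2\psi_{n_k}$ (and $a_{n_k}\to a$ a.e.\ together with the affine-growth bound from \hyperlink{A2}{(A2)} to handle the leading term), and letting $k\to\infty$ shows that $\psi^*$ satisfies the parabolic HJB equation in \cref{FH-hjb} a.e.; the uniform convergence up to $t=T$ gives $\psi^*(T,x,i)=c_{_T}(x,i)$. By the uniqueness assertion of Theorem~\ref{thm:finite-horizon-TM-verification}, $\psi^*\equiv\psi$, so in particular $\psi_{n_k}(0,x,i)\to\psi(0,x,i)$, i.e.\ $\cJ_{T,n_k}^*(x,i,c_{n_k})\to\cJ_T^*(x,i,c)$. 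Since the limit does not depend on the subsequence chosen, the full sequence converges, which proves the claim.

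The main obstacle is the $n$-uniform parabolic a priori estimate and the careful treatment of the terminal time: unlike the elliptic case of Theorem~\ref{TH2.3}, the estimate must absorb the (only $\Sob^{2,p}$-regular) terminal datum $c_{_T}$, and one must verify that the chosen parabolic Sobolev--H\"older embedding genuinely delivers convergence up to $t=T$, so that the terminal condition survives in the limit. A secondary, optional refinement — mirroring the treatment in Theorem~\ref{TH2.4} — would be to identify $\psi^*$ with the finite-horizon value function directly via the It\^o--Krylov formula rather than through PDE uniqueness; invoking the uniqueness already recorded in Theorem~\ref{thm:finite-horizon-TM-verification} is the more economical route.
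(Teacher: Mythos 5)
Your proposal is correct and follows essentially the same route as the paper's proof: freeze a minimizing selector, obtain an $n$-uniform $\Sob^{1,2,p}$ estimate on $(0,T)\times\sB_R\times\mathbb{S}$, extract a subsequence converging weakly in $\Sobl^{1,2,p}$ and strongly (locally) in a compactly embedded space, pass to the limit in the weak formulation using the continuous convergence of $b_n,c_n,m_{ij}^n$, and conclude by uniqueness of the limiting finite-horizon HJB. The only cosmetic difference is that you use the parabolic embedding into $\cC^{1,\beta}$ where the paper uses $\Sob^{1,2,p}\hookrightarrow\Sob^{0,1,p}$, and you spell out the $L^\infty$ bound and the subsequence-to-full-sequence step that the paper leaves implicit.
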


\begin{proof}
Let $v_n^*$ be any minimizing selector of the Hamiltonian in \eqref{FHAP-hjb}, we have
\begin{equation}\label{FHAP-hjb1}
\begin{aligned}
\partial_t\psi_n(t,x,i) + \sL_{v_n^*}^n \psi_n + c_n(x,i,v_n^*(t,x,i)) &= 0,\quad\forall\, (t,x,i)\in (0,T)\times \Rd\times \mathbb{S}\\
\psi_n(T,x,i) &= c_{_{T}}(x,i),\quad\forall\, (x,i)\in  \Rd\times \mathbb{S}
\end{aligned}
\end{equation}
Thus, by \cref{thm:finite-horizon-APM-verification} and the parabolic PDE estimates \cite{DTSB}*{Theorem 1}, for any $p>d+2$ and $R>0$, the solution of \cref{FHAP-hjb1} satisfies
\begin{equation}\label{FHestm}
\|\psi_n\|_{\Sob^{1,2,p}((0,T)\times\sB_R\times\mathbb{S})} \le \tilde{\kappa}_1 (1+\tilde{\kappa}+\|c_n\|_{\infty}|\sB_{2R}|^{\frac{1}{p}}+\|c_{_{T}}\|_\infty). 
\end{equation}
for some $\tilde{\kappa}_1,\, \tilde{\kappa}>0$.
Thus, from \eqref{FHestm}, we obtain
\begin{equation}\label{FHapestm}
\|\psi_n\|_{\Sob^{1,2,p}((0,T)\times\sB_R\times\mathbb{S})} \le \tilde{\kappa}_3
\end{equation}
for some positive constant $\tilde{\kappa}_3$ independent of $n$. Since $\Sob^{1,2,p}((0,T)\times\sB_R)$ is a reflexive Banach space, it follows that $\Sob^{1,2,p}((0,T)\times\sB_R\times\mathbb{S})$ is also a reflexive Banach space, in view of \cref{FHapestm}, compact embedding $\Sob^{1,2,p} \hookrightarrow \Sob^{0,1,p}$ and by the arguments as in \cref{{ETC1.3B}}-\cref{ETC1.3BC} there exists $\bar{\psi} \in \Sobl^{1,2,p}((0,T)\times\Rd\times \mathbb{S})$ such that (along a subsequence, denoted by the same sequence)
\begin{equation}\label{FHcgs}
\begin{aligned}
\psi_n &\to \bar{\psi} \quad \text{in } \Sobl^{1,2,p}((0,T)\times\Rd\times \mathbb{S})\quad\text{(weakly)} \\
\psi_n &\to \bar{\psi} \quad \text{in } \Sobl^{0,1,p}((0,T)\times\Rd\times \mathbb{S})\quad\text{(strongly)}.
\end{aligned}
\end{equation}
Multiplying both sides of \eqref{FHAP-hjb} by a test function $\varphi \in \cC_c^\infty((0,T)\times \Rd\times \mathbb{S})$ and integrating, we obtain
\begin{equation}\label{FH3.5}
\int_0^T \int_{\Rd} \partial_t\psi_n(t,x,i)\,\varphi(t,x,i)\,dxdt + 
\int_0^T \int_{\Rd} \inf_{\zeta\in \Act} \big[\sL_\zeta^n \psi_n + c_n(x,i,\zeta)\big]\varphi(t,x,i)\,dxdt = 0.
\end{equation}
Thus, in view of  \cref{FHcgs} and by letting $n\to\infty$,
from \cref{FH3.5}, it follows 
(by arguments analogous to those in \cref{ETC1.3BC}--\cref{ETC1.3E}) that 
$\bar{\psi}$ satisfies the limiting HJB equation

\begin{equation*}\label{abc}
\begin{aligned}
\partial_t \bar{\psi}(t,x,i) + \inf_{\zeta \in \Act}\big[\sL_\zeta \bar{\psi}(t,x,i) + c(x,i,\zeta)\big] &= 0,\quad\forall\, (t,x,i)\in (0,T)\times \Rd\times \mathbb{S} \\
\bar{\psi}(T,x,i) &= c_{_{T}}(x,i),\quad\forall\, (x,i)\in \Rd\times \mathbb{S} 
\end{aligned}
\end{equation*}
Since the finite-horizon HJB equation admits a unique solution, we conclude that 
$\bar{\psi}(t,x,i)=\psi(t,x,i)$ for all $(t,x,i)\in [0,T]\times\Rd\times\mathbb{S}$, 
and in particular,
\[
\bar{\psi}(0,x,i)=\psi(0,x,i)=\cJ^*_T(x,i,c).
\]
\end{proof}
Next, we prove the robustness result, i.e., if we use the optimal control policy of the approximating model in the true model, as the approximating model approaches the true model, the associated finite horizon cost converges to the optimal value of the true model.
\begin{theorem}[Robustness for Finite Horizon]\label{thm:finite-horizon-robust}
Suppose Assumptions \hyperlink{A1}{{(A1)}}-\hyperlink{A6}{{(A6)}}  hold. Then for any optimal control $v_n^*$ of the approximating models, we have
\[
\lim_{n\to\infty} \cJ_T^{v_n^*}(x,i,c) = \cJ_T^*(x,i,c).
\]
\end{theorem}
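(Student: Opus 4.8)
The plan is to mirror the argument of Theorem~\ref{TH2.4}, replacing the elliptic coupled HJB system by the parabolic one of \cref{FH-hjb} and the topology of $\Usm$ by that of $\Um$. Fix $(x,i)\in\Rd\times\mathbb{S}$ and write, by the triangle inequality,
\[
\babs{\cJ_T^{v_n^*}(x,i,c) - \cJ_T^*(x,i,c)} \;\le\; \babs{\cJ_T^{v_n^*}(x,i,c) - \cJ_{T,n}^*(x,i,c_n)} + \babs{\cJ_{T,n}^*(x,i,c_n) - \cJ_T^*(x,i,c)}.
\]
By Theorem~\ref{thm:finite-horizon-conv} the second term tends to $0$, so it remains to prove $\babs{\cJ_T^{v_n^*}(x,i,c) - \cJ_{T,n}^*(x,i,c_n)}\to 0$.

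To this end, freeze the (space--time) Markov control $v_n^*$ in the \emph{true} dynamics and consider the linear parabolic Cauchy problem
\[
\partial_t\phi_n(t,x,i) + \sL_{v_n^*}\phi_n(t,x,i) + c\bigl(x,i,v_n^*(t,x,i)\bigr) = 0 \ \text{ on } (0,T)\times\Rd\times\mathbb{S},\qquad \phi_n(T,\cdot,\cdot)=c_{_{T}}.
\]
As in Theorem~\ref{thm:finite-horizon-TM-verification} (solve on balls via \cite{DTSB}*{Theorem~1}, diagonalize and let $R\to\infty$), this has a unique solution $\phi_n\in\Sobl^{1,2,p}((0,T)\times\Rd\times\mathbb{S})$ for some $p>d+2$, and the It$\hat{\rm o}$--Krylov formula (\cite{ABG-book}*{Lemma~5.1.4}) gives $\phi_n(0,x,i)=\cJ_T^{v_n^*}(x,i,c)$. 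Since $\norm{c}_\infty,\norm{c_n}_\infty,\norm{m_{ij}}_\infty\le M$ and $c_{_{T}}\in\Sob^{2,p}\cap L^\infty$, the parabolic estimate of \cite{DTSB}*{Theorem~1} furnishes, for each $R>0$, a bound $\norm{\phi_n}_{\Sob^{1,2,p}((0,T)\times\sB_R\times\mathbb{S})}\le\tilde\kappa_R$ with $\tilde\kappa_R$ independent of $n$; the same bound holds for the optimal values $\psi_n$ of the approximating models, as recorded in \cref{FHapestm}. By the Banach--Alaoglu theorem, the compact embedding $\Sob^{1,2,p}\hookrightarrow\Sob^{0,1,p}$, and a diagonalization over $R$, extract a subsequence $\{n_k\}$ along which $\phi_{n_k}\to\hat\phi$ and $\psi_{n_k}\to\tilde\psi$, weakly in $\Sobl^{1,2,p}$ and strongly in $\Sobl^{0,1,p}$; since $\Um$ is compact we may also assume $v_{n_k}^*\to\hat v$ in $\Um$. (Because $p>d+2$, the parabolic embedding $\Sob^{1,2,p}\hookrightarrow\cC^{1,\gamma}$ in the space variable, continuous up to $t=0$, makes these convergences uniform on compact time--space sets, so $\phi_{n_k}(0,\cdot,\cdot)\to\hat\phi(0,\cdot,\cdot)$ and $\psi_{n_k}(0,\cdot,\cdot)\to\tilde\psi(0,\cdot,\cdot)$ pointwise.)

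Next, pass to the limit in both linear equations against a test function $\varphi\in\cC_c^\infty((0,T)\times\Rd\times\mathbb{S})$. Exactly as in the proof of Theorem~\ref{TH2.4}, split the drift term $b(x,i,v_{n_k}^*(t,x,i))\cdot\grad\phi_{n_k}$ and the coupling term $\sum_{j\in\mathbb{S}}m_{ij}(x,v_{n_k}^*(t,x,i))\phi_{n_k}(t,x,j)$ into a ``diagonal'' part, involving $\grad(\phi_{n_k}-\hat\phi)$ or $\phi_{n_k}(t,x,j)-\hat\phi(t,x,j)$, which vanishes by the strong $\Sobl^{0,1,p}$ convergence and the boundedness of $b$ and $m_{ij}$, and a ``coefficient'' part, $\bigl(b(x,i,v_{n_k}^*(t,x,i))-b(x,i,\hat v(t,x,i))\bigr)\cdot\grad\hat\phi$ or $\bigl(m_{ij}(x,v_{n_k}^*(t,x,i))-m_{ij}(x,\hat v(t,x,i))\bigr)\hat\phi(t,x,j)$, which vanishes weakly by the definition of convergence in $\Um$ (the relevant functions lie in the required $L^1\cap L^2$ class because $\grad\hat\phi$ and $\hat\phi$ are locally bounded and $\varphi$ has compact support). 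Hence $\hat\phi\in\Sobl^{1,2,p}((0,T)\times\Rd\times\mathbb{S})$ solves $\partial_t\hat\phi+\sL_{\hat v}\hat\phi+c(x,i,\hat v(t,x,i))=0$ with $\hat\phi(T,\cdot,\cdot)=c_{_{T}}$. The same computation, now also using the continuous convergences $b_n\to b$, $c_n\to c$, $m_{ij}^n\to m_{ij}$ of \hyperlink{A6}{{(A6)}}, applied to \cref{FHAP-hjb} with its minimizing selector $v_{n_k}^*$, shows $\tilde\psi$ solves the same linear Cauchy problem; alternatively, Theorem~\ref{thm:finite-horizon-conv} together with the pointwise-at-$t=0$ convergence already gives $\tilde\psi(0,\cdot,\cdot)=\cJ_T^*(\cdot,\cdot,c)=\psi(0,\cdot,\cdot)$, and the uniqueness part of Theorem~\ref{thm:finite-horizon-TM-verification} (for the linear problem with coefficients frozen at $\hat v$) gives $\hat\phi=\tilde\psi=\psi$. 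Therefore
\[
\cJ_T^{v_{n_k}^*}(x,i,c) = \phi_{n_k}(0,x,i) \longrightarrow \hat\phi(0,x,i) = \psi(0,x,i) = \cJ_T^*(x,i,c),
\]
and since every subsequence of $\{\cJ_T^{v_n^*}(x,i,c)\}_n$ admits a further subsequence with this same limit, the full sequence converges, proving the claim.

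The step I expect to be the main obstacle is the simultaneous passage to the limit in the first--order term $b(x,i,v_{n_k}^*)\cdot\grad\phi_{n_k}$ and the coupling term $\sum_j m_{ij}(x,v_{n_k}^*)\phi_{n_k}(\cdot,j)$: the control coefficients converge only weakly in the $\Um$-topology, so one genuinely needs the \emph{strong} $\Sobl^{0,1,p}$ convergence of the gradients $\grad\phi_{n_k}$ (and of $\phi_{n_k}$) to form the products, and one must verify that the functions $x\mapsto f(x,i,\cdot)\grad\hat\phi(t,x,i)\varphi(t,x,i)$ and $x\mapsto f(x,i,\cdot)\hat\phi(t,x,j)\varphi(t,x,i)$ lie in $L^1\cap L^2$ on $(0,T)\times\Rd$ --- this is where the uniform $\Sob^{1,2,p}$ bound, the boundedness of $c$, $c_{_{T}}$, $m_{ij}$, and the compact support of $\varphi$ enter. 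The time dependence, relative to the elliptic discounted case, only adds the bookkeeping of the $\partial_t$ term and the terminal condition, both handled by the well-posedness in Theorem~\ref{thm:finite-horizon-TM-verification}.
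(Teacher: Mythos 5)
Your proposal is correct and follows essentially the same route as the paper's proof: the triangle inequality plus Theorem~\ref{thm:finite-horizon-conv}, the fixed-policy parabolic equation for $v_n^*$ under the true dynamics with uniform $\Sob^{1,2,p}$ bounds, subsequential limits in $\Sobl^{1,2,p}$ (weak) and $\Sobl^{0,1,p}$ (strong) together with $v_{n_k}^*\to\hat v$ in $\Um$, and passage to the limit in both linear Cauchy problems. The only cosmetic difference is that the paper identifies the two limits via their It$\hat{\rm o}$--Krylov stochastic representations under the limiting control, while you invoke uniqueness of the limiting linear problem (and add the explicit subsequence-to-full-sequence step), which amounts to the same verification argument.
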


\begin{proof}
By the triangle inequality, we have
\begin{align*}
\big| \cJ^{v_n^*}_T(x,i,c) - \cJ^*_T(x,i,c) \big|
&\leq \big| \cJ^{v_n^*}_T(x,i,c) - \cJ^{*}_{T,n}(x,i,c_n) \big|
     + \big| \cJ^{*}_{T,n}(x,i,c_n) - \cJ^*_T(x,i,c) \big| .
\end{align*}

From the continuity result \cref{thm:finite-horizon-conv}, it is known that $
\big| \cJ^{*}_{T,n}(x,i,c_n) - \cJ^*_T(x,i,c) \big| \to 0$ as $n \to \infty.$
Next, we show that
\[
\big| \cJ^{v_n^*}_T(x,i,c) - \cJ^{v_n^*}_{T,n}(x,i,c_n) \big| \to 0 
\quad \text{as } n \to \infty.
\]
Along a subsequence $v_n^* \to \bar{v}$, follows from the compactness of $\Um$. In view of \cref{thm:finite-horizon-TM-verification}, for each $n \in \NN$ there exists a unique solution 
$\bar{\psi}_n \in \Sobl^{1,2,p}((0,T)\times \Rd\times \mathbb{S})$, 
$p > d+2$, to the following Poisson equation:
\begin{equation}\label{FH5.13}
\begin{aligned}
\partial_t \bar{\psi}_n(t,x,i) 
+ \sL_{v_n^*} \bar{\psi}_n(t,x,i) + c(x,i,v_n^*(t,x,i)) &= 0, \quad\forall\, (t,x,i)\in (0,T)\times \Rd\times \mathbb{S}\\
\bar{\psi}_n(T,x,i) &= c_{_{T}}(x,i),\quad\forall\, (x,i)\in \Rd\times \mathbb{S}
\end{aligned}
\end{equation}
This gives us for $R>0$,
\begin{equation*}\label{FHestm2}
\|\bar{\psi}_n\|_{\Sob^{1,2,p}((0,T)\times\sB_R\times\mathbb{S})} \le \tilde{\kappa}_4 (1+\tilde{\kappa}_5+\|c\|_{\infty}|\sB_{2R}|^{\frac{1}{p}}+\|c_{_{T}}\|_\infty).
\end{equation*}

Arguing as in \cref{thm:finite-horizon-conv}, letting $n \to \infty$ from \cref{FH5.13}, 
we deduce that there exists $\hat{\psi} \in \Sobl^{1,2,p}((0,T)\times \Rd\times \mathbb{S})$ with 
$p > d+2$, satisfying
\begin{equation}\label{abcd}
\begin{aligned}
\partial_t \hat{\psi}(t,x,i) +\sL_{\bar{v}} \hat{\psi}(t,x,i) + c(x,i,\bar{v}(t,x,i))\big] &= 0,\quad\forall\, (t,x,i)\in (0,T)\times \Rd\times \mathbb{S} \\
\hat{\psi}(T,x,i) &= c_{_{T}}(x,i),\quad\forall\, (x,i)\in \Rd\times \mathbb{S} 
\end{aligned}
\end{equation}

Now using \cref{abcd}, and applying the It$\hat{\rm o}$–Krylov formula (\cite{ABG-book}*{Lemma~5.1.4}), we deduce that
\begin{align}\label{FH5.17}
\hat{\psi}(t,x,i) 
= \Exp_{x,i}^{\bar{v}}\!\left[ \int_t^T c(X_s,S_s, \bar{v}(s,X_s,S_s))\,ds + c_{_{T}}(X_T,S_T) \right]. 
\end{align}

Moreover, we have
\begin{equation*}
\begin{aligned}
\partial_t\psi_n(t,x,i) 
+ \sL_{v_n^*}^n \psi_n(t,x,i) + c_n(x,i,v_n^*(t,x,i)) &= 0, \quad\forall\, (t,x,i)\in (0,T)\times \Rd\times \mathbb{S}\\
\psi_n(T,x,i) &= c_{_{T}}(x,i),\quad\forall\, (x,i)\in \Rd\times \mathbb{S}
\end{aligned}
\end{equation*}
Letting $n \to \infty$, as in \cref{thm:finite-horizon-conv}, 
we obtain that there exists $\tilde{\psi} \in \Sobl^{1,2,p}((0,T)\times \Rd\times \mathbb{S})$, 
$p > d+2$, satisfying
\begin{equation}\label{abcde}
\begin{aligned}
\partial_t \tilde{\psi}(t,x,i) + \sL_{\bar v}  \tilde{\psi}(t,x,i) + c(x,i,{\bar{v}(t,x,i)})\big] &= 0,\quad\forall\, (t,x,i)\in (0,T)\times \Rd\times \mathbb{S} \\
 \tilde{\psi}(T,x,i) &= c_{_{T}}(x,i),\quad\forall\, (x,i)\in \Rd\times \mathbb{S} 
\end{aligned}
\end{equation}

Now by the It$\hat{\rm o}$–Krylov formula (\cite{ABG-book}*{Lemma~5.1.4}) from \cref{abcde} it follows that
\begin{align}\label{FH5.21}
\tilde{\psi}(t,x,i) 
= \Exp_{x,i}^{\bar{v}}\!\left[ \int_t^T c(X_s,S_s, \bar{v}(s,X_s,S_s))\,ds + c_{_{T}}(X_T,S_T) \right].
\end{align}
Hence, from \cref{FH5.17} and \cref{FH5.21}, we conclude that 
$\cJ_T^{v_n^*}(x,i,c) = \bar{\psi}_n(0,x,i),$ and $\cJ^{v_n^*}_{T,n}(x,i,c_n) = \psi_n(0,x,i)$
converge to the same limit. This completes the proof.
\end{proof}

\section{Control up to an Exit Time}
In this section, we consider an optimal control problem for the exit time cost. We will assume that $\beta \in \cC(\bar{\mathcal{O}}\times\mathbb{S}\times\Act)$ and $h \in \Sob^{2,p}(\mathcal{O}\times\mathbb{S})$. Following the steps as in \cite{VSB2005}*{p. 228-229}, one can derive the following HJB equation in this setting:
\[
\min_{\zeta \in \Act} \left[\sL_\zeta \varphi(x,i) - \beta(x,i,\zeta)\,\varphi(x,i) + c(x,i,\zeta)\right] = 0, 
\quad (x,i) \in \mathcal{O}\times\mathbb{S}, 
\qquad \varphi = h \text{ on } \partial \mathcal{O}\times\mathbb{S}.
\]
In the next theorem, we provide the complete characterization of the optimal policies for the exit time control problem. 
\begin{theorem}\label{TH_ettm}
    Suppose that Assumptions \hyperlink{A1}{{(A1)}}-\hyperlink{A5}{{(A5)}} hold. Then the HJB equation associated with the exit-time control problem,
    \begin{equation}\label{exit-time-hjb}
     \begin{aligned}
    \min_{\zeta \in \Act} \left[\sL_\zeta \varphi(x,i) - \beta(x,i,\zeta)\,\varphi(x,i) + c(x,i,\zeta)\right] &= 0, 
\quad (x,i) \in \mathcal{O}\times\mathbb{S}, \\
 \varphi &= h \quad\text{ on } \partial \mathcal{O}\times\mathbb{S}.
      \end{aligned}
 \end{equation}
admits a unique solution $\hat{\cJ}^*_{e} \in\Sob^{2,p}(\mathcal{O}\times\mathbb{S})\cap\mathcal{C}(\bar{\mathcal{O}}\times\mathbb{S}),\, p\geq d$. Moreover, 
\begin{itemize}
    \item[(1)] \(\hat{\cJ}^*_{e}\) is the optimal exit time cost.
    \item[(2)]  a stationary markov policy $v^*\in \Usm$ is exit time optimal control if and only if it is a pointwise minimizer in \cref{exit-time-hjb}, i.e.,
  \begin{align}\label{ettmmin}
&b(x,i,v^*(x,i))\cdot \grad \hat{\cJ}^*_{e}(x,i) \,+\, \sum_{j \in \mathbb{S}} m_{ij}(x,v^*(x,i))\hat{\cJ}^*_{e}(x,j)- \beta(x,i,v^*(x,i))\,\hat{\cJ}^*_{e}(x,i)+ c(x,i,v^*(x,i))\nonumber\\ &= \min_{\zeta\in \Act}\Big[ b(x,i,\zeta)\cdot \grad \hat{\cJ}^*_{e}(x,i) \,+\, \sum_{j \in \mathbb{S}} m_{ij}(x,\zeta) \hat{\cJ}^*_{e}(x,j)- \beta(x,i,\zeta)\,\hat{\cJ}^*_{e}(x,i)\nonumber\\&\qquad\qquad\qquad\qquad\qquad\qquad\qquad\quad+ c(x,i,\zeta)\Big]\, \quad\text{a.e.}\,\,\, (x,i)\in\mathcal{O}\times \mathbb{S}\,.
\end{align}

\end{itemize}
 \end{theorem}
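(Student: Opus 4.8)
The plan has two stages. First I would solve the Dirichlet problem \cref{exit-time-hjb} for a function $\varphi\in\Sob^{2,p}(\mathcal{O}\times\mathbb{S})\cap\cC(\bar{\mathcal{O}}\times\mathbb{S})$, $p\ge d$; then I would identify $\varphi$ with the optimal exit-time cost via the It\^o--Krylov formula, which simultaneously delivers the verification statements (1)--(2) and the uniqueness claim.

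\emph{Existence.} Since $\mathcal{O}$ is bounded and \hyperlink{A3}{(A3)} holds, the operators $\sL_\zeta$ are uniformly elliptic on $\mathcal{O}$, while \hyperlink{A1}{(A1)} and \hyperlink{A5}{(A5)} make $a,b,m_{ij},c$ bounded and continuous on $\bar{\mathcal{O}}$ and $\beta\ge0$ bounded and continuous. Moving the coupling terms $\sum_{j\ne i}m_{ij}(x,\zeta)\varphi(x,j)$ of the $i$-th equation to the right-hand side, each equation becomes a scalar Bellman equation with nonpositive zeroth-order coefficient $m_{ii}(x,\zeta)-\beta(x,i,\zeta)$, so the weakly coupled system is cooperative. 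For fixed $v\in\Usm$ the linear Dirichlet system is uniquely solvable in $\Sob^{2,p}(\mathcal{O}\times\mathbb{S})$ (by the cooperative structure and the scalar theory of \cite{GilTru}*{Theorem~9.15}); a standard iteration on the map $v\mapsto$ (solution) --- policy improvement, or a Leray--Schauder fixed point --- combined with the interior and boundary $\Sob^{2,p}$ estimates of \cite{GilTru}*{Theorems~9.11 and~9.13} and the Banach--Alaoglu extraction used repeatedly above, produces a solution $\varphi$. A comparison with a bounded supersolution gives an a priori bound $\|\varphi\|_\infty\le\|h\|_\infty+\kappa_{\mathcal{O}}\|c\|_\infty$, and the embedding $\Sob^{2,p}(\mathcal{O}\times\mathbb{S})\hookrightarrow\cC^{0,\gamma}(\bar{\mathcal{O}}\times\mathbb{S})$ together with barriers on $\partial\mathcal{O}$ yields $\varphi\in\cC(\bar{\mathcal{O}}\times\mathbb{S})$ with $\varphi=h$ on $\partial\mathcal{O}\times\mathbb{S}$.

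\emph{Verification and uniqueness.} I would fix a measurable minimizing selector $v^*\in\Usm$ of \cref{exit-time-hjb}, which exists since $\Act$ is compact and the integrand is continuous in $\zeta$. Because $\mathcal{O}$ is bounded and the diffusion is uniformly elliptic on $\mathcal{O}$, $\sup_{U\in\Uadm}\Exp_{x,i}^{U}[\uptau(\mathcal{O})]<\infty$, so $\uptau(\mathcal{O})<\infty$ a.s. Applying the It\^o--Krylov formula (\cite{ABG-book}*{Lemma~5.1.4}) to $t\mapsto e^{-\int_0^{t\wedge\uptau(\mathcal{O})}\beta(X_s,S_s,U_s)\D s}\,\varphi(X_{t\wedge\uptau(\mathcal{O})},S_{t\wedge\uptau(\mathcal{O})})$, using $e^{-\int\beta}\in[0,1]$, the boundedness of $\varphi$, and the continuity of $\varphi$ up to $\partial\mathcal{O}$, and letting $t\to\infty$, the boundary term converges to $e^{-\int_0^{\uptau(\mathcal{O})}\beta}\,h(X_{\uptau(\mathcal{O})},S_{\uptau(\mathcal{O})})$. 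The HJB inequality $\sL_\zeta\varphi-\beta(\cdot,\cdot,\zeta)\varphi+c(\cdot,\cdot,\zeta)\ge0$ for all $\zeta\in\Act$, applied with $\zeta=U_t$, then yields $\varphi(x,i)\le\hat{\cJ}^{U}_e(x,i)$ for every $U\in\Uadm$, while equality holds along $v^*$, so $\varphi(x,i)=\hat{\cJ}^{v^*}_e(x,i)$. Hence $\varphi=\inf_{U\in\Uadm}\hat{\cJ}^{U}_e=\hat{\cJ}^{*}_e$, which is (1) and the sufficiency direction of (2); and since the same computation applies to any solution of \cref{exit-time-hjb} in the stated class, each such solution equals $\inf_{U}\hat{\cJ}^{U}_e$, giving uniqueness. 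For the necessity direction of (2): if $\hat v\in\Usm$ is optimal, the same It\^o--Krylov identity (without assuming $\hat v$ is a minimizer) gives $\hat{\cJ}^{\hat v}_e(x,i)-\varphi(x,i)=\Exp_{x,i}^{\hat v}\bigl[\int_0^{\uptau(\mathcal{O})}e^{-\int_0^t\beta\D s}\,g(X_t,S_t)\,\D t\bigr]$, where $g:=c(\cdot,\cdot,\hat v)+\sL_{\hat v}\varphi-\beta(\cdot,\cdot,\hat v)\varphi\ge0$ a.e.\ by \cref{exit-time-hjb}; optimality forces the left-hand side to vanish, and since the uniformly elliptic process assigns positive expected occupation time to every open subset of $\mathcal{O}\times\mathbb{S}$, it follows that $g\equiv0$ a.e., i.e.\ $\hat v$ attains the pointwise minimum \cref{ettmmin}.

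\emph{Main obstacle.} The delicate point is the boundary behavior at $\partial\mathcal{O}$: one must know $\partial\mathcal{O}$ is regular enough (e.g.\ of class $\cC^2$, or satisfying an exterior-sphere/cone condition) to construct barriers, so that $\varphi\in\cC(\bar{\mathcal{O}}\times\mathbb{S})$ with $\varphi=h$ on $\partial\mathcal{O}\times\mathbb{S}$, and then one must justify the passage $t\to\uptau(\mathcal{O})$ in the It\^o--Krylov identity, i.e.\ combine the continuity of $t\mapsto(X_t,S_t)$ up to the exit time with the continuity of $\varphi$ up to $\partial\mathcal{O}$. Once this is settled, the finiteness of $\Exp_{x,i}^U[\uptau(\mathcal{O})]$ and the measurable selection are routine.
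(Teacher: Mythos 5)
Your proposal is correct in substance, and its verification core (It\^o--Krylov applied to $e^{-\int_0^t\beta}\varphi(X_t,S_t)$, inequality for arbitrary $U\in\Uadm$, equality along a measurable minimizing selector, hence $\varphi=\hat{\cJ}^*_e$) is exactly the paper's argument. You diverge in two places. For existence, you build the solution from scratch by policy iteration/Leray--Schauder with the Gilbarg--Trudinger $\Sob^{2,p}$ estimates and barriers at $\partial\mathcal{O}$; the paper instead exploits the standing assumption $h\in\Sob^{2,p}(\mathcal{O}\times\mathbb{S})$ to substitute $\psi=\varphi-h$, reducing to a weakly coupled HJB Dirichlet problem with zero boundary data, which is then settled in one stroke by citing the known existence--uniqueness result \cite{AS}*{Theorem~2.1}. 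Your route is more self-contained but carries exactly the burden you identify (explicit boundary regularity for barriers and the fixed-point construction), which the paper sidesteps. For the necessity direction of (2), you derive $\hat{\cJ}^{\hat v}_e-\varphi=\Exp_{x,i}^{\hat v}\bigl[\int_0^{\uptau(\mathcal{O})}e^{-\int_0^t\beta\,\D s}\,g\,\D t\bigr]$ with $g\ge0$ and conclude $g\equiv0$ from positivity of the expected occupation measure; note that since only \hyperlink{A1}{(A1)}--\hyperlink{A5}{(A5)} (not the irreducibility condition \hyperlink{A4}{(A4)}) are assumed here, you should run this argument separately from each initial regime $i\in\mathbb{S}$, using that the nondegenerate diffusion in regime $i$ already has a Green measure of full support on $\mathcal{O}$ before the first switch. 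The paper avoids this by observing that the optimal value coincides with the fixed-policy cost $\hat{\cJ}^{v^*}_e$, which is the unique solution of the linear Dirichlet problem under $v^*$, and then comparing that equation pointwise with the HJB equation; both arguments are valid and of comparable length.
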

\begin{proof} Let $v\in \Usm$ such that \cref{ettmmin} holds. For $h \in \Sob^{2,p}(\mathcal{O}\times\mathbb{S})$, consider the Dirichlet problem
\begin{equation}\label{etdp}
    \begin{aligned}
        \min_{\zeta \in \Act} \left[\sL_\zeta \psi(x,i) - \beta(x,i,\zeta)\,\psi(x,i) +f(x,i)\right] 
&= 0 , \quad (x,i) \in\mathcal{O}\times\mathbb{S}, \\
 \psi
&= 0 \quad\text{ on } \partial \mathcal{O}\times\mathbb{S}.
    \end{aligned}
\end{equation}
where $f(x,i)= \sL_\zeta h(x,i) - \beta(x,i,\zeta)\,h(x,i) + c(x,i,\zeta)$

 By \cite{AS}*{Theorem 2.1}, \cref{etdp} has a unique solution $\psi\in \Sob^{2,p}(\mathcal{O}\times\mathbb{S})\cap\mathcal{C}_0(\bar{\mathcal{O}}\times\mathbb{S})$, by taking $\psi=\varphi-h$ in \cref{etdp}, it ensures that \cref{exit-time-hjb} has a unique solution $\varphi\in \Sob^{2,p}(\mathcal{O}\times\mathbb{S})\cap\mathcal{C}(\bar{\mathcal{O}}\times\mathbb{S})$ and using It$\hat{\rm o}$–Krylov formula (\cite{ABG-book}*{Lemma~5.1.4}) (as in  \cref{ETC1.3E}-\cref{ETC1.3FC}) we have 
 \begin{align}\label{etdps}
 \varphi(x,i)= \Exp_{x,i}^v\bigg[\int_0^{\uptau(\mathcal{O})} &e^{-\int_0^t \beta(X_s,S_s,v(X_s,S_s))\,ds}\, c(X_t,S_t,v(X_t,S_t))\,dt\nonumber\\
        &+ e^{-\int_0^{\uptau(\mathcal{O})} \beta(X_s,S_s,v(X_s,S_s))\,ds}\, h(X_{\uptau(\mathcal{O})},S_{\uptau(\mathcal{O})})
    \bigg]
 \end{align}
 Again for any $U\in \Uadm$, from \cref{exit-time-hjb} using It$\hat{\rm o}$–Krylov formula (\cite{ABG-book}*{Lemma~5.1.4}) we get,
 \begin{equation}\label{5.4}
      \varphi(x,i)\le \Exp_{x,i}^U\!\left[
        \int_0^{\uptau(\mathcal{O})} e^{-\int_0^t \beta(X_s,S_s,U_s)\,ds}\, c(X_t,S_t,U_t)\,dt
        + e^{-\int_0^{\uptau(\mathcal{O})} \beta(X_s,S_s,U_s)\,ds}\, h(X_{\uptau(\mathcal{O})},S_{\uptau(\mathcal{O})})
    \right]
 \end{equation}
Now from the definition of the optimal value, \cref{etdps} and taking infimum over all $U\in \Uadm$ in \cref{5.4} we obtain, $\varphi=\hat{\cJ}^*_{e}$. This completes the proof of  \((1)\) and the sufficiency part of \((2)\)\\
Next, let  $v^*\in \Usm$ be an optimal policy. Then for $(x,i)\in \Rd\times\mathbb{S}$,
\begin{align*}
    \hat{\cJ}^{*}_e(x,i) = \Exp_{x,i}^{v^*}\!\bigg[
    \int_0^{\uptau(\mathcal{O})} &e^{-\int_0^t \beta(X_s,S_s,v^*(X_s,S_s))\,ds}\, c(X_t,S_t,v^*(X_t,S_t))\,dt
        \\&+ e^{-\int_0^{\uptau(\mathcal{O})} \beta(X_s,S_s,v^*(X_s,S_s))\,ds}\, h(X_{\uptau(\mathcal{O})},S_{\uptau(\mathcal{O})})\bigg]
\end{align*}
from \((1)
\,\,\hat{\cJ}^{*}_e\) is the unique solution in \(\Sob^{2,p}(\mathcal{O}\times\mathbb{S})\cap\mathcal{C}(\bar{\mathcal{O}}\times\mathbb{S})\) for any \(p\geq d\)  of the following Dirichlet problem
\begin{equation}\label{et5.6}
\begin{aligned}
     \sL_{v^*} \varphi(x,i) - \beta(x,i,v^*(x,i))\,\varphi(x,i) + c(x,i,v^*(x,i)) &= 0, 
\quad (x,i) \in \mathcal{O}\times\mathbb{S}
 ,\\ \varphi &= h \quad\text{ on } \partial \mathcal{O}\times\mathbb{S}.
 \end{aligned}
\end{equation}
By comparing \cref{exit-time-hjb} and \cref{et5.6}, we obtain
\begin{align*}
0=&\sL_{v^*} \hat{\cJ}^{*}_e(x,i) - \beta(x,i,v^*(x,i))\,\hat{\cJ}^{*}_e(x,i) + c(x,i,v^*(x,i))\\
&\geq \min_{\zeta \in \Act} \left[\sL_\zeta \hat{\cJ}^{*}_e(x,i) - \beta(x,i,\zeta)\,\hat{\cJ}^{*}_e(x,i) + c(x,i,\zeta)\right]= 0 
\end{align*} 
Hence, $v^*$ is a pointwise minimizer in \cref{exit-time-hjb}. 
\end{proof}
As in \cref{TH_ettm}, following above, for each approximating model we have the following complete characterization of optimal policies in $\Usm$.

\begin{theorem}\label{etapm}
     Suppose assumption \hyperlink{A6}{{(A6)}}(iii) hold. Then for each $n\in \NN$ the HJB equation
\begin{equation}\label{exit-time-hjb-apm}
\begin{aligned}
    \min_{\zeta \in \Act} \left[\sL_\zeta ^n\varphi_n(x,i) - \beta(x,i,\zeta)\,\varphi_n(x,i) + c_n(x,i,\zeta)\right] &= 0, 
\quad (x,i) \in \mathcal{O}\times\mathbb{S}, \\
 \varphi_n &= h \quad\text{ on } \partial \mathcal{O}\times\mathbb{S}.
\end{aligned}
\end{equation}
admits a unique solution $\hat{\cJ}^*_{e,n} \in\Sob^{2,p}(\mathcal{O}\times\mathbb{S})\cap\mathcal{C}(\bar{\mathcal{O}}\times\mathbb{S})$. Moreover, \(\hat{\cJ}^*_{e,n}\) is the optimal exit time cost, and $ v^*_n\in \Usm$ is exit time optimal control if and only if it is a pointwise minimizer in \cref{exit-time-hjb-apm}, i.e.,
\begin{align*}\label{etapmin}
&b_n(x,i,v^*_n(x,i))\cdot \grad \hat{\cJ}^*_{e,n}(x,i) \,+\, \sum_{j \in \mathbb{S}} m_{ij}^n(x,v^*_n(x,i))\hat{\cJ}^*_{e,n}(x,j)\nonumber\\
&\qquad- \beta(x,i,v^*_n(x,i))\,\hat{\cJ}^*_{e,n}(x,i)+ c_n(x,i,v^*_n(x,i))\nonumber\\ &= \min_{\zeta\in \Act}\bigg[ b_n(x,i, \zeta)\cdot \grad \hat{\cJ}^*_{e,n}(x,i)+ \sum_{j \in \mathbb{S}} m_{ij}^n(x,\zeta) \hat{\cJ}^*_{e,n}(x,j)- \beta(x,i,\zeta)\,\hat{\cJ}^*_{e,n}(x,i)+ c_n(x,i,\zeta)\bigg]\, 
\end{align*}
$\text{a.e.}\,\,\, (x,i)\in\mathcal{O}\times \mathbb{S}\,.$
\end{theorem}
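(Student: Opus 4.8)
The plan is to transcribe the proof of \cref{TH_ettm} for the true model to the $n$-th approximating model, replacing the data $(b,\upsigma,c,m_{ij})$ and operator $\sL_{\zeta}$ everywhere by $(b_n,\upsigma_n,c_n,m^n_{ij})$ and $\sL_{\zeta}^n$. Under \hyperlink{A6}{{(A6)}}(iii) the coefficients $b_n,\upsigma_n,m^n_{ij}$ satisfy \hyperlink{A1}{{(A1)}}--\hyperlink{A4}{{(A4)}} and $c_n,m^n_{ij}$ are uniformly bounded, jointly continuous in $(x,\zeta)$ and locally Lipschitz in $x$ uniformly in $\zeta$; combined with the standing hypotheses of Section~6 that $\beta\in\cC(\bar{\mathcal O}\times\mathbb S\times\Act)$ (so $\beta\ge0$) and $h\in\Sob^{2,p}(\mathcal O\times\mathbb S)$, this is precisely the framework in which the Dirichlet-problem theory for weakly coupled HJB systems applies. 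First I would fix a measurable selector $v^*_n\in\Usm$ realizing the pointwise minimum in \cref{exit-time-hjb-apm} and set $\varphi_n=\psi_n+h$, reducing \cref{exit-time-hjb-apm} to the homogeneous-boundary problem
\[
\min_{\zeta\in\Act}\bigl[\sL_{\zeta}^n\psi_n(x,i)-\beta(x,i,\zeta)\,\psi_n(x,i)+f_n(x,i,\zeta)\bigr]=0\ \text{ in }\mathcal O\times\mathbb S,\qquad \psi_n=0\ \text{ on }\partial\mathcal O\times\mathbb S,
\]
where $f_n(x,i,\zeta)=\sL_{\zeta}^n h(x,i)-\beta(x,i,\zeta)\,h(x,i)+c_n(x,i,\zeta)\in\Lp^{p}(\mathcal O\times\mathbb S)$ because $h\in\Sob^{2,p}$ and all coefficients are bounded on the bounded domain $\mathcal O$. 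By \cite{AS}*{Theorem~2.1} this problem has a unique solution $\psi_n\in\Sob^{2,p}(\mathcal O\times\mathbb S)\cap\cC_0(\bar{\mathcal O}\times\mathbb S)$ for $p\ge d$, hence \cref{exit-time-hjb-apm} has a unique solution $\hat{\cJ}^*_{e,n}=\psi_n+h\in\Sob^{2,p}(\mathcal O\times\mathbb S)\cap\cC(\bar{\mathcal O}\times\mathbb S)$, which already gives existence and uniqueness.

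Next I would establish the stochastic representation and optimality exactly as in the derivation of \cref{etdps}--\cref{5.4}. Applying the It\^o--Krylov formula \cite{ABG-book}*{Lemma~5.1.4} to $t\mapsto e^{-\int_0^t\beta(X^n_s,S^n_s,v^*_n(X^n_s,S^n_s))\,ds}\,\varphi_n(X^n_t,S^n_t)$ along the solution $(X^n,S^n)$ of \cref{ASE1.1} under $v^*_n$, and using that $v^*_n$ attains the pointwise minimum, yields $\varphi_n(x,i)=\hat{\cJ}^{v^*_n}_{e,n}(x,i)$. For an arbitrary $U\in\Uadm$ the same computation together with the HJB inequality $\sL_{\zeta}^n\varphi_n-\beta(\cdot,\zeta)\varphi_n+c_n(\cdot,\zeta)\ge0$ gives $\varphi_n(x,i)\le\hat{\cJ}^{U}_{e,n}(x,i)$; taking the infimum over $U\in\Uadm$ and combining with the identity for $v^*_n$ shows $\varphi_n=\hat{\cJ}^*_{e,n}$, proving that $\hat{\cJ}^*_{e,n}$ is the optimal exit-time cost and that every pointwise minimizer is optimal. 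For the converse, given an optimal $v^*_n$, the argument of \cref{TH_ettm} (via \cref{et5.6}) shows that $\hat{\cJ}^*_{e,n}$ is the unique $\Sob^{2,p}(\mathcal O\times\mathbb S)\cap\cC(\bar{\mathcal O}\times\mathbb S)$ solution of the linear Dirichlet problem $\sL_{v^*_n}^n\varphi-\beta(\cdot,v^*_n)\varphi+c_n(\cdot,v^*_n)=0$ in $\mathcal O\times\mathbb S$, $\varphi=h$ on $\partial\mathcal O\times\mathbb S$; comparing this equation with \cref{exit-time-hjb-apm} forces $v^*_n$ to attain the pointwise minimum a.e.\ in $\mathcal O\times\mathbb S$, giving the necessity direction.

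The only substantive point — and the main obstacle — is verifying that \cite{AS}*{Theorem~2.1} genuinely covers the weakly coupled elliptic HJB system with the approximating data. Concretely one must check: uniform ellipticity of $a_n=\tfrac12\upsigma_n\upsigma_n^{\mathsf T}$ on the bounded set $\mathcal O$ (this is \hyperlink{A3}{{(A3)}} for $\upsigma_n$, which \hyperlink{A6}{{(A6)}}(iii) supplies); the admissible sign and boundedness structure of the zeroth-order couplings ($m^n_{ij}\ge0$ for $i\ne j$, $\sum_{j}m^n_{ij}=0$, $\norm{m^n_{ij}}_\infty\le M$) and of the discount coefficient ($\beta\ge0$, $\beta$ continuous), which together with the Dirichlet boundary condition ensure the maximum principle needed for uniqueness; and sufficient regularity of the data ($\Lp^p$ in $x$, continuity in $\zeta$, local Lipschitz continuity) so that the $W^{2,p}$ theory and a measurable-selection argument apply. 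Since all of these are consequences of \hyperlink{A6}{{(A6)}}(iii) and the standing assumptions of Section~6, the verification is routine, and the remainder of the proof is a direct copy of that of \cref{TH_ettm} with $n$-subscripts.
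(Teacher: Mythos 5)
Your proposal is correct and follows essentially the same route as the paper, which proves this result simply by repeating the argument of \cref{TH_ettm} with the approximating data $(b_n,\upsigma_n,c_n,m^n_{ij})$ in place of the true coefficients: the reduction to a homogeneous Dirichlet problem via subtracting $h$, the appeal to \cite{AS}*{Theorem~2.1}, the It\^o--Krylov representation and infimum over $U\in\Uadm$, and the converse via the fixed-policy Dirichlet equation are exactly the paper's steps. Your added check that \hyperlink{A6}{{(A6)}}(iii) supplies the hypotheses (nondegeneracy, boundedness and sign structure of $m^n_{ij}$, regularity of $c_n$) needed for the $\Sob^{2,p}$ theory is the right verification and matches the paper's intent.
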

By closely following the proof of \cref{TH2.3}, one can obtain the following continuity result. 

\begin{theorem}\label{thm:exit-continuity}
Suppose Assumptions \hyperlink{A1}{{(A1)}}-\hyperlink{A6}{{(A6)}} hold. Then
\[
\lim_{n\to\infty} \hat{\cJ}^*_{e,n}(x,i) = \hat{\cJ}^*_e(x,i), \quad \forall (x,i) \in \bar{\mathcal{O}}\times\mathbb{S}.
\]
\end{theorem}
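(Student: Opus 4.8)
The plan is to follow the proof of \cref{TH2.3} almost verbatim, but carried out on the fixed bounded domain $\mathcal{O}$; this is if anything simpler, since no exhaustion by balls is needed and the coefficients are uniformly nondegenerate on $\mathcal{O}$. Fix for each $n$ a minimizing selector $v_n^*\in\Usm$ of \cref{exit-time-hjb-apm}, so that $\hat{\cJ}^*_{e,n}$ solves the linear Dirichlet problem obtained by freezing the control at $v_n^*$ in \cref{exit-time-hjb-apm}, with boundary data $\hat{\cJ}^*_{e,n}=h$ on $\partial\mathcal{O}\times\mathbb{S}$; written as a scalar-coupled equation the way \cref{TH2.3} rewrites \cref{APOptDHJB1}, the right-hand side involves only $c_n(x,i,v_n^*(x,i))$ and $\sum_{j\ne i}m^{n}_{ij}(x,v_n^*(x,i))\hat{\cJ}^*_{e,n}(x,j)$. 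The first step is a uniform sup-norm bound: from the stochastic representation \cref{etdps}, together with $\beta\ge 0$, $0\le c_n\le M$ and $0\le h\le\|h\|_\infty$, one has $0\le\hat{\cJ}^*_{e,n}(x,i)\le M\sup_{U\in\Uadm}\Exp_{x,i}^{U}[\uptau(\mathcal{O})]+\|h\|_\infty$, and since $\mathcal{O}$ is bounded and the ellipticity, drift and diffusion bounds on $\mathcal{O}$ are uniform in $n$ by \hyperlink{A3}{(A3)}--\hyperlink{A6}{(A6)}(iii), the quantity $\sup_{U}\Exp_{x,i}^{U}[\uptau(\mathcal{O})]$ is finite with a bound independent of $n$; hence $\sup_n\|\hat{\cJ}^*_{e,n}\|_{L^\infty(\mathcal{O}\times\mathbb{S})}\le\widehat{M}$, and consequently the $L^p(\mathcal{O})$-norm of the right-hand side above is bounded uniformly in $n$ for every $p\in[1,\infty)$.

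Next I would absorb the boundary data: setting $\psi_n:=\hat{\cJ}^*_{e,n}-h$, the function $\psi_n$ lies in $\Sob^{2,p}(\mathcal{O}\times\mathbb{S})\cap\cC_0(\bar{\mathcal{O}}\times\mathbb{S})$ and solves a Dirichlet problem with zero boundary data and $L^p$-bounded data (using $h\in\Sob^{2,p}(\mathcal{O}\times\mathbb{S})$), so the global, up-to-the-boundary $\Sob^{2,p}$ estimate of \cite{AS}*{Theorem~2.1} (equivalently \cite{GilTru}*{Theorem~9.15}) yields $\sup_n\|\hat{\cJ}^*_{e,n}\|_{\Sob^{2,p}(\mathcal{O}\times\mathbb{S})}\le\kappa$ for some $\kappa$ independent of $n$ and any fixed $p\ge d$. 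Since $\mathbb{S}$ is finite, $\Sob^{2,p}(\mathcal{O}\times\mathbb{S})$ is reflexive and separable and, for $p>d$, compactly embedded in $\cC^{1,\gamma}(\bar{\mathcal{O}}\times\mathbb{S})$ with $\gamma<1-\tfrac{d}{p}$; hence, by the Banach--Alaoglu theorem and a diagonalization argument as in \cref{ETC1.3BC}, there is a subsequence $\{\hat{\cJ}^*_{e,n_k}\}$ and a limit $\varphi^*\in\Sob^{2,p}(\mathcal{O}\times\mathbb{S})\cap\cC(\bar{\mathcal{O}}\times\mathbb{S})$ with $\hat{\cJ}^*_{e,n_k}\to\varphi^*$ weakly in $\Sob^{2,p}(\mathcal{O}\times\mathbb{S})$ and strongly in $\cC^{1,\gamma}(\bar{\mathcal{O}}\times\mathbb{S})$; the strong $\cC^{1,\gamma}$ convergence transfers the boundary condition, so $\varphi^*=h$ on $\partial\mathcal{O}\times\mathbb{S}$.

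It remains to pass to the limit in the distributional formulation of \cref{exit-time-hjb-apm}: multiplying by $\phi\in\cC^\infty_c(\mathcal{O}\times\mathbb{S})$ and integrating, the second-order term converges by the weak $\Sob^{2,p}$ convergence, while for the Hamiltonian (first-order, zeroth-order and running-cost terms) one combines the strong $\cC^{1,\gamma}$ convergence of $\hat{\cJ}^*_{e,n_k}$ with the a.e.\ convergences $a_{n_k}\to a$, $b_{n_k}\to b$, $c_{n_k}\to c$, $m^{n_k}_{ij}\to m_{ij}$ and their uniform boundedness, invoking dominated convergence exactly as in the passage from \cref{2.11} to \cref{ETC1.3E} (note that $\beta$ is not perturbed, which makes this step slightly cleaner). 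One concludes that $\varphi^*\in\Sob^{2,p}(\mathcal{O}\times\mathbb{S})\cap\cC(\bar{\mathcal{O}}\times\mathbb{S})$ satisfies \cref{exit-time-hjb} a.e.\ on $\mathcal{O}\times\mathbb{S}$ with $\varphi^*=h$ on $\partial\mathcal{O}\times\mathbb{S}$. By the uniqueness assertion of \cref{TH_ettm}, $\varphi^*=\hat{\cJ}^*_e$; since the limit is independent of the extracted subsequence, the full sequence converges, $\hat{\cJ}^*_{e,n}\to\hat{\cJ}^*_e$ on $\bar{\mathcal{O}}\times\mathbb{S}$ (in fact in $\cC^{1,\gamma}(\bar{\mathcal{O}}\times\mathbb{S})$), which is the claim. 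The only genuinely new point relative to \cref{TH2.3} is the uniform control up to the boundary---the $n$-independent bound on $\sup_U\Exp_{x,i}^U[\uptau(\mathcal{O})]$ and the global $\Sob^{2,p}$ estimate after subtracting $h$---and I expect this to be the main (though routine) obstacle; everything else is the elliptic-regularity-plus-compactness scheme already used in the discounted case.
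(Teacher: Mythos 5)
Your proposal is correct and follows exactly the route the paper intends: the paper gives no separate proof of this theorem, stating only that it is obtained ``by closely following the proof of \cref{TH2.3}'', and your argument is precisely that adaptation carried out on the bounded domain $\mathcal{O}$ (frozen-selector Dirichlet problem, uniform $L^\infty$ bound via the stochastic representation and the $n$-independent bound on $\sup_{U}\Exp^U_{x,i}[\uptau(\mathcal{O})]$, global $\Sob^{2,p}$ estimates after subtracting $h$ as in the proof of \cref{TH_ettm}, Banach--Alaoglu plus compact embedding, limit passage in distributional form, and uniqueness from \cref{TH_ettm}). The boundary-layer details you flag as the only new ingredient are indeed the only points not already present in \cref{TH2.3}, and you handle them the same way the paper handles the fixed-policy Dirichlet problems in \cref{TH_ettm} and \cref{thm:exit-robustness}.
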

Similarly for each $n\in\NN$, let ${v}^*_{n} \in \Usm$ and 
${v}^* \in \Usm$ denote the optimal stationary Markov controls corresponding to the approximating and true models, respectively. Then, in view of \cref{thm:exit-continuity}, 
and by following the argument of \cref{TH2.4}, we obtain the following robustness result.

\begin{theorem}\label{thm:exit-robustness}
Suppose Assumptions \hyperlink{A1}{{(A1)}}-\hyperlink{A6}{{(A6)}} hold. Then
\[
\lim_{n\to\infty} \hat{\cJ}^{{v}^*_{n}}_e(x,i) = \hat{\cJ}^{{v}^*}_e(x,i), \quad \forall (x,i) \in \bar{\mathcal{O}}\times\mathbb{S}.
\]
\end{theorem}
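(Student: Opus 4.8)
The plan is to mirror the proof of \cref{TH2.4}, replacing the discounted HJB/Poisson equations on $\Rd\times\mathbb{S}$ by the exit-time Dirichlet problems on the bounded domain $\mathcal{O}\times\mathbb{S}$. Since $v^*$ is exit-time optimal for the true model, \cref{TH_ettm} gives $\hat{\cJ}^{v^*}_e=\hat{\cJ}^*_e$, so it suffices to show $\hat{\cJ}^{v_n^*}_e(x,i)\to\hat{\cJ}^*_e(x,i)$. First I would split
\[
\bigl|\hat{\cJ}^{v_n^*}_e(x,i)-\hat{\cJ}^*_e(x,i)\bigr|\le\bigl|\hat{\cJ}^{v_n^*}_e(x,i)-\hat{\cJ}^*_{e,n}(x,i)\bigr|+\bigl|\hat{\cJ}^*_{e,n}(x,i)-\hat{\cJ}^*_e(x,i)\bigr|,
\]
the second term vanishing by \cref{thm:exit-continuity}; everything reduces to the first term.

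Next I would identify the two underlying PDEs. Writing $\Psi_n:=\hat{\cJ}^{v_n^*}_e$, the argument in the first part of the proof of \cref{TH_ettm} (freeze the true-model coefficients at the stationary control $v_n^*$, solve the resulting linear Dirichlet problem via \cite{AS}*{Theorem~2.1}, and apply the It\^o--Krylov formula \cite{ABG-book}*{Lemma~5.1.4}) shows that $\Psi_n\in\Sob^{2,p}(\mathcal{O}\times\mathbb{S})\cap\cC(\bar{\mathcal{O}}\times\mathbb{S})$ is the unique solution of $\sL_{v_n^*}\Psi_n-\beta(\cdot,\cdot,v_n^*)\Psi_n+c(\cdot,\cdot,v_n^*)=0$ in $\mathcal{O}\times\mathbb{S}$ with $\Psi_n=h$ on $\partial\mathcal{O}\times\mathbb{S}$, while, by \cref{etapm}, $\hat{\cJ}^*_{e,n}$ solves the same equation with $(\sL^n_{v_n^*},\beta,c_n)$ replacing $(\sL_{v_n^*},\beta,c)$ (here $v_n^*$ being a minimizing selector of \cref{exit-time-hjb-apm}). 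Because $\|c_n\|_\infty,\|m^n_{ij}\|_\infty\le M$, $0\le\beta$ is bounded and continuous, $h\in\Sob^{2,p}(\mathcal{O}\times\mathbb{S})$, and the $a_n$ are uniformly nondegenerate on the bounded set $\mathcal{O}$, the global $\Sob^{2,p}$-estimates underlying \cite{AS}*{Theorem~2.1} bound $\|\Psi_n\|_{\Sob^{2,p}(\mathcal{O}\times\mathbb{S})}$ and $\|\hat{\cJ}^*_{e,n}\|_{\Sob^{2,p}(\mathcal{O}\times\mathbb{S})}$ uniformly in $n$. I would then invoke Banach--Alaoglu, the (now global, as $\bar{\mathcal{O}}$ is bounded) compact embedding $\Sob^{2,p}(\mathcal{O}\times\mathbb{S})\hookrightarrow\cC^{1,\beta}(\bar{\mathcal{O}}\times\mathbb{S})$ for $p>d$, $0<\beta<1-d/p$, and compactness of $\Usm$, to extract a subsequence $\{n_k\}$ with $v_{n_k}^*\to\bar v$ in $\Usm$, and $\Psi_{n_k}\to\Psi^*$, $\hat{\cJ}^*_{e,n_k}\to\hat V^*$ weakly in $\Sob^{2,p}$ and strongly in $\cC^{1,\beta}(\bar{\mathcal{O}}\times\mathbb{S})$, with $\Psi^*=\hat V^*=h$ on $\partial\mathcal{O}\times\mathbb{S}$ (the boundary values are retained precisely because the embedding is global).

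Then I would pass to the limit in both equations tested against $\phi\in\cC_c^\infty(\mathcal{O}\times\mathbb{S})$. The drift, coupled, and discount terms composed with $v_{n_k}^*$ are handled exactly as in \cref{TH2.4} and \cref{TH3.3}: e.g. split $\sum_{j}m^{n_k}_{ij}(x,v_{n_k}^*)\hat{\cJ}^*_{e,n_k}(x,j)$ as $\sum_{j}m^{n_k}_{ij}(x,v_{n_k}^*)(\hat{\cJ}^*_{e,n_k}-\hat V^*)(x,j)$, which $\to0$ strongly since the $m^{n_k}_{ij}$ are uniformly bounded and $\hat{\cJ}^*_{e,n_k}\to\hat V^*$ uniformly, plus $\sum_{j}(m^{n_k}_{ij}(x,v_{n_k}^*)-m_{ij}(x,\bar v))\hat V^*(x,j)$, which $\to0$ weakly by the $\Usm$-topology together with the continuous convergence \hyperlink{A6}{(A6)}(ii); the terms $b_{n_k}(\cdot,\cdot,v_{n_k}^*)\cdot\grad\hat{\cJ}^*_{e,n_k}$, $\beta(\cdot,\cdot,v_{n_k}^*)\hat{\cJ}^*_{e,n_k}$ and $c_{n_k}(\cdot,\cdot,v_{n_k}^*)$ are treated the same way, and the identical computation (with $b,m,\beta,c$ in place of $b_{n_k},m^{n_k},\beta,c_{n_k}$) applies to $\Psi_{n_k}$. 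The conclusion is that both $\Psi^*$ and $\hat V^*$ solve the linear Dirichlet problem $\sL_{\bar v}\varphi-\beta(\cdot,\cdot,\bar v)\varphi+c(\cdot,\cdot,\bar v)=0$ in $\mathcal{O}\times\mathbb{S}$, $\varphi=h$ on $\partial\mathcal{O}\times\mathbb{S}$, which has a unique solution in $\Sob^{2,p}(\mathcal{O}\times\mathbb{S})\cap\cC(\bar{\mathcal{O}}\times\mathbb{S})$ by \cite{AS}*{Theorem~2.1} (cf. \cref{TH_ettm}); hence $\Psi^*=\hat V^*$. Since $\hat{\cJ}^*_{e,n_k}\to\hat{\cJ}^*_e$ pointwise by \cref{thm:exit-continuity}, also $\hat V^*=\hat{\cJ}^*_e$, so every subsequential limit of $\{\hat{\cJ}^{v_n^*}_e\}$ equals $\hat{\cJ}^*_e$; a standard subsequence argument then yields $\hat{\cJ}^{v_n^*}_e(x,i)\to\hat{\cJ}^*_e(x,i)$ for all $(x,i)\in\bar{\mathcal{O}}\times\mathbb{S}$, which combined with \cref{thm:exit-continuity} finishes the proof.

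I expect the main obstacle to be the same one overcome in \cref{TH2.4}: reconciling the merely weak ($\Usm$-topology) convergence of the coefficients composed with $v_{n_k}^*$ with the strong $\cC^{1,\beta}$ convergence of the value functions when passing to the limit in the coupled and discount terms. Since this mechanism is already available, its adaptation to the bounded-domain Dirichlet setting should be essentially routine. A secondary point to check carefully is that the $\Sob^{2,p}(\mathcal{O}\times\mathbb{S})$ bound is \emph{global} up to $\partial\mathcal{O}$ with a constant independent of $n$, which rests on the uniform ellipticity and boundedness of the coefficients on $\bar{\mathcal{O}}$ and on the regularity of $\partial\mathcal{O}$ implicit in the Dirichlet theory of \cite{AS}.
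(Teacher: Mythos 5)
Your proposal is correct and follows essentially the same route as the paper's proof: the same triangle-inequality reduction via \cref{thm:exit-continuity}, the same pair of fixed-policy Dirichlet problems for $\hat{\cJ}^{v_n^*}_e$ and $\hat{\cJ}^*_{e,n}$, uniform $\Sob^{2,p}(\mathcal{O}\times\mathbb{S})$ bounds, Banach--Alaoglu with the compact embedding into $\cC^{1,\beta}$, compactness of $\Usm$, and passage to the limit in the weak formulation to show both subsequential limits solve the same limiting Dirichlet problem under $\bar v$. The only (immaterial) difference is that you conclude the two limits coincide by uniqueness of the linear Dirichlet problem, whereas the paper invokes the It\^o--Krylov stochastic representation under the limiting control; your explicit subsequence-to-full-sequence step is a slightly more careful write-up of what the paper leaves implicit.
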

\begin{proof}
The proof follows the same framework as that of the discounted case.  
By the triangle inequality, we have
\[
\big|\hat{\cJ}^{v_n^*}_e(x,i) - \hat{\cJ}^{v^*}_e(x,i)\big|
\le \big|\hat{\cJ}^{v_n^*}_e(x,i) - \hat{\cJ}^{v^*}_{e,n}(x,i)\big|
   + \big|\hat{\cJ}^{v^*}_{e,n}(x,i) - \hat{\cJ}^{v^*}_e(x,i)\big|.
\]
The second term \(\big|\hat{\cJ}^{v^*}_{e,n}(x,i) - \hat{\cJ}^{v^*}_e(x,i)\big|\to 0\) as \(n\to\infty\) by the continuity result (\cref{thm:exit-continuity}).  
For the first term, note that for each \(n\in\NN\), the function \(\hat{\cJ}^{v_n^*}_e\) satisfies the fixed-policy elliptic equation on \(\mathcal{O}\) with boundary data \(h\). Standard elliptic estimates imply uniform boundedness of \(\|\hat{\cJ}^{v_n^*}_e\|_{\Sob^{2,p}(\mathcal{O}\times\mathbb{S})}\), independent of \(n\).  
By the Banach–Alaoglu theorem and standard diagonalization argument, there exists a subsequence $\hat{\cJ}^{v_{n_{k}}^*}_e$ and a limit function \(\bar{\cJ}_e\) such that
\[
\hat{\cJ}^{v_{n_{k}}^*}_e \to \bar{\cJ}_e \text{ in } \Sob^{2,p} (\mathcal{O}\times\mathbb{S})\quad\text{weakly}, \qquad
\hat{\cJ}^{v_{n_{k}}^*}_e \to \bar{\cJ}_e \text{ in } \cC^{1,\beta}(\mathcal{O}\times\mathbb{S})\quad \text{strongly}
\]
Since the space of stationary Markov controls \(\Usm\) is compact, we may also assume \(v_{n_{k}}^*\to \hat{v}^*\) in \(\Usm\).  

Next, multiply the Dirichlet equation satisfied by \(\hat{\cJ}^{v_n^*}_e\) by a test function 
\(\phi\in \cC_c^\infty(\mathcal{O}\times\mathbb{S})\) and integrate over \(\mathcal{O}\times\mathbb{S}\).  
Passing to the limit as \(n\to\infty\), using the 
the strong convergence  of \(\hat{\cJ}^{v_n^*}_e\) in \(\cC^{1,\beta}\), and the convergence \(v_n^*\to\hat{v}^*\) in \(\Usm\), we obtain
 \(\bar{\cJ}_e\) satisfies
\[\sL_{\hat{v}^*} \bar{\cJ}_e(x,i)
   - \beta(x,i,\hat{v}^*(x,i))\bar{\cJ}_e(x,i)
   + c(x,i,\hat{v}^*(x,i)) = 0\quad in\;\; \mathcal{O}\times\mathbb{S},
   \qquad \bar{\cJ}_e = h \text{ on } \partial\mathcal{O}\times\mathbb{S}.
\]
Let $\cJ_{e,n_k}$ denote the HJB (Dirichlet) solution corresponding to the selector $v_{n_k}^*$ for the $n_k$-th approximating model. 
By the same uniform elliptic estimates, Banach--Alaoglu compactness and a diagonal argument used above, there exists a further subsequence (still indexed by $n_k$) and a limit
\[
\tilde{\cJ}_e \in \Sob^{2,p}(\mathcal{O}\times\mathbb{S})\cap\cC(\overline{\mathcal{O}}\times\mathbb{S})
\]
such that
\[
\cJ_{e,n_k}\to \tilde{\cJ}_e \ \text{in }\Sob^{2,p}
(\mathcal{O}\times\mathbb{S})\quad
    \text{weakly}, \qquad 
\cJ_{e,n_k}\to \tilde{\cJ}_e \ \text{in }\cC^{1,\beta}_{\mathrm{loc}}(\mathcal{O}\times\mathbb{S})\quad
    \text{strongly}
\]

As above, multiplying the HJB equation satisfied by \(\cJ_{e,n_k}\) by a test function 
\(\phi \in \cC_c^\infty(\mathcal{O}\times\mathbb{S})\) and integrating over \(\mathcal{O}\times\mathbb{S}\), 
then passing to the limit as \(k \to \infty\) using 
the strong convergence in \(\cC^{1,\beta}\), and the convergence \(v_{n_k}^* \to \hat{v}^*\) in \(\Usm\), 
it follows that \(\tilde{\cJ}_e\) satisfies the limiting HJB equation with boundary data \(h\):
\[
\sL_{\hat{v}^*}\tilde{\cJ}_e(x,i)
   - \beta(x,i,\hat{v}^*(x,i))\,\tilde{\cJ}_e(x,i)
   + c(x,i,\hat{v}^*(x,i)) = 0\quad in\;\; \mathcal{O}\times\mathbb{S},
   \qquad \tilde{\cJ}_e = h \text{ on } \partial\mathcal{O}\times\mathbb{S}.
\]  
Applying the It$\hat{\rm o}$–Krylov formula (\cite{ABG-book}*{Lemma~5.1.4}) to both the fixed–policy limit $\bar{\cJ}_e$ (obtained from $\hat{\cJ}_e^{v_{n_k}^*}$) and to $\tilde{\cJ}_e$ yields the same stochastic representation under the same limiting control $\hat v^*$. Hence
\[
\bar{\cJ}_e=\tilde{\cJ}_e,
\]
so the two limits coincide and the subsequential limits are unique.

\end{proof}
\begin{corollary}
    Let $v_n^\epsilon$ be an $\epsilon$-optimal control for the $n$th approximating model  for any of the four costs discussed in \cref{Cost} and \(\cJ_n(x,i,c_n)\) be the associated cost functional, then there exists \(N\in\NN\) such that \(\forall\;n\ge N,\;v_n^\epsilon\) is an $3\epsilon$-optimal for the true model. i.e.,
    \[|\cJ^{v_n^\epsilon}(x,i,c)-\cJ^*(x,i,c)|\le 3\epsilon,\qquad\forall n\ge N\]
\end{corollary}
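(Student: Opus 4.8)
The plan is to reduce the assertion to a three–term triangle inequality and then invoke, for each of the four criteria, the continuity theorems together with the robustness machinery already built in the preceding sections. Throughout, let $\cJ$, $\cJ_n$ (resp.\ $\cJ^*$, $\cJ_n^*$) denote the cost functional (resp.\ optimal value) of the true and of the $n$th approximating model under whichever of the four criteria is under consideration, and let $\cJ^{v}$, $\cJ_n^{v}$ be the costs incurred by a fixed Markov control $v$. One may assume $v_n^\epsilon$ lies in the relevant Markov class ($\Usm$ for the discounted, ergodic and exit–time problems, $\Um$ for the finite–horizon one), since the HJB characterizations of \cref{TD1.2}, \cref{TErgoOptApprox1}, \cref{thm:finite-horizon-APM-verification} and \cref{etapm} supply $\epsilon$-optimal controls of this form. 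Fix $(x,i)$ and write
\begin{align*}
\cJ^{v_n^\epsilon}(x,i,c)-\cJ^*(x,i,c)
&=\bigl[\cJ^{v_n^\epsilon}(x,i,c)-\cJ_n^{v_n^\epsilon}(x,i,c_n)\bigr]
+\bigl[\cJ_n^{v_n^\epsilon}(x,i,c_n)-\cJ_n^*(x,i,c_n)\bigr]\\
&\qquad+\bigl[\cJ_n^*(x,i,c_n)-\cJ^*(x,i,c)\bigr]\;=:\;A_n+B_n+C_n.
\end{align*}
By $\epsilon$-optimality of $v_n^\epsilon$ for the $n$th model, $0\le B_n\le\epsilon$ for every $n$ (using $\cJ_n^{v_n^\epsilon}\ge\cJ_n^*$), and by the continuity theorems \cref{TH2.3}, \cref{TH3.3}, \cref{thm:finite-horizon-conv} and \cref{thm:exit-continuity}, $C_n\to0$. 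It therefore remains to prove $A_n\to0$.

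For $A_n$ I would rerun, essentially verbatim, the argument of the robustness proofs \cref{TH2.4}, \cref{ErgodLyapRobu1}, \cref{thm:finite-horizon-robust} and \cref{thm:exit-robustness}, now applied to the sequence $\{v_n^\epsilon\}$ in place of a sequence of optimal selectors; the selector optimality was used there only to identify the limit with the value function, not for the convergence of the fixed–control cost. Passing to a subsequence, $v_{n_k}^\epsilon\to\hat v$ by compactness of the Markov control space. To $v_{n_k}^\epsilon$ one associates the fixed–control equation (Poisson, parabolic, or Dirichlet, as dictated by the criterion) for the true dynamics, whose unique solution represents $\cJ^{v_{n_k}^\epsilon}(x,i,c)$, and the analogous equation for the $n_k$th approximating dynamics, whose unique solution represents $\cJ_{n_k}^{v_{n_k}^\epsilon}(x,i,c_{n_k})$. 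The uniform estimates used in the proofs of \cref{TH2.3}, \cref{TH2.4} and \cref{TErgoExisPoiss1}, the parabolic estimates of \cite{DTSB}*{Theorem~1}, and the elliptic estimates of \cite{AS}*{Theorem~2.1} bound both families independently of $k$; Banach–Alaoglu and the compact embeddings then give a further subsequence along which both solution sequences converge weakly in $\Sobl^{2,p}$ and strongly in $\cC^{1,\beta}_{\mathrm{loc}}$ (resp.\ strongly in $\Sobl^{0,1,p}$ in the parabolic case). Passing to the limit in the weak formulations—using $a_{n_k}\to a$ a.e., the continuous–in–control convergence of $b_{n_k},c_{n_k},m^{n_k}$ from \hyperlink{A6}{(A6)}, and the topology of $\Usm$ (resp.\ $\Um$), handling the terms $b_{n_k}(\cdot,v_{n_k}^\epsilon(\cdot))$, $c_{n_k}(\cdot,v_{n_k}^\epsilon(\cdot))$, $m^{n_k}(\cdot,v_{n_k}^\epsilon(\cdot))$ exactly as in \cref{TH2.4}—shows that both limits solve the same limiting equation, namely the fixed–control equation for the true dynamics under $\hat v$. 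Uniqueness of that equation (\cref{TErgoExisPoiss1} in the ergodic case, and the corresponding uniqueness for the discounted, finite–horizon and exit–time fixed–control problems) forces the two limits to coincide, and by the It\^o–Krylov representation both equal $\cJ^{\hat v}(x,i,c)$. Hence $A_{n_k}\to0$, and since the limit does not depend on the subsequence, $A_n\to0$.

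Combining the three estimates: given $\epsilon>0$ there is $N\in\NN$ with $|A_n|\le\epsilon$ and $|C_n|\le\epsilon$ for all $n\ge N$, whence $|\cJ^{v_n^\epsilon}(x,i,c)-\cJ^*(x,i,c)|\le|A_n|+|B_n|+|C_n|\le3\epsilon$ for every $n\ge N$—the left side being in fact nonnegative, since $\cJ^{v_n^\epsilon}(x,i,c)\ge\cJ^*(x,i,c)$. I expect the main obstacle to be the $A_n\to0$ step, and within it the ergodic case: there one must control the potentials $V^{v_n^\epsilon}$ uniformly in $n$ in the weighted class $\sorder(\Lyap)$—not merely via local Sobolev bounds—so that the uniqueness result of \cref{TErgoExisPoiss1} can be applied to the limit; this is precisely where Assumption \hyperlink{A7}{(A7)} and the Foster–Lyapunov estimates enter, paralleling the relevant portion of the proof of \cref{ErgodLyapRobu1}.
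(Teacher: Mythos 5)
Your proposal is correct and follows essentially the same route as the paper: the identical three-term triangle-inequality decomposition, with the middle term controlled by $\epsilon$-optimality, the last by the continuity theorems (\cref{TH2.3}, \cref{TH3.3}, \cref{thm:finite-horizon-conv}, \cref{thm:exit-continuity}), and the first by freezing $v_n^\epsilon$ and rerunning the robustness argument of \cref{TH2.4} (and its ergodic, finite-horizon and exit-time analogues). The paper states this first step only as a one-line citation, so your detailed elaboration of it—including the subsequence/uniqueness argument and the $\sorder(\Lyap)$ issue in the ergodic case—is simply a fleshed-out version of the same proof.
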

\begin{proof}
    Given \(\epsilon>0\), there exist \(N_1,N_2,N_3\in\NN\) such that \begin{itemize}
        \item[(i)]  \(|\cJ^{v_n^\epsilon}(x,i,c)-\cJ_n^{v_n^\epsilon}(x,i,c_n)|\le \epsilon\qquad\forall\; n\ge N_1\), (by fixing $v_n^\epsilon$ and arguing as in the proof of the continuity results, e.g., \cref{TH2.4})
   \item[(ii)] \(|\cJ_n^{v_n^\epsilon}(x,i,c_n)-\cJ_n^{*}(x,i,c_n)|\le \epsilon\qquad\forall\; n\ge N_2,\) (follows from $\epsilon$-optimality of $v_n^\epsilon$) 
   \item[(iii)]\(|\cJ_n^{*}(x,i,c_n)-\cJ^*(x,i,c)|\le \epsilon\qquad\forall\; n\ge N_3\) (follows from the continuity results) 
    \end{itemize}
    From the triangle inequality and  combining (i), (ii) and (iii), we get
    \[|\cJ^{v_n^\epsilon}(x,i,c)-\cJ^*(x,i,c)|\le 3\epsilon,\qquad\forall n\ge N\]
    where \(N=\max\{N_1,N_2,N_3\}\).
\end{proof}

\begin{corollary}[Robustness under Noise Approximation]\label{cor:noise-robustness}
Consider a modified approximating model of \cref{ASE1.1} in which the driving noise is not ideal (Brownian).  
The state dynamics are given by
\begin{equation}\label{approx-noise}
\begin{aligned}
    dX_t^n&=b(X_t^n,S_t^n,U_t)dt+\upsigma(X_t^n,S_t^n)dY_t^n\\
    \mathbb{P}( S_{t+\Delta}^n &=j\;|\;S_t^n=i,\; X_s^n, U_s,\;\;s\le t) = m_{ij}\Delta t\;+\;o(\Delta t),\qquad j\ne i
\end{aligned}
\end{equation}
where the noise process \(\{Y_t^n\}\) is approximated by an Itô process of the form
\begin{align}\label{eq:noise}
    dY_t^n = \hat b_n(X_t^n,S_t^n,U_t)\,dt + \hat \upsigma_n(X_t^n,S_t^n)\,dW_t,
\end{align}
with coefficients satisfying
\(
\hat b_n \to 0,\; 
\text{and}\;\; 
\hat \upsigma_n \to I,\; 
\text{as } n \to \infty.
\)
Let \(v_n^*\) denote an optimal control for the approximating model \cref{approx-noise}-\cref{eq:noise}, and let \(\cJ^{U}(x,i,c),\;\cJ^{*}(x,i,c)\) denote, respectively the associated cost functional and the value function  for any of the four cost criteria discussed in \cref{Cost}.  
Then,
\[
\lim_{n\to\infty} \cJ^{v_n^*}(x,i,c) = \cJ^*(x,i,c).
\]
In particular, the robustness result remains valid when the system noise is approximated by an Itô process.
\end{corollary}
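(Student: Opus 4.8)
The plan is to observe that, after substituting the noise dynamics, the modified model \cref{approx-noise}--\cref{eq:noise} is nothing but a particular instance of the approximating family \cref{ASE1.1}, and then to invoke the robustness theorems already established for each of the four cost criteria. Inserting \cref{eq:noise} into the first line of \cref{approx-noise} gives
\begin{equation*}
dX_t^n \,=\, \bigl[\,b(X_t^n,S_t^n,U_t) + \upsigma(X_t^n,S_t^n)\,\hat b_n(X_t^n,S_t^n,U_t)\,\bigr]\,dt \;+\; \upsigma(X_t^n,S_t^n)\,\hat\upsigma_n(X_t^n,S_t^n)\,dW_t ,
\end{equation*}
which is exactly \cref{ASE1.1} under the identifications $b_n \df b + \upsigma\hat b_n$, $\upsigma_n \df \upsigma\hat\upsigma_n$, $c_n \df c$ and $m^n_{ij}\df m_{ij}$ (the switching mechanism in \cref{approx-noise} is that of the true model). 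So the task reduces to verifying that this data meets Assumption \hyperlink{A6}{(A6)}, and, in the ergodic case, Assumption \hyperlink{A7}{(A7)}.

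I would first dispatch \hyperlink{A6}{(A6)}. Since $\hat\upsigma_n\to I$, we get $\upsigma_n=\upsigma\hat\upsigma_n\to\upsigma$ a.e., which is (i); for any $\zeta_n\to\zeta$, continuity of $b$ in the control together with $\hat b_n\to 0$ gives $b_n(x,i,\zeta_n)=b(x,i,\zeta_n)+\upsigma(x,i)\hat b_n(x,i,\zeta_n)\to b(x,i,\zeta)$ a.e., while $c_n\equiv c$ and $m^n_{ij}\equiv m_{ij}$ converge trivially, which is (ii); local Lipschitz continuity and affine growth of $b_n,\upsigma_n$ are inherited from those of $b,\upsigma$ and the standing regularity of $\hat b_n,\hat\upsigma_n$, and the bounds on $c_n,m^n_{ij}$ come from \hyperlink{A5}{(A5)}, which is (iii) except for nondegeneracy. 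The nondegeneracy \hyperlink{A3}{(A3)} for $a_n=\tfrac12\upsigma\hat\upsigma_n\hat\upsigma_n\transp\upsigma\transp$ is the one point needing attention: writing $z\transp a_n z=\tfrac12\babs{\hat\upsigma_n\transp\upsigma\transp z}^2$ and using $\hat\upsigma_n\hat\upsigma_n\transp\succeq\tfrac12 I$ on each ball for all $n$ large (a consequence of $\hat\upsigma_n\to I$), one gets $z\transp a_n z\ge\tfrac12\,z\transp a z\ge\tfrac12 C_R^{-1}\abs{z}^2$, so $a_n$ is uniformly nondegenerate for large $n$; discarding finitely many indices, \hyperlink{A6}{(A6)}(iii) holds for all $n$.

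Once \hyperlink{A6}{(A6)} is in force, the conclusion for the discounted, finite-horizon and exit-time criteria is immediate from \cref{TH2.4}, \cref{thm:finite-horizon-robust} and \cref{thm:exit-robustness}. The hard part is the ergodic criterion, which additionally requires the Foster--Lyapunov pair of \hyperlink{A7}{(A7)}(ii) for the shifted generators $\sL^n_\zeta$: since $\sL^n_\zeta\Lyap=\sL_\zeta\Lyap+\trace\!\bigl((a_n-a)\grad^2\Lyap\bigr)+\upsigma\hat b_n\cdot\grad\Lyap$, the two correction terms are controlled by $\norm{\hat\upsigma_n\hat\upsigma_n\transp-I}$ and $\norm{\hat b_n}$ against $\grad^2\Lyap$ and $\grad\Lyap$, so for $n$ large the inequality \cref{Lyap1} degrades into one of the type \cref{Lyap2} with the same $\Lyap$ and, at worst, a slightly larger constant and a slightly smaller inf-compact $h$ — which is all that the proofs of \cref{TH3.3} and \cref{ErgodLyapRobu1} use. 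Hence \cref{ErgodLyapRobu1} applies and yields $\sE_{x,i}(c,v_n^*)\to\sE^*(c)$, completing the reduction and hence the proof.
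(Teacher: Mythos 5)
The paper states this corollary without a proof; the intended argument is exactly your reduction, namely that substituting \cref{eq:noise} into \cref{approx-noise} produces a model of the form \cref{ASE1.1} with $b_n=b+\upsigma\hat b_n$, $\upsigma_n=\upsigma\hat\upsigma_n$, $c_n=c$, $m^n_{ij}=m_{ij}$, after which the robustness theorems for the four criteria apply. Your verification of \hyperlink{A6}{(A6)} is correct and in fact more careful than the paper, in particular the uniform nondegeneracy of $a_n=\tfrac12\upsigma\hat\upsigma_n\hat\upsigma_n\transp\upsigma\transp$ for large $n$ (which implicitly requires reading $\hat\upsigma_n\to I$ as locally uniform rather than merely a.e., a point worth stating).

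The one place where your argument is thinner than it should be is the ergodic criterion. You write $\sL^n_\zeta\Lyap=\sL_\zeta\Lyap+\trace\bigl((a_n-a)\grad^2\Lyap\bigr)+\upsigma\hat b_n\cdot\grad\Lyap$ and claim the corrections only degrade \cref{Lyap1} into an inequality of type \cref{Lyap2} with the same $\Lyap$ and a slightly smaller inf-compact $h$. That step needs control of the correction terms \emph{globally in} $x$: if $\hat b_n\to0$ and $\hat\upsigma_n\to I$ only pointwise or locally uniformly, the products $\norm{a_n-a}\,\abs{\grad^2\Lyap}$ and $\abs{\upsigma\hat b_n\cdot\grad\Lyap}$ can be unbounded at infinity for each fixed $n$ (since $\grad\Lyap,\grad^2\Lyap$ are typically unbounded), and then no inequality of the form $\sL^n_\zeta\Lyap\le \widehat C_0'-h'$ with inf-compact $h'$ follows. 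To close this you should either assume uniform convergence together with a domination condition (e.g.\ the corrections belong to $\sorder(h)$, or $\grad\Lyap,\grad^2\Lyap$ are controlled by $h$), or simply state that \hyperlink{A7}{(A7)}(ii) is assumed for the induced generators, exactly as \cref{ErgodLyapRobu1} requires. The discounted, finite-horizon and exit-time cases are unaffected, since there only the local arguments of \cref{TH2.4}, \cref{thm:finite-horizon-robust} and \cref{thm:exit-robustness} are used.
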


\section{Example}
We provide an example in which the robustness result holds under our assumptions \hyperlink{A1}{(A1)}-\hyperlink{A4}{(A4)} and \hyperlink{A6}{(A6)}.
\begin{example}
Consider the linear–quadratic (LQ) control problem associated with the systems \cref{E1.1} and \cref{ASE1.1},
 where 
\[
b(x,i,\zeta) = A(i)x + B(i)\zeta, \qquad 
\upsigma(x,i) = C(i)x,
\]
and 
\[
b_n(x,i,\zeta) = A_n(i)x + B_n(i)\zeta, \qquad 
\upsigma_n(x,i) = C_n(i)x,
\]
for \((x,i,\zeta)\in\Rd\times\mathbb{S}\times\RR^{l}\). Here \(A_n(i),C_n(i)\in \RR^{d\times d}\) and \(B_n(i)\in\RR^{d\times l}\).  The finite-horizon cost is defined through the running costs
\[
c(x,i,\zeta)=x^{\top}Q(i)x + \zeta^{\top}R(i)\zeta, \qquad 
c_n(x,i,\zeta)=x^{\top}Q_n(i)x + \zeta^{\top}R_n(i)\zeta,
\]
and the terminal cost \(c_{_T}(x,i)=x^{\top}P(i)x\),
where \(\{Q_n(i)\in\RR^{d\times d}\}_{i\in\mathbb{S}}\) are positive semidefinite symmetric matrices (\(Q_n(i)\succeq0\)), and \(\{R_n(i)\in\RR^{l\times l}\}_{i\in\mathbb{S}}\)   and \(\{P(i)\in\RR^{d\times d}\}_{i\in\mathbb{S}}\) are positive definite symmetric matrices (\(R_n(i), P(i)\succ 0\)). Assume that \(W_t\) is a one-dimensional standard Wiener process.
\subsection*{True model} The dynamics are given by
\begin{equation}\label{eq:model}
\begin{aligned}
&dX_t = (A(S_t)X_t+B(S_t)U_t)\,dt + C(S_t)X_t\,dW_t,\quad X_s=x\\
&\mathbb{P}( S_{t+\Delta t}=j\;|\;S_t=i,\; X_s, U_s,\;\;s\le t) = m_{ij}\Delta t\;+\;o(\Delta t),\qquad j\ne i
\end{aligned}
\end{equation}
where \( (s,x)\in[0,T)\times\Rd\) \text{are the initial time and state}, and \(S_t\) is a Markov chain on $\mathbb{S}$ with generator \((m_{ij})_{i,j\in\mathbb{S}}\)
 satisfying \(m_{ij}\ge0\) for \(j\ne i\) and \(\sum_{j\in\mathbb{S}}m_{ij}=0\). The performance criterion is 
\[\cJ_{T}^U(x,i,c) = \Exp_{x,i}^{U}\left[\int_0^{T} \left\{X_s^{\top}Q(S_s)X_s + U_s^{\top}R(S_s)U_s\right\} \D{s} + X_T^{\top}P(S_T)X_T\right],\,\]
and the corresponding HJB equation is given by
\begin{equation}\label{eq:hjb}
\begin{aligned}
\partial_t \psi(t,x,i) 
&+ \inf_{\zeta\in \Act}\Big\{\trace\!\big(\frac{1}{2}x^{\top}C(i)^{\top} \nabla^2 \psi(t,x,i)C(i)x\big)
+ (A(i)x+B(i)\zeta)\cdot \nabla \psi(t,x,i)
\\ &\hspace{6.5em}
+ \sum_{j\in \mathbb{S}} m_{ij}\psi(t,x,j)
+ x^{\top}Q(i)x + \zeta^{\top}R(i)\zeta
\Big\} = 0,
\\[2mm]
\psi(T,x,i)&=x^{\top}P(i)x,\qquad (x,i)\in \Rd\times\mathbb{S}.
\end{aligned}
\end{equation}
The solution of the HJB equation in this LQ problem is assumed to have a quadratic form. Let us assume the solution be \begin{equation}\label{eq:valuefn}
\psi(t,x,i)=x^{\top}K(t,i)x
\end{equation}
for some matrix \(K\in\RR^{d\times d}\). Without loss of generality assume $K$ to be a symmetric matrix. Substituting \cref{eq:valuefn} in \cref{eq:hjb} and by comparing the coefficients of $x$ we get the following system of Riccati equations for $K(t,i)$: 
\begin{align}
-\dot{K}(t,i)
&=C(i)^{\top}  K(t,i)C(i)+ A(i)^\top K(t,i) + K(t,i)A(i) 
- K(t,i)B(i) R(i)^{-1} B(i)^\top K(t,i)
\nonumber\\
&\qquad+ Q(i) 
+ \sum_{j\in\mathbb{S}} m_{ij}K(t,j),\nonumber\\
K(T,i)&=P(i)
\label{eq:riccati}
\end{align}
It can be shown (see, \cite{LiandZhou}*{Lemma 4.1}) that these equations admit a unique solution and the optimal control is given by \(v^*(t,x,i)=-R(i)^{-1}B(i)^{\top}K(t,i)x\) (see, \cite{LiandZhou}*{Corollary 3.1})
\subsection*{Approximating model} The approximating dynamics are

\begin{equation}
\begin{aligned}
&dX_t^n = (A_n(S_t^n)X_t^n+B_n(S_t^n)U_t)\,dt + C_n(S_t^n)X_t^n\,dW_t,\quad X^n_s=x\\
&\mathbb{P}( S_{t+\Delta}^n =j\;|\;S_t^n=i,\; X_s^n, U_s,\;\;s\le t) = m_{ij}^n\Delta t\;+\;o(\Delta t),\qquad j\ne i
\end{aligned}
\end{equation}
with the cost \(\cJ_{T,n}^U(x,i,c_n) = \Exp_{x,i}^{U}\left[\int_0^{T} \left\{(X_{s}^n)^{\top}Q_n(S_s^n)X_s^n + U_s^{\top}R_n(S_s^n)U_s\right\} \D{s} + (X_T^n)^{\top}P(S_T^n)X_T^n\right]\)
and HJB equation
\begin{equation}\label{eq:aphjb}
\begin{aligned}
\partial_t \psi_n(t,x,i) 
&+ \inf_{\zeta\in \Act}\Big\{\trace\!\big(\frac{1}{2}x^{\top}C_n(i)^{\top} \nabla^2 \psi_n(t,x,i)C_n(i)x\big)
+ (A_n(i)x+B_n(i)\zeta)\cdot \nabla \psi_n(t,x,i)
\\ &\hspace{6.5em}
+ \sum_{j\in \mathbb{S}} m_{ij}^n\psi_n(t,x,j)
+ x^{\top}Q_n(i)x + \zeta^{\top}R_n(i)\zeta
\Big\} = 0,
\\[2mm]
\psi_n(T,x,i)&=x^{\top}P(i)x,\qquad (x,i)\in \Rd\times\mathbb{S}.
\end{aligned}
\end{equation}
with the solution 
\begin{equation}\label{eq:apvaluefn}
\psi_n(t,x,i)=x^{\top}K_n(t,i)x
\end{equation}
and the optimal control is \(v^*_n(t,x,i)=-R_n(i)^{-1}B_n(i)^{\top}K_n(t,i)x\) (see, \cite{LiandZhou}*{Corollary 3.1}). The
Riccati equations for $K_n(t,i)$: 
\begin{align}
-\dot{K}_n(t,i)
&=C_n(i)^{\top}  K_n(t,i)C_n(i)+ A_n(i)^\top K_n(t,i) + K_n(t,i)A_n(i) 
\nonumber\\
&\qquad- K_n(t,i)B_n(i) R_n(i)^{-1} B_n(i)^\top K_n(t,i)+ Q_n(i) 
+ \sum_{j\in\mathbb{S}} m_{ij}^nK_n(t,j),\nonumber\\
K_n(T,i)&=P(i)
\label{eq:apriccati}
\end{align}
\subsubsection*{\textbf{Continuity}}
Since 
\[
A_n(i)\to A(i),\quad B_n(i)\to B(i),\quad C_n(i)\to C(i),
\quad Q_n(i)\to Q(i),\quad R_n(i)\to R(i),\quad m_{ij}^n\to m_{ij},
\]
as \(n\to\infty\) for every \(i,j\in\mathbb{S}\),
and in view of \cref{Kbd} and \cref{Klips} stated below, the family $\{K_n(t,i)\}$ is equicontinuous and uniformly bounded. Hence, by the  Arzel\`{a}-Ascoli theorem, there exists a subsequence $K_{n_k}$ and a function \(\hat K\in \cC([0,T]\times\mathbb{S})\) such that \(K_{n_k}\to \hat K\)  uniformly on $[0,T]$.
From \cref{eq:apriccati}, for every \(t\in[0,T]\) and \(i\in\mathbb{S}\),
\begin{align*}
K_{n_k}(t,i)
&= \int_{t}^{T}\bigg[-C_{n_k}(i)^{\top}  K_{n_k}(s,i)C_{n_k}(i)-A_{n_k}(i)^{\top} K_{n_k}(s,i) - K_{n_k}(s,i)A_{n_k}(i) 
\nonumber\\
&\qquad+ K_{n_k}(s,i)B_{n_k}(i) R_{n_k}(i)^{-1} B_{n_k}(i)^{\top} K_{n_k}(s,i)- Q_{n_k}(i)
-\sum_{j\in\mathbb{S}}m_{ij}^{n_k}K_{n_k}(s,j)\bigg] ds \;+\;P(i).
\end{align*}
Letting $k\to \infty$ gives
\begin{align*}
\hat K(t,i)
&= \int_{t}^{T}\bigg[-C(i)^{\top} \hat {K}(s,i)C(i)-A(i)^{\top} \hat K(s,i) -\hat K(s,i)A(i) 
+\hat K(s,i)B(i) R(i)^{-1} B(i)^{\top}\hat K(s,i)\nonumber\\
&\qquad\qquad\qquad- Q(i)
-\sum_{j\in\mathbb{S}}m_{ij}\hat K(s,j)\bigg] ds\;\;+\;\;P(i)
\end{align*}
Equivalently,
\begin{align*}
\dot {\hat {K}}(t,i)
&= -C(i)^{\top}  \hat {K}(t,i)C(i)-A(i)^{\top} \hat K(t,i) -\hat K(t,i)A(i) 
+\hat K(t,i)B(i) R(i)^{-1} B(i)^{\top}\hat K(t,i)\nonumber\\
&\qquad- Q(i)
-\sum_{j\in\mathbb{S}}m_{ij}\hat K(t,j)
\end{align*}
By the uniqueness of the solution of Riccati equations, we conclude $\hat K=K$. Therefore \(K_n\to K\) uniformly on \([0,T]\).
\subsubsection*{\textbf{ Robustness}}
Since \(\|R_n(i)-R(i)\|,\|B_n(i)-B(i)\|\; \text{and}\;\|K_n(\cdot,i)-K(\cdot,i)\|\to 0\) as \(n\to \infty\), it follows that \(|v_n^*(t,x,i)-v^*(t,x,i)|=|R_n(i)^{-1}B_n(i)^{\top}K_n(t,i)x-R(i)^{-1}B(i)^{\top}K(t,i)x|\to 0\). Thus, by the dominated convergence theorem \[|\cJ_{T}^{v_n^*}(x,i,c)-\cJ_{T}^{*}(x,i,c)|\to 0\]


\end{example}

\begin{lemma}[Positive definiteness of the Riccati solution]\label{lem:posdef}
Let $\{K_n(t,i)\}_{i\in\mathbb{S}}$ solve the coupled Riccati systems \cref{eq:apriccati} associated with the HJB equations \eqref{eq:aphjb}  on $[0,T]$, with $Q_n(i)\succeq0$ and $R_n(i),\,P(i)\succ0$. Then for every $i\in\mathbb{S}$ and $t\in[0,T)$, $K_n(t,i)$ is positive definite.
\end{lemma}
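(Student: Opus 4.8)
The plan is to bound $K_n(t,i)$ from below by an explicit positive‑definite matrix, obtained by exhibiting a classical subsolution of the approximating finite‑horizon HJB equation \cref{eq:aphjb} and comparing it against $\psi_n(t,x,i)=x^\top K_n(t,i)x$. Concretely, I would test the ansatz $\underline\psi(t,x,i)=\theta(t)\abs{x}^2$ with $\theta(t)=\epsilon\,\E^{-\gamma(T-t)}$, for constants $\epsilon,\gamma>0$ to be fixed (allowed to depend on $n$, which is harmless since $n$ is fixed in the statement). Since $\underline\psi$ does not depend on $i$, the coupling term $\sum_{j}m^n_{ij}\underline\psi(t,x,j)$ vanishes because $\sum_j m^n_{ij}=0$; substituting into the Hamiltonian of \cref{eq:aphjb} with $\nabla^2_x\underline\psi=2\theta(t)I$ and discarding the nonnegative terms $\theta(t)\abs{C_n(i)x}^2$ and $x^\top Q_n(i)x$, one is left for each $\zeta$ with $\gamma\theta(t)\abs{x}^2-2\theta(t)\,x^\top A_n(i)x+2\theta(t)\,\zeta^\top B_n(i)^\top x+\zeta^\top R_n(i)\zeta$. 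Absorbing the cross term by Young's inequality against the coercive term $\zeta^\top R_n(i)\zeta\ge r\abs{\zeta}^2$, with $r:=\min_i\lambda_{\min}(R_n(i))>0$, and using $\theta(t)\le\epsilon$, this is $\ge\theta(t)\bigl(\gamma-2\bar{A}-\epsilon\bar{B}^2/r\bigr)\abs{x}^2$, where $\bar{A}:=\max_i\norm{A_n(i)}$ and $\bar{B}:=\max_i\norm{B_n(i)}$.

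Choosing $\gamma:=2\bar{A}+1$ and $\epsilon:=\min\bigl\{r/(\bar{B}^2+1),\,\min_i\lambda_{\min}(P(i))\bigr\}$ makes this quantity nonnegative, so $\partial_t\underline\psi+\inf_{\zeta}\bigl[\sL_{\zeta}^n\underline\psi+c_n\bigr]\ge0$, and moreover $\underline\psi(T,x,i)=\epsilon\abs{x}^2\le x^\top P(i)x=c_{_{T}}(x,i)=\psi_n(T,x,i)$. To compare, I would fix a minimizing selector $v_n^*$ of \cref{eq:aphjb}, so that the equation gives $\partial_t\psi_n+\sL^n_{v_n^*}\psi_n+c_n=0$ while the subsolution inequality (applied with $\zeta=v_n^*(t,x,i)$) gives $\partial_t\underline\psi+\sL^n_{v_n^*}\underline\psi+c_n\ge0$; subtracting, $w:=\psi_n-\underline\psi\in\Sobl^{1,2,p}$ satisfies $\partial_t w+\sL^n_{v_n^*}w\le0$ with $w(T,\cdot,\cdot)\ge0$. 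Applying the It\^{o}--Krylov formula (\cite{ABG-book}*{Lemma~5.1.4}) to $s\mapsto w(s,X^n_s,S^n_s)$ along the $v_n^*$‑controlled approximating process—with the standard localization on balls $\sB_R$ as $R\to\infty$, legitimate because the closed‑loop coefficients grow linearly and $w$ has at most quadratic growth—yields $w(t,x,i)\ge\Exp^{v_n^*}_{x,i}[w(T,X^n_T,S^n_T)]\ge0$. Hence $x^\top K_n(t,i)x=\psi_n(t,x,i)\ge\underline\psi(t,x,i)=\epsilon\,\E^{-\gamma(T-t)}\abs{x}^2$ for all $x\in\Rd$, i.e.\ $K_n(t,i)\succeq\epsilon\,\E^{-\gamma(T-t)}I\succ0$ for every $i\in\mathbb{S}$ and $t\in[0,T]$, which proves the lemma.

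The delicate point is the first step: the constants $\gamma,\epsilon$ must be calibrated so that the Hamiltonian inequality holds uniformly over $\zeta\in\RR^{l}$ and $x\in\Rd$—trading the sign‑indefinite $A_n$‑drift against the exponential weight $\gamma$, and the sign‑indefinite $B_n$‑coupling against the $R_n$‑coercivity—while keeping $\epsilon$ small enough that $\epsilon I\preceq P(i)$ for every $i$. A purely ODE‑based alternative, avoiding the comparison argument, is to rewrite the coupled Riccati system \cref{eq:apriccati} in closed‑loop form via the feedback $F_n(t,i):=-R_n(i)^{-1}B_n(i)^\top K_n(t,i)$: this turns \cref{eq:apriccati} into a coupled generalized Lyapunov system with nonnegative source $Q_n(i)+F_n(t,i)^\top R_n(i)F_n(t,i)$ and terminal datum $P(i)\succ0$, and positivity preservation of the associated backward evolution gives $K_n(t,i)\succeq$ (the evolution of $P(i)$ alone) $\succ0$. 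I would nonetheless present the subsolution/comparison route, since it relies only on results already established above.
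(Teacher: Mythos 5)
Your proof is correct, but it takes a genuinely different route from the paper. The paper's argument is the short probabilistic one: writing $\psi_n(t,x,i)=x^\top K_n(t,i)x$ and using the stochastic representation of the value under the optimal feedback, the integrand $(X_s^n)^\top Q_n X_s^n+(v_n^*)^\top R_n v_n^*$ and the terminal term $(X_T^n)^\top P X_T^n$ are nonnegative, and strict positivity for $x\neq0$ follows because the closed-loop linear SDE started at $x\neq 0$ cannot vanish, so the expected cost is strictly positive. Your argument instead exhibits the explicit classical subsolution $\underline\psi(t,x,i)=\epsilon\,\E^{-\gamma(T-t)}\abs{x}^2$ of \cref{eq:aphjb} (the coupling term drops out since $\sum_j m^n_{ij}=0$, the $A_n$-drift is absorbed into $\gamma$, the $B_n$-cross term into the $R_n$-coercivity via Young, and $\epsilon\le\min_i\lambda_{\min}(P(i))$ handles the terminal datum) and then compares with $\psi_n$ along the optimal closed-loop dynamics via It\^o--Krylov with a standard localization. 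Your calibration of $(\gamma,\epsilon)$ and the Young-inequality bookkeeping check out, and the localization is justified since the closed-loop coefficients are linear in $x$ and $w=\psi_n-\underline\psi$ has quadratic growth. What your route buys is a quantitative lower bound $K_n(t,i)\succeq\epsilon\,\E^{-\gamma(T-t)}I$ (which, under the uniform coefficient bounds of \hyperlink{A6}{(A6)}, could even be made uniform in $n$) and it sidesteps the "trajectory never hits zero" point implicit in the paper's proof; what it costs is length, since the paper's representation argument is three lines. Your alternative closed-loop Lyapunov reformulation of \cref{eq:apriccati} would also work, but either of your routes is more machinery than the paper uses.
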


\begin{proof}
Fix $t\in[0,T)$ and $i\in\mathbb{S}$. Let $(X^n_{(\cdot)},S_{(\cdot)}^n)$ be the optimal state trajectory and $v^*_n(\cdot)$ an optimal control
\(
v_n^*(t)= -R_n(i)^{-1} B_n(i)^\top K_n(t,S_t)\,X_t^n
\). Define the value function $\psi_n(t,x,i)=x^\top K_n(t,i)x$. By the Ito formula, we have the representation
\begin{equation}\label{eq:value-rep}
\psi_n(t,x,i)
=\Exp_{x,i}^{v^*}\!\left[\int_t^T \bigl((X_s^n)^\top Q_n(S_s^n)\,X_s+v_n^*(s)^\top R_n(S_s^n)\,v_n^*(s)\bigr)\,ds
\;+\; (X_T^n)^\top P(S_t^n)\,X_T^n \right].
\end{equation}
Because $R_n{(\cdot)},P{(\cdot)}\succ0$ and $Q_n{(\cdot)}\succeq0$, the integrand is non-negative and vanishes only if $X\equiv0$ and $v^*_n\equiv0$. Hence for any $x\neq0$,
\[
x^\top K_n(t,i) x \;=\; \psi_n(t,x,i) \;>\; 0.
\]
Therefore $K_n(t,i)\succ0$ for every $t\in[0,T)$ and $i\in\mathbb{S}$.  \qedhere
\end{proof}

\begin{lemma}\label{Kbd}(A priori bounds on $\psi_n$ and $K_n$)\label{lem:apriori}
There exist constants $C_0,k_0>0$ (independent of $t,i$) such that for all $(t,x,i)\in[0,T]\times\Rd\times\mathbb{S}$,
\begin{equation}\label{eq:V-upper}
0 \;\le\; \psi_n(t,x,i)\;\le\; C_0\,e^{k_0 T} |x|^2 (T+1),
\end{equation}
and, in particular,
\begin{equation}\label{eq:K-bound}
\|K_n(t,i)\| \;\le\; C_0\,e^{k_0 T}(T+1)\,.
\end{equation}
\end{lemma}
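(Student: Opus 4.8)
The plan is to read off both bounds from the dynamic--programming representation of $\psi_n$ together with an elementary second--moment estimate for the controlled state. By \cref{thm:finite-horizon-APM-verification}, $\psi_n(t,x,i)$ is the optimal finite--horizon cost for the problem started from $(x,i)$ at time $t$; since $Q_n(i)\succeq 0$, $R_n(i)\succ 0$ and $P(i)\succ 0$, the running and terminal costs are non--negative, so $\psi_n(t,x,i)\ge 0$, which is the lower bound in \eqref{eq:V-upper}. For the upper bound I would bound $\psi_n$ by the cost incurred along the admissible control $U\equiv 0$ (started at time $t$), namely $\Exp_{x,i}^{0}\bigl[\int_t^{T}(X_s^{n})^{\top}Q_n(S_s^{n})X_s^{n}\,ds+(X_T^{n})^{\top}P(S_T^{n})X_T^{n}\bigr]$, where under $U\equiv 0$ the state obeys the linear SDE $dX_s^{n}=A_n(S_s^{n})X_s^{n}\,ds+C_n(S_s^{n})X_s^{n}\,dW_s$. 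Applying It\^o's formula to $|X_s^{n}|^2$, taking expectations, and using $2\langle x,Ax\rangle\le 2\|A\||x|^2$ and $|Cx|^2\le\|C\|^2|x|^2$ gives
\[
\frac{d}{ds}\,\Exp_{x,i}^{0}\bigl[|X_s^{n}|^2\bigr]\;\le\;k_0\,\Exp_{x,i}^{0}\bigl[|X_s^{n}|^2\bigr],\qquad k_0:=\sup_{n\in\NN}\max_{j\in\mathbb{S}}\bigl(2\|A_n(j)\|+\|C_n(j)\|^2\bigr),
\]
so Gronwall's inequality yields $\Exp_{x,i}^{0}[|X_s^{n}|^2]\le |x|^2 e^{k_0(s-t)}\le|x|^2 e^{k_0 T}$; here $k_0<\infty$ because the convergent sequences $A_n(j),C_n(j)$ are bounded uniformly in $n$.

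Substituting this into the cost and using the uniform bounds $\|Q_n(j)\|\le Q_0$, $\|P(j)\|\le P_0$ (available for the same reason, since $Q_n(j)\to Q(j)$ and $P(j)$ is fixed), I obtain
\[
\psi_n(t,x,i)\;\le\;Q_0\!\int_t^{T}\!\Exp_{x,i}^{0}\bigl[|X_s^{n}|^2\bigr]\,ds+P_0\,\Exp_{x,i}^{0}\bigl[|X_T^{n}|^2\bigr]\;\le\;\bigl(Q_0(T-t)+P_0\bigr)\,|x|^2 e^{k_0 T}\;\le\;C_0\,e^{k_0 T}|x|^2(T+1),
\]
with $C_0:=\max\{Q_0,P_0\}$; this is \eqref{eq:V-upper}. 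For \eqref{eq:K-bound}, note that $K_n(t,i)$ is symmetric and, by \cref{lem:posdef}, positive semidefinite, so its operator norm equals $\sup_{|x|=1}x^{\top}K_n(t,i)x=\sup_{|x|=1}\psi_n(t,x,i)\le C_0 e^{k_0 T}(T+1)$.

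The one point that needs care is that the constants $C_0,k_0$ be independent of $n$ (as well as of $t,i$) --- this is precisely what the equicontinuity/uniform--boundedness argument in the example requires --- and it follows at once from the fact that $\{A_n(j)\},\{B_n(j)\},\{C_n(j)\},\{Q_n(j)\},\{R_n(j)\}$ are convergent, hence bounded, matrix families and $\mathbb{S}$ is finite, so all suprema over $n$ and $j$ are finite. Everything else is routine: It\^o's formula, Gronwall's inequality, and the variational formula for the norm of a positive semidefinite symmetric matrix, with the It\^o--Krylov representation the same one already used in \cref{thm:finite-horizon-TM-verification} and \cref{lem:posdef}. Alternatively, one may integrate the Riccati system \eqref{eq:apriccati} backward from $t$ to $T$ and apply Gronwall directly to $\max_{j\in\mathbb{S}}\|K_n(\cdot,j)\|$, discarding the negative semidefinite term $-K_nB_nR_n^{-1}B_n^{\top}K_n$ via \cref{lem:posdef}; the probabilistic route is slightly cleaner since that quadratic term is absorbed automatically by the non--negativity of the cost.
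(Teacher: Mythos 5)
Your proposal is correct and follows essentially the same route as the paper: bound $\psi_n$ by the cost under the admissible control $U\equiv 0$, obtain $\Exp|X_s^n|^2\le |x|^2e^{k_0T}$ via It\^o and Gr\"onwall with constants uniform in $n$ (the paper takes $k_0$, $C_0$ depending on $\sup_n\sup_i\|A_n(i)\|$, the diffusion bound, and $\sup_n\sup_i\|Q_n(i)\|$, $\|P(i)\|$), and then read off \eqref{eq:K-bound} from $\|K_n(t,i)\|=\sup_{|y|=1}y^{\top}K_n(t,i)y$ using symmetry and \cref{lem:posdef}. Your explicit justification of $n$-uniformity via boundedness of the convergent coefficient sequences is a welcome clarification of a point the paper leaves implicit.
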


\begin{proof}
\emph{Step 1 (upper bound for $\psi_n$).}
Consider the admissible control $U\equiv0$. The corresponding state satisfies the linear SDE
\[
dX_t^n=A_n(S_t^n)\,X_t^n\,ds + C_n(S_t^n)X_t^n\,dW_t,
\qquad X_0^n=x.
\]
By It$\hat{\rm o}$ formula and the Grönwall's inequality, there exists $k_0>0$ (depending on $\sup_{n}\sup_i\|A_n(i)\|$ and the diffusion bound) such that
\begin{equation}\label{eq:state-moment}
\mathbb E|X_t^n|^2 \;\le\;|x|^2 e^{k_0T}.
\end{equation}
Plugging $U\equiv0$ into the cost, using $Q_n{(\cdot)},P{(\cdot)}\succeq0$ and \eqref{eq:state-moment}, we obtain \eqref{eq:V-upper} with some $C_0>0$ (depending on $\sup_{n}\sup_i\|Q_n(i)\|$ and $\sup_{n}\sup_i\|P_n(i)\|$).

\emph{Step 2 (bound on $K_n$).} Since $\psi_n(t,x,i)=x^\top K_n(t,i)x$ and $K_n(t,i)=K_n(t,i)^\top\succ0$ by \cref{lem:posdef}, the norm of $K_n(t,i)$ equals:
\[
\|K_n(t,i)\|=\sup_{|y|=1} y^\top K_n(t,i)\,y \,.
\]
Fix any unit vector $y$ and set $x=a\,y$. Then
\[
a^2\,y^\top K(t,i) y \;=\; \psi_n(t,a y,i)\;\le\; C_0 e^{k_0T}|a|^2(T+1).
\]
Dividing by $a^2$, 
we get $y^\top K_n(t,i) y \le C_0 e^{k_0T}(T+1)$. Taking the supremum over unit $y$, we obtain \eqref{eq:K-bound}.  \qedhere
\end{proof}

\begin{lemma}\label{Klips}(Uniform Lipschitz continuity in time)\label{lem:lipschitz}
For each $i\in\mathbb{S}$, the maps $t\mapsto K_n(t,i)$ is Lipschitz continuous (uniformly in $i$ and $n$) on $[0,T]$.
\end{lemma}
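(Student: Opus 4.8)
The plan is to bound the time derivative $\dot K_n(t,i)$ uniformly in $t\in[0,T]$, $i\in\mathbb{S}$ and $n\in\NN$ directly from the coupled Riccati system \cref{eq:apriccati}, and then to integrate. First I would assemble the requisite uniform bounds on the data. Since $A_n(i)\to A(i)$, $B_n(i)\to B(i)$, $C_n(i)\to C(i)$, $Q_n(i)\to Q(i)$ and $m_{ij}^n\to m_{ij}$ for every pair $i,j$ in the finite set $\mathbb{S}$, each of these sequences of matrices is bounded; let $\Lambda$ denote a common bound for their norms. For the inverse $R_n(i)^{-1}$ one argues as follows: because $R(i)\succ0$ and $R_n(i)\to R(i)$, there is $N_0$ such that $R_n(i)\succeq\tfrac12\lambda_{\min}(R(i))\,I$ for all $n\ge N_0$ and all $i\in\mathbb{S}$, whence $\|R_n(i)^{-1}\|\le 2/\lambda_{\min}(R(i))$; for the finitely many indices $n<N_0$, each $R_n(i)$ is a fixed positive-definite matrix and therefore has bounded inverse. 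Taking the maximum over $\mathbb{S}$ yields $\Lambda':=\sup_{n}\sup_{i\in\mathbb{S}}\|R_n(i)^{-1}\|<\infty$. Finally, \cref{Kbd} supplies $\|K_n(t,i)\|\le M_K:=C_0 e^{k_0T}(T+1)$ for all $(t,i,n)$.

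Next I would bound the derivative. The coefficients $A_n(i),B_n(i),C_n(i),Q_n(i),m_{ij}^n$ are time-independent, so the right-hand side of \cref{eq:apriccati} is a continuous function of $K_n(t,\cdot)$ alone, and hence $K_n(\cdot,i)\in\cC^1([0,T])$. Substituting the bounds above, each term on the right-hand side of \cref{eq:apriccati} is controlled by a product of finitely many factors bounded by $\Lambda$, $\Lambda'$ or $M_K$: the quadratic feedback term contributes at most $\Lambda^2\Lambda' M_K^2$, the linear terms $A_n^\top K_n$, $K_n A_n$, $C_n^\top K_n C_n$ contribute at most $\Lambda M_K$ and $\Lambda^2 M_K$, the switching term $\sum_{j\in\mathbb{S}}m_{ij}^n K_n(t,j)$ contributes at most $|\mathbb{S}|\,\Lambda\,M_K$, and $Q_n(i)$ contributes at most $\Lambda$. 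Summing, there is a constant $L=L(\Lambda,\Lambda',M_K,|\mathbb{S}|)$, independent of $t$, $i$ and $n$, with $\|\dot K_n(t,i)\|\le L$ for every $t\in[0,T]$.

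Finally, by the fundamental theorem of calculus applied coordinatewise, $\|K_n(t,i)-K_n(s,i)\|\le L\,|t-s|$ for all $s,t\in[0,T]$, $i\in\mathbb{S}$ and $n\in\NN$, which is precisely the asserted uniform Lipschitz continuity in time. I do not anticipate a genuine obstacle in this argument; the only point requiring a little care is the uniform invertibility of $R_n(i)$ (so that $\Lambda'<\infty$), which is handled as above by separating the convergent tail $n\ge N_0$ from the finitely many initial indices, and which is also implicitly needed for the boundedness used in \cref{Kbd}.
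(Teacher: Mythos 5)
Your argument is correct, but it takes a genuinely different route from the paper. You work deterministically at the level of the coupled Riccati system \cref{eq:apriccati}: using the uniform bound $\|K_n(t,i)\|\le C_0e^{k_0T}(T+1)$ from \cref{Kbd}, the boundedness of the convergent coefficient sequences $A_n,B_n,C_n,Q_n,m^n_{ij}$, and the uniform invertibility of $R_n(i)$ (your tail-versus-finitely-many-indices argument for $\sup_n\|R_n(i)^{-1}\|<\infty$ is the one genuinely delicate point, and you handle it correctly), you bound the right-hand side of the ODE, hence $\|\dot K_n(t,i)\|\le L$ uniformly, and integrate. The paper instead argues probabilistically on the value function $\psi_n(t,x,i)$: it shifts time by $\delta$ in the stochastic representation of the optimal cost, compares the shifted and unshifted controls/trajectories via the It\^o formula and the second-moment estimate \cref{eq:state-moment}, obtains the two-sided bound $|\psi_n(t+\delta,x,i)-\psi_n(t,x,i)|\le k_1|x|^2\delta$, and then transfers this to $K_n$ through the quadratic form exactly as you do in your final step. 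Your route is shorter and more elementary once \cref{Kbd} is available, at the price of needing the extra hypothesis-level input $\sup_n\sup_i\|R_n(i)^{-1}\|<\infty$ (harmless here since $R_n(i)\to R(i)\succ0$) and the $\cC^1$ (or integral-form) regularity of the Riccati solution; the paper's dynamic-programming argument never touches the feedback term or $R_n^{-1}$, needs only moment bounds and boundedness of $Q_n,R_n,P$, and would survive in settings where a closed-form Riccati representation of $\dot K_n$ is less convenient to exploit.
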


\begin{proof}
\emph{Step 1 (Lipschitz for $\psi_n$).}
Fix $(t,x,i)$ and $\delta>0$ with $t,t+\delta\in[0,T]$. Write $\psi_n(t,x,i)$ as
\[
\psi_n(t,x,i)=\Exp_{x,i}^{v^*_n}\!\left[\int_t^{T}
\bigl((X_{s}^n)^\top Q_n(S_s^n) X_s^n + v^*_n(s){}^\top R_n(S_s^n) v^*_n(s)\bigr)\,ds
+ (X_T^n)^\top P(S_s^n)\,X_T^n\right],
\]

where $v^*_n(t)=v^*_n(t,X_t^n,S_t^n)$ is the optimal control.
By the change of variable \(s\to s+\delta\) in the above equation, we have 
\begin{align*}
\psi_n(t,x,i)=\mathbb{E}_{x,i}^{v^*_n}\bigg[\int_{t+\delta}^{T+\delta} \Big[(X_{s-\delta}^n)^{\top}Q_n({S_{s-\delta}^n})X_{s-\delta}^n &+ v_n^*(s-\delta)^{\top}R_n({S_{s-\delta}^n})v_n^*(s-\delta)\Big] ds \\
&+ (X_T^n)^{\top}P(S_T^n)X_T^n\bigg] 
\end{align*}

For \(s \in [t+\delta,T+\delta]\), let \(\tilde{X}^{n}_s = X_{s-\delta}^n\), \(\tilde{S}^{n}_s = S_{s-\delta}^n\), and \(\tilde{U}^n_s = v_n^*({s-\delta})\). Clearly, \(\tilde{S}^{n}_{t+\delta}= S_t^n = i\) and \(\tilde{X}^{n}_{t+\delta} = X_t^n = x\).
Moreover, 
\begin{align*}
\psi_n(t,x,i)&=\mathbb{E}_{x,i}^{\tilde{U}^n}\left[\int_{t+\delta}^{T+\delta} \left[(\tilde{X}^{n}_s)^{\top}Q_n({\tilde{S}^{n}_n)}\tilde{X}^{n}_s + (\tilde{U}^n_s)^{\top}R_n({\tilde{S}^{n}_s})\tilde{U}^n_s\right] ds + (\tilde{X}^{n}_{T+\delta})^{\top}P(\tilde{S}^{n}_{T+\delta})\tilde{X}^{n}_{T+\delta}\right] \\
&\geq \mathbb{E}_{x,i}^{\tilde{U}^n}\left[\int_{t+\delta}^{T} \left[(\tilde{X}^{n}_s)^{\top}Q_n({\tilde{S}^{n}_n)}\tilde{X}^{n}_s + (\tilde{U}^n_s)^{\top}R_n({\tilde{S}^{n}_s})\tilde{U}^n_s\right] ds + (\tilde{X}^{n}_{T+\delta})^{\top}P({\tilde{S}^{n}_{T+\delta}})\tilde{X}^{n}_{T+\delta}\right]
\end{align*}
By the Itô formula, we have
\[
\psi_n(t+\delta,x,i)\leq \Exp_{x,i}^{\tilde{U}^n}\left[\int_{t+\delta}^T \left[(\tilde{X}^{n}_s)^{\top}Q_n({\tilde{S}^{n}_n)}\tilde{X}^{n}_s + (\tilde{U}^n_s)^{\top}R_n({\tilde{S}^{n}_s})\tilde{U}^n_s\right] ds + (\tilde{X}^{n}_{T})^{\top}P({\tilde{S}^{n}_T})\tilde{X}^{n}_{T}\right] 
\]
Also, by applying It$\hat{\rm o}$'s formula to \((\tilde{X}_t^n)^{\top}P(\tilde{S}^{n}_t)\tilde{X}_t^n\) twice  and using \cref{eq:state-moment}, we have
\[ \Exp_{x,i}^{\tilde{U}^n}[(\tilde{X}^{n}_T)^{\top}P({\tilde{S}^{n}_T})\tilde{X}^{n}_T - (\tilde{X}^{n}_{T+\delta})^{\top}P(\tilde{S}^{n}_{T+\delta})\tilde{X}^{n}_{T+\delta}] \leq k_1|x|^2\delta. \]
for some \(k_1>0\), it follows that
\begin{equation}\label{eq:leftbd}
 \psi_n(t+\delta,x,i) - \psi_n(t,x,i) \leq k_1|x|^2\delta. \end{equation}
To show the reverse inequality. Let \(\Exp_{x,i}\) denote the conditional expectation given \((X_{s+\delta},S_{s+\delta}) = (x,i)\). Then, we have, under the optimal control \(v^*_n\) and the corresponding trajectory \((X_t^n,S_t^n)\)
\[
\psi_n(t+\delta,x,i) = \Exp_{x,i}^{v_n^*}\left[\int_{t+\delta}^T \left[(X_s^n)^{\top}Q_n(S_s^n)X_s^n + v_n^*(s)^{\top}R_n(S_s^n)v_n^*(s)\right] ds + (X_T^n)^{\top}P(S_T^n)X_T^n\right]
\]
By the change of variable \(s \rightarrow s-\delta\) in the above equation, we have
\begin{align*}
\psi_n(t+\delta,x,i) &= \Exp_{x,i}^{v_n^*}\bigg[\int_{t}^{T-\delta} \left[(X_{s+\delta}^n)^{\top}Q_n(S_{s+\delta}^n)X_{s+\delta}^n + v_n^*(s+\delta)^{\top}R_n(S_{s+\delta}^n)v_n^*(s+\delta)\right] ds \\
& \qquad+ (X_T^n)^{\top}P(S_T^n)X_T^n\bigg] 
\end{align*}
Let \(\hat{S}_s^n = S_{s+\delta}^n\), \(\hat{U}^n_s = v_n^*({s+\delta})\), and \(\hat{X}_s^n = X_{s+\delta}^n\). Note, \(\hat{S}^n(\cdot)\) is a Markov chain with \(\hat{S}_t^n = S_{t+\delta}^n = i\). We can extend the definition of \(\hat{U}^n_{(\cdot)}\) for \(s \in [T-\delta,T]\). If we define \(\hat{U}^n_{t} = 0\) for \(t \in (T-\delta,T]\), we may extend the definition of \(\hat{X}_{t}^n\), and can be defined on \((T-\delta,T]\) as well by solving \cref{eq:model}.
Rewriting the above equation, gives
\begin{equation}\label{eq7.13}
\psi_n(t+\delta,x,i) = \Exp_{x,i}^{\hat{U}^n}\left[\int_{t}^{T-\delta} \left[(\hat{X}^n_s)^{\top}Q_n(\hat{S}^n_s)\hat{X}^n_s + (\hat{U}^n_s)^{\top}R_n(\hat{S}^n_s)\hat{U}^n_s\right] ds + (\hat{X}^n_{T-\delta})^{\top}P(\hat{S}^n_{T-\delta})\hat{X}^n_{T-\delta}\right] 
\end{equation}
Also, by the It$\hat{\rm o}$ formula, we have 
\begin{equation}\label{eq7.14}
\psi_n(t,x,i) \le \Exp_{x,i}^{\hat{U}^n}\left[\int_{t}^{T} \left[(\hat{X}^n_s)^{\top}Q(\hat{S}^n_s)\hat{X}^n_s + (\hat{U}^n_s)^{\top}R(\hat{S}^n_s)\hat{U}^n_s\right] ds + (\hat{X}^n_{T})^{\top}P(\hat{S}^n_{T})\hat{X}^n_{T}\right] 
\end{equation}
Subtracting \cref{eq7.14} from \cref{eq7.13} and using \cref{eq:state-moment} gives
\begin{align} \label{eq:rightbd}
\psi_n(t+\delta,x,i)-\psi_n(t,x,i)&\ge -\Exp_{x,i}^{\hat{U}^n}\int_{T-\delta}^{T} \left[(\hat{X}^n_s)^{\top}Q_n(\hat{S}^n_s)\hat{X}^n_s + (\hat{U}^n_s)^{\top}R_n(\hat{S}^n_s)\hat{U}^n_s\right] ds \nonumber\\&\qquad+ \Exp_{x,i}^{\hat{U}^n}\left[(\hat{X}^n_{T-\delta})^{\top}P(\hat{S}^n_{T-\delta})\hat{X}^n_{T-\delta}-(\hat{X}^n_{T})^{\top}P(\hat{S}^n_{T})\hat{X}^n_{T}\right]\nonumber\\
&\ge-k_1|x|^2\delta
\end{align}
From \cref{eq:leftbd} and \cref{eq:rightbd}, we get
\begin{equation}\label{eq:V-lip}
\bigl|\psi_n(t+\delta,x,i)-\psi_n(t,x,i)\bigr| \;\le\; k_1\,|x|^2\,\delta .
\end{equation}

\emph{Step 2 (Lipschitz for $K$).}
Set $x=a\,y$ with $|y|=1$. Then
\[
a^2\, y^\top\!\bigl(K_n(t+\delta,i)-K_n(t,i)\bigr) y
= \psi_n(t+\delta,a y,i)-\psi_n(t,a y,i).
\]
Use \eqref{eq:V-lip}, divide by $a^2$, 
to obtain
\[
\bigl| y^\top\!\bigl(K_n(t+\delta,i)-K_n(t,i)\bigr) y \bigr| \;\le\; k_1\,\delta .
\]
Taking the supremum over unit $y$ yields $\|K_n(t+\delta,i)-K_n(t,i)\|\le k_1\,\delta$. Hence $K_n(\cdot,i)$ is Lipschitz on $[0,T]$. \qedhere
\end{proof}
\section{Conclusion}
In this paper, we developed a comprehensive robustness theory for controlled regime-switching diffusions under multiple cost criteria. By combining analytical and probabilistic methods, we proved the continuity of value functions and the robustness of optimal controls as the system parameters of the approximating models converge to those of the true model. The results generalize the robustness framework for diffusion processes to hybrid stochastic systems involving both continuous and discrete dynamics.

Our analysis required refined Sobolev estimates and weak compactness arguments for coupled HJB systems, together with stochastic representations based on the It$\hat{\rm o}$–Krylov formula. The framework accommodates a broad class of models and cost structures, including finite-horizon, discounted, ergodic, and exit-time formulations.

Possible directions for future work include extending the present results to non-Markovian switching, partial observation settings, and mean-field or risk-sensitive formulations. Such extensions would further strengthen the understanding of the robust control for hybrid stochastic systems in uncertain and data-driven environments.
\section*{Acknowledgment}
This research of the first author was partially supported by a Start-up Grant IISERB/ R\&D/2024-25/154 and Prime Minister Early Career Research Grant ANRF/ECRG/2024/001658/ PMS. The research of the second author was partially supported by the UGC Junior Research Fellowship (UGC-JRF), Ministry of Education, Government of India.
\appendix
\section{Verification Theorem}

For completeness, we provide the Verification Theorem for the ergodic cost. Prior to that, we need the following lemmas.

\begin{lemma}\label{3.3.4}
Under \hyperlink{A7}{{(A7)}}, all stationary Markov policies are stable, i.e., 
$\Usm=\Uadm_{\mathsf{ssm}}$.

\end{lemma}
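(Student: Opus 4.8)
The plan is to deduce positive recurrence of the joint process $(X_t,S_t)$ under an arbitrary stationary Markov control $v\in\Usm$ directly from the Foster--Lyapunov inequality \cref{Lyap1}. Fix $v\in\Usm$. Since adding a constant to $\Lyap$ leaves $\sL_\zeta\Lyap$ unchanged (because $\sum_{j\in\mathbb{S}}m_{ij}(x,\zeta)=0$), I may assume $\Lyap\ge 1$ on $\Rd\times\mathbb{S}$, the infimum being attained by inf-compactness. Because $h$ in \cref{Lyap1} is inf-compact, the set $\{(x,i):\inf_{\zeta\in\Act}h(x,i,\zeta)\le \widehat C_0+1\}$ is contained in $\sB_{r_0}\times\mathbb{S}$ for some $r_0>0$, and hence \cref{Lyap1} specialized to $v$ gives
\[
\sL_v\Lyap(x,i)\;\le\;\widehat C_0-h(x,i,v(x,i))\;\le\;-1\qquad\text{for all }(x,i)\in\sB_{r_0}^c\times\mathbb{S},
\]
while globally $\sL_v\Lyap\le\widehat C_0$ since $h\ge 0$.

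The first step is non-explosion: as $\Lyap\ge 0$ is inf-compact and $\sL_v\Lyap\le\widehat C_0\le \widehat C_0(1+\Lyap)$, the standard Lyapunov criterion (equivalently, the affine-growth condition \hyperlink{A2}{{(A2)}}) shows that the solution of \cref{E1.1} under $v$ is non-explosive, so $\uptau_R\uparrow\infty$ a.s.\ as $R\to\infty$. Next I would apply the It\^o--Krylov formula (\cite{ABG-book}*{Lemma~5.1.4}) to $\Lyap$ along this solution started at $(x,i)\in\sB_{r_0}^c\times\mathbb{S}$ and stopped at $\uuptau_{r_0}\wedge\uptau_R$ (with $R>r_0$); the drift bound on $\sB_{r_0}^c\cap\sB_R$ together with $\Lyap\ge 0$ yields
\[
\Exp_{x,i}^v\bigl[\uuptau_{r_0}\wedge\uptau_R\bigr]\;\le\;\Lyap(x,i)-\Exp_{x,i}^v\bigl[\Lyap(X_{\uuptau_{r_0}\wedge\uptau_R},S_{\uuptau_{r_0}\wedge\uptau_R})\bigr]\;\le\;\Lyap(x,i).
\]
Letting $R\to\infty$ and invoking monotone convergence then gives $\Exp_{x,i}^v[\uuptau_{r_0}]\le\Lyap(x,i)<\infty$ for every $(x,i)\in\sB_{r_0}^c\times\mathbb{S}$.

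To finish, I would combine this hitting-time estimate with the qualitative properties of the dynamics. Under \hyperlink{A1}{{(A1)}}--\hyperlink{A3}{{(A3)}}, $(X_t,S_t)$ is a non-degenerate strong Feller (hence strong Markov) process (\cite{AGM93}*{Theorem~2.1}, \cite{ABG-book}*{Theorem~5.2.9}); moreover, for any $r_1>r_0$ the $x$-component in $\sB_{r_1}$ is a uniformly non-degenerate diffusion with bounded coefficients (the switching cannot cause exit since $\mathbb{S}$ is finite), so $\Exp_{x,i}^v[\uptau_{r_1}]$ is finite and bounded uniformly over $(x,i)\in\sB_{r_1}\times\mathbb{S}$. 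Splitting an arbitrary trajectory into the excursion that exits $\sB_{r_1}$ and the subsequent hitting of $\sB_{r_0}$, the mean return time to the bounded set $\sB_{r_0}\times\mathbb{S}$ is finite from every initial condition. Together with the irreducibility hypothesis \hyperlink{A4}{{(A4)}}, the standard Khasminskii-type construction then produces a unique invariant probability measure $\eta_v$ for $(X_t,S_t)$; that is, the controlled regime-switching diffusion is positive recurrent and $v\in\Uadm_{\mathsf{ssm}}$. Since $v\in\Usm$ was arbitrary and the inclusion $\Uadm_{\mathsf{ssm}}\subset\Usm$ is trivial, $\Usm=\Uadm_{\mathsf{ssm}}$.

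The step I expect to be the main obstacle is the last one: transferring the classical ``finite mean return time to a bounded set $\Rightarrow$ existence and uniqueness of an invariant probability measure'' machinery from non-degenerate diffusions to the coupled hybrid process. One must verify that the finiteness of the regime set $\mathbb{S}$ and the irreducibility condition \hyperlink{A4}{{(A4)}} suffice to run the Khasminskii construction for $(X,S)$ and to obtain uniqueness of $\eta_v$; the remaining ingredients (the It\^o--Krylov formula, the Lyapunov non-explosion argument, and the elementary exit-time estimates on bounded balls) are routine.
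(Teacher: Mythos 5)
Your proposal is correct and follows essentially the same route as the paper: the paper's proof likewise applies the It\^o--Krylov formula to the Lyapunov pair of \hyperlink{A7}{(A7)}, using inf-compactness of $h$ to get a strictly negative drift outside a ball, and derives the hitting-time bound \cref{3.13}, $\Exp^v_{x,i}[\uuptau_r]\le \Lyap(x,i)/\epsilon$, from which stability is concluded. The final step you flag as the main obstacle is not actually needed: in the sense of \cite{ABG-book} (which the paper follows), positive recurrence of the nondegenerate process under a stationary Markov control is characterized by finiteness of the mean hitting time of a bounded ball, so your estimate already gives $\Usm=\Uadm_{\mathsf{ssm}}$, and the Khasminskii-type construction of the invariant measure is standard additional information rather than a required ingredient.
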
 
\begin{proof}
Following the proof technique as in \cref{TH3.3}, we obtain the estimate \cref{3.13}, which implies the stability.
\end{proof}

\begin{lemma}\label{infmeasure} 
    Let $v\in \Uadm_{\mathsf{ssm}}$ and $\eta_v$ be the corresponding invariant probability measure, then for any ball $\sB_R$ of radius $R>0$ in $\Rd$
    \[ \inf_{v\in\Uadm_{\mathsf{ssm}}}\eta_v(\sB_R\times\mathbb{S})>0\] 
\end{lemma}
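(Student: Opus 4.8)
The plan is to obtain the bound from the Foster--Lyapunov inequality \hyperlink{A7}{{(A7)}}(i), first on one sufficiently large ball and then on an arbitrary one. By \cref{3.3.4} we have $\Uadm_{\mathsf{ssm}}=\Usm$, so for every $v\in\Usm$ the controlled process is positive recurrent and $\eta_v$ is well defined; moreover, nondegeneracy \hyperlink{A3}{{(A3)}} together with irreducibility \hyperlink{A4}{{(A4)}} makes the process strong Feller and irreducible, hence $\eta_v$ is mutually absolutely continuous with $\mathrm{Leb}\times(\text{counting})$, and in particular $\eta_v(\sB_R\times\mathbb{S})>0$ for each individual $v$. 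The real issue is uniformity of this lower bound over $v\in\Usm$.

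\textbf{Step 1.} Integrating $\sL_v\Lyap\le\widehat{C}_0-h$ against the invariant measure $\eta_v$ (arguing as in the proof of \cref{TH3.3} and \cite{ABG-book}*{Lemma~3.7.2}, i.e.\ using the hitting-time estimate $\Exp_{x,i}^{v}[\uuptau_r]\le\epsilon^{-1}\Lyap(x,i)$ and the companion bound on $\Exp_{x,i}^{v}\big[\int_0^{\uuptau_r} h(X_s,S_s,v(X_s,S_s))\,\D s\big]$ together with the regenerative representation of $\eta_v$) yields $\int_{\Rd\times\mathbb{S}}h(x,i,v(x,i))\,\eta_v(\D x,\D i)\le\widehat{C}_0$ for every $v\in\Usm$. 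Since $h$ is inf-compact, the set $\{(x,i):\inf_{\zeta\in\Act}h(x,i,\zeta)\le 2\widehat{C}_0+1\}$ is compact, hence contained in $\sB_{R_0}\times\mathbb{S}$ for some $R_0>0$; on its complement $h(x,i,v(x,i))>2\widehat{C}_0+1$, so Markov's inequality gives $\eta_v(\sB_{R_0}^c\times\mathbb{S})\le\widehat{C}_0/(2\widehat{C}_0+1)<\tfrac12$, i.e.\ $\eta_v(\sB_{R_0}\times\mathbb{S})\ge\tfrac12$ uniformly in $v\in\Usm$.

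\textbf{Step 2.} If $R\ge R_0$ the claim follows from $\sB_{R_0}\subset\sB_R$. If $R<R_0$, invariance of $\eta_v$ under the time-one transition kernel $P_1^v$ gives $\eta_v(\sB_R\times\mathbb{S})=\int P_1^v((x,i),\sB_R\times\mathbb{S})\,\eta_v(\D x,\D i)\ge\tfrac12\,\inf_{v\in\Usm}\inf_{(x,i)\in\sB_{R_0}\times\mathbb{S}}P_1^v((x,i),\sB_R\times\mathbb{S})$, and it remains to show this last infimum is strictly positive. On $\sB_{R_0+1}$ the family $\{\sL_v:v\in\Usm\}$ is uniformly elliptic (by \hyperlink{A3}{{(A3)}}) with coefficients bounded independently of $v$ (local boundedness of $b$ uniform over $\zeta\in\Act$ from \hyperlink{A1}{{(A1)}}--\hyperlink{A2}{{(A2)}}, and $\norm{m_{ij}}_{\infty}\le M$); bounding below by $e^{-(N-1)M}$ the probability that no regime change occurs on $[0,1]$ and invoking the Stroock--Varadhan support theorem (or the Krylov--Safonov parabolic Harnack inequality) for the resulting fixed-regime nondegenerate diffusion killed on $\partial\sB_{R_0+1}$ produces a lower bound $\Prob_{x,i}^{v}(X_1\in\sB_R)\ge c(R,R_0)>0$ uniform over $x\in\sB_{R_0}$, $i\in\mathbb{S}$ and $v\in\Usm$, which finishes the proof.

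The main obstacle is Step 2: obtaining a transition-probability lower bound that is uniform over the (a priori non-compact) set of all stationary Markov controls while respecting the coupling with the discrete switching component. This is resolved by exploiting that, because $\Act$ is compact and $\mathbb{S}$ finite, the admissible coefficients vary within a bounded, uniformly elliptic class, so classical interior parabolic estimates apply with $v$-independent constants. A soft alternative avoiding PDE estimates is available: Step 1 shows $\{\eta_v\}_{v\in\Usm}$ is tight; the map $v\mapsto\eta_v$ is weakly continuous on the compact metric space $\Usm$ (any weak limit of $\eta_{v_n}$ along $v_n\to v$ is invariant for $v$, whose invariant measure is unique); the functional $\eta\mapsto\eta(\sB_R\times\mathbb{S})$ is lower semicontinuous on the open set $\sB_R\times\mathbb{S}$ and positive at every $v$; hence its infimum over the compact set $\Usm$ is positive.
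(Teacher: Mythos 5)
Your argument is essentially correct, but it is a genuinely different route from the paper's. The paper disposes of this lemma by invoking Harnack-type estimates for the invariant densities, which solve the weakly coupled adjoint elliptic system: it cites \cite{ABG-book}*{Theorem~5.3.4}, \cite{FP} and \cite{sirakov}*{Theorem~2} and then follows the argument of \cite{ABG-book}*{Lemma~3.2.4}, so the uniform positivity of $\eta_v(\sB_R\times\mathbb{S})$ comes from PDE estimates on the densities with constants independent of $v$. You instead argue probabilistically: the Foster--Lyapunov bound \hyperlink{A7}{(A7)}(i) gives $\int h\,\D\pi_v\le\widehat{C}_0$ uniformly in $v$, hence (inf-compactness of $h$ plus Chebyshev) a uniform mass bound $\eta_v(\sB_{R_0}\times\mathbb{S})\ge\tfrac12$ on one large ball, and then invariance under the time-one kernel plus a $v$-uniform minorization $\inf_{x\in\sB_{R_0},i}P_1^v((x,i),\sB_R\times\mathbb{S})>0$ (no-switch event of probability bounded below via the Poisson random measure, followed by Krylov--Safonov/Harnack lower bounds for a uniformly elliptic diffusion with measurable drift bounded uniformly over $\Act$) transfers the mass to an arbitrary ball. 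Note that your Step~1 makes the dependence on \hyperlink{A7}{(A7)} explicit, which is in fact necessary: without a uniform Lyapunov condition the statement fails (e.g.\ $d=1$, $\upsigma\equiv1$, $b(x,\zeta)=\zeta$, $\Act=[-1,1]$, $v(x)=\sign(x_0-x)$ gives $\eta_v(\sB_R)\to0$ as $x_0\to\infty$), so both proofs implicitly live under the standing ergodic assumptions. What each approach buys: the paper's density/Harnack route yields quantitative local bounds on the invariant densities themselves (useful elsewhere in the ergodic analysis and matching the coupled-system machinery already cited), while yours is more self-contained probabilistically and avoids regularity theory for the adjoint system, at the price of the standard but uncited uniform transition-kernel lower bound for bounded measurable drifts (the Stroock--Varadhan support theorem as such wants continuous coefficients, so Krylov--Safonov is the right tool there, as you note).

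One caveat: your ``soft alternative'' at the end is not actually soft. The assertion that any weak limit of $\eta_{v_n}$ along $v_n\to v$ in $\Usm$ is invariant for $v$ requires passing to the limit in $\int b(x,i,v_n(x,i))\cdot\grad f\,\D\eta_{v_n}$ and $\int\sum_j m_{ij}(x,v_n(x,i))f(x,j)\,\D\eta_{v_n}$, where the control topology only pairs against fixed $\Lp^1$ densities in $x$; one needs strong $\Lp^1_{loc}$ convergence of the invariant densities, which again rests on the elliptic estimates for the coupled adjoint system. Since your main Steps~1--2 do not use this alternative, the proof stands, but the parenthetical continuity claim should either be dropped or supported by the same density estimates the paper cites.
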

\begin{proof}
    In view of  \cite{ABG-book}*{Theorem~5.3.4} \cite{FP}*{p. 560} \cite{sirakov}*{Theorem 2}, and by following the arguments of \cite{ABG-book}*{Lemma 3.2.4}, one can obtain $\inf_{v\in\Uadm_{\mathsf{ssm}}}\eta_v(\sB_R\times \mathbb{S}) > 0$ for any $R>0$.
\end{proof}



\begin{lemma}\label{3.7.2}
    \textit{Suppose Assumption \hyperlink{A5}{{(A5)}} holds and there exists a constant $k_0 > 0$, and a pair of nonnegative, inf-compact functions $(\Lyap, h) \in \cC^2(\Rd\times \mathbb{S}) \times \mathfrak{C}$ such that $1 + c \in \sorder(h)$ and
\begin{align}\label{3.7.3}
\sL_{\zeta} 
\Lyap(x,i) \leq k_0 - h(x, i, \zeta), \quad \forall\,\; (x, i, \zeta) \in \Rd \times \mathbb{S} \times  \Act
\end{align}
Then the following properties are satisfied:
\begin{enumerate}
\item for any $r > 0$,
\begin{align}\label{3.7.4}
x \mapsto \sup_{v \in \Uadm_{\mathsf{ssm}}} \mathbb{E}^v_{x,i} \left[ \int_0^{\uuptau_r} (1 + c_v(X_t,S_t)) \, dt \right] \in \sorder(\Lyap)
\end{align}
\item if $\varphi \in \sorder(\Lyap)$, then for any $(x,i) \in \Rd\times\mathbb{S}$,
\begin{align}\label{3.7.5}
\lim_{t \to \infty} \sup_{v \in \Uadm_{\mathsf{ssm}}} \frac{1}{t} \mathbb{E}^v_{x,i} [\varphi(X_t,S_t)] = 0,
\end{align}
and for all $v\in\Uadm_{\mathsf{ssm}}$ and $t\ge0$,
\begin{align}\label{3.7.6}
\lim_{R \to \infty} \mathbb{E}^v_{x,i} [\varphi(X_{t \wedge \uptau_R},S_{t \wedge \uptau_R})] = \mathbb{E}^v_{x,i} [\varphi(X_t,S_t)]
\end{align}
\end{enumerate}}
\end{lemma}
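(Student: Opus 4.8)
The plan is to adapt the proof of \cite{ABG-book}*{Lemma~3.7.2} to the hybrid state space $\Rd\times\mathbb{S}$. The key structural observation is that the Foster--Lyapunov pair $(\Lyap,h)$ in \cref{3.7.3} does not depend on the control (the inequality is uniform in $\zeta\in\Act$), so every moment bound produced below is automatically uniform over $v\in\Uadm_{\mathsf{ssm}}$; recall in addition that $\Usm=\Uadm_{\mathsf{ssm}}$ by \cref{3.3.4}, that $(X_t,S_t)$ is non-explosive under \hyperlink{A1}{{(A1)}}--\hyperlink{A3}{{(A3)}} so that $\uptau_R\nearrow\infty$ a.s.\ as $R\to\infty$, and that the It\^o--Krylov formula (\cite{ABG-book}*{Lemma~5.1.4}) applies directly to $\Lyap\in\cC^2(\Rd\times\mathbb{S})$, the generator $\sL_v$ already carrying the switching term $\sum_{j}m_{ij}(\cdot,v)\Lyap(\cdot,j)$.

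For part~(1), fix $v\in\Uadm_{\mathsf{ssm}}$ and $r>0$. Applying It\^o--Krylov to $\Lyap$ on $[0,\uuptau_r\wedge\uptau_R]$ and invoking \cref{3.7.3},
\[
\Exp_{x,i}^{v}\!\Big[\int_0^{\uuptau_r\wedge\uptau_R} h(X_t,S_t,v(X_t,S_t))\,\D t\Big]\;\le\;\Lyap(x,i)+k_0\,\Exp_{x,i}^{v}\big[\uuptau_r\wedge\uptau_R\big].
\]
Using the inf-compactness of $h$ (choose an annulus on which $\inf_{\zeta}h-k_0\ge1$ and bound the expected hitting time there, then patch back to the prescribed $r$ via the strong Markov property, as in \cite{ABG-book}*{Lemma~3.7.2(i)}) one controls $\Exp_{x,i}^{v}[\uuptau_r\wedge\uptau_R]$, and letting $R\to\infty$ with Fatou's lemma gives $\sup_v\Exp_{x,i}^{v}\big[\int_0^{\uuptau_r}h(X_t,S_t,v(X_t,S_t))\,\D t\big]\le\widehat M\,\Lyap(x,i)$ on $\sB_r^c\times\mathbb{S}$. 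Finally, since $1+c\in\sorder(h)$ and $\Lyap$ is inf-compact, I would split $\int_0^{\uuptau_r}(1+c_v)\,\D t$ into its part on a large ball $\sB_\rho$ (contributing a constant, uniformly in $v$ and in the starting point) and its part on $\sB_\rho^c$ (dominated by $\epsilon\int_0^{\uuptau_r}h\,\D t$), exactly as in \cite{ABG-book}*{Lemma~3.7.2(i)}, obtaining $\limsup_{|x|\to\infty}\sup_i\sup_v\Lyap(x,i)^{-1}\Exp_{x,i}^{v}[\int_0^{\uuptau_r}(1+c_v)\,\D t]\le\epsilon\widehat M$; letting $\epsilon\downarrow0$ establishes \cref{3.7.4}.

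For part~(2), the drift inequality \cref{3.7.3} gives $\sL_v\Lyap\le k_0$ for every $v$, so the stopped It\^o--Krylov identity yields $\Exp_{x,i}^{v}[\Lyap(X_{t\wedge\uptau_R},S_{t\wedge\uptau_R})]\le\Lyap(x,i)+k_0t$ for all $R$, and hence, letting $R\to\infty$ with Fatou, $\Exp_{x,i}^{v}[\Lyap(X_t,S_t)]\le\Lyap(x,i)+k_0t$ uniformly in $v$. Given $\varphi\in\sorder(\Lyap)$, for each $\epsilon>0$ inf-compactness of $\Lyap$ gives a constant $C_\epsilon$ with $|\varphi|\le\epsilon\Lyap+C_\epsilon$ everywhere; then $t^{-1}\Exp_{x,i}^{v}[\varphi(X_t,S_t)]\le t^{-1}\epsilon(\Lyap(x,i)+k_0t)+t^{-1}C_\epsilon$, whose $\limsup_{t\to\infty}$ is $\le\epsilon k_0$, uniformly in $v$; since $\epsilon$ is arbitrary this proves \cref{3.7.5}. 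For \cref{3.7.6}, fix $v$ and $t$; from $|\varphi|\le\epsilon\Lyap+C_\epsilon$ and $\sup_R\Exp_{x,i}^{v}[\Lyap(X_{t\wedge\uptau_R},S_{t\wedge\uptau_R})]\le\Lyap(x,i)+k_0t$ we get, for any event $A$, $\Exp_{x,i}^{v}\big[|\varphi(X_{t\wedge\uptau_R},S_{t\wedge\uptau_R})|\Ind_A\big]\le\epsilon(\Lyap(x,i)+k_0t)+C_\epsilon\Prob(A)$, so the family $\{\varphi(X_{t\wedge\uptau_R},S_{t\wedge\uptau_R})\}_R$ is uniformly integrable ($\epsilon$ being arbitrary); since $\uptau_R\nearrow\infty$ a.s.\ we have $(X_{t\wedge\uptau_R},S_{t\wedge\uptau_R})\to(X_t,S_t)$ a.s., and $\varphi$ is continuous, so the Vitali convergence theorem gives \cref{3.7.6}.

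I expect the main obstacle to be the passage to the $\sorder(\Lyap)$ conclusion in part~(1): one must combine the $h$-weighted occupation bound with $1+c\in\sorder(h)$ together with a uniform (in $v$ and in the initial point) estimate on the time the process spends in a fixed compact annulus before reaching $\sB_r$ --- this is the excursion/occupation-time argument of \cite{ABG-book}*{Lemma~3.7.2(i)}, which transfers to the regime-switching setting without modification, the discrete coordinate $S$ entering only through $\sL_v$ and the bounded switching rates $m_{ij}$.
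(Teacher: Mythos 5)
Your proposal is correct and follows essentially the same route as the paper, which omits the proof entirely by declaring it identical to \cite{ABG-book}*{Lemma~3.7.2}: your control-uniform Foster--Lyapunov bounds via the It\^o--Krylov formula, the strong-Markov patching and occupation-time splitting for \cref{3.7.4}, and the bound $\Exp_{x,i}^{v}[\Lyap(X_t,S_t)]\le\Lyap(x,i)+k_0t$ combined with $|\varphi|\le\epsilon\Lyap+C_\epsilon$ and uniform integrability for \cref{3.7.5}--\cref{3.7.6} are exactly the ABG argument transported to the switching generator, and are consistent with the estimates the authors themselves reproduce in the proof of \cref{TH3.3} (e.g.\ \cref{3.13} and \cref{TErgoOptCont1E}). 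No gaps beyond the level of detail the cited source already supplies.
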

\textit{Proof.} The proof is identical to that of \cite{ABG-book}*{lemma 3.7.2}, and is therefore omitted.
\hfill $\Box$
\begin{corollary}\label{co3.7.3}
Let $v \in \Uadm_{\mathsf{ssm}}$ with $\rho_v < \infty$.  
For any fixed $r > 0$, define
\[
\varphi(x) := \Exp_{x,i}^v \!\left[ \int_0^{\uuptau_r} \bigl( 1 + c_v(X_t,S_t) \bigr)\, dt \right], 
\qquad (x,i) \in \Rd\times \mathbb{S} .
\]

Then
\[
\lim_{t \to \infty} \frac{1}{t}\, \mathbb{E}_{x,i}^v \!\big[ \varphi(X_t,S_t) \big] = 0,
\qquad \forall\, (x,i) \in \Rd\times \mathbb{S},
\]
and for any $t \geq 0$,
\[
\lim_{R \to \infty} \mathbb{E}_{x,i}^v \!\big[ \varphi(X_{t \wedge \uptau_R},S_{t \wedge \uptau_R}) \big]
= \mathbb{E}_{x,i}^v \!\big[ \varphi(X_t,S_t) \big], 
\qquad \forall\, v \in \Uadm_{\mathsf{ssm}}, \ \forall\, (x,i) \in \Rd\times \mathbb{S}.
\]
\end{corollary}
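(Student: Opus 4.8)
The plan is to obtain the corollary as a direct specialization of \cref{3.7.2}: the two assertions are nothing but \cref{3.7.5} and \cref{3.7.6} read off for the particular function $\varphi$ in the statement, so the only thing that really needs checking is that this $\varphi$ belongs to the class $\sorder(\Lyap)$ for which \cref{3.7.2}(2) applies. First I would confirm that the hypotheses of \cref{3.7.2} are in force. Under \hyperlink{A7}{{(A7)}}(i) we have the pair $(\Lyap,h)$ satisfying the Foster--Lyapunov inequality \cref{3.7.3}, and since $c$ is bounded by \hyperlink{A5}{{(A5)}} while $h$ is inf-compact, the function $1+c$ lies in $\sorder(h)$; moreover the hypothesis $\rho_v<\infty$ (automatic here because $c$ is bounded) guarantees $v\in\Uadm_{\mathsf{ssm}}$, so that $\varphi$ is well defined and finite.

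Next I would bound $\varphi$ pointwise by the supremum function of \cref{3.7.4}. For the fixed $r>0$ in the statement,
\[
0\;\le\;\varphi(x,i)\;=\;\Exp_{x,i}^{v}\!\left[\int_0^{\uuptau_r}\bigl(1+c_v(X_t,S_t)\bigr)\,\D t\right]\;\le\;\sup_{v'\in\Uadm_{\mathsf{ssm}}}\Exp_{x,i}^{v'}\!\left[\int_0^{\uuptau_r}\bigl(1+c_{v'}(X_t,S_t)\bigr)\,\D t\right]\;=:\;\Phi_r(x,i),
\]
and \cref{3.7.4} gives $\Phi_r\in\sorder(\Lyap)$. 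Since $\Lyap\ge0$ and $0\le\varphi\le\Phi_r$, the quotient $\varphi/\Lyap$ is squeezed between $0$ and $\Phi_r/\Lyap$, whence $\limsup_{|x|\to\infty}\sup_{i\in\mathbb{S}}\varphi(x,i)/\Lyap(x,i)=0$, i.e.\ $\varphi\in\sorder(\Lyap)$.

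With $\varphi\in\sorder(\Lyap)$ in hand, both conclusions follow from \cref{3.7.2}(2): \cref{3.7.5} applied to $\varphi$ yields $\lim_{t\to\infty}\sup_{v'\in\Uadm_{\mathsf{ssm}}}\tfrac1t\Exp_{x,i}^{v'}[\varphi(X_t,S_t)]=0$, and since $\varphi\ge0$ and $\Exp_{x,i}^{v}[\varphi(X_t,S_t)]\le\sup_{v'}\Exp_{x,i}^{v'}[\varphi(X_t,S_t)]$, the first claim drops out by a squeeze; the second claim is precisely \cref{3.7.6} for this $\varphi$, with $v\in\Uadm_{\mathsf{ssm}}$ and $t\ge0$ arbitrary. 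The only step carrying any content is the membership $\varphi\in\sorder(\Lyap)$, and even that reduces to the elementary domination argument above together with \cref{3.7.2}(1); I do not expect a genuine obstacle beyond keeping the hypotheses of \cref{3.7.2} straight.
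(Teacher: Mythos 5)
Your argument is correct: since the paper states this corollary without proof (importing it from the Arapostathis--Borkar--Ghosh framework), the natural route is exactly the one you take, namely observing that $0\le\varphi\le$ the function in \cref{3.7.4}, hence $\varphi\in\sorder(\Lyap)$, and then reading off the two conclusions from \cref{3.7.5} and \cref{3.7.6}. In the setting where the paper actually invokes the corollary (inside \cref{vd&hjb}, where \hyperlink{A7}{(A7)}(i) is a standing assumption), this is fully rigorous. The only substantive difference from the source result is one of generality: the original diffusion corollary is proved for a single $v\in\Uadm_{\mathsf{ssm}}$ with $\rho_v<\infty$ \emph{without} any uniform-in-control Lyapunov condition, essentially by using $\varphi$ itself (which solves $\sL_v\varphi=-(1+c_v)$ on $\sB_r^c$ and vanishes on $\sB_r$) as a Lyapunov function for the fixed control $v$; your derivation instead leans on \cref{3.7.2}(1), and hence on the hypothesis \hyperlink{A7}{(A7)}(i) holding uniformly over controls. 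That buys a shorter proof at the price of a stronger hypothesis, which is harmless here because boundedness of $c$ makes $\rho_v<\infty$ automatic and \cref{3.3.4} gives $\Usm=\Uadm_{\mathsf{ssm}}$ under \hyperlink{A7}{(A7)}. Two small slips to fix: the implication ``$\rho_v<\infty$ guarantees $v\in\Uadm_{\mathsf{ssm}}$'' is backwards --- stability of $v$ is part of the hypothesis (or follows from \cref{3.3.4}), not a consequence of $\rho_v<\infty$; and finiteness of $\varphi$ should be credited to the domination by the function in \cref{3.7.4}, exactly as in your squeeze argument, rather than to $\rho_v<\infty$.
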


\vspace{5mm}
The following result is adapted from \cite{ABG-book}*{Theorem 5.5.2}, and the proof follows the general methodology of \cite{ABG-book}*{Lemma~3.6.3}.
\begin{theorem}\label{5.5.2}
Suppose Assumption \hyperlink{A5}{{(A5)}} holds. For each radius \( R > 0 \), there exists a constant   \( \widehat{C}_1(R) > 0 \) and a constant \( \widehat{C}_2 > 0 \) (independent of \(R\)) such that, for all \( \alpha \in (0,1) \), we have
\begin{align}
\|\cJ_\alpha^v - \cJ_\alpha^v(0,1)\|_{\Sob^{2,p}(\cB_R\times \mathbb{S})} 
&\le \widehat{C}_1, \label{A.2}\\
\sup_{(\cB_R\times \mathbb{S})} \alpha \cJ_\alpha^v 
&\le \widehat{C}_2 \label{A.3}
\end{align}
Moreover, \cref{A.2}-\cref{A.3} still holds if $\cJ_\alpha^v$ is replaced by $V_\alpha$.
\end{theorem}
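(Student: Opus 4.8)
The plan is to adapt the vanishing--discount estimates of \cite{ABG-book}*{Lemma~3.6.3 and Theorem~5.5.2} to the regime--switching (weakly coupled) setting. The bound \cref{A.3} is immediate from the probabilistic representation: since $\cJ_\alpha^v(x,i)=\Exp_{x,i}^v[\int_0^\infty e^{-\alpha t}c(X_t,S_t,v(X_t,S_t))\,\D t]$ and $\|c\|_\infty\le M$, we get $0\le\alpha\cJ_\alpha^v(x,i)\le M$ for all $(x,i)\in\Rd\times\mathbb{S}$, all $\alpha>0$ and all $v\in\Usm$; the same pointwise bound holds for $V_\alpha=\inf_{U\in\Uadm}\cJ_\alpha^U(\cdot,\cdot,c)$. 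Hence \cref{A.3} holds with $\widehat C_2=M$, independent of $R$, $\alpha$ and $v$.

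The substance of the proof is a \emph{uniform-in-$\alpha$ oscillation estimate}: for each $R>0$ there is $C_0(R)$, independent of $\alpha\in(0,1)$ and $v\in\Usm$, such that $\osc_{\cB_{2R}\times\mathbb{S}}\cJ_\alpha^v:=\sup_{\cB_{2R}\times\mathbb{S}}\cJ_\alpha^v-\inf_{\cB_{2R}\times\mathbb{S}}\cJ_\alpha^v\le C_0(R)$. I would establish this by a coupling argument (the probabilistic counterpart of the Harnack step in \cite{ABG-book}): for $(x,i),(\bar x,\bar i)\in\cB_{2R}\times\mathbb{S}$, construct, using the same Brownian motion and Poisson random measure, a coupling of the solutions of \cref{E1.1} under $v$ started from the two points that coalesce at a random time $\tau_c$ with $\sup_{(x,i),(\bar x,\bar i)\in\cB_{2R}\times\mathbb{S}}\sup_{v\in\Usm}\Exp[\tau_c]=:\Xi(R)<\infty$. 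Here finiteness of $\Xi(R)$ rests on: (a) the Foster--Lyapunov bound $\Exp_{x,i}^v[\uuptau_r]\le\tfrac1\epsilon\Lyap(x,i)$ obtained exactly as \cref{3.13} in the proof of \cref{TH3.3}, which controls the time for each marginal to reach a fixed ball $\cB_r$ uniformly in $v$; and (b) a Doeblin/minorization estimate on the fixed compact set $\overline{\cB_r}$, valid uniformly in $v\in\Usm$ by the nondegeneracy \hyperlink{A3}{(A3)} and the switching irreducibility \hyperlink{A4}{(A4)}, forcing coalescence in bounded expected time once both marginals are localized (with the excursions out of $\cB_r$ again controlled by (a)). Since the two cost integrands agree for $t\ge\tau_c$, $|\cJ_\alpha^v(x,i)-\cJ_\alpha^v(\bar x,\bar i)|\le\Exp[\int_0^{\tau_c}e^{-\alpha t}\,2\|c\|_\infty\,\D t]\le 2M\,\Xi(R)=:C_0(R)$, with no dependence on $\alpha$.

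With the oscillation bound in hand, put $w_\alpha:=\cJ_\alpha^v-\cJ_\alpha^v(0,1)$, so $\|w_\alpha\|_{L^\infty(\cB_{2R}\times\mathbb{S})}\le C_0(R)$ since $(0,1)\in\cB_{2R}\times\mathbb{S}$. Because the same constant is subtracted in every regime and $\sum_{j}m_{ij}(\cdot,v)=0$, we have $\sL_v w_\alpha=\sL_v\cJ_\alpha^v=\alpha\cJ_\alpha^v-c(\cdot,v)$; componentwise, $\trace(a(\cdot,i)\grad^2 w_\alpha(\cdot,i))+b(\cdot,i,v)\cdot\grad w_\alpha(\cdot,i)+m_{ii}(\cdot,v)\,w_\alpha(\cdot,i)=\alpha\cJ_\alpha^v(\cdot,i)-c(\cdot,i,v)-\sum_{j\ne i}m_{ij}(\cdot,v)\,w_\alpha(\cdot,j)$, whose right-hand side is bounded in $L^p(\cB_{2R})$ by a constant depending only on $R$, $M$, $N=|\mathbb{S}|$ and $C_0(R)$ (using \cref{A.3}, $\|w_\alpha\|_{L^\infty(\cB_{2R}\times\mathbb{S})}\le C_0(R)$, and $\|m_{ij}\|_\infty\le M$). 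Since $a(\cdot,i)$ is uniformly elliptic on $\cB_{2R}$ by \hyperlink{A3}{(A3)} and $m_{ii}(\cdot,v)\le0$, the interior estimate \cite{GilTru}*{Theorem~9.11} applied on $\cB_R\subset\cB_{2R}$ to each component yields $\|w_\alpha\|_{\Sob^{2,p}(\cB_R\times\mathbb{S})}\le\widehat C_1(R)$ with $\widehat C_1(R)$ independent of $\alpha\in(0,1)$ and $v\in\Usm$, i.e.\ \cref{A.2}. For $V_\alpha$, by \cref{TH2.1} any measurable minimizing selector $\hat v_\alpha\in\Usm$ of \cref{2.1} satisfies $\sL_{\hat v_\alpha}V_\alpha+c(\cdot,\hat v_\alpha)=\alpha V_\alpha$ with $V_\alpha=\cJ_\alpha^{\hat v_\alpha}$, so the bounds above, specialized to $v=\hat v_\alpha$, give \cref{A.2}--\cref{A.3} for $V_\alpha$.

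The main obstacle is step (a)--(b): arranging that the coalescence time $\tau_c$ of the coupled pair has an expectation bounded \emph{uniformly in both $\alpha$ and $v\in\Usm$}. The discount factor alone gives no contraction as $\alpha\downarrow0$, so one must exploit the Foster--Lyapunov structure \hyperlink{A7}{(A7)} to confine the excursions of $(X,S)$ to a neighbourhood of a fixed compact set, together with a careful localized minorization using \hyperlink{A3}{(A3)} and \hyperlink{A4}{(A4)} that simultaneously couples the diffusion component and the finite-state switching component, and then track all constants through the elliptic estimates to verify that none of them degenerates as $\alpha\to0$.
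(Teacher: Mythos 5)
Your bound \cref{A.3} and your final step (writing the componentwise equation for $w_\alpha=\cJ_\alpha^v-\cJ_\alpha^v(0,1)$, noting that the coupling terms and $\alpha\cJ_\alpha^v$ are bounded, and invoking the interior estimate of \cite{GilTru}*{Theorem~9.11}) coincide with the paper's argument. The difference, and the gap, lies in how you obtain the uniform-in-$\alpha$ oscillation bound on $\cB_{2R}\times\mathbb{S}$. The paper gets it by a purely local analytic argument: it introduces the auxiliary Dirichlet problem $\sL_v\varphi-\alpha\varphi=0$ in $\cB_{3R}\times\mathbb{S}$ with boundary data $\cJ_\alpha^v$, bounds $\sup_{\cB_{3R}\times\mathbb{S}}|\cJ_\alpha^v-\varphi|$ by the ABP estimate (\cite{AS}*{Theorem~A.1}) in terms of $\lVert c\rVert_\infty$ alone, and controls the oscillation of the $\alpha$-harmonic part $\varphi$ by the Harnack inequality for weakly coupled systems (\cite{sirakov}*{Theorem~2}); all constants depend only on the ellipticity and coefficient bounds on $\cB_{3R}$, uniformly in $\alpha\in(0,1)$ and $v$. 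No recurrence or stability of the process is used, which is consistent with the theorem being stated under (A5) (with the standing assumptions (A1)--(A4)) only.

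Your coupling route, by contrast, needs the Foster--Lyapunov condition (A7) to confine excursions and to make the expected coalescence time finite, and (A7) is not among the hypotheses of the theorem; without it (e.g.\ for transient dynamics) one can have $\Exp[\tau_c]=\infty$ for all couplings while the estimate \cref{A.2} still holds, so your argument cannot prove the statement at its stated generality. Moreover, even granting (A7), the quantitative heart of your proof --- a coalescent coupling of the joint diffusion-and-switching process with $\sup_{v\in\Usm}\sup_{(x,i),(\bar x,\bar i)\in\cB_{2R}\times\mathbb{S}}\Exp[\tau_c]<\infty$, i.e.\ a Doeblin minorization on a fixed ball that is uniform over all stationary Markov controls, combined with uniform control of return times --- is exactly what you label the ``main obstacle'' and is asserted rather than proved. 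As it stands, the proposal replaces the paper's two-line ABP--Harnack step with a substantial unproven probabilistic construction under an extra hypothesis; to repair it within the stated assumptions you should either carry out the local comparison argument (auxiliary Dirichlet problem plus ABP and the coupled Harnack inequality) or supply the full coupling construction and restrict the claim to the setting where (A7) is in force.
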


\begin{proof}
Let $v\in \Uadm_{\mathsf{ssm}}$. Since $\cJ_{\alpha}^{v}$ satisfies
\[
\sL_v\psi-c_v=\alpha\psi
 \]
and the running cost $c$ is bounded, it follows that \(\alpha \cJ_{\alpha}^{v}(x,i)\leq \norm{c}_{L^{\infty}(\Rd\times\mathbb{S}\times\Act)}\). This provides a uniform upper bound on \(\alpha \cJ_{\alpha}^{v} \) throughout \(\Rd\times\mathbb{S}\), and thus inequality \cref{A.3} follows directly.

To establish the \(\Sob^{2,p}\) estimate \cref{A.2}, 
consider the Poisson equation 
\[
\sL_v\psi-c_v=\alpha\psi
 \quad \text{in}\;\;\cB_{3R}\times\mathbb{S}\]
By It$\hat{\rm o}$–Krylov formula, we have $\psi=\cJ^v_\alpha$.
Next, define the auxiliary Dirichlet problem
\begin{align*}
    \sL_v\varphi-\alpha\varphi=0\qquad\text{in}\;\cB_{3R}\times\mathbb{S},\qquad \varphi=\cJ^v_\alpha\qquad\text{on}\;\partial\cB_{3R}\times\mathbb{S}
\end{align*}
Then \(\varphi-\psi\) satisfies
\begin{equation}\label{A.7}
\sL_v(\varphi-\psi)-\alpha(\varphi-\psi)=-c_v
 \quad \text{in}\;\;\cB_{3R}\times\mathbb{S}\end{equation}
and \(\psi-\varphi\) satisfy
\begin{equation}\label{A.8}
\sL_v(\psi-\varphi)-\alpha(\psi-\varphi)=c_v\ge 0
 \quad \text{in}\;\;\cB_{3R}\times\mathbb{S}\end{equation}
Applying the Aleksandrov–Bakelman–Pucci (ABP) estimate \cite{AS}*{Theorem A.1} in \cref{A.7} and \cref{A.8} yields
\begin{equation}\label{a1}
\sup_{\cB_{3R}\times\mathbb{S}}|\varphi-\psi|\leq C|\cB_{3R}|^{\frac{1}{d}}\norm{c}_{L^\infty(\cB_{3R}\times\mathbb{S}\times\Act)}\end{equation}
for some $C>0$. 
Therefore
\begin{align}\label{osc}
\osc_{\cB_{2R}\times\mathbb{S}}\psi(x,i)&\leq \sup_{\cB_{2R}\times\mathbb{S}}(\psi(x,i)-\varphi(x,i))+\sup_{\cB_{2R}\times\mathbb{S}}(\varphi(x,i)-\psi(x,i))\nonumber\\
&\leq 2C|\cB_{2R}|^{\frac{1}{d}}\norm{c}_{L^\infty(\cB_{2R}\times\mathbb{S}\times\Act)}
\end{align}
Define \(\bar\psi:=\cJ^v_\alpha-\cJ^v_\alpha(0,1)\). Then
\(\bar\psi\) satisfies
\[
\sL_v\bar\psi-\alpha\bar\psi=-c_v+\alpha\cJ^v_\alpha(0,1)
 \quad \text{in}\;\;\cB_{2R}\times\mathbb{S}\]
 Equivalently,
\begin{align*}
\trace\!\big(a(x,i)\nabla^2 \bar{\psi}(x,i)\big)
+ b(x,i,v(x,i))\cdot\nabla\bar{\psi}(x,i)
+ (m_{ii}(x,v(x,i))-\alpha)\bar{\psi}(x,i)
= H(x,i),
\end{align*}
where
\[
H(x,i)
= -\Big[
c(x,i,v(x,i))
+ \sum_{j\ne i} m_{ij}(x,v(x,i))\bar{\psi}(x,j)
- \alpha \cJ^v_\alpha(0,1)
\Big].
\]
Using the elliptic $\Sob^{2,p}$ estimates (see \cite{GilTru}*{Theorem~9.11}) together with Harnack’s inequality \cite{sirakov}*{Theorem~2}, we obtain
\begin{align*}
\|\bar\psi\|_{\Sob^{2,p}(\cB_R\times\mathbb{S})}
&\le
C_1\bigl(
\|\bar\psi\|_{L^p(\cB_{2R}\times\mathbb{S})}
+ \|H\|_{L^p(\cB_{2R}\times\mathbb{S})}
\bigr)\\
&\le
C_2|\cB_{2R}|^{\frac{1}{p}}
\left(
\osc_{\cB_{2R}\times\mathbb{S}}\cJ^v_\alpha
+ \|c\|_{L^{\infty}(\cB_{2R}\times\mathbb{S}\times\Act)}
+|\mathbb{S}|M\osc_{\cB_{2R}\times\mathbb{S}}\cJ^v_\alpha+ \sup_{\cB_{2R}\times\mathbb{S}}\alpha \cJ^v_\alpha
\right)
\end{align*}
Finally, combining this with \cref{osc} and the uniform bound \cref{A.3},
we obtain the desired Sobolev estimate \cref{A.2}.
\end{proof}


\begin{definition}\label{A.D.1}
Let $\tilde{\Uadm}_{ssm} := \{v \in \Uadm_{\mathsf{ssm}} : \rho_v < \infty\}$. Define
\begin{align*}
\Psi^v((x,i); \rho) &:= \lim_{r \downarrow 0} \Exp^v_{x,i} \left[ \int_0^{\uuptau_r} (c_v(X_t,S_t) - \rho) dt \right], \quad v \in \tilde{\Uadm}_{ssm}, \\
\Psi^*((x,i); \rho) &:= \liminf_{r \downarrow 0} \inf_{v \in \tilde{\Uadm}_{ssm}} \Exp^v_{x,i} \left[ \int_0^{\uuptau_r} (c_v(X_t,S_t) - \rho) dt \right],
\end{align*}
provided the limits exist and are finite.
\end{definition}


The next theorem extends the classical vanishing-discount analysis for diffusions (see \cite{ABG-book}) to the regime-switching diffusion framework, where the HJB equation becomes a weakly coupled system of elliptic PDEs.
\begin{theorem}[Vanishing Discount and Ergodic HJB]\label{vd&hjb}
Assume \hyperlink{A7}{(A7)}(i) holds.  

\begin{enumerate}
\item \textbf{Vanishing Discount Approximation.}  
Suppose $\rho^* < \infty$. For any sequence $\alpha_n \downarrow 0$, there exists a subsequence (also denoted as $\alpha_n$), a function 
$
V \in \Sobl^{2,p}(\Rd \times \mathbb{S}),\;\; \rho \in \RR,$
such that, as $n \to \infty$,
\[
\alpha_n V_{\alpha_n}(0,1) \;\to\; \rho, 
\qquad 
\bar V_{\alpha_n} := V_{\alpha_n} - V_{\alpha_n}(0,1) \;\to\; V,
\]
uniformly on compact subsets of $\Rd\times\mathbb{S
}$.  
The pair $(V,\rho)$ satisfies the ergodic HJB equation
\begin{equation}\label{vd-hjb}
\min_{\zeta \in \Act} \{ \sL_\zeta V(x,i) + c(x,i,\zeta) \} = \rho, \qquad x \in \Rd,\, i\in \mathbb{S}.
\end{equation}
Moreover,
\begin{align*}
V(x,i) \;\leq\; \Psi^*((x,i); \rho),\qquad \rho \leq \rho^*
\end{align*}
and for any $r>0$,
\begin{equation}\label{3.7.42}
    V(x,i)\geq -\rho^* \limsup_{\alpha\to 0}\Exp_{x,i}^{{v}_{\alpha}
    }[\uuptau_r]-\sup_{\sB_r\times\mathbb{S}}V \quad \forall\, (x,i)\in \sB_r^{c}\times \mathbb{S}
\end{equation}
where $\{v_{\alpha}:0<\alpha<1\}\subset \Usm$ is a collection of $\alpha$-discounted optimal control.
\item \textbf{Existence of Smooth Solutions for Fixed Controls.}  
If $\hat v \in \Uadm_{\mathsf{ssm}}$ and $\rho_{\hat v} < \infty$, then there exist
$
\hat V \in \Sobl^{2,p}(\Rd\times\mathbb{S}), \;\;  p > 1,\; 
\text{and}\;\; \hat \rho \in \RR,
$
such that
\[
\sL_{\hat v} \hat V + c_{\hat v} = \hat \rho \quad \text{in } \Rd\times\mathbb{S},
\]
with
\[
\alpha \cJ^{\hat v}_\alpha(0,1) \;\to\; \hat \rho, 
\qquad 
\cJ^{\hat v}_\alpha - \cJ^{\hat v}_\alpha(0,1) \;\to\; \hat V 
\qquad \text{as}\;\;\alpha \to 0,
\]
uniformly on compact subsets, and
\(
\hat V(x,i) = \Psi^{\hat v}((x,i); \hat{\rho}),
\qquad \hat \rho = \rho_{\hat v}.
\)
\vspace{2mm}

\item \textbf{Optimality and Characterization of the Ergodic Value.}  
The HJB equation
\begin{align}\label{hjb}
\min_{\zeta \in \Act} \{ \sL_\zeta V(x,i) + c(x,i,\zeta) \} = \rho\,, 
\qquad (x,i) \in \Rd\times \mathbb{S},
\end{align}
admits a solution
\[
V^* \in \Sobl^{2,p}(\Rd\times\mathbb{S}) \cap \sorder(\Lyap), 
\qquad V^*(0,1) = 0, 
\qquad \rho = \rho^*.
\]
Furthermore, if $v^* \in \Usm$ satisfies
\begin{align}\label{3.7.52}
&b(x,i,v^*(x,i))\cdot \grad V^*(x,i)\,+\, \sum_{j \in \mathbb{S}} m_{ij}(x,v^*(x,i)) V^*(x,j) + c(x,i, v^*(x,i))\nonumber\\
&= \min_{\zeta \in \Act} \{ b(x,i,\zeta)\cdot \grad V^*(x,i)\,+\, \sum_{j \in \mathbb{S}} m_{ij}(x,\zeta) V^*(x,j) + c(x,i, \zeta) \}, 
\quad \text{a.e.}\, (x,i)\in \Rd\times \mathbb{S}
\end{align}
then
\begin{equation}\label{3.7.53}
\rho_{v^*} = \rho^* 
= \inf_{U \in \Uadm} 
\liminf_{T \to \infty} \frac{1}{T} 
\mathbb{E}^U_{x,i}\!\left[ \int_0^T c(X_t,S_t,U_t)\,dt \right].
\end{equation}
\end{enumerate}
\end{theorem}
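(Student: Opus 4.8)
The plan is to follow the vanishing-discount methodology of \cite{ABG-book}*{Sections~3.6--3.7}, transported to the weakly coupled elliptic system produced by the switching component, and to treat the three parts in the order stated. For Part~(1) I would first record the a priori bounds: from the discounted HJB equation \cref{2.1} one has $0\le V_\alpha\le\|c\|_\infty/\alpha$, so $\alpha V_{\alpha_n}(0,1)$ is bounded and, along a subsequence, converges to some $\rho\in\RR$; and \cref{5.5.2}, applied with $V_\alpha$ in place of $\cJ^v_\alpha$, gives $\|V_{\alpha_n}-V_{\alpha_n}(0,1)\|_{\Sob^{2,p}(\sB_R\times\mathbb{S})}\le\widehat C_1(R)$ uniformly in $n$. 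Then, using the compact embedding $\Sob^{2,p}\hookrightarrow\cC^{1,\beta}$, the Banach--Alaoglu theorem, and a diagonal argument, I would extract a further subsequence with $\bar V_{\alpha_n}\df V_{\alpha_n}-V_{\alpha_n}(0,1)\to V$ weakly in $\Sobl^{2,p}(\Rd\times\mathbb{S})$ and strongly in $\cC^{1,\beta}_{\mathrm{loc}}(\Rd\times\mathbb{S})$, and pass to the limit in \cref{2.1} exactly as in the proof of \cref{TH2.3} — multiplying by $\phi\in\cC_c^\infty(\Rd\times\mathbb{S})$, using the continuous convergence of the coefficients together with the uniform convergence of the minimized Hamiltonian over the compact action set $\Act$, and noting $\alpha_n\bar V_{\alpha_n}\to0$ locally uniformly (again by \cref{5.5.2}) — to obtain $\min_\zeta[\sL_\zeta V(x,i)+c(x,i,\zeta)]=\rho$, i.e.\ \cref{vd-hjb}.

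To finish Part~(1) I would derive the bound $V\le\Psi^*(\cdot;\rho)$ and the estimate \cref{3.7.42} from the It\^o--Krylov formula (\cite{ABG-book}*{Lemma~5.1.4}) applied to $e^{-\alpha t}V_\alpha$ over $[0,\uuptau_r\wedge\uptau_R]$ under a minimizing selector $v_\alpha\in\Usm$, letting $R\to\infty$ (using $\sup_\alpha\Exp^{v_\alpha}_{x,i}[\uuptau_r]<\infty$, itself a consequence of \cref{Lyap1} and It\^o--Krylov), then $\alpha=\alpha_n\to0$ and $r\downarrow0$; and $\rho\le\rho^*$ from $\alpha V_\alpha(0,1)\le\alpha\cJ^v_\alpha(0,1)$ for every $v\in\Usm$, combined with Part~(2) and taking the infimum over $v$. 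For Part~(2), since for a fixed $\hat v\in\Uadm_{\mathsf{ssm}}$ the map $\cJ^{\hat v}_\alpha$ solves the \emph{linear} equation $\sL_{\hat v}\cJ^{\hat v}_\alpha-\alpha\cJ^{\hat v}_\alpha=-c_{\hat v}$, the estimates of \cref{5.5.2} apply directly; I would extract $\cJ^{\hat v}_{\alpha_n}-\cJ^{\hat v}_{\alpha_n}(0,1)\to\hat V$ and $\alpha_n\cJ^{\hat v}_{\alpha_n}(0,1)\to\hat\rho$ along a subsequence, with $\hat V\in\Sobl^{2,p}(\Rd\times\mathbb{S})$ satisfying $\sL_{\hat v}\hat V+c_{\hat v}=\hat\rho$, and then identify $\hat\rho=\rho_{\hat v}$ and $\hat V=\Psi^{\hat v}(\cdot;\hat\rho)$ via the It\^o--Krylov representation on $[0,\uuptau_r]$ together with \cref{3.7.2}(2) and \cref{co3.7.3}, as in the uniqueness argument of \cref{TErgoExisPoiss1}; since the limit is then uniquely characterized, the whole family converges as $\alpha\to0$.

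For Part~(3) I would take $(V,\rho)$ from Part~(1), which satisfies $V(0,1)=0$, set $V^*\df V$, and first establish $V^*\in\sorder(\Lyap)$ by sandwiching: the upper bound $V^*\le\Psi^*(\cdot;\rho)$ lies in $\sorder(\Lyap)$ by \cref{3.7.2}(1), while \cref{3.7.42} and the Lyapunov estimate $\Exp^v_{x,i}[\uuptau_r]\le\epsilon^{-1}\Lyap(x,i)$ bound it below. Choosing a measurable minimizing selector $v^*\in\Usm$ of \cref{3.7.52} (standard measurable selection), applying It\^o--Krylov to $V^*$ under $v^*$ over $[0,T\wedge\uptau_R]$, letting $R\to\infty$ (using $V^*\in\sorder(\Lyap)$ and \cref{3.7.2}(2)) and then dividing by $T$ and sending $T\to\infty$ so that $T^{-1}\Exp^{v^*}_{x,i}[V^*(X_T,S_T)]\to0$, yields $\rho=\rho_{v^*}\ge\rho^*$, hence $\rho=\rho^*=\rho_{v^*}$; the same computation with the inequality $\sL_{U_t}V^*+c\ge\rho$ for an arbitrary $U\in\Uadm$ gives $\rho\le\liminf_{T\to\infty}\tfrac1T\Exp^U_{x,i}\!\big[\int_0^T c(X_t,S_t,U_t)\,dt\big]$, with equality at $U=v^*$, which is \cref{3.7.53}.

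The hard part will be the two estimates that feed everything else: the uniform-in-$\alpha$ Sobolev control of the normalized discounted values $\bar V_\alpha$ (delegated here to \cref{5.5.2}, itself resting on an ABP/Harnack oscillation estimate for the coupled system), and the verification that the vanishing-discount limit $V^*$ genuinely lies in $\sorder(\Lyap)$, which forces one to match the probabilistic upper bound $\Psi^*(\cdot;\rho)$ against the Lyapunov-based lower bound \cref{3.7.42}. By contrast, the passage to the limit in the nonlinear, coupled minimization should be routine, given the strong $\cC^{1,\beta}_{\mathrm{loc}}$ convergence and the continuous-convergence argument already used in \cref{TH2.3}.
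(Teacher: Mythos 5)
Your overall route (uniform bounds from \cref{5.5.2}, Banach--Alaoglu plus diagonalization, passage to the limit in the discounted HJB as in \cref{TH2.3}, the linear Poisson equation for a fixed $\hat v$ in Part~(2), and the Lyapunov sandwich plus It\^o--Krylov verification in Part~(3)) is the same as the paper's. However, there is one step that, as you describe it, does not work: you claim to obtain \emph{both} the upper bound $V\le\Psi^*(\cdot;\rho)$ and the lower bound \cref{3.7.42} by applying the It\^o--Krylov formula to $e^{-\alpha t}V_\alpha$ \emph{under a minimizing selector} $v_\alpha$. Under $v_\alpha$ you get an identity for $V_\alpha$, which is exactly what the lower bound \cref{3.7.42} needs; but $\Psi^*((x,i);\rho)=\liminf_{r\downarrow0}\inf_{v\in\tilde{\Uadm}_{ssm}}\Exp^v_{x,i}\bigl[\int_0^{\uuptau_r}(c_v-\rho)\,dt\bigr]$ is an \emph{infimum over all} $v$, and an estimate along the single control $v_\alpha$ (or its limit) only bounds $V$ by the value of one particular $v$, which sits \emph{above} the infimum — the inequality points the wrong way. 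The paper's argument (adapting \cite{ABG-book}*{Lemma~3.7.8}) instead fixes an arbitrary $v\in\tilde{\Uadm}_{ssm}$, uses the supersolution property $\sL_v V_\alpha+c_v\ge\alpha V_\alpha$ (equivalently, the concatenated control that follows $v$ up to $\uuptau_r\wedge\uptau_R$ and is $\alpha$-discounted optimal afterwards) to get $V_\alpha(x,i)\le\Exp^v_{x,i}\bigl[\int_0^{\uuptau_r\wedge\uptau_R}e^{-\alpha t}c_v\,dt+e^{-\alpha(\uuptau_r\wedge\uptau_R)}V_\alpha(X_{\uuptau_r\wedge\uptau_R},S_{\uuptau_r\wedge\uptau_R})\bigr]$ for \emph{every} such $v$, and only then sends $R\to\infty$, $\alpha\to0$, $r\downarrow0$ and infimizes over $v$. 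This is not a cosmetic point: your Part~(3) uses $V^*\le\Psi^*(\cdot;\rho)$ as the upper half of the sandwich giving $V^*\in\sorder(\Lyap)$, so the gap propagates into the $\sorder(\Lyap)$ membership and hence into the uniqueness/characterization step.

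A secondary caution: your derivation of $\rho\le\rho^*$ via $\alpha V_\alpha(0,1)\le\alpha\cJ^v_\alpha(0,1)$ and Part~(2) only yields $\rho\le\inf_{v\in\Usm}\rho_v$, whereas $\rho^*=\sE^*(c)$ is defined as an infimum over \emph{all} admissible controls and initial points; a priori $\inf_{v\in\Usm}\rho_v\ge\rho^*$, so this does not close by itself. One either compares with arbitrary $U\in\Uadm$ via $\alpha V_\alpha\le\alpha\cJ^U_\alpha$ together with the Abelian inequality $\limsup_{\alpha\downarrow0}\alpha\cJ^U_\alpha(x,i)\le\sE_{x,i}(c,U)$, or defers the inequality to the verification step in Part~(3) (where the supersolution computation under arbitrary $U$ gives $\rho\le\liminf_T\frac1T\Exp^U_{x,i}\bigl[\int_0^T c\,dt\bigr]$). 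With these two repairs — both standard and consistent with the toolkit you already invoke — the rest of your proposal matches the paper's proof.
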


\begin{proof}(Sketch)
\begin{enumerate}
    \item  By \cref{5.5.2}, the quantities $\{\alpha V_\alpha(0,1)\}_{\alpha\in(0,1)}$ are uniformly bounded, and $\bar{V}_\alpha := V_\alpha - V_\alpha(0,1)$ are uniformly bounded in  $\Sob^{2,p}(\cB_R\times\mathbb{S})$, for each $R>0$, 
$p>1$, independently of~$\alpha$. Starting from~\cref{2.1}, and by standard diagonalization arguments and Banach-Alaoglu theorem as in the proof of~\cref{TH2.3}, 
we deduce that, along some sequence $\alpha_n \downarrow 0$,
\(
\bar{V}_{\alpha_n} \to V \;\;\text{uniformly on compact subsets of } \Rd\times\mathbb{S},
\)
and $\alpha_n V_{\alpha_n}(0,1) \to \rho$ for some $\rho\in\RR$. 
Passing to the limit in the weak form of the discounted HJB yields that $(V,\rho)$ satisfies~\cref{vd-hjb}.
To establish $\rho \leq \rho^*$ and the bounds involving $\Psi^*$ and~\cref{3.7.42}, 
we follow the argument of~\cite{ABG-book}*{Lemma~3.7.8}, 
adapting it to the regime-switching setting.  
Specifically, for each $v\in\tilde{\Uadm}_{ssm}$ we consider a concatenated control that coincides with~$v$ until the stopping time $\uuptau_r\wedge \uptau_R$, for $r<R$, and then switches to an $\alpha$-discounted optimal control~$v_\alpha$ thereafter.  Using the strong Markov property, It$\hat{\rm o}$–Krylov formula, and letting $R\to \infty$ one can obtain $\bar{V}_\alpha $ then arguing as in ~\cite{ABG-book}*{Lemma~3.7.8} and letting $\alpha\to0$, $r\downarrow0$, 
we obtain
\[
V(x,i)\;\leq\;
\liminf_{r\downarrow0}\inf_{v\in\tilde{\Uadm}_{ssm}}
\Exp^v_{x,i}\!\left[\int_0^{\uuptau_r}(c_v(X_t,S_t)-\rho)\,dt\right],
\]
and the lower bound~\cref{3.7.42} follows similarly by taking $v=v_\alpha$. 
This completes the proof of part~(1).
\item 
Let $\hat{v}\in\Uadm_{\mathsf{ssm}}$ be such that $\rho_{\hat{v}}<\infty$.  
Using the uniform Sobolev bounds (see~\cref{A.2}), standard diagonalization arguments and Banach-Alaoglu theorem as in~\cref{TH2.3}, 
there exist subsequences $\alpha_n\to0$ and limits $\hat{V}$ and $\hat{\rho}$ such that
\[
\alpha_n \cJ^{\hat v}_{\alpha_n}(0,1)\to\hat{\rho}, 
\qquad 
\cJ^{\hat v}_{\alpha_n}-\cJ^{\hat v}_{\alpha_n}(0,1)\to\hat{V}\]
and $(\hat{V},\hat{\rho})$ satisfies 
$\sL_{\hat{v}}\hat{V}+c_{\hat{v}}=\hat{\rho}$ on $\Rd\times\mathbb{S}$.  
Following the proof of~\cite{ABG-book}*{Lemma~3.7.8(ii)}, we obtain the stochastic representation
\begin{equation}\label{3.7.50-rev}
\hat{V}(x,i) = 
\Exp^{\hat{v}}_{x,i}
\!\left[\int_0^{\uuptau_r}\!\{c_{\hat{v}}(X_t,S_t)-\hat{\rho}\}\,dt
+ \hat{V}(X_{\uuptau_r},S_{\uuptau_r})\right].
\end{equation}
Letting $r\downarrow0$ gives
\(
\hat{V}(x,i) = \Psi^{\hat v}((x,i); \hat{\rho})\)

By \cref{3.7.50-rev} and \cref{co3.7.3}, we have \(\lim_{t \to \infty} \frac{1}{t} \Exp^{\hat{v}}_{x,i}[\hat{V}(X_t,S_t)] \to 0.\) Applying It$\hat{\rm o}$-Krylov formula to $\sL_{\hat{v}}\hat{V}+c_{\hat{v}}=\hat{\rho}$, dividing by $t$, 
and letting $t\to\infty$ yields $\hat{\rho}=\rho_{\hat{v}}$.  
This completes the proof of part~(2).



\item Part~(1) guarantees the existence of a solution $(V,\rho)$ to~\cref{hjb} with $\rho\le\rho^*$. Using again the argument of~\cite{ABG-book}*{Lemma~3.7.8}, we have for all $(x,i)\in\Rd\times\mathbb{S}$,
\begin{align}\label{A.18}
    V(x,i) \leq \Exp^v_{x,i} \left[ \int_0^{\uuptau_r} c_v(X_t,S_t) dt + V(X_{\uuptau_r},S_{\uuptau_r}) \right]
\end{align}
Combining \cref{A.18} \& \cref{3.7.42} yields the growth estimate,
\begin{equation*}
|V(x,i)| \leq \sup_{v \in \Uadm_{\mathsf{ssm}}} \Exp^v_{x,i} \left[ \int_0^{\uuptau_r} \{c_v(X_t,S_t) + \rho^*\} dt \right] + \sup_{\cB_r\times \mathbb{S}} V \quad \forall (x,i) \in \cB_r^c\times \mathbb{S}.
\end{equation*}
so $V\in\sorder(\Lyap)$ by~\cref{3.7.4}.  
Now let $v^*\in\Usm$ be a minimizing selector satisfying~\cref{3.7.52}.  
Applying the It$\hat{\rm o}$-Krylov formula (\cite{ABG-book}*{Lemma~5.1.4}) to $V$ under~$v^*$ gives
\begin{align}\label{3.7.54}
\Exp^{v^*}_{x,i}[V(X_{t \wedge \uptau_R},S_{t \wedge \uptau_R})] - V(x,i) &= \Exp^{v^*}_{x,i} \left[ \int_0^{t \wedge \uptau_R} \sL_{v^*} V(X_s,S_s) ds \right] \nonumber \\
&= \Exp^{v^*}_{x,i} \left[ \int_0^{t \wedge \uptau_R} \{\rho - c_{v^*}(X_s,S_s)\} ds \right].\nonumber\\
&=\rho \Exp^{v^*}_{x,i}[t \wedge \uptau_R] - \Exp^{v^*}_{x,i} \left[ \int_0^{t \wedge \uptau_R} c_{v^*}(X_s,S_s) ds \right]
\end{align}
Letting $R\to\infty$ by applying \cref{3.7.6} and employing monotone convergence, we obtain\begin{equation}\label{3.7.55}\Exp^{v^*}_{x,i}[V(X_t,S_t)] - V(x,i) = \Exp^{v^*}_{x,i} \left[ \int_0^t \{\rho - c_{v^*}(X_s,S_s)\} ds \right] \end{equation}
Dividing by $t$, and passing to the limit as $t\to\infty$ 
(using~\cref{3.7.5}) yields $\rho_{v^*}=\rho$. Since $\rho\le\rho^*$, it follows that $\rho_{v^*}=\rho^*$.  
Finally, applying It$\hat{\rm o}$-Krylov again to~\cref{hjb} for any $U\in\Uadm$ gives~\cref{3.7.53}, 
establishing optimality of~$v^*$.
\end{enumerate}
\end{proof}

We now state a result addressing the uniqueness of solutions to the HJB equation.

\begin{theorem}\label{3.7.12}
Let $V^*$ denote the solution of \cref{hjb} obtained  as  the vanishing discount limit in \cref{vd&hjb}. Then the following statements hold:
\begin{enumerate}
\item $V^*(x,i) = \Psi^*((x,i); \rho^*) = \Psi^{\hat{v}}((x,i); \rho^*)$, for any average-cost optimal $\hat{v} \in \Uadm_{\mathsf{ssm}}$;
\item a control $\hat{v} \in \Uadm_{\mathsf{ssm}}$ is average-cost optimal if and only if it is a measurable selector from the minimizer $\min_{\zeta \in \Act}\{\sL_\zeta V^*(x,i) + c(x,i,\zeta)\}$;
\item if $(\tilde{V}, \tilde{\rho}) \in (\Sobl^{2,p}(\Rd\times\mathbb{S}) \cap \sorder(\Lyap)) \times \RR$ satisfies \cref{hjb} with $\tilde{V}(0,1)=0$, then $(\tilde{V}, \tilde{\rho}) = (V^*, \rho^*)$.
\end{enumerate}
\end{theorem}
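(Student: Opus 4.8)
The plan is to deduce the theorem from the vanishing-discount analysis already in place in \cref{vd&hjb}, the stability of every stationary Markov control under \hyperlink{A7}{(A7)} (\cref{3.3.4}), and the strict positivity of the invariant densities $\eta_v$ guaranteed by \hyperlink{A3}{(A3)}--\hyperlink{A4}{(A4)} (as in \cref{infmeasure}), closing with a stochastic-representation argument via the It\^o--Krylov formula. The template is \cite{ABG-book}*{Theorem~3.7.12}; the new work is to carry the switching coupling $\sum_{j}m_{ij}(x,\cdot)\,\cdot\,(x,j)$ through every step.

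By \cref{vd&hjb}(3) the vanishing-discount limit $V^*$ solves \cref{hjb} with $\rho=\rho^*$, belongs to $\sorder(\Lyap)$, satisfies $V^*(0,1)=0$, admits a measurable minimizing selector $v^*\in\Usm$, and obeys $V^*\le\Psi^*(\cdot\,;\rho^*)$; moreover \cref{3.7.53} already gives $\rho_{v^*}=\rho^*$, which is the ``selector $\Rightarrow$ optimal'' half of part (2). For the converse half, let $\hat v\in\Uadm_{\mathsf{ssm}}$ be average-cost optimal, so $\rho_{\hat v}=\rho^*$. Since $V^*$ solves the HJB as a pointwise minimum, $g:=\sL_{\hat v}V^*+c_{\hat v}-\rho^*\ge 0$ a.e.\ on $\Rd\times\mathbb{S}$. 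Integrating $g$ against the invariant probability measure $\eta_{\hat v}$ of the joint process $(X,S)$ and using $\int\sL_{\hat v}V^*\,\D\eta_{\hat v}=0$ (legitimate because $V^*\in\Sobl^{2,p}\cap\sorder(\Lyap)$ is $\eta_{\hat v}$-integrable, so the It\^o--Krylov martingale attached to $V^*$ has mean-zero drift under $\eta_{\hat v}$) together with $\int c_{\hat v}\,\D\eta_{\hat v}=\rho_{\hat v}=\rho^*$ yields $\int g\,\D\eta_{\hat v}=0$; as $\eta_{\hat v}$ has a strictly positive density, $g=0$ a.e., i.e.\ $\hat v$ is a selector. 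This proves (2).

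For part (1) I would apply the It\^o--Krylov formula to $V^*$ on $[0,\uuptau_r\wedge\uptau_R]$ under the selector $v^*$; using $\sL_{v^*}V^*+c_{v^*}=\rho^*$, letting $R\to\infty$ while controlling the boundary term $\Exp^{v^*}_{x,i}\!\big[V^*(X_{\uptau_R},S_{\uptau_R})\Ind_{\{\uuptau_r\ge\uptau_R\}}\big]$ through $V^*\in\sorder(\Lyap)$ and the Foster--Lyapunov inequality \cref{Lyap1}, and then sending $r\downarrow 0$ with $V^*(0,1)=0$, gives the representation $V^*(x,i)=\Psi^{v^*}((x,i);\rho^*)$. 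Since $\Psi^{v^*}(\cdot\,;\rho^*)\ge\Psi^*(\cdot\,;\rho^*)$ by definition and $V^*\le\Psi^*(\cdot\,;\rho^*)$ from \cref{vd&hjb}(3), we conclude $V^*=\Psi^*(\cdot\,;\rho^*)=\Psi^{v^*}(\cdot\,;\rho^*)$; combining with the equivalence ``optimal $\Leftrightarrow$ selector'' from (2) gives $V^*=\Psi^{\hat v}(\cdot\,;\rho^*)$ for every average-cost optimal $\hat v$. Part (3) then follows by bootstrapping: given $(\tilde V,\tilde\rho)$ solving \cref{hjb} with $\tilde V\in\sorder(\Lyap)$ and $\tilde V(0,1)=0$, choose a measurable selector $\tilde v\in\Usm$, so $\sL_{\tilde v}\tilde V+c_{\tilde v}=\tilde\rho$; dividing the resulting It\^o--Krylov identity by $t$ and letting $t\to\infty$ (using $\tilde V\in\sorder(\Lyap)$ and \cref{3.7.5}) gives $\tilde\rho=\rho_{\tilde v}$, while applying It\^o--Krylov to the HJB inequality $\sL_\zeta\tilde V+c(x,i,\zeta)\ge\tilde\rho$ under an arbitrary $U\in\Uadm$ forces $\tilde\rho\le\rho^*$; since $\rho_{\tilde v}\ge\rho^*$, we get $\tilde\rho=\rho^*$ and $\tilde v$ optimal. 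The representation of part (1) then gives $\tilde V=\Psi^{\tilde v}(\cdot\,;\rho^*)=V^*$, hence $(\tilde V,\tilde\rho)=(V^*,\rho^*)$.

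I expect the main obstacle to be the two coupled limiting operations inside the stochastic representation: sending $R\to\infty$ to kill the exit-from-$\sB_R$ term and $r\downarrow 0$ to collapse $\uuptau_r$ onto a hitting time of the reference point, carried out \emph{simultaneously} for the weakly coupled system, where one can only rely on $V^*\in\sorder(\Lyap)$ and on the drift inequality \cref{Lyap1} evaluated along the $S$-enlarged exit times to dominate $V^*(X_{\uptau_R},S_{\uptau_R})$ uniformly and to ensure the required limits exist and are finite. A secondary technical point is the mean-zero identity $\int\sL_{\hat v}V^*\,\D\eta_{\hat v}=0$ for the \emph{joint} generator, which requires that the switching term $\sum_{j}m_{ij}(x,\hat v(x,i))V^*(x,j)$ be $\eta_{\hat v}$-integrable and that $\eta_{\hat v}$ annihilate the full coupled generator, not merely its diffusion part. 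Beyond these, the argument is a routine transcription of the diffusion case, with the coupled strong maximum principle (\cite{GilTru}*{Theorem~9.6} applied componentwise, combined with the irreducibility in \hyperlink{A4}{(A4)}) available as an alternative route for the final comparison $\tilde V=V^*$.
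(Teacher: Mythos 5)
Your proposal is correct in substance and reaches all three assertions, but it follows a genuinely different route from the paper at the two decisive steps. The paper handles (1) and (2) in one stroke: for an average-cost optimal $\hat v$ it invokes the Poisson solution $\hat V$ from \cref{vd&hjb}(2) (so $\hat V=\Psi^{\hat v}(\cdot;\rho^*)$), observes $\sL_{\hat v}(V^*-\hat V)\ge 0$ together with $V^*-\hat V\le \Psi^*-\Psi^{\hat v}\le 0$, and concludes $V^*=\hat V$ from the strong maximum principle for the coupled system (as in \cref{TErgoExisPoiss1}); the selector property of $\hat v$ and the identities in (1) then fall out simultaneously. For (3) the paper deliberately proves only the one-sided bound $\tilde V\ge\Psi^*(\cdot;\rho^*)$ (Fatou plus the Lyapunov control \cref{3.7.58} of the exit term) and again closes with the maximum principle, never needing an exact representation of $\tilde V$. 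You instead prove ``optimal $\Rightarrow$ selector'' by integrating $g=\sL_{\hat v}V^*+c_{\hat v}-\rho^*\ge 0$ against the invariant measure $\eta_{\hat v}$ and using positivity of its density, obtain (1) from the two-sided representation $V^*=\Psi^{v^*}$ sandwiched by $V^*\le\Psi^*\le\Psi^{v^*}$, and obtain (3) by identifying both $\tilde V$ and $V^*$ with $\Psi^{\tilde v}$. Your route avoids the coupled maximum principle; the paper's route avoids occupation-measure arguments and exact representations, relying only on tools it has already established.

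Two steps in your plan need more care than you give them. First, the identity $\int\sL_{\hat v}V^*\,\D\eta_{\hat v}=0$ is not justified by $\eta_{\hat v}$-integrability of $V^*$; what must be shown is integrability of $\sL_{\hat v}V^*$, i.e.\ of $g$ itself, which is exactly the quantity under investigation. The standard repair is the time-average route: apply the It\^o--Krylov formula on $[0,t]$ under $\hat v$, use \cref{3.7.5}--\cref{3.7.6} (all stationary controls are stable by \cref{3.3.4}) to kill $\tfrac1t\Exp^{\hat v}_{x,i}[V^*(X_t,S_t)]$, and pass to $\int g\,\D\eta_{\hat v}=0$ by truncating $g$ and using positive recurrence; positivity of the invariant density (standard under \hyperlink{A3}{(A3)}--\hyperlink{A4}{(A4)}, though not recorded in the paper) then upgrades $\eta_{\hat v}$-a.e.\ to Lebesgue-a.e. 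Second, in (3) you use the equality $\tilde V=\Psi^{\tilde v}(\cdot;\rho^*)$, which requires controlling the boundary terms at $\uptau_R$ and $\uuptau_r$ in both directions, and your bound $\tilde\rho\le\rho^*$ under arbitrary $U\in\Uadm$ uses a version of \cref{3.7.5} beyond stationary controls; both can be fixed (the paper itself implicitly extends the Lyapunov bound to arbitrary admissible controls in \cref{3.7.53}), but the paper's one-sided Fatou estimate followed by the strong maximum principle is the safer and more economical closing move.
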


\begin{proof}
\begin{enumerate}
    \item[(1)-(2)]
From \cref{vd&hjb}(1), $V^*$ arises as the limit of $\bar{V}_{\alpha_n}$ as $\alpha_n \to 0$, hence $V^* \le \Psi^*((x,i);\rho)$, where by \cref{vd&hjb}(3), $\rho = \rho^*$.
Let $\hat{v} \in \Uadm_{\mathsf{ssm}}$ be any average-cost optimal control, and let $\hat{V} \in \Sobl^{2,p}(\Rd\times\mathbb{S}),$ $p>1$, be the solution to the Poisson equation
\(
\sL_{\hat{v}}\hat{V} + c_{\hat{v}} = \rho_{\hat{v}}.
\)
By optimality, $\rho_{\hat{v}} = \rho^*$.  
Subtracting, we have $\sL_{\hat{v}}(V^*-\hat{V}) \ge 0$ and
\[
V^*(x,i) - \hat{V}(x,i)
\le
\Psi^*((x,i); \rho^*) - \Psi^{\hat{v}}((x,i); \rho^*) \le 0.
\]
Since $V^*(0,1)=\hat{V}(0,1)$, the strong maximum principle (\cite{GilTru}*{Theorem~9.6}) 
 implies $V^*=\hat{V}$ (see, \cref{TErgoExisPoiss1}).  
This establishes both (1) and (2).
\item[(3)]
Let $(\tilde{V}, \tilde{\rho}) \in (\Sobl^{2,p}(\Rd\times\mathbb{S}) \cap \sorder(\Lyap)) \times \RR$ satisfy \cref{hjb}, and let $\tilde{v} \in \Uadm_{\mathsf{ssm}}$ be a measurable selector attaining the minimum.  
Applying the Itô--Krylov formula (\cite{ABG-book}*{Lemma~5.1.4}) and using \cref{3.7.6}, we obtain the analogue of \cref{3.7.55}:
\[
\Exp^{\tilde{v}}_{x,i}[\tilde{V}(X_t,S_t)] - \tilde{V}(x,i)
= 
\Exp^{\tilde{v}}_{x,i}\!\left[\int_0^t (\tilde{\rho} - c_{\tilde{v}}(X_s,S_s))\,ds\right].
\]
Dividing by $t$ and letting $t\to\infty$, together with \cref{3.7.5}, gives $\rho_{\tilde{v}}=\tilde{\rho}$, hence $\rho^* \le \tilde{\rho}$.  
Conversely, applying the same formula to \cref{hjb} under an average-cost optimal control $v^*$ yields
\begin{equation}\label{3.7.56}
\Exp^{v^*}_{x,i}[\tilde{V}(X_t,S_t)] - \tilde{V}(x,i) \geq \Exp^{v^*}_{x,i} \left[ \int_0^t \{\tilde{\rho} - c_{v^*}(X_s,S_s)\} ds \right]. 
\end{equation}
Dividing by $t$, taking $t\to\infty$, and using again \cref{3.7.5}, we deduce $\tilde{\rho}\le\rho^*$.  
Thus, $\tilde{\rho} = \rho^*$.
Next, to compare $\tilde{V}$ and $V^*$, let $r<R$ be such that $r<|x|<R$.  
By applying It$\hat{\rm o}$-Krylov formula (\cite{ABG-book}*{Lemma 5.1.4}) to \cref{hjb} under $\tilde{v}$, we obtain
\begin{align}\label{3.7.57}
\tilde{V}(x,i) &= \Exp^{\tilde{v}}_{x,i} \left[ \int_0^{\uuptau_r \wedge \uptau_R} \{c_{\tilde{v}}(X_t,S_t) - \rho^*\} dt + \tilde{V}(X_{\uuptau_r},S_{\uuptau_r}) \Ind_{\{\uuptau_r < \uptau_R\}} \right. \nonumber \\
&\quad \left. + \tilde{V}(X_{\uptau_R},S_{\uptau_R}) \Ind_{\{\uuptau_r \geq \uptau_R\}} \right].
\end{align}
Using the Lyapunov inequality \cref{3.7.3} and the It$\hat{\rm o}$-Krylov formula (\cite{ABG-book}*{Lemma 5.1.4}), we obtain
\begin{equation}\label{3.7.58}
\Exp^v_{x,i}\left[\Lyap(X_{\uptau_R},S_{\uptau_R}) \Ind_{\{\uptau_R \leq \uuptau_r\}}\right] \leq k_0 \Exp^v_{x,i}[\uuptau_r] + \Lyap(x,i) \quad \forall v \in \Uadm_{\mathsf{ssm}}.
\end{equation}
Since $\tilde{V}\in\sorder(\Lyap)$, the right-hand side is finite, implying
\begin{equation*}
\sup_{v \in \Uadm_{\mathsf{ssm}}} \Exp^v_{x,i}[\tilde{V}(X_{\uptau_R},S_{\uptau_R}) \Ind_{\{\uptau_R \leq \uuptau_r\}}] \xrightarrow{R \to \infty} 0.
\end{equation*}
Letting $R\to\infty$ in \cref{3.7.57} and applying Fatou’s lemma gives
\[
\tilde{V}(x,i)\ge
\inf_{v\in\Uadm_{\mathsf{ssm}}}
\Exp^v_{x,i}\!\left[\int_0^{\uuptau_r}\!(c_v(X_t,S_t)-\rho^*)\,dt\right]
+\inf_{\cB_r\times\mathbb{S}}\tilde{V}.
\]

Sending $r\downarrow0$ and using $\tilde{V}(0,1)=0$ yields
$\tilde{V}(x,i)\ge\Psi^*((x,i);\rho^*)$.
Therefore, $V^*-\tilde{V}\le0$ and $\sL_{\tilde{v}}(V^*-\tilde{V})\ge0$.  
By the strong maximum principle (\cite{GilTru}*{Theorem~9.6}), 
we conclude $\tilde{V}=V^*$ (see, \cref{TErgoExisPoiss1}).  
This completes the proof.

\end{enumerate}
\end{proof}



\bibliography{Dinesh_Robustness}

@Article{Escobedo,
AUTHOR = {Escobedo-Trujillo, Beatris Adriana and Garrido-Meléndez, Javier and Alcalá, Gerardo and Revuelta-Acosta, J. D.},
TITLE = {Optimal Control with Partially Observed Regime Switching: Discounted and Average Payoffs},
JOURNAL = {Mathematics},
VOLUME = {10},
YEAR = {2022},
NUMBER = {12},
ARTICLE-NUMBER = {2073},
URL = {https://www.mdpi.com/2227-7390/10/12/2073},
ISSN = {2227-7390},
DOI = {10.3390/math10122073}
}

@article{LiandZhou,
author = {Li, Xun and Zhou, Xun},
year = {2003},
month = {01},
pages = {265-282},
title = {Indefinite stochastic LQ controls with Markovian jumps in a finite time horizon},
volume = {2},
journal = {Communications in Information and Systems},
doi = {10.4310/CIS.2002.v2.n3.a4}
}

@article {WLS01,
    AUTHOR = {S. Walczak and U. Ledzewicz and H.Sch\"attler},
     TITLE = {Stability of elliptic optimal control problems},
   JOURNAL = {Computers \& Mathematics with Applications},
    VOLUME = {41},
      YEAR = {2001},
    NUMBER = {10-11},
     PAGES = {1245--1256},
}

@article{SI72,
  title={Continuous dependence on parametem and boundary data for nonlinear two-point boundary value problems},
  author={S.K. Ingram},
  journal={Pacific J. Math.},
  volume={41},
  pages={395--408},
  year={1972}}

@article{arapostathis2022risk,
  title={Risk-sensitive control for a class of diffusions with jumps},
  author={Arapostathis, Ari and Biswas, Anup},
  journal={The Annals of Applied Probability},
  volume={32},
  number={6},
  pages={4106--4142},
  year={2022},
  publisher={Institute of Mathematical Statistics}
}

@book{yong2012stochastic,
  title={Stochastic Controls: Hamiltonian Systems and HJB Equations},
  author={Yong, J. and Zhou, X.Y.},
  isbn={9781461214663},
  lccn={98055411},
  series={Stochastic Modelling and Applied Probability},
  url={https://books.google.co.in/books?id=LesHCAAAQBAJ},
  year={2012},
  publisher={Springer New York}
}

@book{HP09-book,
    AUTHOR = {Pham, H.},
     TITLE = {Continuous-time stochastic control and applications with financial applications},
    SERIES = {Stochastic Modelling and Applied Probability},
    VOLUME = {61},
 PUBLISHER = {Springer},
      YEAR = {2009}}

@book {Bor-book,
    AUTHOR = {Borkar, Vivek S.},
     TITLE = {Optimal control of diffusion processes},
    SERIES = {Pitman Research Notes in Mathematics Series},
    VOLUME = {203},
 PUBLISHER = {Longman Scientific \& Technical, Harlow; copublished in the
              United States with John Wiley \& Sons, Inc., New York},
      YEAR = {1989},
     PAGES = {vi+196},
      ISBN = {0-582-03540-6},
   MRCLASS = {93E20 (49A60 49C10 60H10 60J60 93-02)},
  MRNUMBER = {1005532},
MRREVIEWER = {Anatoli\u{\i} B. Juditsky},
}

@article{Yin10,
author = {Yin, G. and Mao, Xuerong and Yuan, Chenggui and Cao, Dingzhou},
title = {Approximation Methods for Hybrid Diffusion Systems with State-Dependent Switching Processes: Numerical Algorithms and Existence and Uniqueness of Solutions},
journal = {SIAM Journal on Mathematical Analysis},
volume = {41},
number = {6},
pages = {2335-2352},
year = {2010},
doi = {10.1137/080727191},
URL = {https://doi.org/10.1137/080727191},
eprint = { https://doi.org/10.1137/080727191}}

@article{sirakov,
  TITLE = {{Some estimates and maximum principles for weakly coupled systems of elliptic PDE}},
  AUTHOR = {Sirakov, Boyan},
  URL = {https://hal.science/hal-00351672},
  JOURNAL = {{Nonlinear Analysis: Theory, Methods and Applications}},
  PUBLISHER = {{Elsevier}},
  VOLUME = {70},
  NUMBER = {8},
  PAGES = {3039-3046},
  YEAR = {2009},
  PDF = {https://hal.science/hal-00351672v1/file/LinearABP_submitted.pdf},
  HAL_ID = {hal-00351672},
  HAL_VERSION = {v1},
}

@Article{FP,
journal={Journal of Optimization Theory and Applications},
author={A. M. Il'in and R. Z. Khasminskii and G. Yin},
title={Singularly Perturbed Switching Diffusions: Rapid Switchings and Fast Diffusions},
year={1999},
month={September},
pages={555-591},
volume={102},
number={3},
doi={10.1023/A:1022698023010},
url={https://ideas.repec.org/a/spr/joptap/v102y1999i3d10.1023_a1022698023010.html},
}

@article{FH,
title = {Dynamic programming for a Markov-switching jump–diffusion},
journal = {Journal of Computational and Applied Mathematics},
volume = {267},
pages = {1-19},
year = {2014},
issn = {0377-0427},
doi = {https://doi.org/10.1016/j.cam.2014.01.021},
url = {https://www.sciencedirect.com/science/article/pii/S0377042714000491},
author = {N. Azevedo and D. Pinheiro and G.-W. Weber}}

@article{AS,
title = {Ergodic risk-sensitive control for regime-switching diffusions},
journal = {Systems \& Control Letters},
volume = {170},
pages = {105399},
year = {2022},
issn = {0167-6911},
doi = {https://doi.org/10.1016/j.sysconle.2022.105399},
url = {https://www.sciencedirect.com/science/article/pii/S0167691122001761},
author = {Anup Biswas and Somnath Pradhan},
}

@misc{DTSB,
      title={Existence, uniqueness, and regularity of solutions to nonlinear and non-smooth parabolic obstacle problems}, 
      author={Durandard Théo and Strulovici Bruno},
      year={2024},
      eprint={2404.01498},
      archivePrefix={arXiv},
      primaryClass={math.AP},
      url={https://arxiv.org/abs/2404.01498}, }

@article{AGM93,
author = {Ghosh, Mrinal K. and Arapostathis, Aristotle and Marcus, Steven I.},
title = {Optimal Control of Switching Diffusions with Application to Flexible Manufacturing Systems},
journal = {SIAM Journal on Control and Optimization},
volume = {31},
number = {5},
pages = {1183-1204},
year = {1993},
doi = {10.1137/0331056},
URL = {https://doi.org/10.1137/0331056},
eprint = { https://doi.org/10.1137/0331056}
}

@article{PradhanYuksel2023,
author = {Pradhan, Somnath and Y\"{u}ksel, Serdar},
title = {Robustness of Stochastic Optimal Control to Approximate Diffusion Models Under Several Cost Evaluation Criteria},
journal = {Mathematics of Operations Research},
volume = {49},
number = {4},
pages = {2049-2077},
year = {2024},
doi = {10.1287/moor.2022.0134},
URL = {https://doi.org/10.1287/moor.2022.0134},
eprint = { https://doi.org/10.1287/moor.2022.0134}}

@book{FlemingSoner2006,
title={Controlled Markov Processes and Viscosity Solutions},
author={Fleming, W. H. and Soner, H. M.},
edition={2nd},
year={2006},
publisher={Springer}
}

@article{Ghosh97,
author = {Ghosh, Mrinal K. and Arapostathis, Aristotle and Marcus, Steven I.},
title = {Ergodic Control of Switching Diffusions},
journal = {SIAM Journal on Control and Optimization},
volume = {35},
number = {6},
pages = {1952-1988},
year = {1997},
doi = {10.1137/S0363012996299302},
URL = {https://doi.org/10.1137/S0363012996299302},
eprint = {https://doi.org/10.1137/S0363012996299302}
}

@article{HS08,
 ISSN = {00028282},
 URL = {http://www.jstor.org/stable/2677734},
 author = {Lars Peter Hansen and Thomas J. Sargent},
 journal = {The American Economic Review},
 number = {2},
 pages = {60--66},
 publisher = {American Economic Association},
 title = {Robust Control and Model Uncertainty},
 urldate = {2025-10-09},
 volume = {91},
 year = {2001}
}

@book{YinZhu2010,
  title={Hybrid Switching Diffusions: Properties and Applications},
  author={Yin, G. and Zhu, C.},
  year={2010},
  publisher={Springer}
}

@book{MaoYuan2006,
author = {Mao, Xuerong and Yuan, Chenggui},
title = {Stochastic Differential Equations with Markovian Switching},
publisher = {PUBLISHED BY IMPERIAL COLLEGE PRESS AND DISTRIBUTED BY WORLD SCIENTIFIC PUBLISHING CO.},
year = {2006},
doi = {10.1142/p473},
address = {},
edition   = {},
URL = {https://www.worldscientific.com/doi/abs/10.1142/p473},
eprint = {https://www.worldscientific.com/doi/pdf/10.1142/p473}
}

@book{ABG-book,
    AUTHOR = {Arapostathis, A. and Borkar, V. S. and
              Ghosh, M. K.},
     TITLE = {Ergodic control of diffusion processes},
    SERIES = {Encyclopedia of Mathematics and its Applications},
    VOLUME = {143},
 PUBLISHER = {Cambridge University Press},
   ADDRESS = {Cambridge},
      YEAR = {2012},
      MRNUMBER = {2884272}}

@book {Adams,
    AUTHOR = {Adams, R. A.},
     TITLE = {Sobolev Spaces},
 PUBLISHER = {Academic Press, New York},
      YEAR = {1975},}

@book{HB-book,
    AUTHOR = {Haim Brezis},
     TITLE = {Functional Analysis, Sobolev Spaces and Partial Differential Equations},
    SERIES = {Universitext},
 PUBLISHER = {Springer-Verlag},
   ADDRESS = {New York},
      YEAR = {2010},}

@article{BJP02,
  title={Robustness and risk-sensitive filtering},
  author={R. K. Boel and M. R. James and I. R. Petersen},
  journal={IEEE Trans. Automat. Control},
  volume={47},
  pages={451--461},
  year={2002}
}

@article{VSB2005,
 author={Vivek S. Borkar},
  title={Controlled diffusion processes},
journal={Probab. Surveys},
 volume={2},
  pages={213--244},
   year={2005},
   DOI ={10.1214/154957805100000131}}

@article{GL99,
  author={E. Gordienko and E. Lemus-Rodr\'iguez},
  title={Estimation of robustness for controlled diﬀusion processes},
  journal={Stoch. Anal. and Appl.},
  volume={17},
  number={3},
  pages={421--441},
  year={1999}}

@article{Bor89,
  title={A topology for markov controls},
  author={V. S. Borkar},
  journal={Applied Mathematics and Optimization},
  volume={20},
  pages={55--62},
  year={1989}}

@article{KV16,
  title={On robustness of discrete time optimal filters},
  author={M. L. Kleptsyna and A. Yu. Veretennikov},
  journal={Mathematical Methods of Statistics},
  volume={25},
  number={3},
  pages={207--218},
  year={2016}}

@book{KushnerDupuis2001,
  title={Numerical Methods for Stochastic Control Problems in Continuous Time},
  author={Kushner, H. and Dupuis, P.G.},
  isbn={9781461300076},
  lccn={00061267},
  series={Stochastic Modelling and Applied Probability},
  url={https://books.google.com.vc/books?id=JfbSBwAAQBAJ},
  year={2013},
  publisher={Springer New York}
}

@article{LJE15,
  title={Discounted robust control for Markov diffusion processes},
  author={J.D. L\'opez-Barrientos and H. Jasso-Fuentes and B. A. Escobedo-Trujillo},
  journal={TOP},
  volume={23},
  pages={53--76},
  year={2015}}

@article{NG05,
  title={Robust control of Markov decision processes with uncertain transition matrices},
  author={A. Nilim and L. E. Ghaoui},
  journal={Oper. Res.},
  volume={53},
  pages={780--798},
  year={2005}}

@article{SX15,
  title={Convergence analysis for distributionally robust optimization and equilibrium problems},
  author={H. Sun and H. Xu},
  journal={Math. Oper. Res.},
  volume={41},
  pages={377--401},
  year={2015}}

@article {KY-20,
    AUTHOR = {Kara, A. D. and Y\"uksel, S.},
     TITLE = {Robustness to Incorrect System Models in Stochastic Control},
   JOURNAL = {SIAM J. Control Optim.},
  FJOURNAL = {SIAM Journal on Control and Optimization},
    VOLUME = {58},
      YEAR = {2020},
    NUMBER = {2},
     PAGES = {1144--1182},
   MRCLASS = {93E20, 93E03, 93E11, 62G35},
       DOI = {10.1137/18M1208058},
       URL = {https://doi.org/10.1137/18M1208058}}

@article{KRY-20,
title = {Robustness to incorrect models and data-driven learning in average-cost optimal stochastic control},
journal = {Automatica},
volume = {139},
pages = {110179},
year = {2022},
issn = {0005-1098},
doi = {https://doi.org/10.1016/j.automatica.2022.110179},
url = {https://www.sciencedirect.com/science/article/pii/S0005109822000243},
author = {Ali Devran Kara and Maxim Raginsky and Serdar Yüksel},  
}

@book{GilTru,
	AUTHOR = {Gilbarg, David and Trudinger, Neil S.},
     TITLE = {Elliptic partial differential equations of second order},
    SERIES = {Grundlehren der Mathematischen Wissenschaften},
    VOLUME = {224},
   EDITION = {Second},
 PUBLISHER = {Springer-Verlag, Berlin},
      YEAR = {1983},
  MRNUMBER = {737190},
       DOI = {10.1007/978-3-642-61798-0}}

@article {CL89,
    AUTHOR = {Caffarelli, L.},
     TITLE = {Interior a Priori Estimates for Solutions of Fully Non-Linear Equations},
   JOURNAL = {Annals of Mathematics},
    VOLUME = {130},
      YEAR = {1989},
    NUMBER = {1},
     PAGES = {189--213},
       DOI = {10.2307/1971480}}

@article{SPSY,
author = {Somnath Pradhan and Serdar Y{\"u}ksel},
title = {{Continuity of cost in Borkar control topology and implications on discrete space and time approximations for controlled diffusions under several criteria}},
volume = {29},
journal = {Electronic Journal of Probability},
number = {none},
publisher = {Institute of Mathematical Statistics and Bernoulli Society},
pages = {1 -- 32},
keywords = {controlled diffusions, finite actions, Hamilton-Jacobi-Bellman equation, near optimality, piecewise constant policy},
year = {2024},
doi = {10.1214/24-EJP1093},
URL = {https://doi.org/10.1214/24-EJP1093}
}
\bibliographystyle{elsarticle-num} 

\end{document}